\newtheorem{theorem}{\bf Theorem}[section]
\newtheorem{lemma}[theorem]{\bf Lemma}
\newtheorem{corollary}[theorem]{\bf Corollary}
\newtheorem{proposition}[theorem]{\bf Proposition}
\newtheorem{remark}[theorem]{\bf Remark}
\numberwithin{equation}{section}
\newcommand{\dt}{\partial_t}
\newcommand{\ds}{\partial_s}
\newcommand{\opnorm}{\@ifstar\@opnorms\@opnorm}
\newcommand{\@opnorms}[1]{%
  \left|\mkern-1.5mu\left|\mkern-1.5mu\left|
   #1
  \right|\mkern-1.5mu\right|\mkern-1.5mu\right|
}
\newcommand{\@opnorm}[2][]{%
  \mathopen{#1|\mkern-1.5mu#1|\mkern-1.5mu#1|}
  #2
  \mathclose{#1|\mkern-1.5mu#1|\mkern-1.5mu#1|}
}
\begin{document}
\vspace*{0ex}
\begin{center}
{\Large\bf
A priori estimates for solutions to equations of motion \\[0.5ex]
of an inextensible hanging string
}
\end{center}

\begin{center}
Tatsuo Iguchi and Masahiro Takayama\footnote{Corresponding author}
\end{center}

\begin{abstract}
We consider the initial boundary value problem to equations of motion of an inextensible hanging string of finite length under the action of the gravity. 
We also consider the problem in the case without any external forces. 
In this problem, the tension of the string is also an unknown quantity. 
It is determined as a unique solution to a two-point boundary value problem, 
which is derived from the inextensibility of the string together with the equation of motion, and degenerates linearly at the free end. 
We derive a priori estimates for solutions to the initial boundary value problem in weighted Sobolev spaces under a natural stability condition. 
The necessity for the weights results from the degeneracy of the tension. 
Uniqueness of solutions is also proved. 
\end{abstract}

\section{Introduction}
We are concerned with the motion of a homogeneous and inextensible string of finite length $L$ under the action of the gravity and a tension of the string. 
Suppose that one end of the string is fixed and another one is free. 
Let $s$ be the arc length of the string measured from the free end of the string so that the string is descried as a curve 
\[
\bm{x}(s,t)=(x_1(s,t),x_2(s,t),x_3(s,t)), \qquad s\in [0,L]
\]
at time $t$. 
We can assume without loss of generality that the fixed end of the string is placed at the origin in $\mathbb{R}^3$. 
Let $\rho$ be a constant density of the string, $\bm{g}$ the acceleration of gravity vector, 
and $\tau(s,t)$ a scalar tension of the string at the point $\bm{x}(s,t)$ at time $t$. 
See Figure \ref{intro:hanging string}.
\begin{figure}[ht]
\setlength{\unitlength}{1pt}
\begin{picture}(0,0)
\put(320,-50){$(x_1,x_2)$}
\put(235,-15){$x_3$}
\put(232,-47){$s=L$}
\put(225,-175){$s=0$}
\put(220,-150){$s$}
\put(160,-102){$\bm{g}$}
\put(230,-126){$\bm{x}=\bm{x}(s,t)$}
\put(240,-155){$-\tau(s,t)\bm{x}'(s,t)$}
\put(223,-100){$\tau(s,t)\bm{x}'(s,t)$}
\end{picture}
\begin{center}
\includegraphics[width=0.45\linewidth]{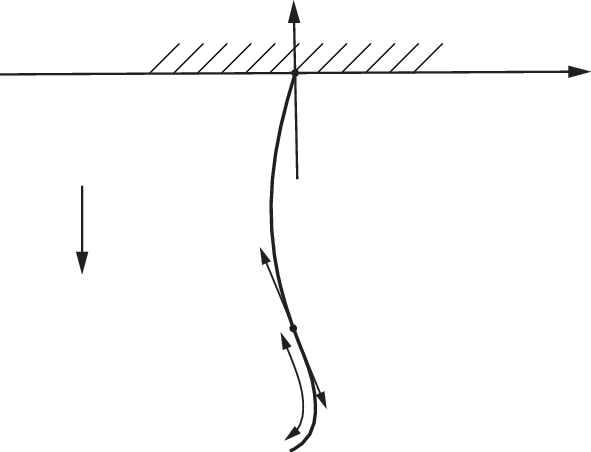}
\end{center}
\caption{Hanging String}
\label{intro:hanging string}
\end{figure}
Then, the motion of the string is described by the equations 
\[
\begin{cases}
 \rho\ddot{\bm{x}}-(\tau\bm{x}')'=\rho\bm{g} &\mbox{in}\quad (0,L)\times(0,T), \\
 |\bm{x}'|=1 &\mbox{in}\quad (0,L)\times(0,T),
\end{cases}
\]
where $\dot{\bm{x}}$ and $\bm{x}'$ denote the derivatives of $\bm{x}$ with respect to $t$ and $s$, respectively, 
so that $\bm{x}'$ is a unit tangential vector of the string. 
For a derivation of these equations, we refer, for example, to Reeken \cite{Reeken1977} and Yong \cite{Yong2006}. 
The first equation is the equation of motion. 
The peculiarity of this problem is that we do not assume any elasticity of the string so that the tension $\tau$ in the first equation 
is also an unknown quantity of the problem. 
In other words, we do not assume any constitutive equation for the tension $\tau$. 
However, we impose the second equation, which describes the fact that the string is inextensible. 
We also note that the tension is caused by the inextensibility of the string as we will see below. 
The boundary conditions at the both ends of the string are given by 
\[
\begin{cases}
 \bm{x}=\bm{0} &\mbox{on}\quad \{s=L\}\times(0,T), \\
 \tau=0 &\mbox{on}\quad \{s=0\}\times(0,T).
\end{cases}
\]
The first boundary condition represents that the one end of the string is fixed at the origin, and the second one represents that another end is free. 
In the case $\bm{g}\neq\bm{0}$, by making the change of the variables $s\to Ls$, $t\to\sqrt{L/g}t$, $\bm{x}\to L\bm{x}$, $\tau\to \rho gL\tau$, 
and $\bm{g}/g\to\bm{g}$ with $g=|\bm{g}|$, we may assume that $\rho=1$, $L=1$, and $|\bm{g}|=1$. 
Similarly, in the case $\bm{g}=\bm{0}$, by making the same change of the variables as above with any positive constant $g$, we may assume that $\rho=1$ and $L=1$. 
Therefore, in the following we consider the equations 
\begin{equation}\label{Eq}
\begin{cases}
 \ddot{\bm{x}}-(\tau\bm{x}')' = \bm{g} &\mbox{in}\quad (0,1)\times(0,T), \\
 |\bm{x}'|=1 &\mbox{in}\quad (0,1)\times(0,T),
\end{cases}
\end{equation}
under the boundary conditions 
\begin{equation}\label{BC}
\begin{cases}
 \bm{x}=\bm{0} &\mbox{on}\quad \{s=1\}\times(0,T), \\
 \tau=0 &\mbox{on}\quad \{s=0\}\times(0,T).
\end{cases}
\end{equation}
Here, $\bm{g}$ is a constant unit vector or the zero vector. 
Finally, we impose the initial conditions of the form 
\begin{equation}\label{IC}
(\bm{x},\dot{\bm{x}})|_{t=0}=(\bm{x}_0^\mathrm{in},\bm{x}_1^\mathrm{in}) \quad\mbox{in}\quad (0,1).
\end{equation}
This is the initial boundary value problem that we are going to consider in this paper. 
Here, we remark that the problem \eqref{Eq} and \eqref{BC} also arises in a minimization problem of the action function 
$J(\bm{x})=\int_0^T\!\!\int_0^1\bigl( \frac12|\dot{\bm{x}}(s,t)|^2+\bm{g}\cdot\bm{x}(s,t) \bigr)\mathrm{d}s\mathrm{d}t$ 
under the constraints $|\bm{x}(s,t)|\equiv1$ and $\bm{x}(1,t)=\bm{0}$. 
In this case, the tension $\tau$ appears as a Lagrangian multiplier. 
For more details on this variational principle, we refer, for example, to \c{S}eng\"{u}l and Vorotnikov \cite{SengulVorotnikov2017} and the references therein.

\medskip
As was explained above, the tension $\tau$ is also an unknown quantity. 
On the other hand, we assume that the string is inextensible so that we impose the constraint $|\bm{x}'|=1$, which causes a tension of the string. 
In other words, by using the constraint we can derive an equation for the tension $\tau$ as follows. 
Let $(\bm{x},\tau)$ be a solution to \eqref{Eq} and \eqref{BC}. 
Then, we see that $\tau$ satisfies the following two-point boundary value problem 
\begin{equation}\label{BVP}
\begin{cases}
 -\tau''+|\bm{x}''|^2\tau = |\dot{\bm{x}}'|^2 &\mbox{in}\quad (0,1)\times(0,T), \\
 \tau=0 &\mbox{on}\quad \{s=0\}\times(0,T), \\
 \tau'=-\bm{g}\cdot\bm{x}' &\mbox{on}\quad \{s=1\}\times(0,T),
\end{cases}
\end{equation}
where we regard the time $t$ as a parameter. 
This is a well-known fact and is easily verified; 
see, for example, Preston \cite[Section 2.1]{Preston2011} and \c{S}eng\"{u}l and Vorotnikov \cite[Section 2.4]{SengulVorotnikov2017}. 
In fact, by differentiating the constraint $|\bm{x}'|^2=1$ with respect to $s$ and $t$, we have 
$\bm{x}'\cdot\bm{x}''=0$, $\bm{x}'\cdot\bm{x}'''+|\bm{x}''|^2 = 0$, $\bm{x}'\cdot\dot{\bm{x}}'=0$, and $\bm{x}'\cdot\ddot{\bm{x}}'+|\dot{\bm{x}}'|^2 = 0$. 
Therefore, differentiating the first equation in \eqref{Eq} with respect to $s$ and then taking an inner product with $\bm{x}'$, 
we obtain the first equation in \eqref{BVP}. 
Taking an inner product of the first equation in \eqref{Eq} with $\bm{x}'$, taking its trace on $s=0$, 
and using the first boundary condition in \eqref{BC}, we obtain the last boundary condition in \eqref{BVP}.
It is easy to see that for each fixed time $t$, the two-point boundary value problem \eqref{BVP} can be solved uniquely, 
so that $\tau$ is determined by $\bm{x}'(\cdot,t)$ and $\dot{\bm{x}}'(\cdot,t)$. 
Unlike standard theories of nonlinear wave equaions, in our problem the tension $\tau$ depends nonlocally in space and time on $\bm{x}'$. 
Particularly, we need an information of the curvature vector $\bm{x}''(\cdot,t)$ and the deformation velocity $\dot{\bm{x}}'(\cdot,t)$ 
of the tangential vector of the string to determine the tension $\tau$.

\medskip
For the well-posedness of the initial boundary value problem, standard analysis on hyperbolic systems requires a positivity of the tension $\tau$. 
However, the positivity fails necessarily at the free end $s=0$ due to the boundary condition on $\tau$. 
Taking these into account, in place of assuming a strict positivity of $\tau$, we impose the following stability condition 
\begin{equation}\label{SolClass0_SC}
\frac{\tau(s,t)}{s}\geq c_0>0
\end{equation}
for $(s,t)\in(0,1)\times(0,T)$. 
If we consider a linearized problem around the rest state, then the corresponding stability condition is reduced to 
$-\bm{g}\cdot\bm{x}'(1,t) \geq c_0>0$ for $t\in(0,T)$. 
This last condition can be easily understood geometrically; see Figures \ref{fig:HS1} and \ref{fig:HS2}. 
\bigskip
\begin{figure}[ht]
\setlength{\unitlength}{1pt}
\begin{picture}(0,0)
\put(77,42){$\bm{g}$}
\put(107,22){$\bm{x}'(1,t)$}
\put(307,58){$\bm{g}$}
\put(337,12){$\bm{x}'(1,t)$}
\end{picture} 
 \begin{minipage}{0.48\hsize}
  \begin{center}
   \includegraphics[width=30mm]{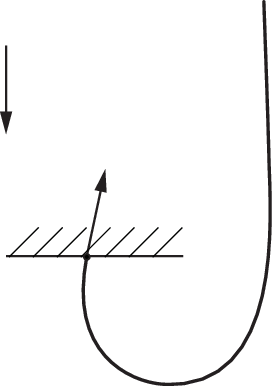}
  \end{center}
  \caption{The case $-\bm{g}\cdot\bm{x}'(1,t)>0$}
  \label{fig:HS1}
 \end{minipage}\quad
 \begin{minipage}{0.48\hsize}
  \begin{center}
   \includegraphics[width=30mm]{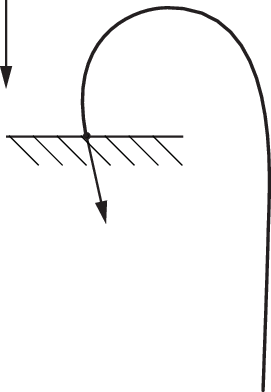}
  \end{center}
  \caption{The case $-\bm{g}\cdot\bm{x}'(1,t)<0$}
  \label{fig:HS2}
 \end{minipage}
\end{figure}
As we will see in Section \ref{sect:BVP}, under the condition $\int_0^1s|\bm{x}''(s,t)|^2\mathrm{d}s \lesssim 1$, the tension $\tau$ can be bounded from below as 
\begin{equation}\label{SC1}
\frac{\tau(s,t)}{s} \gtrsim -\bm{g}\cdot\bm{x}'(1,t) + \int_0^1s|\dot{\bm{x}}'(s,t)|^2\mathrm{d}s \exp\left( -\int_0^1s|\bm{x}''(s,t)|^2\mathrm{d}s \right),
\end{equation}
if the right-hand side is non-negative. 
This reveals a nonlinear stabilizing effect of the problem, and moreover, ensures the stability condition even in the case 
$\bm{g}=\bm{0}$ if $\dot{\bm{x}}(s,t)\not\equiv\bm{0}$. 
Our main objective is to show the local well-posedness of the initial boundary value problem \eqref{Eq}--\eqref{IC} in appropriate weighted Sobolev spaces 
under the stability condition \eqref{SolClass0_SC}. 
Toward this goal, in this paper we will derive a priori estimates for the solution $(\bm{x},\tau)$ to the problem, 
which are given in Theorem \ref{th:APE}. 
We also prove a uniqueness of solutions to the problem in the class where a priori estimates would be obtained. 
The uniqueness is given in Theorems \ref{th:unique_g<>0} and \ref{th:unique_g==0}.

\medskip
Even if a priori estimates for the solution $(\bm{x},\tau)$ would be obtained, it is not straightforward to construct an existence theory 
and we need more technical calculations than what will be done in this paper. 
Therefore, we postpone this existence part in our future work. 
Here, we just give a brief comment relative to the existence of a solution: 
In the derivation of the two-point boundary value problem \eqref{BVP}, we use essentially the constraint $|\bm{x}'|=1$ 
so that it is natural to expect that \eqref{BVP} contains an information of the constraint. 
In view of this, we will consider the initial boundary value problem to hyperbolic equations 
\begin{equation}\label{HP}
\begin{cases}
 \ddot{\bm{x}}-(\tau\bm{x}')' = \bm{g} &\mbox{in}\quad (0,1)\times(0,T), \\
 \bm{x}=\bm{0} &\mbox{on}\quad \{s=1\}\times(0,T),
\end{cases}
\end{equation}
for $\bm{x}$ under the initial condition \eqref{IC}, coupled with the two-point boundary value problem \eqref{BVP} for $\tau$, 
in place of the problem \eqref{Eq}--\eqref{IC}. 
One may think that a boundary condition on the free end $s=0$ is missing for the well-posedness of the problem for $\bm{x}$. 
However, it is not the case because the tension is degenerate at $s=0$. 
For more details, see Takayama \cite{Takayama2018}. 
We will use the initial boundary value problem to the hyperbolic and elliptic coupled system \eqref{HP}, \eqref{BVP}, and \eqref{IC} 
to construct the solution $(\bm{x},\tau)$. 
In order to show the equivalence of the problems, we need to show that the solution $(\bm{x},\tau)$ to the transformed system 
\eqref{HP}, \eqref{BVP}, and \eqref{IC} satisfies the constraint $|\bm{x}'|=1$ under appropriate conditions on the initial data. 
Here, we note that if there exists a smooth solution to the original problem \eqref{Eq}--\eqref{IC}, 
then the initial data have to satisfy the constraints 
$|\bm{x}_0^{\mathrm{in}\,\prime}|=1$ and $\bm{x}_0^{\mathrm{in}\,\prime}\cdot\bm{x}_1^{\mathrm{in}\,\prime}=0$ in $(0,1)$. 
Conversely, we will show in Theorem \ref{th:equiv_0} that if the initial data satisfy these constrains, then any regular solution 
$(\bm{x},\tau)$ to the transformed system \eqref{HP}, \eqref{BVP}, and \eqref{IC} satisfying the stability condition \eqref{SolClass0_SC} 
satisfies the constraint $|\bm{x}'|=1$. 
This will be carried out by using an energy estimate.

\medskip
Contrary to the studies on elastic strings, 
there are few results on the well-posedness of the initial boundary value problem \eqref{Eq}--\eqref{IC} to the motion of an inextensible straing. 
Reeken \cite{Reeken1979-1, Reeken1979-2} considered the motion of an inextensible string of \textit{infinite} length 
having one end fixed at the point $(0,0,\infty)$ in a gravity field. 
For technical reasons he assumed that the acceleration of gravity vector $\bm{g}$ is not constant. 
To be precise, he assumed that $\bm{g}=\bm{g}(s)\in C^\infty([0,\infty))$ is constant for $s\in[0,l]$ and grows linearly beyond $s=l$ for some positive $l$. 
Under this non-physical condition, he proved the existence locally in time and uniqueness of the solution 
provided that the initial data are sufficiently close to a trivial stationary solution in some weighted Sobolev spaces. 
The method that he used to solve the original problem \eqref{Eq}--\eqref{IC} is quite different from solving 
the transformed problem \eqref{HP}, \eqref{BVP}, and \eqref{IC}. 
He applied the hard implicit function theorem, which is also known as the Nash--Moser theorem, to construct the solution so that 
higher regularity must be imposed on the initial data and that a loss of derivatives was allowed. 
Preston \cite{Preston2011} considered the motion of an inextensible string of finite length in the case without any external forces, 
that is, in the case $\bm{g}=\bm{0}$. 
Under this particular situation, he proved the existence locally in time and uniqueness of the solution for arbitrary initial data in some weighted Sobolev spaces. 
Although the weighted Sobolev spaces used by Preston \cite{Preston2011} may seem to be different from those used by Reeken \cite{Reeken1979-1, Reeken1979-2}, 
their norms are equivalent so that their weighted Sobolev spaces are identical. 
In order to solve the original problem \eqref{Eq}--\eqref{IC}, he used the transformed problem \eqref{HP}, \eqref{BVP}, and \eqref{IC}. 
To be precise, in oreder to construct a solution he introduced a discretized problem with respect to $s$ and used uniform estimates for discrete solutions. 
Moreover, since the constraint $|\bm{x}'|=1$ could not be achieved if we use the discretization method described above, 
he guaranteed it by using the spherical coordinate such as $\bm{x}'(s,t)=(\cos\theta(s,t),\sin\theta(s,t))$ in the two-dimensional case. 
\c{S}eng\"{u}l and Vorotnikov \cite{SengulVorotnikov2017} considered exactly the same initial boundary value problem 
\eqref{Eq}--\eqref{IC} as ours and proved the existence of an admissible Young measure solution after transforming the problem 
into a system of conservation laws with a discontinuous flux. 
We note that the existence of such a generalized Young measure solution does not imply the classical well-posedness of the problem. 
To our knowledge, these are only results on the existence of a solution to the initial boundary value problem \eqref{Eq}--\eqref{IC}, 
so that its well-posedness has not been resolved so far. 
We aim to show the well-posedness of the problem \eqref{Eq}--\eqref{IC} 
in the weighted Sobolev spaces used by Reeken \cite{Reeken1979-1, Reeken1979-2} and Preston \cite{Preston2011}.

\medskip
As related topics on the motion of an inextensible string, Preston \cite{Preston2012} studied the geodesics on an infinite-dimensional manifold 
of inextensible curves in the $L^2$-metric and proved that the geodesics are determined by \eqref{Eq} and \eqref{BC} with $\bm{g}=\bm{0}$. 
Similarly, Preston and Saxton \cite{PrestonSaxton2013} studied the geodesic on this manifold in the $H^1$-metric 
and Shi and Vorotnikov \cite{ShiVorotnikov2019} studied the gradient flow of a potential energy on this manifold in the $L^2$-metric. 
Moreover, there are several results on the rotations of an inextensible hanging string about a vertical axis with one free end under the action of the gravity. 
We can observe stable configurations, in which its shape is not changing with time, when we force to rotate the string from the upper fixed end. 
These configurations are related to the angular velocity of the rotation. 
A representative result on this problem was given by Kolodner \cite{Kolodner1955}, 
who proved that the corresponding nonlinear eigenvalue problem with a constant angular velocity $\omega$ has exactly $n$ non-trivial solutions 
if and only if $\omega$ satisfies $\omega_n<\omega\leq\omega_{n+1}$ with $\omega_n\equiv\sigma_n\sqrt{|\bm{g}|/4L}$, 
where $\sigma_n$ is the $n$-th zero of the Bessel function $J_0(z)$, $\bm{g}$ is the acceleration of gravity vector, 
and $L$ is the length of the string. 
For more results on the rotating string, see references in Amore, Boyd, and M\'{a}rquez \cite{AmoreBoydMarquez2023}. 
The study of the motion of an inextensible string has applications: see Grothaus and Marheineke \cite{GrothausMarheineke2016} to textile industry; 
and Connell and Yue \cite{ConnellYue2007}, Lee, Huang, and Sung \cite{LeeHuangSung2014}, 
and Ryu, Park, Kim, and Sung \cite{RyuParkKimSung2015} to flapping dynamics of a flag.

\medskip
As ending this introduction, we explain that the weights in the norm that we will use in this paper arise naturally from the standard theory of hyperbolic systems. 
In the case where $\bm{g}$ is a unit constant vector, the problem \eqref{Eq} and \eqref{BC} has a trivial stationary solution 
$(\bm{x}_\mathrm{s}(s),\tau_\mathrm{s}(s))=((1-s)\bm{g},s)$. 
Linearizing \eqref{Eq} and \eqref{BC} around this stationary solution and picking up only the highest order terms, we obtain linear equations 
$\ddot{\bm{x}}=(s\bm{x}')'$ with the boundary condition $\bm{x}|_{s=1}=\bm{0}$. 
By introducing a new quantity $\bm{x}^\sharp(x_1,x_2,t)=\bm{x}(x_1^2+x_2^2,t)$, the linearized problem is transformed equivalently into 
\[
\begin{cases}
 \dt^2\bm{x}^\sharp=\frac14\Delta\bm{x}^\sharp &\mbox{in}\quad D\times(0,T), \\
 \bm{x}^\sharp=\bm{0} &\mbox{on}\quad \partial D\times(0,T),
\end{cases}
\]
where $\Delta$ is the two-dimensional Laplacian and $D$ is the unit disc in $\mathbb{R}^2$ with a center at the origin. 
It is well-known that the corresponding initial boundary value problem is well-posed in the class $\bigcap_{j=0}^mC^j([0,T];H^{m-j}(D))$, 
where $H^m(D)$ is the standard $L^2$ Sobolev space of order $m$ on $D$. 
By calculating the norm $\|\bm{x}^\sharp(t)\|_{H^m}$ in terms of $\bm{x}(\cdot,t)$, we are naturally led to a weighted Sobolev space $X^m$ 
with a norm $\|\cdot\|_{X^m}$ so that $\|\bm{x}^\sharp(t)\|_{H^m} \simeq\|\bm{x}(t)\|_{X^m}$ holds. 
This space $X^m$ is exactly the same as the space used by Reeken \cite{Reeken1979-1, Reeken1979-2} and that by Preston \cite{Preston2011}. 
In this paper, we evaluate the solution $\bm{x}$ of \eqref{Eq} in this weighted Sobolev space by using the energy method. 
This requires to evaluate the solution $\tau$ of the two-point boundary value problem \eqref{BVP} also in a weighted Sobolev space. 
To this end, we express the solution $\tau$ by using Green's function of the problem and evaluate it through precise pointwise estimates of Green's function.

\medskip
The contents of this paper are as follows.  
In Section \ref{sect:results} we begin with introducing a weighted Sobolev space $X^m$, which plays an important role in the problem, 
and then state our main results in this paper: 
a priori estimates for solutions in Theorem \ref{th:APE}, uniqueness of solutions in Theorems \ref{th:unique_g<>0} and \ref{th:unique_g==0}, 
and the equivalence of the original problem \eqref{Eq}--\eqref{IC} and the transformed problem \eqref{HP}, \eqref{BVP}, and \eqref{IC} in Theorem \ref{th:equiv_0}. 
In Section \ref{sect:BVP} we analyze Green's function related to the two-point boundary value problem \eqref{BVP} 
to derive precise pointwise estimates for the solution in terms of norms of the weighted Sobolev space $X^m$ for the coefficients. 
In Section \ref{sect:FS} we present basic properties of the space $X^m$ and related calculus inequalities. 
We also introduce another weighted Sobolev space $Y^m$ for convenience in estimating higher order derivatives of the solution 
to the two-point boundary value problem \eqref{BVP}. 
In Section \ref{sect:equiv} we prove Theorem \ref{th:equiv_0}. 
In Section \ref{sect:EE} we analyze a linearized system to the problem \eqref{Eq}, \eqref{BC}, and \eqref{BVP}, and derive an energy estimate for the solution. 
In Section \ref{sect:unique} we prove Theorems \ref{th:unique_g<>0} and \ref{th:unique_g==0} by applying the energy estimate obtained in Section \ref{sect:EE} 
with a slight modification. 
In Section \ref{sect:ETau} we derive estimates for the tension $\tau$ assuming some bounds of norms for $\bm{x}$ and the stability condition \eqref{SolClass0_SC}. 
In the critical case of the regularity index, we use a weaker norm for $\tau$ than those in the other cases. 
In Section \ref{sect:EstID} we evaluate initial values for time derivatives of $\bm{x}$ and $\tau$ 
in terms of the initial data $(\bm{x}_0^\mathrm{in},\bm{x}_1^\mathrm{in})$. 
Finally, in Section \ref{sect:APE} we prove Theorem \ref{th:APE}.

\medskip
\noindent
{\bf Notation}. \ 
For $1\leq p\leq\infty$, we denote by $L^p$ the Lebesgue space on the open interval $(0,1)$. 
For non-negative integer $m$, we denote by $H^m$ the $L^2$ Sobolev space of order $m$ on $(0,1)$. 
The norm of a Banach space $B$ is denoted by $\|\cdot\|_B$. 
The inner product in $L^2$ is denoted by $(\cdot,\cdot)_{L^2}$. 
We put $\dt=\frac{\partial}{\partial t}$ and $\ds=\frac{\partial}{\partial s}$. 
The norm of a weighted $L^p$ space with a weight $s^\alpha$ is denoted by $\|s^\alpha u\|_{L^p}$, so that 
$\|s^\alpha u\|_{L^p}^p=\int_0^1s^{\alpha p}|u(s)|^p \mathrm{d}s$ for $1\leq p<\infty$. 
It is sometimes denoted by $\|\sigma^\alpha u\|_{L^p}$, too. 
This would cause no confusion. 
$[P,Q]=PQ-QP$ denotes the commutator. 
We denote by $C(a_1, a_2, \ldots)$ a positive constant depending on $a_1, a_2, \ldots$. 
$f\lesssim g$ means that there exists a non-essential positive constant C such that $f\leq Cg$ holds. 
$f\simeq g$ means that $f\lesssim g$ and $g\lesssim f$ hold. 
$a_1 \vee a_2 = \max\{a_1,a_2\}$.

\medskip
\noindent
{\bf Acknowledgement} \\
T. I. is partially supported by JSPS KAKENHI Grant Number JP22H01133.

\section{Main results}\label{sect:results}
In order to state our main results, we first introduce function spaces that we are going to use in this paper. 
For a non-negative integer $m$ we define a weighted Sobolev space $X^m$ as a set of all function $u=u(s)\in L^2$ equipped with a norm 
$\|\cdot\|_{X^m}$ defined by 
\begin{equation}\label{WSS}
\|u\|_{X^m}^2 =
\begin{cases}
 \displaystyle
 \|u\|_{H^k}^2 + \sum_{j=1}^k\|s^j\ds^{k+j}u\|_{L^2}^2 &\mbox{for}\quad m=2k, \\
 \displaystyle
 \|u\|_{H^k}^2 + \sum_{j=1}^{k+1}\|s^{j-\frac12}\ds^{k+j}u\|_{L^2}^2 &\mbox{for}\quad m=2k+1.
\end{cases}
\end{equation}
For a function $u=u(s,t)$ depending also on time $t$, we introduce norms 
$\opnorm{\cdot}_m$ and $\opnorm{\cdot}_{m,*}$ by 
\[
\opnorm{u(t)}_m^2 = \sum_{j=0}^m \|\dt^j u(t)\|_{X^{m-j}}^2, \qquad
\opnorm{u(t)}_{m,*}^2 = \sum_{j=0}^{m-1} \|\dt^j u(t)\|_{X^{m-j}}^2. 
\]
The first norm $\opnorm{\cdot}_m$ will be used to evaluate $\bm{x}$, 
whereas the second norm $\opnorm{\cdot}_{m,*}$ will be used to evaluate $\tau$. 
However, in the critical case on the regularity index $m$, 
we need to use a weaker norm than $\opnorm{\cdot}_{m,*}$. 
For $\epsilon>0$, we introduce norms $\|\cdot\|_{X_\epsilon^k}$ for $k=1,2,3$ as 
\[
\|u\|_{X_\epsilon^k}^2 =
\begin{cases}
 \|s^\epsilon u\|_{L^\infty}^2 + \|s^{\frac12+\epsilon}u'\|_{L^2}^2 &\mbox{for}\quad k=1, \\
 \|u\|_{L^\infty}^2 + \|s^\epsilon u'\|_{L^2}^2 + \|s^{1+\epsilon} u''\|_{L^2}^2 &\mbox{for}\quad k=2, \\
 \|u\|_{L^\infty}^2 + \|s^\epsilon u'\|_{L^\infty}^2 + \|s^{\frac12+\epsilon}u''\|_{L^2}^2 + \|s^{\frac32+\epsilon}u'''\|_{L^2}^2
  &\mbox{for}\quad k=3,
\end{cases}
\]
and put 
\[
\opnorm{u(t)}_{3,*,\epsilon}^2 = \|u(t)\|_{X_\epsilon^3}^2 + \|\dt u(t)\|_{X_\epsilon^2}^2 + \|\dt^2 u(t)\|_{X_\epsilon^1}^2. 
\]
The following theorem is one of main theorems in this paper and gives a priori estimates for the solution $(\bm{x},\tau)$ to the problem \eqref{Eq}--\eqref{IC}.

\begin{theorem}\label{th:APE}
For any integer $m\geq4$ and any positive constants $M_0$ and $c_0$, 
there exist a sufficiently small positive time $T$ and a large constant $C$ such that if the initial data satisfy 
\begin{equation}\label{CondID}
\begin{cases}
 \|\bm{x}_0^\mathrm{in}\|_{X^m}+\|\bm{x}_1^\mathrm{in}\|_{X^{m-1}} \leq M_0, \\
 \frac{\tau_0^\mathrm{in}(s)}{s} \geq 2c_0 \quad\mbox{for}\quad 0<s<1, 
\end{cases}
\end{equation}
where $\tau_0^\mathrm{in}(s)=\tau(s,0)$ is the initial tension, 
then any regular solution $(\bm{x},\tau)$ to the initial boundary value problem \eqref{Eq}--\eqref{IC} satisfies the stability condition \eqref{SolClass0_SC}, 
$\opnorm{\bm{x}(t)}_m \leq C$, and 
\[
\begin{cases}
 C^{-1}s \leq \tau(s,t) \leq Cs, \quad \sum_{j=1}^{m-3}|\dt^j\tau(s,t)|\leq Cs, \quad |\dt^{m-2}\tau(s,t)|\leq Cs^\frac12, \\
 \opnorm{\tau'(t)}_{m-1,*} \leq C \qquad\ \mbox{in the case $m\geq5$}, \\
 \opnorm{\tau'(t)}_{3,*,\epsilon} \leq C(\epsilon) \qquad\mbox{in the case $m=4$}
\end{cases}
\]
for $0\leq t\leq T$ and $\epsilon>0$, where the constant $C(\epsilon)$ depends also on $\epsilon$. 
\end{theorem}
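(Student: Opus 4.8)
The plan is to set up a nonlinear energy estimate in the weighted space by differentiating the equations in time, applying the linearized energy estimate of Section~\ref{sect:EE}, and closing the argument via a continuity (bootstrap) argument in time. First, I would bring in the bounds on the initial values of the time derivatives: from the equations $\ddot{\bm{x}}-(\tau\bm{x}')'=\bm{g}$ together with the two-point boundary value problem \eqref{BVP}, one computes $\dt^j\bm{x}|_{t=0}$ and $\dt^j\tau|_{t=0}$ recursively in terms of $(\bm{x}_0^\mathrm{in},\bm{x}_1^\mathrm{in})$, so that $\opnorm{\bm{x}(0)}_m$ and the relevant norms of $\tau$ at $t=0$ are controlled by $M_0$ (this is the content of the Section~\ref{sect:EstID} estimates). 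In particular $\tau_0^\mathrm{in}(s)/s\geq 2c_0$ and the lower bound \eqref{SC1} propagate the stability condition \eqref{SolClass0_SC} with constant $c_0$ as long as the a priori bound on $\opnorm{\bm{x}(t)}_m$ holds and $T$ is small.

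Next, I would introduce the bootstrap quantity: let $T_*\le T$ be the maximal time on which $\opnorm{\bm{x}(t)}_m\le M_1$ (for a constant $M_1$ to be chosen, larger than a fixed multiple of $M_0$) and $\tau(s,t)/s\ge c_0$ hold. On $[0,T_*]$, the tension estimates of Section~\ref{sect:ETau} give $C^{-1}s\le\tau\le Cs$, the pointwise bounds on $\dt^j\tau$ for $1\le j\le m-2$, and $\opnorm{\tau'(t)}_{m-1,*}\le C$ (resp. $\opnorm{\tau'(t)}_{3,*,\epsilon}\le C(\epsilon)$ when $m=4$), with constants depending only on $M_1$ and $c_0$ — here the degeneracy of $\tau$ at $s=0$ is exactly compensated by the weights in $X^m$, and the critical case $m=4$ is why the weaker norm $\opnorm{\cdot}_{3,*,\epsilon}$ is needed. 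Then I would apply $\dt^j$ to the hyperbolic system \eqref{HP} for $j=0,1,\dots,m-1$ (equivalently, view $\dt^{m-1}\bm{x}$ as solving a linearized equation of the form treated in Section~\ref{sect:EE} with coefficient $\tau$ and forcing built from commutators $[\dt^j,\tau\ds]\bm{x}'$ and products involving lower-order $\dt^k\tau$), and use the energy estimate of Section~\ref{sect:EE} together with the calculus inequalities for $X^m$ and $Y^m$ from Section~\ref{sect:FS} to bound the forcing terms. Summing over $j$ and handling the elliptic lowest-order piece $\ds^m\bm{x}$ directly from the equation $\ddot{\bm{x}}-(\tau\bm{x}')'=\bm{g}$, I expect an inequality of the form
\[
\opnorm{\bm{x}(t)}_m^2 \le C_0\,\opnorm{\bm{x}(0)}_m^2 + C_1(M_1)\int_0^t\bigl(1+\opnorm{\bm{x}(t')}_m^2\bigr)\mathrm{d}t'
\]
on $[0,T_*]$, from which Gronwall gives $\opnorm{\bm{x}(t)}_m^2\le 2C_0 M_0^2 e^{C_1(M_1)t}$. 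Choosing $M_1^2=4C_0 M_0^2$ and then $T$ small enough that $e^{C_1(M_1)T}\le 2$ makes the bootstrap close strictly, so $T_*=T$, which yields $\opnorm{\bm{x}(t)}_m\le C$ and, via Sections~\ref{sect:ETau} and~\ref{sect:BVP}, all the stated bounds on $\tau$.

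The main obstacle will be the energy estimate for the top-order term in the presence of the degenerate, time-dependent, nonlocal coefficient $\tau$: one must integrate by parts in the weighted $L^2$ inner product without picking up uncontrolled boundary contributions at $s=0$ (where $\tau\sim s$) and at $s=1$ (where the Dirichlet condition on $\bm{x}$ and the Neumann-type condition $\tau'=-\bm{g}\cdot\bm{x}'$ interact), and the forcing terms $[\dt^j,\tau\ds]\bm{x}'$ require carefully tracking how many weights each factor $\ds^k\tau$ supplies versus how many $\ds^l\bm{x}$ consumes — this is exactly where the structure of $X^m$ (even/odd split, half-integer weights) and the auxiliary space $Y^m$ are essential, and where the critical case $m=4$ forces the use of the $\epsilon$-weighted norm $\opnorm{\cdot}_{3,*,\epsilon}$ to absorb a logarithmically-failing endpoint. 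A secondary technical point is verifying that the nonlinear lower bound \eqref{SC1} can be iterated in time consistently with the bootstrap, i.e. that the integral $\int_0^1 s|\bm{x}''(s,t)|^2\mathrm{d}s$ appearing in the exponent stays bounded by the $X^m$-norm and does not degrade the constant $c_0$ over $[0,T]$.
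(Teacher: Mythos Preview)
Your overall architecture---bootstrap, linearized energy estimate of Section~\ref{sect:EE}, tension estimates of Section~\ref{sect:ETau}, and closing via smallness of $T$---matches the paper. But the organization of the core estimate differs in ways that matter. The paper does \emph{not} sum the energy estimate over $j=0,\dots,m-1$; it applies Proposition~\ref{prop:EE} only once, to $\bm{y}=\dt^{m-2}\bm{x}$ and $\nu=\dt^{m-2}\tau$, which controls the single quantity $E(t)=\|\dt^{m-1}\bm{x}\|_{X^1}^2+\|\dt^{m-2}\bm{x}\|_{X^2}^2$. The full norm $\opnorm{\bm{x}(t)}_m$ is then recovered by a separate, substantial induction (Lemma~\ref{lem:APEx}) that trades time derivatives for space derivatives via the equation $\bm{x}'=\mu^{-1}(\mathscr{M}\ddot{\bm{x}}-\bm{g})$ and the constraint $|\bm{x}'|=1$. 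Your sentence ``summing over $j$ and handling the elliptic lowest-order piece $\ds^m\bm{x}$ directly'' glosses over this: the linearized energy only yields $X^2$-level control at each $j$, so summing does not reconstruct $\opnorm{\cdot}_m$, and the inductive recovery of $\|\dt^{m-j}\bm{x}\|_{X^j}$ for $j=3,\dots,m$ is real work (especially in the critical case $m=4$, Sections~10.1--10.2).

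Two further points. First, the bootstrap in the paper is two-tiered: the assumption \eqref{EstAss} separates $\opnorm{\bm{x}}_{m-1}\le M_1$ from the top-order energy $E(t)\le M_2$, precisely because in Proposition~\ref{prop:EE} the constant $C_1$ multiplying $E(0)$ depends only on the \emph{lower-order} bound $M_1$ (via \eqref{SolClassEst2}), not on $M_2$; this is what makes the choice $M_2=4C_0C_1$ non-circular. Your inequality with an $M_1$-independent $C_0$ in front of $\opnorm{\bm{x}(0)}_m^2$ encodes exactly this, but you should flag that this decoupling is not automatic and is in fact the reason the linearized estimate uses the almost-orthogonality $\bm{x}'\cdot\bm{y}'=f$ coming from the constraint (see Remark~\ref{re:EE}). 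Second, the stability condition is propagated not through \eqref{SC1} but simply via $\tau(s,t)/s\ge\tau(s,0)/s-\int_0^t s^{-1}|\dt\tau(s,t')|\,\mathrm{d}t'\ge 2c_0-C_2t$, using the pointwise bound $|\dt\tau|\le C_2 s$ from Lemma~\ref{lem:EstTau1}; your proposed iteration of \eqref{SC1} is unnecessary.
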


\begin{remark}\label{remark:CI}
\begin{enumerate}
\item[\rm(1)]
Since $\tau(\cdot,t)$ is uniquely determined from $(\bm{x}(\cdot,t),\dot{\bm{x}}(\cdot,t))$ as a solution of the two-point boundary value problem \eqref{BVP}, 
the initial tension $\tau_0^\mathrm{in}$ is also uniquely determined from the initial data $(\bm{x}_0^\mathrm{in},\bm{x}_1^\mathrm{in})$. 
Moreover, by Lemma \ref{lem:EstSolBVP1}, under the condition $\|\bm{x}_0^\mathrm{in}\|_{X^m}\leq M_0$, we have 
\[
\frac{\tau_0^\mathrm{in}(s)}{s} \gtrsim -\bm{g}\cdot\bm{x}_0^{\mathrm{in}\prime}(1)
 + \int_0^1s|\bm{x}_1^{\mathrm{in}\prime}(s)|^2\mathrm{d}s \exp\left( -\int_0^1s|\bm{x}_0^{\mathrm{in}\prime\prime}(s)|^2\mathrm{d}s \right).
\]
Therefore, if the initial string is in fact hanging from the fixed end $s=1$, that is, if $-\bm{g}\cdot\bm{x}_0^{\mathrm{in}\prime}(1)>0$, 
then the second condition in \eqref{CondID} is satisfied. 
Moreover, even in the case $\bm{g}=\bm{0}$, if the initial deformation velocity $\bm{x}_1^\mathrm{in}$ is not identically zero, 
then the second condition in \eqref{CondID} is satisfied, too. 
\item[\rm(2)]
Lemma \ref{lem:EstSolBVP1} also implies that if $\bm{x}_1^\mathrm{in}=\bm{0}$, 
then we have $\frac{\tau_0^\mathrm{in}(s)}{s} \simeq -\bm{g}\cdot\bm{x}_0^{\mathrm{in}\prime}(1)$. 
Therefore, if, in addition, $-\bm{g}\cdot\bm{x}_0^{\mathrm{in}\prime}(1)<0$, 
then the initial tension $\tau_0^\mathrm{in}$ is negative everywhere except at the free end $s=0$, 
so that the equation of motion in \eqref{Eq}  becomes elliptic in space and time. 
As a result, the initial boundary value problem becomes ill-posed. 
\item[\rm(3)]
The requirement $m\geq4$ corresponds to the quasilinear regularity in the sense that $m=4$ is the minimal integer regularity index $m$ 
that ensures the embedding $C^0([0,T];X^m)\cap C^1([0,T];X^{m-1}) \hookrightarrow C^1([0,1]\times[0,T])$; see Remark \ref{remark:embedding}. 
Therefore, $m=4$ is a critical regularity index in the classical sense. 
\end{enumerate}
\end{remark}

We then consider the uniqueness of the solution $(\bm{x},\tau)$ to the initial boundary value problem \eqref{Eq}--\eqref{IC}. 
To this end, we need to specify a class that the solutions belong to. 
Here, we consider the solutions satisfying 
\begin{equation}\label{SolClass0}
\bm{x}' \in L^\infty(0,T;X^2)\cap W^{1,\infty}(0,T;X^1). 
\end{equation}
We note that if $\bm{x} \in L^\infty(0,T;X^4)\cap W^{1,\infty}(0,T;X^3)$, 
then it also satisfies \eqref{SolClass0}; see Remark \ref{remark:embedding}. 
As we will see in Lemma \ref{lem:EstSol3}, under the conditions \eqref{SolClass0}, the solutions satisfy also $\bm{x}'\in W^{2,\infty}(0,T;L^2)$. 
In view of these and the boundary condition $\bm{x}|_{s=1}=\bm{0}$, 
we may assume without loss of generality that $\bm{x},\bm{x}' \in C^0([0,T];X^1) \cap C^1([0,T];L^2)$. 
Therefore, the initial conditions \eqref{IC} can be understood in the classical sense.

\begin{theorem}\label{th:unique_g<>0}
The solution to the initial boundary value problem \eqref{Eq}--\eqref{IC} is unique in the class \eqref{SolClass0} 
satisfying the stability condition \eqref{SolClass0_SC}. 
\end{theorem}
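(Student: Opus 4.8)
\medskip\noindent
\textbf{Proof strategy.}
The plan is to carry out a weighted energy estimate on the difference of two solutions, reducing matters to the linearized energy estimate established in Section~\ref{sect:EE}. Let $(\bm{x}_1,\tau_1)$ and $(\bm{x}_2,\tau_2)$ be two solutions of \eqref{Eq}--\eqref{IC} in the class \eqref{SolClass0} satisfying \eqref{SolClass0_SC}, and set $\bm{y}=\bm{x}_1-\bm{x}_2$ and $\sigma=\tau_1-\tau_2$. Subtracting the equations of motion and using $\tau_1\bm{x}_1'-\tau_2\bm{x}_2'=\tau_1\bm{y}'+\sigma\bm{x}_2'$, the difference $\bm{y}$ solves the degenerate hyperbolic equation
\[
\ddot{\bm{y}}-(\tau_1\bm{y}')'=(\sigma\bm{x}_2')' \quad \text{in } (0,1)\times(0,T),
\]
with $\bm{y}|_{s=1}=\bm{0}$ and $(\bm{y},\dot{\bm{y}})|_{t=0}=(\bm{0},\bm{0})$. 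Subtracting the two boundary value problems \eqref{BVP} and using $|\bm{x}_1''|^2-|\bm{x}_2''|^2=(\bm{x}_1''+\bm{x}_2'')\cdot\bm{y}''$ and $|\dot{\bm{x}}_1'|^2-|\dot{\bm{x}}_2'|^2=(\dot{\bm{x}}_1'+\dot{\bm{x}}_2')\cdot\dot{\bm{y}}'$, the difference $\sigma$ solves, with $t$ a parameter,
\[
\begin{cases}
 -\sigma''+|\bm{x}_1''|^2\sigma = (\dot{\bm{x}}_1'+\dot{\bm{x}}_2')\cdot\dot{\bm{y}}' - \tau_2(\bm{x}_1''+\bm{x}_2'')\cdot\bm{y}'' & \text{in } (0,1)\times(0,T),\\
 \sigma = 0 & \text{on } \{s=0\},\\
 \sigma' = -\bm{g}\cdot\bm{y}' & \text{on } \{s=1\}.
\end{cases}
\]
Thus $(\bm{y},\sigma)$ is a solution of precisely the linearized system analyzed in Section~\ref{sect:EE} about the background $\bm{x}_1$, with zero forcing and zero initial data, up to remainder terms that are linear in $\bm{y}$ with coefficients built from $\bm{x}_1,\bm{x}_2$ and their first and second derivatives.

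The first step is to control $\sigma$ in terms of a low-order weighted energy of $\bm{y}$. Using the pointwise bounds for the two-point boundary value problem from Section~\ref{sect:BVP}, the lower bounds $\tau_i(s)/s\ge c_0$, and the calculus inequalities of Section~\ref{sect:FS}, and crucially exploiting that the weight $\tau_2\simeq s$ multiplying $\bm{y}''$ on the right-hand side exactly matches the degenerate weight in the definition of $X^2$, one estimates $\sigma$ and $\sigma'$ in suitable weighted $L^2$ spaces by quantities already controlled by the energy of $\bm{y}$, together with the boundary trace $|\bm{y}'(1)|$ coming from $\sigma'|_{s=1}=-\bm{g}\cdot\bm{y}'(1)$, which is itself absorbed by a weighted trace inequality near $s=1$. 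This is the step in which the nonlocal dependence of the tension on $\bm{x}$ is handled.

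Next I would run the energy method on the hyperbolic equation following Section~\ref{sect:EE}. Testing with $\dt\bm{y}$ and integrating by parts, and using $\bm{y}|_{s=1}=\bm{0}$ together with $\tau_1|_{s=0}=0$ so that all boundary contributions vanish, the source term becomes $-(\sigma\bm{x}_2',\dt\bm{y}')_{L^2}$; in particular \emph{no derivative of $\sigma$ survives}, which is the mechanism that defeats the apparent loss of one derivative in the coupling. Combining this identity with the companion estimate obtained by applying to the equation the weighted tangential derivative adapted to the degeneracy (which supplies the needed control of $\dt\bm{y}'$ and of $s\bm{y}''$), and with the bound for $\sigma$ from the previous step, one arrives at a closed differential inequality
\[
\frac{\mathrm{d}}{\mathrm{d}t}E[\bm{y}](t)\le K(t)\,E[\bm{y}](t),
\]
where $E[\bm{y}]$ is the lowest-order instance of the energy functional of Section~\ref{sect:EE} and $K\in L^\infty(0,T)$ is determined by the norms of $\bm{x}_1$ and $\bm{x}_2$ permitted by \eqref{SolClass0} (together with $\bm{x}_i'\in W^{2,\infty}(0,T;L^2)$ from Lemma~\ref{lem:EstSol3}). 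Since $E[\bm{y}](0)=0$, Gronwall's inequality forces $E[\bm{y}]\equiv0$ on $[0,T]$, hence $\bm{y}\equiv\bm{0}$ and then $\sigma\equiv0$. When $\bm{g}=\bm{0}$ the boundary condition becomes $\sigma'|_{s=1}=0$ and the trace term disappears, which simplifies the first step; this is Theorem~\ref{th:unique_g==0}.

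The main obstacle is exactly the coupling between the hyperbolic equation for $\bm{y}$ and the elliptic problem for $\sigma$: through its boundary value problem $\sigma$ is only as regular as $\bm{y}''$, while $(\sigma\bm{x}_2')'$ contains $\sigma'$, so a naive estimate loses one derivative. This is overcome by the structural integration by parts above, which trades $\sigma'$ for $\dt\bm{y}'$, together with the presence of the weight $\tau_2\simeq s$ in front of $\bm{y}''$ in the source of the $\sigma$-equation, so that the whole scheme closes in precisely the weighted spaces underlying $X^m$. The remaining technical point --- the reason only a ``slight modification'' of the Section~\ref{sect:EE} estimate is needed --- is that here the background solutions lie only in the weak class \eqref{SolClass0}, of lower regularity than assumed in Section~\ref{sect:EE}; this is handled by running the energy estimate at minimal order and invoking the product and commutator inequalities in the weighted spaces $X^m$ and $Y^m$ from Section~\ref{sect:FS}, with the degenerate coefficients controlled through the Green's function bounds of Section~\ref{sect:BVP}.
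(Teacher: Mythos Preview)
Your overall strategy---subtract two solutions, read off a linearized system for the difference, invoke the energy estimate of Section~\ref{sect:EE}, then Gronwall---is correct and is also the paper's strategy. You differ from the paper in one structural choice: you linearize asymmetrically around $(\bm{x}_1,\tau_1)$, whereas the paper sets $\bm{x}=\tfrac12(\bm{x}_1+\bm{x}_2)$, $\tau=\tfrac12(\tau_1+\tau_2)$ and linearizes around this average. Your choice has an advantage you do not mention: since $(\bm{x}_1,\tau_1)$ is a genuine solution of \eqref{Eq}--\eqref{BC}, Proposition~\ref{prop:EE} applies to it directly. In the paper's symmetric setup the averaged pair $(\bm{x},\tau)$ is \emph{not} a solution, and this is precisely the ``slight modification'' alluded to: the boundary-term identity of Lemma~\ref{lem:BT} fails and must be replaced by Lemma~\ref{lem:MBT}. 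Your diagnosis that the modification concerns lowering the regularity of the background is therefore incorrect; the hypotheses \eqref{SolClassEst0}--\eqref{SolClassEst2} of Proposition~\ref{prop:EE} are already implied by the class \eqref{SolClass0}.

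There are, however, two genuine gaps in your outline. First, you never use the constraint $|\bm{x}_i'|=1$. In the paper's variables this gives $\bm{x}'\cdot\bm{y}'=0$ exactly; in yours it gives $\bm{x}_1'\cdot\bm{y}'=\tfrac12|\bm{y}'|^2$, i.e.\ the $f$ in \eqref{LEq} is quadratic. As Remark~\ref{re:EE} explains, this near-orthogonality is not cosmetic: it is what allows the term $(\tau\dot{\bm{y}}',\nu''\bm{x}')_{L^2}$ in the proof of Proposition~\ref{prop:EE} to be controlled at the minimal regularity of \eqref{SolClass0}. Without it you would need extra regularity on the background that you do not have. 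Second, you misidentify the main obstacle. The derivative-counting issue you highlight (that $(\sigma\bm{x}_2')'$ contains $\sigma'$) is handled, but the integration by parts against $\dt\bm{y}$ that you describe is only the lowest-order part of the estimate and does not by itself control $\|s\bm{y}''\|_{L^2}$, which is needed to bound $\sigma$. The actual work happens when one tests with $\mathscr{A}_\tau\dot{\bm{y}}$: after integration by parts one is left with the boundary contribution $(\nu\tau\bm{x}''\cdot\dot{\bm{y}}')|_{s=1}$, containing the uncontrolled trace $\dot{\bm{y}}'|_{s=1}$. This is the term that forces Lemma~\ref{lem:BT} (or its substitute Lemma~\ref{lem:MBT}) and that your ``companion estimate obtained by applying the weighted tangential derivative'' does not address.
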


In the case $\bm{g}=\bm{0}$, if the initial deformation velocity $\bm{x}_1^\mathrm{in}$ is identically zero, 
then the initial boundary value problem \eqref{Eq}--\eqref{IC} has a trivial solution $(\bm{x}(s,t),\tau(s,t))=(\bm{x}_0^\mathrm{in}(s),0)$. 
Since this solution does not satisfy the stability condition \eqref{SolClass0_SC}, 
we cannot apply directly Theorem \ref{th:unique_g<>0} to ensure the uniqueness of solutions in this case. 
Nevertheless, by Lemma \ref{lem:SpecialCase} we see that this trivial solution is the only one that does not satisfy the stability condition \eqref{SolClass0_SC} 
in th case $\bm{g}=\bm{0}$. 
As a result, we have the following uniqeness theorem without assuming a priori the stability condition.

\begin{theorem}\label{th:unique_g==0}
In the case $\bm{g}=\bm{0}$, the solution to the initial boundary value problem \eqref{Eq}--\eqref{IC} is unique in the class \eqref{SolClass0}. 
\end{theorem}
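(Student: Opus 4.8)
The plan is to reduce the case $\bm{g}=\bm{0}$ to the situation already handled by Theorem \ref{th:unique_g<>0}. The only obstruction is that a solution of \eqref{Eq}--\eqref{IC} in the class \eqref{SolClass0} need not satisfy the stability condition \eqref{SolClass0_SC}, so Theorem \ref{th:unique_g<>0} does not apply verbatim. However, the excerpt announces (via Lemma \ref{lem:SpecialCase}) that, in the case $\bm{g}=\bm{0}$, the \emph{only} solution in the class \eqref{SolClass0} violating \eqref{SolClass0_SC} is the trivial one $(\bm{x}(s,t),\tau(s,t))=(\bm{x}_0^\mathrm{in}(s),0)$, and that this happens precisely when $\bm{x}_1^\mathrm{in}\equiv\bm{0}$. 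So I would split the argument according to the initial data.

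First, suppose $\bm{x}_1^\mathrm{in}\not\equiv\bm{0}$. Let $(\bm{x},\tau)$ be any solution in the class \eqref{SolClass0}. By Lemma \ref{lem:SpecialCase}, since the initial datum is not the one producing the trivial solution, $(\bm{x},\tau)$ must satisfy the stability condition \eqref{SolClass0_SC}. Hence all solutions lie in the class of Theorem \ref{th:unique_g<>0}, which gives uniqueness directly. (Alternatively, one can invoke the lower bound \eqref{SC1}: under the class \eqref{SolClass0} one has $\int_0^1 s|\bm{x}''(s,t)|^2\,\mathrm{d}s \lesssim 1$, and at $t=0$ the term $\int_0^1 s|\dot{\bm{x}}'(s,0)|^2\,\mathrm{d}s = \int_0^1 s|\bm{x}_1^{\mathrm{in}\prime}(s)|^2\,\mathrm{d}s$ is strictly positive when $\bm{x}_1^\mathrm{in}\not\equiv\bm{0}$, so \eqref{SC1} forces $\tau(s,0)/s \geq 2c_0>0$ for some $c_0$ depending on the data; then Theorem \ref{th:APE}, or a short continuity argument on $t$, propagates \eqref{SolClass0_SC} on a time interval, which suffices to feed Theorem \ref{th:unique_g<>0}.)

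Second, suppose $\bm{x}_1^\mathrm{in}\equiv\bm{0}$. Let $(\bm{x},\tau)$ be any solution in \eqref{SolClass0}. If $(\bm{x},\tau)$ satisfies \eqref{SolClass0_SC}, then again it lies in the class of Theorem \ref{th:unique_g<>0}, and in particular equals the trivial solution $(\bm{x}_0^\mathrm{in},0)$ if the latter happens to satisfy \eqref{SolClass0_SC} — but the trivial solution has $\tau\equiv0$, which violates \eqref{SolClass0_SC}, so in this subcase Theorem \ref{th:unique_g<>0} yields at most one solution and that solution, whatever it is, is the unique one. If instead $(\bm{x},\tau)$ does \emph{not} satisfy \eqref{SolClass0_SC}, then Lemma \ref{lem:SpecialCase} forces $(\bm{x},\tau)=(\bm{x}_0^\mathrm{in},0)$. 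Combining: every solution is either the stability-satisfying one (unique by Theorem \ref{th:unique_g<>0}) or the trivial one; and a direct check shows that when $\bm{x}_1^\mathrm{in}\equiv\bm{0}$ the pair $(\bm{x}_0^\mathrm{in},0)$ does solve \eqref{Eq}--\eqref{IC} (using $\bm{g}=\bm{0}$, $\ddot{\bm{x}}=\bm{0}$, $(\tau\bm{x}')'=0$, $|\bm{x}_0^{\mathrm{in}\prime}|=1$), so it is the unique solution, and no stability-satisfying solution can exist alongside it by the same energy estimate underlying Theorem \ref{th:unique_g<>0} applied to the difference of two solutions.

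The main obstacle is organizing the dichotomy cleanly so that the two candidate solutions cannot coexist: the heart of the matter is the energy estimate for the difference of two solutions (the one proved in Section \ref{sect:unique} for Theorem \ref{th:unique_g<>0}, via Section \ref{sect:EE}), which must be shown to apply even when one of the two solutions fails \eqref{SolClass0_SC}; here one uses that the degeneracy $\tau(s,t)\to 0$ as $s\to0$ is compatible with the weighted energy method, and that the lower bound only enters through the tension of \emph{one} of the two solutions, allowing the comparison to go through. Everything else is bookkeeping on top of Lemma \ref{lem:SpecialCase} and Theorem \ref{th:unique_g<>0}.
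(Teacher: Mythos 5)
Your first case ($\bm{x}_1^\mathrm{in}\not\equiv\bm{0}$) is exactly the paper's argument: by the contrapositive of Lemma \ref{lem:SpecialCase}, every solution in the class \eqref{SolClass0} must satisfy \eqref{SolClass0_SC}, and Theorem \ref{th:unique_g<>0} finishes. The gap is in your second case ($\bm{x}_1^\mathrm{in}\equiv\bm{0}$). There you must exclude the coexistence of the trivial solution $(\bm{x}_0^\mathrm{in},0)$ with a hypothetical solution satisfying \eqref{SolClass0_SC}, and you do this by asserting that the energy estimate behind Theorem \ref{th:unique_g<>0} still applies when one of the two solutions violates the stability condition, because ``the lower bound only enters through the tension of one of the two solutions.'' That is not justified and, as the machinery stands, not true: in the proof of Theorem \ref{th:unique_g<>0} the boundary term is controlled by Lemma \ref{lem:MBT}, whose derivation takes the trace of $\ddot{\bm{x}}_j=(\tau_j\bm{x}_j')'+\bm{g}$ at $s=1$ and divides by $\tau_j|_{s=1}$ for \emph{both} $j=1,2$ (the identity for $2\bm{x}''|_{s=1}$ contains $1/\tau_j$ and $\tau_j'/\tau_j$, and $I_2$ contains $2\tau_1\tau_2/(\tau_1^2+\tau_2^2)$). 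If the second solution is the trivial one, $\tau_2\equiv 0$, these manipulations are meaningless, so extending the estimate would require genuine extra work that you flag as ``the main obstacle'' but do not supply.

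The paper avoids this entirely, and you could too, via an observation you missed: $\tau(\cdot,0)$ is uniquely determined by the initial data through the two-point boundary value problem \eqref{BVP}, and when $\bm{g}=\bm{0}$ and $\bm{x}_1^\mathrm{in}\equiv\bm{0}$ the bounds of Lemma \ref{lem:EstSolBVP1} (with $a=0$ and $h=|\dot{\bm{x}}'|^2$) give $0\leq\tau(s,0)\leq s\|\bm{x}_1^{\mathrm{in}\,\prime}\|_{L^2}^2=0$ for \emph{every} solution in the class \eqref{SolClass0}. Hence no solution can satisfy \eqref{SolClass0_SC} in this case (this is Corollary \ref{cor:SpecialCase}), and Lemma \ref{lem:SpecialCase} then forces every solution to coincide with $(\bm{x}_0^\mathrm{in},0)$; no energy estimate for a difference of solutions is needed. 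With that substitution in your second case, your proof becomes the paper's.
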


The following theorem ensures the equivalence of the original problem \eqref{Eq}--\eqref{IC} and the transformed problem 
\eqref{HP}, \eqref{BVP}, and \eqref{IC}.

\begin{theorem}\label{th:equiv_0}
Let $(\bm{x},\tau)$ be a solution to the transformed problem \eqref{HP}, \eqref{BVP}, and \eqref{IC} in the class \eqref{SolClass0} satisfying 
the stability condition \eqref{SolClass0_SC}. 
Suppose that the initial data satisfy $|\bm{x}_0^{\mathrm{in}\,\prime}(s)|\equiv1$ and 
$\bm{x}_0^{\mathrm{in}\,\prime}(s)\cdot\bm{x}_1^{\mathrm{in}\,\prime}(s)\equiv0$. 
Then, we have $|\bm{x}'(s,t)|\equiv1$. 
\end{theorem}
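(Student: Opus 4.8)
The plan is to derive a closed differential inequality for the quantity $\varphi(s,t) := |\bm{x}'(s,t)|^2 - 1$, show that it satisfies a linear wave-type equation with a degenerate coefficient $\tau$, and then run a weighted energy estimate that forces $\varphi \equiv 0$ given that $\varphi|_{t=0} = 0$ and $\dt\varphi|_{t=0} = 0$. The vanishing of the initial data for $\varphi$ is exactly the content of the hypotheses $|\bm{x}_0^{\mathrm{in}\,\prime}|\equiv 1$ and $\bm{x}_0^{\mathrm{in}\,\prime}\cdot\bm{x}_1^{\mathrm{in}\,\prime}\equiv 0$, since $\varphi(s,0) = |\bm{x}_0^{\mathrm{in}\,\prime}(s)|^2 - 1 = 0$ and $\frac12\dt\varphi(s,0) = \bm{x}_0^{\mathrm{in}\,\prime}(s)\cdot\bm{x}_1^{\mathrm{in}\,\prime}(s) = 0$.

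First I would compute the PDE satisfied by $\varphi$. Differentiating the hyperbolic equation in \eqref{HP}, namely $\ddot{\bm{x}} = (\tau\bm{x}')' + \bm{g}$, with respect to $s$ and taking the inner product with $\bm{x}'$, one gets $\bm{x}'\cdot\ddot{\bm{x}}' = \bm{x}'\cdot(\tau\bm{x}')''$. Since $\bm{x}'\cdot\ddot{\bm{x}}' = \frac12\dt^2(|\bm{x}'|^2) - |\dot{\bm{x}}'|^2 = \frac12\dt^2\varphi - |\dot{\bm{x}}'|^2$, and expanding $\bm{x}'\cdot(\tau\bm{x}')'' = \tau''|\bm{x}'|^2 + 2\tau'\,\bm{x}'\cdot\bm{x}'' + \tau\,\bm{x}'\cdot\bm{x}'''$ while using $\bm{x}'\cdot\bm{x}'' = \frac12\varphi'$ and $\bm{x}'\cdot\bm{x}''' = \frac12\varphi'' - |\bm{x}''|^2$, one obtains after rearranging
\[
\tfrac12\dt^2\varphi - \tfrac12(\tau\varphi')' - \tfrac12\tau''\varphi
 = |\dot{\bm{x}}'|^2 - \tau''|\bm{x}'|^2 - \tau|\bm{x}''|^2 .
\]
Now I invoke the two-point boundary value problem \eqref{BVP}: $-\tau'' + |\bm{x}''|^2\tau = |\dot{\bm{x}}'|^2$, so $-\tau''|\bm{x}'|^2 - \tau|\bm{x}''|^2 = -\tau'' - \tau''(|\bm{x}'|^2 - 1) - \tau''|\bm{x}'|^2 + \ldots$; carefully combining the right-hand side with $|\dot{\bm{x}}'|^2$ and the $\tau''$ term on the left, the inhomogeneous terms cancel and one is left with a homogeneous equation of the form
\[
\dt^2\varphi - (\tau\varphi')' - \tau''\varphi = 0 \quad \mbox{in } (0,1)\times(0,T),
\]
together with the boundary condition $\varphi = |\bm{x}'|^2 - 1 = 0$ on $\{s=1\}$ coming from $\bm{x}|_{s=1} = \bm{0}$ (differentiating the Dirichlet condition in time shows $\dot{\bm{x}}'(1,t)$ is tangent, but more simply $\bm{x}'(1,t)$ has unit length is not automatic — rather one uses that the last boundary condition of \eqref{BVP}, $\tau'(1) = -\bm{g}\cdot\bm{x}'(1)$, together with tracing the $s$-equation, gives $\varphi(1,t) = 0$; this needs a short separate argument using the structure at $s=1$).

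The core step is then the energy estimate. I would test the equation for $\varphi$ against $\dt\varphi$ and integrate over $(0,1)$, producing
\[
\frac{d}{dt}\,\frac12\int_0^1\bigl( |\dt\varphi|^2 + \tau|\varphi'|^2 \bigr)\mathrm{d}s
 = \int_0^1\bigl( \tfrac12\dot\tau|\varphi'|^2 + \tau''\varphi\,\dt\varphi \bigr)\mathrm{d}s + \text{boundary terms at } s=0,1.
\]
The boundary term at $s=1$ vanishes since $\varphi(1,t)=0$ (hence $\dt\varphi(1,t)=0$); the boundary term at $s=0$ is $\tau(0,t)\varphi'(0,t)\dt\varphi(0,t) = 0$ because $\tau(0,t)=0$. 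The stability condition \eqref{SolClass0_SC} gives $\tau(s,t) \geq c_0 s > 0$, so the energy $E(t) := \frac12\int_0^1(|\dt\varphi|^2 + \tau|\varphi'|^2)\mathrm{d}s$ controls $\|\dt\varphi(t)\|_{L^2}^2$ and the weighted quantity $\|s^{1/2}\varphi'(t)\|_{L^2}^2$. The terms on the right are bounded by $C(t)E(t)$ provided $\dot\tau$, $\tau''$, and suitable norms of $\bm{x}$ are bounded: the term $\int_0^1\dot\tau|\varphi'|^2$ needs $|\dot\tau(s,t)| \lesssim s$, which follows from the estimates on $\tau$ available in the class \eqref{SolClass0} (via Lemma \ref{lem:EstSol3} and the analysis of \eqref{BVP}); the term $\int_0^1\tau''\varphi\,\dt\varphi$ is split, using $\tau'' = |\bm{x}''|^2\tau - |\dot{\bm{x}}'|^2$, into $\int_0^1(|\bm{x}''|^2\tau)\varphi\,\dt\varphi$ and $-\int_0^1|\dot{\bm{x}}'|^2\varphi\,\dt\varphi$, and here one must be careful: $\varphi$ itself is not yet controlled by $E(t)$, only its derivatives are. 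To close the loop I would also use a Poincaré-type inequality exploiting $\varphi(1,t)=0$, namely $\|\varphi(t)\|_{L^2} \lesssim \|\varphi'(t)\|_{L^1} \lesssim \|s^{1/2}\varphi'(t)\|_{L^2}\cdot\|s^{-1/2}\|_{L^2}$ — but $s^{-1/2}\notin L^2(0,1)$, so this naive bound fails, and instead one writes $\varphi(s,t) = -\int_s^1\varphi'(\sigma,t)\,d\sigma$ and estimates $\|s^{-1/2}\varphi\|_{L^2}$ or $\|\varphi\|_{L^\infty}$ directly using a weighted Hardy inequality, giving $\|\varphi(t)\|_{L^\infty}^2 \lesssim \int_0^1 s|\varphi'(s,t)|^2\,ds \cdot \log(\cdot)$ or a clean Hardy bound $\|s^{-1/2}\varphi\|_{L^2} \lesssim \|s^{1/2}\varphi'\|_{L^2}$ — the latter being the natural one and consistent with the weights in $X^m$. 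With $\|\varphi(t)\|$ in an appropriate weighted norm absorbed into $E(t)$, Grönwall's inequality then yields $E(t) \leq E(0)e^{C(t)}$, and since $E(0) = 0$ we conclude $\dt\varphi \equiv 0$ and $\varphi' \equiv 0$, hence $\varphi$ is constant in both variables; as $\varphi(1,t)=0$ we get $\varphi \equiv 0$, i.e. $|\bm{x}'(s,t)|\equiv 1$.

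The main obstacle I anticipate is twofold: verifying the boundary identity $\varphi(1,t) = 0$ rigorously (this requires tracing the $s$-differentiated equation of motion at $s=1$ and reconciling it with the Neumann-type condition $\tau'(1) = -\bm{g}\cdot\bm{x}'(1)$ in \eqref{BVP}, which is precisely where the specific form of the transformed problem is used), and controlling the zeroth-order term $\int_0^1 \tau'' \varphi\,\dt\varphi\,ds$ given only the degenerate energy — the correct weighted Hardy inequality adapted to the $X^m$-scale is essential here, and the low regularity imposed by the class \eqref{SolClass0} leaves little room. Everything else is a fairly standard degenerate-hyperbolic energy argument of the type already used elsewhere in the paper.
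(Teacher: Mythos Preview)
Your overall strategy---derive a linear degenerate wave equation for $\varphi=|\bm{x}'|^2-1$ and close an energy estimate via Gr\"onwall---is exactly the paper's approach. However, there are two concrete problems.

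First, and most importantly, the boundary condition you conjecture at $s=1$ is wrong. Tracing $\ddot{\bm{x}}=(\tau\bm{x}')'+\bm{g}$ at $s=1$ (where $\ddot{\bm{x}}=\bm{0}$) and dotting with $\bm{x}'$ gives $\tau'|\bm{x}'|^2+\tfrac12\tau\varphi'=-\bm{g}\cdot\bm{x}'$; combining with the Neumann condition $\tau'(1)=-\bm{g}\cdot\bm{x}'(1)$ from \eqref{BVP} yields the \emph{Robin} condition
\[
\tau\varphi' + 2\tau'\varphi = 0 \quad\mbox{on }\{s=1\},
\]
not $\varphi(1,t)=0$. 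Consequently the boundary term $\tau\varphi'\,\dt\varphi\big|_{s=1}$ in your energy identity does not vanish; using the Robin relation it becomes $-2\tau'\varphi\,\dt\varphi|_{s=1}=-\frac{\mathrm{d}}{\mathrm{d}t}(\tau'\varphi^2)|_{s=1}+\dot{\tau}'\varphi^2|_{s=1}$, so you must absorb a boundary contribution into the energy functional. Equally, your Poincar\'e/Hardy step to recover control of $\varphi$ itself rests on $\varphi(1,t)=0$ and therefore collapses. The paper handles both issues by working with the energy
\[
E(t)=\int_0^1\bigl(\lambda s|\varphi|^2+\tau|\dt\varphi|^2+\tau^2|\varphi'|^2\bigr)\mathrm{d}s + 2(\tau\tau'\varphi^2)\big|_{s=1},
\]
choosing $\lambda$ large so that the trace $|\varphi(1,t)|^2\lesssim\|s^{1/2}\varphi\|_{L^2}^2+\|s^{1/2}\varphi\|_{L^2}\|s\varphi'\|_{L^2}$ is dominated by the bulk; control of $\varphi$ away from $s=1$ then comes from writing $|\varphi(s)|\lesssim|\varphi(1)|+s^{-1/2}\|\sigma\varphi'\|_{L^2}$.

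Second (minor), your equation for $\varphi$ has coefficient errors: the correct identity is $\ddot{\varphi}=\tau\varphi''+2\tau'\varphi'+2\tau''\varphi$, equivalently $\ddot{\varphi}-(\tau\varphi')'-\tau'\varphi'-2\tau''\varphi=0$. This does not change the argument structurally, but the extra first-order term $\tau'\varphi'$ is another reason why the paper weights $|\varphi'|^2$ by $\tau^2$ rather than $\tau$.
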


We prove Theorems \ref{th:APE}, \ref{th:unique_g<>0}, \ref{th:unique_g==0}, and \ref{th:equiv_0} 
in Sections \ref{sect:APE}, \ref{sect:unique}, \ref{sect:unique}, and \ref{sect:equiv}, respectively.

\section{Two-point boundary value problem}\label{sect:BVP}
In view of \eqref{BVP} we will consider the two-point boundary value problem 
\begin{equation}\label{TBVP}
\begin{cases}
 -\tau''+|\bm{x}''|^2\tau = h \quad\mbox{in}\quad (0,1), \\
 \tau(0)=0, \quad \tau'(1)=a,
\end{cases}
\end{equation}
where $\bm{x}(s)$ and $h(s)$ are given functions and $a$ is a constant.

\subsection{Green's function}
As is well-known, Green's function to the boundary value problem \eqref{TBVP} can be constructed as follows. 
Let $\varphi$ and $\psi$ be unique solutions to the initial value problems 
\begin{equation}\label{IVPphi}
\begin{cases}
 -\varphi''+|\bm{x}''|^2\varphi = 0 \quad\mbox{in}\quad (0,1), \\
 \varphi(0)=0, \quad \varphi'(0)=1,
\end{cases}
\end{equation}
and 
\begin{equation}\label{IVPpsi}
\begin{cases}
 -\psi''+|\bm{x}''|^2\psi = 0 \quad\mbox{in}\quad (0,1), \\
 \psi(1)=1, \quad \psi'(1)=0,
\end{cases}
\end{equation}
respectively. 
The Wronskian $W(s;\varphi,\psi)=\varphi(s)\psi'(s)-\varphi'(s)\psi(s)$ is a non-zero constant 
since the uniqueness of solutions to the boundary value problem \eqref{TBVP} is easily verified. 
Particularly, we have $W(s;\varphi,\psi)\equiv-\varphi'(1)$. 
A sharp estimate for $\varphi'(1)$ will be given below; see Lemma \ref{lem:EstPhi}. 
In terms of these fundamental solutions, Green's function to the boundary value problem \eqref{TBVP} is given by 
\begin{equation}\label{GF}
G(s,r)=
\begin{cases}
 \dfrac{\varphi(s)\psi(r)}{\varphi'(1)} &\mbox{for}\quad 0\leq s\leq r, \\[2ex]
 \dfrac{\psi(s)\varphi(r)}{\varphi'(1)} &\mbox{for}\quad r\leq s\leq 1.
\end{cases}
\end{equation}
Particularly, the unique solution to the boundary value problem \eqref{TBVP} can be expressed as 
\begin{equation}\label{SolBVP}
\tau(s)=a\frac{\varphi(s)}{\varphi'(1)}
 +\frac{\psi(s)}{\varphi'(1)}\int_0^s\varphi(\sigma)h(\sigma)\mathrm{d}\sigma
 +\frac{\varphi(s)}{\varphi'(1)}\int_s^1\psi(\sigma)h(\sigma)\mathrm{d}\sigma.
\end{equation}
We proceed to evaluate these fundamental solutions $\varphi$ and $\psi$.

\begin{lemma}\label{lem:EstPhi}
Let $\varphi$ be a unique solution to \eqref{IVPphi}. 
Then, for any $s\in[0,1]$ we have 
\[
\begin{cases}
 1 \leq \varphi'(s) \leq \exp(\|\sigma^{\frac12}\bm{x}''\|_{L^2}^2), \\
 s \leq \varphi(s) \leq s\exp(\|\sigma^{\frac12}\bm{x}''\|_{L^2}^2).
\end{cases}
\]
\end{lemma}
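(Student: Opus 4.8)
The plan is to exploit the structure of the ODE $-\varphi'' + |\bm{x}''|^2\varphi = 0$ with initial data $\varphi(0)=0$, $\varphi'(0)=1$, using a positivity/monotonicity argument followed by an integral (Gronwall-type) estimate. First I would observe that since $q(s) := |\bm{x}''(s)|^2 \geq 0$, the equation reads $\varphi'' = q\,\varphi$, so whenever $\varphi \geq 0$ we have $\varphi'' \geq 0$, i.e.\ $\varphi'$ is nondecreasing on any interval where $\varphi$ stays nonnegative. Since $\varphi'(0)=1>0$ and $\varphi(0)=0$, the solution is increasing and positive for small $s>0$; a continuity/bootstrap argument then shows $\varphi(s)>0$ and $\varphi'(s)\geq \varphi'(0)=1$ for \emph{all} $s\in(0,1]$ (if $\varphi$ ever returned to $0$, then on the maximal interval $(0,s_*)$ where $\varphi>0$ we would have $\varphi'$ nondecreasing, hence $\varphi' \geq 1 > 0$, forcing $\varphi$ to be strictly increasing and contradicting $\varphi(s_*)=0$). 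This gives the lower bounds $\varphi'(s)\geq 1$ and, by integrating from $0$, $\varphi(s)\geq s$, for all $s\in[0,1]$.

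For the upper bounds I would integrate the equation twice. From $\varphi''(s) = q(s)\varphi(s)$ and $\varphi'(0)=1$ we get $\varphi'(s) = 1 + \int_0^s q(\sigma)\varphi(\sigma)\,\mathrm{d}\sigma$, and since $\varphi(s)\geq s$ is increasing, $\varphi(\sigma)\leq \varphi(s)$ for $\sigma\leq s$, so
\[
\varphi'(s) \leq 1 + \varphi(s)\int_0^s q(\sigma)\,\mathrm{d}\sigma.
\]
The key point is that $\int_0^s q(\sigma)\,\mathrm{d}\sigma = \int_0^s |\bm{x}''(\sigma)|^2\,\mathrm{d}\sigma$ is \emph{not} obviously finite in the weighted setting — one only controls $\int_0^1 \sigma|\bm{x}''(\sigma)|^2\,\mathrm{d}\sigma = \|\sigma^{1/2}\bm{x}''\|_{L^2}^2$. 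So the cleaner route is to estimate $\varphi(s)/s$ directly: writing $\varphi(s) = s + \int_0^s(s-\sigma)q(\sigma)\varphi(\sigma)\,\mathrm{d}\sigma$ (the Duhamel/double-integration formula for $\varphi''=q\varphi$ with these initial conditions) and using $s - \sigma \leq s$, $\varphi(\sigma)\leq \frac{\varphi(\sigma)}{\sigma}\cdot\sigma$, one obtains
\[
\frac{\varphi(s)}{s} \leq 1 + \int_0^s \sigma\, q(\sigma)\,\frac{\varphi(\sigma)}{\sigma}\,\mathrm{d}\sigma,
\]
and Gronwall's inequality applied to $w(s) := \varphi(s)/s$ against the density $\sigma q(\sigma) = \sigma|\bm{x}''(\sigma)|^2$ yields $w(s)\leq \exp\!\big(\int_0^s \sigma|\bm{x}''(\sigma)|^2\,\mathrm{d}\sigma\big) \leq \exp(\|\sigma^{1/2}\bm{x}''\|_{L^2}^2)$, which is the desired bound $\varphi(s)\leq s\exp(\|\sigma^{1/2}\bm{x}''\|_{L^2}^2)$. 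Feeding this back into $\varphi'(s) = 1 + \int_0^s \sigma q(\sigma)\frac{\varphi(\sigma)}{\sigma}\,\mathrm{d}\sigma$ and using the same bound on $\varphi(\sigma)/\sigma$ gives $\varphi'(s)\leq 1 + \int_0^s \sigma|\bm{x}''|^2\,\mathrm{d}\sigma\cdot\exp(\|\sigma^{1/2}\bm{x}''\|_{L^2}^2)$; since $1 + x e^x \leq e^x$ fails in general but $1+\int_0^s\sigma q\,\mathrm{d}\sigma\,e^{\int_0^s\sigma q} \leq e^{\int_0^s \sigma q}$ does hold (as $1 + a\,e^a \leq e^a$... actually one should instead note $\varphi'(s) = 1+\int_0^s q\varphi \le 1 + \int_0^s \sigma q\, w(\sigma)\,\mathrm{d}\sigma$ and run Gronwall directly on the pair, or simply bound $\varphi'(s) \le w(1)$-type quantities), we recover $\varphi'(s)\leq \exp(\|\sigma^{1/2}\bm{x}''\|_{L^2}^2)$.

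The main obstacle, and the place requiring care, is precisely this matching of the weight: the naive Gronwall argument wants $\int_0^s |\bm{x}''|^2$, which is unavailable, so the argument must be organized around the quantity $\varphi(s)/s$ rather than $\varphi(s)$ itself, converting the bare coefficient $|\bm{x}''|^2$ into the weighted density $\sigma|\bm{x}''|^2$ that is integrable. Once that reorganization is in place, everything else is a routine application of the integral form of the ODE, monotonicity of $\varphi$, and Gronwall's inequality; the lower bounds are immediate from nonnegativity of the coefficient. I would present the lower bounds first, then derive the Duhamel identity, then close the upper bound on $\varphi/s$ by Gronwall, and finally deduce the bound on $\varphi'$.
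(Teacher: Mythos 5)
Your lower bounds and the positivity/monotonicity bootstrap coincide with the paper's argument. For the upper bound on $\varphi$ you take a genuinely different but valid route: the paper uses monotonicity of $\varphi'$ to write $\varphi(\sigma)=\int_0^\sigma\varphi'\leq\sigma\varphi'(\sigma)$ and runs Gronwall directly on $\varphi'$, whereas you use the Duhamel identity $\varphi(s)=s+\int_0^s(s-\sigma)|\bm{x}''(\sigma)|^2\varphi(\sigma)\,\mathrm{d}\sigma$ and run Gronwall on $w(s)=\varphi(s)/s$; both correctly convert the bare coefficient $|\bm{x}''|^2$ into the integrable density $\sigma|\bm{x}''|^2$, which is indeed the crux.

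The genuine gap is in your final step, the bound on $\varphi'$. As you yourself notice, feeding $w(\sigma)\leq\mathrm{e}^{A}$ (with $A=\|\sigma^{1/2}\bm{x}''\|_{L^2}^2$) back into $\varphi'(s)=1+\int_0^s\sigma|\bm{x}''(\sigma)|^2 w(\sigma)\,\mathrm{d}\sigma$ only yields $1+A\mathrm{e}^{A}$, which exceeds $\mathrm{e}^{A}$; and neither fallback you sketch works as stated. In particular a bound of the form $\varphi'(s)\leq w(1)$ is false in general: $w(s)=\varphi(s)/s$ is the average of the nondecreasing function $\varphi'$ over $[0,s]$, hence $w(s)\leq\varphi'(s)$, and the best one gets this way is $\varphi'(s)\leq\varphi(1)/(1-s)$, which degenerates as $s\to1$. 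The repair is easy and should be made explicit: do not pull the exponential out of the integral, but integrate exactly, using $\frac{\mathrm{d}}{\mathrm{d}\sigma}\exp\bigl(\int_0^\sigma r|\bm{x}''(r)|^2\mathrm{d}r\bigr)=\sigma|\bm{x}''(\sigma)|^2\exp\bigl(\int_0^\sigma r|\bm{x}''(r)|^2\mathrm{d}r\bigr)$, so that $\varphi'(s)\leq1+\int_0^s\sigma|\bm{x}''(\sigma)|^2\exp\bigl(\int_0^\sigma r|\bm{x}''(r)|^2\mathrm{d}r\bigr)\mathrm{d}\sigma=\exp\bigl(\int_0^s\sigma|\bm{x}''(\sigma)|^2\mathrm{d}\sigma\bigr)$. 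Alternatively, adopt the paper's order: bound $\varphi(\sigma)\leq\sigma\varphi'(\sigma)$, apply Gronwall to $\varphi'$ first, and obtain the bound on $\varphi$ by integrating $\varphi'$, which avoids the issue entirely.
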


\begin{proof}
It is sufficient to show the first estimate because the second one can easily follow from the first one by integrating it over $[0,s]$ 
and by using the initial condition $\varphi(0)=0$.

We first show that $\varphi(s)>0$ for all $s\in(0,1]$. 
In view of the initial conditions at $s=0$, we have $\varphi(s)>0$ for $0<s\ll1$. 
Now, suppose that there exists $s_*\in(0,1]$ such that $\varphi(s_*)=0$. 
We can assume without loss of generality that $\varphi(s)>0$ for $0<s<s_*$, so that $\varphi'(s_*)\leq0$. 
Then, we have $\varphi''(s)=|\bm{x}''(s)|^2\varphi(s) \geq 0$ for $0<s<s_*$. 
This implies that $\varphi'(s)$ is non-decreasing in the interval $[0,s_*]$, so that $\varphi'(s_*)\geq\varphi'(0)=1$. 
This contradicts with $\varphi'(s_*)\leq0$. 
Therefore, $\varphi(s)>0$ holds for all $s\in(0,1]$. 
Particularly, $\varphi'(s)$ is non-decreasing in the whole interval $[0,1]$, so that we obtain $\varphi'(s)\geq\varphi'(0)=1$ for all $s\in[0,1]$.

We proceed to show the upper bound of $\varphi'(s)$. 
Since $\varphi'(s)$ is a non-decreasing function, we have $\varphi(s)=\int_0^s\varphi'(\sigma)\mathrm{d}\sigma \leq s\varphi'(s)$. 
Therefore, we see that 
\begin{align*}
\varphi'(s)
&= 1+\int_0^s\varphi''(\sigma) \mathrm{d}\sigma \\
&= 1+\int_0^s|\bm{x}''(\sigma)|^2\varphi(\sigma) \mathrm{d}\sigma \\
&\leq 1+\int_0^s \sigma|\bm{x}''(\sigma)|^2\varphi'(\sigma) \mathrm{d}\sigma,
\end{align*}
which together with Gronwall's inequality yields 
$\varphi'(s) \leq \exp(\int_0^s\sigma|\bm{x}''(\sigma)|^2 \mathrm{d}\sigma)$. 
This gives the desired estimate. 
\end{proof}

\begin{lemma}\label{lem:EstPsi}
Let $\psi$ be a unique solution to \eqref{IVPpsi}. 
Then, for any $s\in[0,1]$ and any $\alpha\geq0$ we have 
\[
\begin{cases}
 1 \leq \psi(s) \leq \exp(\|\sigma^{\frac12}\bm{x}''\|_{L^2}^2), \\
 0 \geq s^\alpha\psi'(s) \geq -\|\sigma^{\frac{\alpha}{2}}\bm{x}''\|_{L^2}^2\exp(\|\sigma^{\frac12}\bm{x}''\|_{L^2}^2).
\end{cases}
\]
\end{lemma}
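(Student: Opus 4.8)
The plan is to mimic the argument for Lemma \ref{lem:EstPhi}, using the fact that $\psi$ solves the same ODE $-\psi''+|\bm{x}''|^2\psi=0$ but with terminal data prescribed at $s=1$, so that the natural direction of integration is from $s=1$ backward to $s=0$. First I would show that $\psi(s)>0$ and $\psi'(s)\leq 0$ on all of $[0,1]$. From the terminal conditions $\psi(1)=1$, $\psi'(1)=0$ we have $\psi(s)>0$ for $s$ slightly less than $1$. If $\psi$ had a zero, take $s_*\in[0,1)$ to be the largest such zero, so $\psi(s)>0$ on $(s_*,1]$; then $\psi''=|\bm{x}''|^2\psi\geq0$ on $(s_*,1)$, hence $\psi'$ is non-decreasing there, and from $\psi'(1)=0$ we get $\psi'(s)\leq0$ on $(s_*,1)$, so $\psi$ is non-increasing on $(s_*,1)$; combined with $\psi(1)=1$ this forces $\psi(s)\geq 1>0$ on $[s_*,1]$, contradicting $\psi(s_*)=0$. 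Therefore $\psi>0$ on $[0,1]$, whence $\psi''\geq0$ everywhere, $\psi'$ is non-decreasing on $[0,1]$, and $\psi'(s)\leq\psi'(1)=0$ for all $s$. This also already gives the lower bound $\psi(s)\geq\psi(1)=1$.

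For the upper bound on $\psi(s)$, integrate $\psi'$ from $s$ to $1$: since $\psi'$ is non-positive and non-decreasing, $0\geq\psi'(\sigma)\geq\psi'(s)$ for $\sigma\in[s,1]$, and more usefully, using $\psi''(\sigma)=|\bm{x}''(\sigma)|^2\psi(\sigma)$,
\[
\psi(s)=1-\int_s^1\psi'(\sigma)\,\mathrm{d}\sigma
 =1+\int_s^1(1-\sigma)|\bm{x}''(\sigma)|^2\psi(\sigma)\,\mathrm{d}\sigma
 \leq 1+\int_0^1\sigma|\bm{x}''(\sigma)|^2\psi(\sigma)\,\mathrm{d}\sigma,
\]
after an integration by parts of $\int_s^1\psi'$ against the weight and the bound $1-\sigma\leq 1$; more directly one writes $\psi'(s)=-\int_s^1\psi''=-\int_s^1|\bm{x}''|^2\psi$ and then $\psi(s)=1+\int_s^1(\sigma-s)|\bm{x}''(\sigma)|^2\psi(\sigma)\,\mathrm{d}\sigma$ by exchanging the order of integration, and $\sigma-s\leq\sigma$. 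Gronwall's inequality (run from $s=1$ downward, or applied to $\tilde\psi(s):=\psi(1-s)$) then yields $\psi(s)\leq\exp(\int_0^1\sigma|\bm{x}''(\sigma)|^2\,\mathrm{d}\sigma)=\exp(\|\sigma^{\frac12}\bm{x}''\|_{L^2}^2)$.

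Finally, for the weighted estimate on $\psi'$, use $\psi'(s)=-\int_s^1|\bm{x}''(\sigma)|^2\psi(\sigma)\,\mathrm{d}\sigma$ together with the bound on $\psi$ just obtained: for $\alpha\geq0$ and $\sigma\geq s$ we have $s^\alpha\leq\sigma^\alpha$, so
\[
0\geq s^\alpha\psi'(s)=-s^\alpha\int_s^1|\bm{x}''(\sigma)|^2\psi(\sigma)\,\mathrm{d}\sigma
 \geq -\int_0^1\sigma^\alpha|\bm{x}''(\sigma)|^2\,\mathrm{d}\sigma\cdot\|\psi\|_{L^\infty}
 \geq -\|\sigma^{\frac{\alpha}{2}}\bm{x}''\|_{L^2}^2\exp(\|\sigma^{\frac12}\bm{x}''\|_{L^2}^2),
\]
which is the claimed inequality. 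The only mildly delicate point is the direction in which Gronwall is applied (the data for $\psi$ sit at the right endpoint), so I would state the backward Gronwall inequality explicitly or reduce to the forward case by the reflection $s\mapsto 1-s$; everything else is a routine repetition of the monotonicity/positivity bootstrap used for $\varphi$.
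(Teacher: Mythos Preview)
Your proof is correct and follows essentially the same approach as the paper: the positivity/monotonicity bootstrap for $\psi$ and $\psi'$, the integral identity $\psi'(s)=-\int_s^1|\bm{x}''|^2\psi\,\mathrm{d}\sigma$ combined with a backward Gronwall argument for the upper bound on $\psi$, and then the weighted estimate on $\psi'$ obtained by pulling $s^\alpha\leq\sigma^\alpha$ inside that integral. The only cosmetic difference is that the paper applies Gronwall to the differential inequality $\psi'(s)\geq-\bigl(\int_s^1|\bm{x}''|^2\,\mathrm{d}\sigma\bigr)\psi(s)$ (using monotonicity of $\psi$) rather than to the integral equation $\psi(s)=1+\int_s^1(\sigma-s)|\bm{x}''|^2\psi\,\mathrm{d}\sigma$, but this leads to the same bound.
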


\begin{proof}
We first show that $\psi(s)>0$ for all $s\in[0,1]$. 
In view of the initial condition $\psi(1)=1$, we have $\psi(s)>0$ for $0\leq 1-s\ll1$. 
Now, suppose that there exists $s_*\in[0,1)$ such that $\psi(s_*)=0$. 
We can assume without loss of generality that $\psi(s)>0$ for $s_*<s\leq1$, so that $\psi'(s_*)\geq0$. 
Then, we have $\psi''(s)=|\bm{x}''(s)|^2\psi(s) \geq 0$ for $s_*<s\leq1$. 
This implies that $\psi'(s)$ is non-decreasing in the interval $[s_*,1]$, so that $\psi'(s)\leq\psi'(1)=0$ for all $s\in[s_*,1]$. 
This implies that $\psi(s)$ is non-increasing in the interval $[s_*,1]$, so that $\psi(s_*)\geq\psi(1)=1$. 
This contradicts with $\psi(s_*)=0$. 
Therefore, $\psi(s)>0$ holds for all $s\in[0,1]$. 
Particularly, $\psi''(s)\geq0$ holds for all $s\in[0,1]$, which implies in turn that $\psi'(s)\leq0$ and $\psi(s)\geq1$ for all $s\in[0,1]$.

We then show the upper bound of $\psi(s)$. 
Noting that $\psi(s)$ is a non-increasing function and that $\psi'(1)=0$, we see that 
\begin{align}\label{PfEstPsi}
\psi'(s) 
&= -\int_s^1\psi''(\sigma) \mathrm{d}\sigma \\
&= -\int_s^1|\bm{x}''(\sigma)|^2\psi(\sigma) \mathrm{d}\sigma \nonumber \\
&\geq -\int_s^1|\bm{x}''(\sigma)|^2 \mathrm{d}\sigma \psi(s), \nonumber
\end{align}
which together with Gronwall's inequality and $\psi(1)=1$ yields 
\begin{align*}
\psi(s) 
&\leq \exp\left(\int_s^1\int_\sigma^1|\bm{x}''(\tilde{\sigma})|^2 \mathrm{d}\tilde{\sigma}\mathrm{d}\sigma\right) \\
&= \exp\left( \int_s^1(\sigma-s)|\bm{x}''(\sigma)|^2 \mathrm{d}\sigma\right) \\
&\leq \exp\left( \int_0^1\sigma|\bm{x}''(\sigma)|^2 \mathrm{d}\sigma\right).
\end{align*}
This shows the desired upper bound.

We finally show the lower bound of $\psi'(s)$. 
It follows from \eqref{PfEstPsi} that 
\[
s^\alpha \psi'(s) \geq -\int_s^1\sigma^\alpha|\bm{x}''(\sigma)|^2 \mathrm{d}\sigma \psi(s),
\]
which together with the upper bound of $\psi(s)$ gives the desired one. 
\end{proof}

When the function $\bm{x}$ depends also on the time $t$, the fundamental solution $\varphi$ depends on the time $t$, too. 
In the following lemma, we will give estimates for the time derivative $\dot{\varphi}$.

\begin{lemma}\label{lem:EstDtPhi}
Let $\varphi$ be a unique solution to \eqref{IVPphi}. 
Then, for any $(s,t)\in[0,1]\times[0,T]$ we have 
\[
\begin{cases}
 |\dot{\varphi}'(s,t)| \leq \|\sigma^{\frac12}\dot{\bm{x}}''(t)\|_{L^2}\|\sigma^{\frac12}\bm{x}''(t)\|_{L^2}
  \exp(2\|\sigma^{\frac12}\bm{x}''(t)\|_{L^2}^2), \\
 |\dot{\varphi}(s,t)| \leq s\|\sigma^{\frac12}\dot{\bm{x}}''(t)\|_{L^2}\|\sigma^{\frac12}\bm{x}''(t)\|_{L^2}
  \exp(2\|\sigma^{\frac12}\bm{x}''(t)\|_{L^2}^2).
\end{cases}
\]
\end{lemma}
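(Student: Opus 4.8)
The plan is to differentiate the initial value problem \eqref{IVPphi} with respect to $t$ and regard the resulting equation for $\dot{\varphi}$ as an inhomogeneous version of \eqref{IVPphi} with vanishing data, then run the same Gronwall-type argument already used in the proofs of Lemmas~\ref{lem:EstPhi} and~\ref{lem:EstPsi}. Since the initial conditions $\varphi(0)=0$, $\varphi'(0)=1$ do not depend on $t$, differentiation gives that $\dot{\varphi}=\dot{\varphi}(s,t)$ solves $-\dot{\varphi}''+|\bm{x}''|^2\dot{\varphi}=-2(\bm{x}''\cdot\dot{\bm{x}}'')\varphi$ in $(0,1)$ together with $\dot{\varphi}(0,t)=\dot{\varphi}'(0,t)=0$, where primes denote $\ds$-derivatives; smooth dependence of $\varphi$ on the parameter $t$, hence existence of $\dot{\varphi}$ and validity of this equation, is standard for $\bm{x}$ regular enough.

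First I would establish the bound on $\dot{\varphi}'$. Writing the equation as $\dot{\varphi}''=|\bm{x}''|^2\dot{\varphi}+2(\bm{x}''\cdot\dot{\bm{x}}'')\varphi$ and integrating over $[0,s]$ with $\dot{\varphi}'(0,t)=0$ gives $\dot{\varphi}'(s,t)=\int_0^s\bigl(|\bm{x}''(\sigma)|^2\dot{\varphi}(\sigma)+2(\bm{x}''(\sigma)\cdot\dot{\bm{x}}''(\sigma))\varphi(\sigma)\bigr)\mathrm{d}\sigma$. For the inhomogeneous part I would use Lemma~\ref{lem:EstPhi}, i.e. $0\le\varphi(\sigma)\le\sigma\exp(\|\sigma^{\frac12}\bm{x}''\|_{L^2}^2)$, together with $|\bm{x}''\cdot\dot{\bm{x}}''|\le|\bm{x}''|\,|\dot{\bm{x}}''|$ and the weighted Cauchy--Schwarz inequality $\int_0^1\sigma|\bm{x}''(\sigma)|\,|\dot{\bm{x}}''(\sigma)|\mathrm{d}\sigma\le\|\sigma^{\frac12}\bm{x}''\|_{L^2}\|\sigma^{\frac12}\dot{\bm{x}}''\|_{L^2}$. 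To handle the term $\int_0^s|\bm{x}''|^2\dot{\varphi}$, where the unknown $\dot{\varphi}$ reappears with the ``unweighted'' coefficient $|\bm{x}''|^2$, I would set $N(s)=\sup_{0\le\sigma\le s}|\dot{\varphi}'(\sigma,t)|$; since $\dot{\varphi}(0,t)=0$ one has $|\dot{\varphi}(\sigma,t)|\le\int_0^\sigma|\dot{\varphi}'(r,t)|\mathrm{d}r\le\sigma N(\sigma)\le\sigma N(s)$, which turns that term into $\int_0^s\sigma|\bm{x}''(\sigma)|^2N(\sigma)\mathrm{d}\sigma$ and manufactures exactly the weight $\sigma$ needed to control it by $N$ against $\|\sigma^{\frac12}\bm{x}''\|_{L^2}^2$. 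Taking the supremum in $s$ then yields an integral inequality of the form $N(s)\lesssim\|\sigma^{\frac12}\bm{x}''\|_{L^2}\|\sigma^{\frac12}\dot{\bm{x}}''\|_{L^2}\exp(\|\sigma^{\frac12}\bm{x}''\|_{L^2}^2)+\int_0^s\sigma|\bm{x}''(\sigma)|^2N(\sigma)\mathrm{d}\sigma$, and Gronwall's inequality, producing a further factor $\exp(\|\sigma^{\frac12}\bm{x}''\|_{L^2}^2)$, gives the asserted bound on $|\dot{\varphi}'(s,t)|$. The bound on $\dot{\varphi}$ is then immediate: since $\dot{\varphi}(0,t)=0$, integrating over $[0,s]$ gives $|\dot{\varphi}(s,t)|\le s\sup_{[0,1]}|\dot{\varphi}'(\cdot,t)|$, which is the claimed estimate with the extra factor $s$. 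As an alternative to the Gronwall step one may represent $\dot{\varphi}$ through the Green's function of the initial value problem built from $\varphi$ and from the solution of the equation in \eqref{IVPphi} with data prescribed at an interior point $\sigma$, bounded by $(s-\sigma)\exp(\|\sigma^{\frac12}\bm{x}''\|_{L^2}^2)$ exactly as in Lemma~\ref{lem:EstPhi}; this delivers both estimates at once but needs the same weighted Cauchy--Schwarz input.

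I expect the only real obstacle to be closing the estimate, namely the appearance of $\dot{\varphi}$ under the integral with coefficient $|\bm{x}''|^2$ rather than a $\sigma$-weighted one: the remedy is precisely the elementary observation $|\dot{\varphi}(\sigma,t)|\le\sigma N(\sigma)$, which exploits the homogeneous initial value $\dot{\varphi}(0,t)=0$ to produce the weight $\sigma$ that makes the $\sigma|\bm{x}''|^2$-integral controllable by $\|\sigma^{\frac12}\bm{x}''\|_{L^2}^2$ and thus amenable to Gronwall. Everything else is routine bookkeeping, the bound $\varphi(\sigma)\lesssim\sigma\exp(\|\sigma^{\frac12}\bm{x}''\|_{L^2}^2)$ and the Gronwall exponent combining to give the factor $\exp(2\|\sigma^{\frac12}\bm{x}''\|_{L^2}^2)$.
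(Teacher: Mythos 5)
Your proposal is correct and follows essentially the same route as the paper's proof: differentiate \eqref{IVPphi} in $t$ to see that $\dot{\varphi}$ solves the inhomogeneous problem with zero data, integrate over $[0,s]$, use $|\dot{\varphi}(\sigma)|\leq\sigma\sup_{0\leq r\leq\sigma}|\dot{\varphi}'(r)|$ to gain the weight $\sigma$, apply Gronwall, and bound the source term by Lemma \ref{lem:EstPhi} and weighted Cauchy--Schwarz, the two exponentials combining into the stated factor $\exp(2\|\sigma^{\frac12}\bm{x}''\|_{L^2}^2)$; the second estimate then follows by integration from $\dot{\varphi}(0,t)=0$, exactly as in the paper.
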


\begin{proof}
It is sufficient to show the first estimate because the second one can easily follow from the first one by integrating it over $[0,s]$ 
and by using the initial condition $\dot{\varphi}(0,t)=0$. 
We note that $\dot{\varphi}$ is a solution to the initial value problem 
\[
\begin{cases}
 -\dot{\varphi}''+|\bm{x}''|^2\dot{\varphi} = h \quad\mbox{in}\quad (0,1), \\
 \dot{\varphi}(0)=\dot{\varphi}'(0)=0,
\end{cases}
\]
where $h=-2(\dot{\bm{x}}''\cdot\bm{x}'')\varphi$ and we regard the time $t$ as a parameter, 
so that we omit $t$ from the notation in the following of this proof. 
Integrating the equation over $[0,s]$ and using the initial condition $\dot{\varphi}(0)=0$, we have 
\[
\dot{\varphi}'(s) = \int_0^s|\bm{x}''(\sigma)|^2\dot{\varphi}(\sigma)\mathrm{d}\sigma - \int_0^s h(\sigma)\mathrm{d}\sigma.
\]
Since $|\dot{\varphi}(\sigma)|=|\int_0^\sigma \dot{\varphi}'(\tilde{\sigma})\mathrm{d}\tilde{\sigma}|
\leq \sigma\sup_{0\leq r\leq \sigma}|\dot{\varphi}'(r)|$, we obtain 
\[
\sup_{0\leq r\leq s}|\dot{\varphi}'(r)| 
\leq \int_0^s \sigma|\bm{x}''(\sigma)|^2 \sup_{0\leq r\leq \sigma}|\dot{\varphi}'(r)|\mathrm{d}\sigma + \int_0^s|h(\sigma)|\mathrm{d}\sigma.
\]
Therefore, Gronwall's inequality yields 
$\sup_{0\leq r\leq s}|\dot{\varphi}'(r)| \leq \exp(\int_0^s  \sigma|\bm{x}''(\sigma)|^2\mathrm{d}\sigma)\int_0^s|h(\sigma)|\mathrm{d}\sigma$. 
It follows from Lemma \ref{lem:EstPhi} that 
$\|h\|_{L^1} \leq \|\sigma^{\frac12}\dot{\bm{x}}''\|_{L^2}\|\sigma^{\frac12}\bm{x}''\|_{L^2} \exp(\|\sigma^{\frac12}\bm{x}''\|_{L^2}^2)$, 
so that we obtain the desired estimate. 
\end{proof}

\subsection{Estimate of solutions}
In view of \eqref{BVP} we first consider the case where $h(s)$ is non-negative.

\begin{lemma}\label{lem:EstSolBVP1}
Let $\tau$ be a unique solution to the boundary value problem \eqref{TBVP}. 
Suppose that $h(s)\geq 0$ and $a+\|\sigma h\|_{L^1}\exp(-\|\sigma^{\frac12}\bm{x}''\|_{L^2}^2)\geq 0$. 
Then, for any $s\in[0,1]$ we have 
\[
\begin{cases}
 s\{ a +\|\sigma h\|_{L^1}\exp(-\|\sigma^{\frac12}\bm{x}''\|_{L^2}^2) \}
 \exp(-\|\sigma^{\frac12}\bm{x}''\|_{L^2}^2) 
 \leq \tau(s) \leq s(a+\|h\|_{L^1}), \\
 a-(a+\|h\|_{L^1})\|\sigma^{\frac12}\bm{x}''\|_{L^2}^2 \leq \tau'(s) \leq a+\|h\|_{L^1}.
\end{cases}
\]
\end{lemma}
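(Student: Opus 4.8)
The plan is to start from the explicit representation \eqref{SolBVP} of the solution $\tau$ and bound each of the three terms using the pointwise estimates for $\varphi$, $\varphi'$, and $\psi$ from Lemmas \ref{lem:EstPhi} and \ref{lem:EstPsi}. Since $h \geq 0$, all three terms in \eqref{SolBVP} are nonnegative once we know $a/\varphi'(1) \geq 0$; but $a$ itself may be negative (that is why the hypothesis is on $a + \|\sigma h\|_{L^1}\exp(-\|\sigma^{1/2}\bm{x}''\|_{L^2}^2)$, not on $a$ alone), so a crude term-by-term lower bound does not suffice and the terms must be combined carefully. First I would record the abbreviation $E = \exp(\|\sigma^{1/2}\bm{x}''\|_{L^2}^2)$, so that Lemma \ref{lem:EstPhi} gives $1 \leq \varphi'(1) \leq E$ and $s \leq \varphi(s) \leq sE$, and Lemma \ref{lem:EstPsi} gives $1 \leq \psi(s) \leq E$ and $\psi'(s) \leq 0$ with $\|\sigma^{\alpha/2}\bm{x}''\|_{L^2}^2$-type lower bounds.

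For the \textbf{upper bound on $\tau(s)$}: from \eqref{SolBVP}, using $\varphi(s) \leq s\varphi'(1)$ (valid since $\varphi'$ is nondecreasing, as shown in the proof of Lemma \ref{lem:EstPhi}, so $\varphi(s) = \int_0^s \varphi' \leq s\varphi'(s) \leq s\varphi'(1)$) together with $\psi(s) \leq \varphi'(1)$ (since $\psi$ is nonincreasing with $\psi(1)=1 \leq \varphi'(1)$... actually $\psi(s) \geq 1$, so I need $\psi(s) \leq \varphi'(1)$, which requires a small argument comparing $\psi$ and $\varphi'$; alternatively bound $\psi(s)\varphi(\sigma) \leq \varphi(s)\psi(\sigma)$ is false — instead use $\varphi(\sigma) \leq \sigma\varphi'(1) \leq s\varphi'(1)$ for $\sigma \leq s$ and $\psi(\sigma) \leq \psi(s)$ in the two integrals). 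The cleanest route: in the first integral $\psi(s)\varphi(\sigma)/\varphi'(1) \leq \psi(s)\sigma/\varphi'(1) \cdot \varphi'(1)\cdot(1/\varphi'(1))$... I would organize it so that each integrand is controlled by $s\,h(\sigma)$ times a Green's-function bound $G(s,\sigma) \leq s$, which follows from $\varphi(s) \leq s\varphi'(1)$, $\psi \geq 1$ being irrelevant to the upper bound because we also have $\psi(\sigma) \leq \psi(s) \cdot(\text{something})$. The key clean estimate I expect to use is $0 \leq G(s,\sigma) \leq s\wedge\sigma \leq s$, obtained from $\varphi(s)\psi(\sigma) \leq s\varphi'(1)\psi(\sigma)$ and a comparison showing $\psi(\sigma)/\varphi'(1) \leq 1$; combined with $a\varphi(s)/\varphi'(1) \leq as$ this yields $\tau(s) \leq s(a + \|h\|_{L^1})$.

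For the \textbf{lower bound on $\tau(s)$}: group the first term with the second. Write $\tau(s) = \frac{\varphi(s)}{\varphi'(1)}\big(a + \int_s^1 \psi h\big) + \frac{\psi(s)}{\varphi'(1)}\int_0^s \varphi h$. Since $\psi(s) \geq 1$, $\varphi(\sigma) \geq \sigma$, and $h \geq 0$, the last term is $\geq \frac{1}{\varphi'(1)}\int_0^s \sigma h(\sigma)\,\mathrm{d}\sigma \geq 0$; drop it. For the first group, $\varphi(s)/\varphi'(1) \geq s/E$ (using $\varphi(s) \geq s$ and $\varphi'(1) \leq E$), and $\int_s^1 \psi h \geq \int_s^1 h$; but I actually want $\|\sigma h\|_{L^1}$ in the bound, so instead I would bound $a + \int_s^1 \psi h \geq a + \int_0^1 \sigma h(\sigma)\,\mathrm{d}\sigma \cdot(\text{?})$ — this is where the hypothesis $a + \|\sigma h\|_{L^1}E^{-1} \geq 0$ enters. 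The trick: $\int_s^1 \psi(\sigma) h(\sigma)\,\mathrm{d}\sigma + \frac{\varphi'(1)}{\varphi(s)}\cdot\frac{\psi(s)}{\varphi'(1)}\int_0^s\varphi h \geq$ (rearranging \eqref{SolBVP}) ... more directly, since $\psi \geq 1$ and $\varphi(\sigma) \geq \sigma$, and $\varphi(s)/\varphi'(1) \leq s$, we get $\tau(s) = \frac{\varphi(s)}{\varphi'(1)} a + \frac{\psi(s)}{\varphi'(1)}\int_0^s\varphi h + \frac{\varphi(s)}{\varphi'(1)}\int_s^1\psi h \geq \frac{\varphi(s)}{\varphi'(1)}\big(a + \int_0^1 \min(\text{weights})\, h\big)$; choosing the weight $\sigma$ throughout (valid since $\varphi(\sigma)\geq\sigma$, $\psi\geq1$, and $\varphi(s)/\varphi'(1)$ vs $\psi(s)/\varphi'(1)$ are comparable up to... ) gives $\tau(s) \geq \frac{\varphi(s)}{\varphi'(1)}\big(a + \|\sigma h\|_{L^1} E^{-1}\big)$ provided the right factor is used — and then $\varphi(s)/\varphi'(1) \geq s/E$ finishes it, matching the claimed bound $s\{a + \|\sigma h\|_{L^1}E^{-1}\}E^{-1}$. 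I expect this regrouping to require writing $\frac{\psi(s)}{\varphi'(1)}\varphi(\sigma) \geq \frac{\varphi(s)}{\varphi'(1)}\cdot\frac{\psi(s)\varphi(\sigma)}{\varphi(s)} \geq \frac{\varphi(s)}{\varphi'(1)}\cdot\frac{\sigma}{s}\cdot E^{-1}$ using $\psi(s)\geq1$, $\varphi(\sigma)\geq\sigma$, $\varphi(s)\leq sE$.

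For the \textbf{bounds on $\tau'(s)$}: differentiate \eqref{SolBVP}. The boundary integrals' derivatives combine so that $\tau'(s) = a\frac{\varphi'(s)}{\varphi'(1)} + \frac{\psi'(s)}{\varphi'(1)}\int_0^s\varphi h + \frac{\varphi'(s)}{\varphi'(1)}\int_s^1\psi h$ (the two boundary-term contributions from differentiating the variable limits cancel since at $\sigma = s$ the integrand factors agree). For the \textbf{upper bound}: $\varphi'(s)/\varphi'(1) \leq 1$ (since $\varphi'$ nondecreasing), $\psi'(s) \leq 0$ so that term is $\leq 0$, and $\int_s^1 \psi h \leq$ ... hmm, $\psi$ can exceed $1$; instead note $\frac{\varphi'(s)}{\varphi'(1)}\int_s^1 \psi(\sigma) h(\sigma)\,\mathrm{d}\sigma \leq \int_s^1 \psi(\sigma) h(\sigma)\,\mathrm{d}\sigma$ and one needs $\psi(\sigma)\varphi'(s) \leq \varphi'(1)$... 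Actually the clean bound: $\frac{\varphi'(s)}{\varphi'(1)}\psi(\sigma) \leq 1$ for $\sigma \geq s$ because $\varphi'(s)\psi(\sigma) \leq \varphi'(\sigma)\psi(\sigma) \leq \varphi'(1)\psi(1) = \varphi'(1)$ using that $\varphi'\psi$ is... nondecreasing? Check: $(\varphi'\psi)' = \varphi''\psi + \varphi'\psi' = |\bm{x}''|^2\varphi\psi + \varphi'\psi'$, sign unclear. Alternative: just use $\varphi'(s) \leq \varphi'(1)$ and $\psi(\sigma) \leq E$... that gives an $E$ I don't want. I think the intended bound uses $\frac{\varphi'(s)}{\varphi'(1)}\int_s^1\psi h \leq \int_s^1 \psi h$ is wrong; rather one observes directly from the ODE $\tau'' = |\bm{x}''|^2\tau - h$ so $\tau'(s) = \tau'(1) - \int_s^1\tau'' = a - \int_s^1(|\bm{x}''|^2\tau - h) = a + \int_s^1 h - \int_s^1|\bm{x}''|^2\tau$. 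Using $0 \leq \tau(\sigma) \leq \sigma(a + \|h\|_{L^1})$ just proved: $\tau'(s) \leq a + \|h\|_{L^1}$ (drop the negative term, bound $\int_s^1 h \leq \|h\|_{L^1}$), and $\tau'(s) \geq a - \int_s^1 \sigma|\bm{x}''(\sigma)|^2(a+\|h\|_{L^1})\,\mathrm{d}\sigma = a - (a+\|h\|_{L^1})\|\sigma^{1/2}\bm{x}''\|_{L^2}^2$, which is exactly the claim. So the $\tau'$ bounds follow most cleanly by reintegrating the ODE rather than differentiating the Green's-function formula, using the $\tau$-bounds as input.

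\textbf{Main obstacle.} The delicate point is the lower bound on $\tau(s)$: one cannot estimate the three terms of \eqref{SolBVP} separately (the $a$-term may be negative), so the argument must regroup $a$ with the $\int_s^1$ integral and extract the right exponential weight $E^{-1}$ while converting $\int_s^1 h$ and $\int_0^s \varphi h / \varphi(s)$ into a single $\|\sigma h\|_{L^1}$-type quantity; getting the two occurrences of $E^{-1}$ (one from $\varphi(s)/\varphi'(1) \geq s/E$, one from the $\|\sigma h\|_{L^1}$ weight-conversion) to land exactly as stated, under the sign hypothesis, is the step requiring care. Everything else reduces to monotonicity of $\varphi'$, positivity and monotonicity of $\psi$, and reintegrating the ODE.
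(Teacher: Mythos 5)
Your plan for the lower bound on $\tau$ and for both bounds on $\tau'$ is essentially the paper's proof. For the lower bound you regroup the three terms of \eqref{SolBVP} using $\psi(s)\geq 1\geq \varphi(s)\exp(-\|\sigma^{\frac12}\bm{x}''\|_{L^2}^2)$ and $\varphi(\sigma)\geq\sigma$, $\psi(\sigma)\geq 1\geq\sigma$, so that the bracket becomes $a+\|\sigma h\|_{L^1}\exp(-\|\sigma^{\frac12}\bm{x}''\|_{L^2}^2)$, nonnegative by hypothesis, and then use $\varphi(s)/\varphi'(1)\geq s\exp(-\|\sigma^{\frac12}\bm{x}''\|_{L^2}^2)$; this is exactly the paper's computation. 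Likewise, your final route to the $\tau'$ bounds (integrate the equation over $[s,1]$ and use the pointwise bounds on $\tau$) is the paper's.

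The step that does not work as written is your Green's-function upper bound $\tau(s)\leq s(a+\|h\|_{L^1})$. First, the estimate $a\varphi(s)/\varphi'(1)\leq as$ fails when $a<0$, which the hypothesis permits: then one only gets $a\varphi(s)/\varphi'(1)\leq as\exp(-\|\sigma^{\frac12}\bm{x}''\|_{L^2}^2)$, which is larger than $as$, so a term-by-term estimate cannot give the stated bound. Second, your bound $G(s,\sigma)\leq s$ rests on $\psi(\sigma)\leq\varphi'(1)$, which you flag but never prove; it is in fact true (from the constancy of the Wronskian, $\varphi'(\sigma)\psi(\sigma)=\varphi'(1)+\varphi(\sigma)\psi'(\sigma)\leq\varphi'(1)$, and $\varphi'\geq1$), but it does not follow from Lemmas \ref{lem:EstPhi} and \ref{lem:EstPsi} alone, so as a plan this is a gap. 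The paper avoids both issues by ordering the steps differently: once the lower bound (plus the sign hypothesis) gives $\tau\geq0$, integrating the equation in \eqref{TBVP} over $[s,1]$ yields $\tau'(s)\leq a+\int_s^1 h\leq a+\|h\|_{L^1}$ — this uses only $\tau\geq0$, not the upper bound on $\tau$ — and integrating this over $[0,s]$ with $\tau(0)=0$ gives $\tau(s)\leq s(a+\|h\|_{L^1})$; the lower bound on $\tau'$ then uses this upper bound exactly as you propose. So your argument closes up if you obtain the upper bound on $\tau$ from your own ODE-reintegration step rather than from the Green's function, and drop the pointwise bound on $G$ altogether.
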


\begin{proof}
We remind that the solution $\tau$ is expressed by Green's function as \eqref{SolBVP}. 
Under the assumptions, by Lemmas \ref{lem:EstPhi} and \ref{lem:EstPsi} we see that 
\begin{align*}
\tau(s)
& \geq \frac{\varphi(s)}{\varphi'(1)}
\left\{ a+\exp(-\|\sigma^{\frac12}\bm{x}''\|_{L^2}^2)
 \int_0^s\varphi(\sigma)h(\sigma)\mathrm{d}\sigma
 +\int_s^1\psi(\sigma)h(\sigma)\mathrm{d}\sigma \right\} \\
& \geq \frac{\varphi(s)}{\varphi'(1)}
\left\{ a+\exp(-\|\sigma^{\frac12}\bm{x}''\|_{L^2}^2)
 \int_0^s \sigma h(\sigma)\mathrm{d}\sigma
 +\exp(-\|\sigma^{\frac12}\bm{x}''\|_{L^2}^2)
 \int_s^1 \sigma h(\sigma)\mathrm{d}\sigma \right\} \\
& = \frac{\varphi(s)}{\varphi'(1)}
\{ a+\exp(-\|\sigma^{\frac12}\bm{x}''\|_{L^2}^2)\|\sigma h\|_{L^1} \} \\
& \geq s\exp(-\|\sigma^{\frac12}\bm{x}''\|_{L^2}^2)
\{ a+\exp(-\|\sigma^{\frac12}\bm{x}''\|_{L^2}^2)\|\sigma h\|_{L^1} \}, 
\end{align*}
which implies the lower bound of $\tau(s)$. 
Integrating the equation for $\tau$ over $[s,1]$, we have 
\[
\tau'(s)+\int_s^1|\bm{x}''(\sigma)|^2\tau(\sigma)\mathrm{d}\sigma = a + \int_s^1h(\sigma)\mathrm{d}\sigma.
\]
Since the positivity of $\tau(s)$ is already guaranteed, this implies the upper bound of $\tau'(s)$, and then that of $\tau(s)$. 
As for the lower bound of $\tau'(s)$, we see that 
\begin{align*}
\tau'(s)
&\geq a - \int_s^1|\bm{x}''(\sigma)|^2\tau(\sigma)\mathrm{d}\sigma \\
&\geq a - (a+\|h\|_{L^1}) \int_s^1 \sigma|\bm{x}''(\sigma)|^2\mathrm{d}\sigma.
\end{align*}
This gives the desired estimate. 
\end{proof}

We proceed to give estimate for the solution $\tau$ to the problem \eqref{TBVP} without assuming the non-negativity of $h(s)$ and $a$. 
Such estimates will be used to evaluate the derivatives of $\tau$ with respect to $t$.

\begin{lemma}\label{lem:EstSolBVP2}
For any $M>0$ there exists a constant $C=C(M)>0$ such that if $\|\sigma^{\frac12}\bm{x}''\|_{L^2} \leq M$, then the solution $\tau$ 
to the boundary value problem \eqref{TBVP} satisfies 
\[
\begin{cases}
 |\tau(s)| \leq C(|a|s+\|\sigma^\alpha h\|_{L^1}s^{1-\alpha}), \\
 s^\alpha|\tau'(s)| \leq C(|a|s^\alpha + \|\sigma^\alpha h\|_{L^1})
\end{cases}
\]
for any $s\in[0,1]$ and any $\alpha\in[0,1]$. 
\end{lemma}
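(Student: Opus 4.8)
The plan is to work directly from the Green's function representation \eqref{SolBVP} and to insert the bounds for the fundamental solutions supplied by Lemmas \ref{lem:EstPhi} and \ref{lem:EstPsi}. Under the hypothesis $\|\sigma^{\frac12}\bm{x}''\|_{L^2}\le M$ these give $\varphi'(1)\ge1$, hence $1/\varphi'(1)\le1$, together with $s\le\varphi(s)\le se^{M^2}$, $1\le\varphi'(s)\le e^{M^2}$, and $1\le\psi(s)\le e^{M^2}$ for all $s\in[0,1]$. The whole argument then rests on two elementary weight manipulations valid for $\alpha\in[0,1]$: on $(0,s)$ one has $\sigma^{1-\alpha}\le s^{1-\alpha}$, whence $\int_0^s\sigma|h(\sigma)|\,\mathrm{d}\sigma\le s^{1-\alpha}\|\sigma^\alpha h\|_{L^1}$; and on $(s,1)$ one has $s^\alpha\le\sigma^\alpha$, whence $s^\alpha\int_s^1|h(\sigma)|\,\mathrm{d}\sigma\le\|\sigma^\alpha h\|_{L^1}$.

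For the bound on $\tau$ I would estimate the three terms in \eqref{SolBVP} separately. The first is at most $|a|se^{M^2}$. In the second, $\psi(s)/\varphi'(1)\le e^{M^2}$ while $\int_0^s\varphi(\sigma)|h(\sigma)|\,\mathrm{d}\sigma\le e^{M^2}\int_0^s\sigma|h(\sigma)|\,\mathrm{d}\sigma\le e^{M^2}s^{1-\alpha}\|\sigma^\alpha h\|_{L^1}$ by the first manipulation. In the third, $\varphi(s)/\varphi'(1)\le se^{M^2}$ while $\int_s^1\psi(\sigma)|h(\sigma)|\,\mathrm{d}\sigma\le e^{M^2}\int_s^1|h(\sigma)|\,\mathrm{d}\sigma$, so that $s$ times this term is $\le e^{M^2}s^{1-\alpha}\|\sigma^\alpha h\|_{L^1}$ by the second manipulation. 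Summing the three contributions gives $|\tau(s)|\le C(M)(|a|s+\|\sigma^\alpha h\|_{L^1}s^{1-\alpha})$.

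For the bound on $\tau'$ I would differentiate \eqref{SolBVP}; the two terms produced by differentiating the limits of integration cancel, since each equals $\varphi(s)\psi(s)h(s)/\varphi'(1)$, leaving
\[
\tau'(s)=a\frac{\varphi'(s)}{\varphi'(1)}+\frac{\psi'(s)}{\varphi'(1)}\int_0^s\varphi(\sigma)h(\sigma)\,\mathrm{d}\sigma+\frac{\varphi'(s)}{\varphi'(1)}\int_s^1\psi(\sigma)h(\sigma)\,\mathrm{d}\sigma.
\]
The first and third terms, multiplied by $s^\alpha$, are handled exactly as above and contribute $C(M)(|a|s^\alpha+\|\sigma^\alpha h\|_{L^1})$. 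The middle term is the only subtle point, and this is where I expect the main obstacle: inserting the pointwise bound for $s^\alpha\psi'(s)$ from Lemma \ref{lem:EstPsi} directly would force $\int_0^1\sigma^\alpha|\bm{x}''|^2\,\mathrm{d}\sigma$ to appear, and this is \emph{not} controlled by $M$ when $\alpha<1$. Instead I would use the identity $\psi'(s)=-\int_s^1|\bm{x}''(\sigma)|^2\psi(\sigma)\,\mathrm{d}\sigma$ obtained in the proof of Lemma \ref{lem:EstPsi}, giving $|\psi'(s)|\le e^{M^2}\int_s^1|\bm{x}''(\sigma)|^2\,\mathrm{d}\sigma$, combine it with $\int_0^s\varphi(\sigma)|h(\sigma)|\,\mathrm{d}\sigma\le e^{M^2}s^{1-\alpha}\|\sigma^\alpha h\|_{L^1}$, and then absorb the surplus power of $s$ through
\[
s\int_s^1|\bm{x}''(\sigma)|^2\,\mathrm{d}\sigma\le\int_s^1\sigma|\bm{x}''(\sigma)|^2\,\mathrm{d}\sigma\le\|\sigma^{\frac12}\bm{x}''\|_{L^2}^2\le M^2.
\]
Collecting the three contributions then yields $s^\alpha|\tau'(s)|\le C(M)(|a|s^\alpha+\|\sigma^\alpha h\|_{L^1})$.

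Apart from this single point, the argument is a bookkeeping exercise; the care required is essentially in propagating the weight $s^\alpha$ uniformly over the whole range $\alpha\in[0,1]$, including the endpoints $\alpha=0$ and $\alpha=1$, by always splitting a power of $s$ between the factor multiplying the $h$-integral and the integral itself rather than relying on the off-the-shelf weighted bounds. I do not foresee any genuine difficulty beyond that.
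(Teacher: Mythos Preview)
Your proposal is correct and follows essentially the same route as the paper: both arguments plug the bounds from Lemmas \ref{lem:EstPhi} and \ref{lem:EstPsi} into the representation \eqref{SolBVP} and its derivative \eqref{Sol'BVP}, and both use the same pair of weight manipulations on $(0,s)$ and $(s,1)$. The one place where you work harder than necessary is the middle term in $\tau'$: the paper simply combines $\int_0^s\sigma|h|\,\mathrm{d}\sigma\le s^{1-\alpha}\|\sigma^\alpha h\|_{L^1}$ with the ready-made bound $s|\psi'(s)|\lesssim1$ from Lemma \ref{lem:EstPsi} (case $\alpha=1$ there), so there is no need to reopen the integral identity for $\psi'$---the ``obstacle'' you anticipate is already absorbed into that cited estimate.
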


\begin{proof}
It follows from Lemmas \ref{lem:EstPhi} and \ref{lem:EstPsi} that $\varphi(s)\simeq s$, $\varphi'(s)\simeq1$, $\psi(s)\simeq1$, 
and $s|\psi'(s)| \lesssim 1$. 
Since the solution $\tau(s)$ is expressed as \eqref{SolBVP}, we have 
\begin{align*}
|\tau(s)|
&\lesssim |a|s + \int_0^s \sigma|h(\sigma)| \mathrm{d}\sigma + s\int_s^1 |h(\sigma)| \mathrm{d}\sigma \\
&\lesssim |a|s + s^{1-\alpha}\int_0^1 \sigma^\alpha|h(\sigma)| \mathrm{d}\sigma.
\end{align*}
This gives the first estimate of the lemma. 
In view of 
\begin{equation}\label{Sol'BVP}
\tau'(s)=a\frac{\varphi'(s)}{\varphi'(1)}+\frac{\psi'(s)}{\varphi'(1)}\int_0^s\varphi(\sigma)h(\sigma)\mathrm{d}\sigma
 +\frac{\varphi'(s)}{\varphi'(1)}\int_s^1\psi(\sigma)h(\sigma)\mathrm{d}\sigma, 
\end{equation}
we see that 
\begin{align*}
s^\alpha|\tau'(s)|
&\lesssim |a|s^\alpha + s^\alpha|\psi'(s)| \int_0^s \sigma|h(\sigma)| \mathrm{d}\sigma
 + s^\alpha \int_s^1 |h(\sigma)| \mathrm{d}\sigma \\
&\lesssim |a|s^ \alpha + \int_0^1 \sigma^\alpha|h(\sigma)| \mathrm{d}\sigma.
\end{align*}
This gives the second estimate of the lemma. 
\end{proof}

In general, if $h(s)$ has a singularity at $s=0$, then so is $\tau'(s)$. 
To evaluate the singularity in terms of $L^p$-norm, the above pointwise estimate does not give a sharp one. 
Next, we will derive a sharp $L^p$ estimate for $\tau'(s)$. 
To this end, we prepare the following calculus inequality.

\begin{lemma}\label{lem:CalIneq}
Let $1\leq p\leq\infty$ and $\alpha+\frac{1}{p}>0$, and put $H(s)=\int_s^1 h(\sigma) \mathrm{d}\sigma$. 
Then, we have 
\[
\|s^\alpha H\|_{L^p} \leq \frac{1}{(\alpha+\frac{1}{p})^\frac{1}{p}} \|s^{\alpha+\frac{1}{p}}h\|_{L^1}.
\]
\end{lemma}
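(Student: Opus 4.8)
The inequality to prove is a weighted Hardy-type estimate for the "tail integral" operator $h \mapsto H(s) = \int_s^1 h(\sigma)\,\mathrm{d}\sigma$. The plan is to handle the finite-$p$ case directly by Fubini's theorem and then obtain $p=\infty$ as a limiting case (or as an even easier separate argument). So first I would assume $1 \le p < \infty$ and write
\[
\|s^\alpha H\|_{L^p}^p = \int_0^1 s^{\alpha p}\left|\int_s^1 h(\sigma)\,\mathrm{d}\sigma\right|^p \mathrm{d}s \le \int_0^1 s^{\alpha p}\left(\int_s^1 |h(\sigma)|\,\mathrm{d}\sigma\right)^p \mathrm{d}s.
\]

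The next step is to estimate the inner integral by pulling out a power of $\sigma$: since $0 < s \le \sigma \le 1$, we have $|h(\sigma)| = \sigma^{-(\alpha+1/p)}\cdot\sigma^{\alpha+1/p}|h(\sigma)|$, and the cleanest route is simply to bound $\int_s^1 |h(\sigma)|\,\mathrm{d}\sigma \le \int_0^1 \sigma^{\alpha+1/p}|h(\sigma)|\,\mathrm{d}\sigma \cdot \sup\{\sigma^{-(\alpha+1/p)} : \sigma \ge s\}$. Because $\alpha + 1/p > 0$, the map $\sigma \mapsto \sigma^{-(\alpha+1/p)}$ is decreasing, so its supremum over $[s,1]$ is attained at $\sigma = s$, giving $\int_s^1 |h(\sigma)|\,\mathrm{d}\sigma \le s^{-(\alpha+1/p)}\,\|\sigma^{\alpha+1/p}h\|_{L^1}$. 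Substituting this bound,
\[
\|s^\alpha H\|_{L^p}^p \le \|\sigma^{\alpha+1/p}h\|_{L^1}^p \int_0^1 s^{\alpha p}\, s^{-(\alpha p + 1)}\,\mathrm{d}s = \|\sigma^{\alpha+1/p}h\|_{L^1}^p \int_0^1 s^{-1}\,\mathrm{d}s,
\]
which diverges — so the crude pointwise bound on $H$ is too lossy and I must instead keep $H$ under the integral sign and use Fubini.

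The correct execution: bound $|H(s)| \le \int_s^1 |h(\sigma)|\,\mathrm{d}\sigma$ and estimate one factor of $H$ crudely but integrate the other against $s^{\alpha p}$. Concretely, write $|H(s)|^p \le |H(s)|^{p-1}\int_s^1 |h(\sigma)|\,\mathrm{d}\sigma$ is still circular; the clean way is: $|H(s)|^p = |H(s)|^{p}$, and since $|H(s)| \le \|\sigma^{\alpha+1/p}h\|_{L^1}\cdot s^{-(\alpha+1/p)}$ only on one factor won't help either. The actual trick is to swap the order of integration from the start: with $H(s) \le \int_s^1 |h(\sigma)|\,\mathrm{d}\sigma$,
\[
\int_0^1 s^{\alpha p}|H(s)|^p\,\mathrm{d}s = p\int_0^1 |h(\sigma)| \left(\int_0^\sigma s^{\alpha p}|H(s)|^{p-1}\,\mathrm{d}s\right)\mathrm{d}\sigma
\]
after an integration by parts in $s$ (using $H' = -h$ and $H(1)=0$), then bounding $|H(s)| \le \|\sigma^{\alpha+1/p}h\|_{L^1} s^{-(\alpha+1/p)}$ inside, integrating the resulting $s$-power over $[0,\sigma]$ (which converges since the exponent is $\alpha p - (\alpha+1/p)(p-1) = 1/p - 1 > -1$... checking: this equals $-1 + 1/p$, and $\int_0^\sigma s^{-1+1/p}\,\mathrm{d}s = p\,\sigma^{1/p}$), and finally reassembling. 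The main obstacle is getting the bookkeeping of exponents exactly right so the constant comes out to be precisely $(\alpha+1/p)^{-1/p}$ rather than something larger; I expect the sharp constant to emerge from the integration-by-parts identity combined with a single application of the elementary bound on $H$, and I would double-check sharpness against the extremal $h(\sigma) = \sigma^{-(\alpha+1/p)}\mathbf{1}_{[\delta,1]}$ as $\delta \to 0$. The $p = \infty$ case is immediate: $s^\alpha|H(s)| \le s^\alpha \int_s^1 \sigma^{-\alpha}\cdot\sigma^\alpha|h(\sigma)|\,\mathrm{d}\sigma \le s^\alpha\,s^{-\alpha}\|\sigma^\alpha h\|_{L^1} = \|\sigma^\alpha h\|_{L^1}$, matching the stated bound with constant $1 = (\alpha)^{0}$.
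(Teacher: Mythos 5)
Your overall strategy is the same as the paper's: dispose of $p=\infty$ directly, and for $p<\infty$ combine an integration-by-parts/Fubini identity with the elementary bound $s^{\alpha+\frac1p}H(s)\le\|\sigma^{\alpha+\frac1p}h\|_{L^1}$ (valid since $\alpha+\frac1p>0$ and $s\le\sigma$ on the domain of integration). However, the written execution has two concrete errors. First, your displayed identity
\[
\int_0^1 s^{\alpha p}|H(s)|^p\,\mathrm{d}s \;=\; p\int_0^1 |h(\sigma)|\Bigl(\int_0^\sigma s^{\alpha p}|H(s)|^{p-1}\,\mathrm{d}s\Bigr)\mathrm{d}\sigma
\]
is false: swapping the order of integration on the right gives (for $h\ge0$) $p\int_0^1 s^{\alpha p}|H(s)|^{p-1}\bigl(\int_s^1 h\bigr)\mathrm{d}s=p\int_0^1 s^{\alpha p}H(s)^p\mathrm{d}s$, so the identity asserts $I=pI$. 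The correct options are either pure Fubini with no factor $p$, starting from $H(s)^p=H(s)^{p-1}\int_s^1 h(\sigma)\,\mathrm{d}\sigma$ (the step you dismissed as ``circular'' is in fact the right one), or genuine integration by parts, which is the paper's route and produces $\frac{p}{\alpha p+1}\int_0^1 s^{\alpha p+1}h(s)H(s)^{p-1}\mathrm{d}s$, with $H^{p-1}$ evaluated at the same variable as $h$ rather than under a separate inner integral (plus a boundary term at $s=0$ which is nonpositive, so harmless for the inequality). Second, the exponent bookkeeping: $\alpha p-(\alpha+\tfrac1p)(p-1)=\alpha+\tfrac1p-1$, not $\tfrac1p-1$; your value is correct only for $\alpha=0$, and with it the reassembled integral would involve $\int_0^1\sigma^{1/p}h(\sigma)\mathrm{d}\sigma$ rather than $\|\sigma^{\alpha+\frac1p}h\|_{L^1}$.

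With these fixed, the plan closes exactly as you anticipate and yields the sharp constant: from $\int_0^1 s^{\alpha p}H^p\mathrm{d}s=\int_0^1 h(\sigma)\bigl(\int_0^\sigma s^{\alpha p}H(s)^{p-1}\mathrm{d}s\bigr)\mathrm{d}\sigma$, insert $H(s)^{p-1}\le\|\sigma^{\alpha+\frac1p}h\|_{L^1}^{p-1}\,s^{-(\alpha+\frac1p)(p-1)}$, compute $\int_0^\sigma s^{\alpha+\frac1p-1}\mathrm{d}s=\sigma^{\alpha+\frac1p}/(\alpha+\tfrac1p)$, and reassemble to get $\frac{1}{\alpha+\frac1p}\|\sigma^{\alpha+\frac1p}h\|_{L^1}^p$, i.e.\ the constant $(\alpha+\tfrac1p)^{-1/p}$ with no spurious factor $p^{1/p}$. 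Your $p=\infty$ argument is correct as written.
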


\begin{proof}
The case $p=\infty$ is trivial, so that we assume $p<\infty$. 
We may also assume without loss of generality that $h(s)$ is non-negative. 
By integration by parts, we see that 
\begin{align*}
\|s^\alpha H\|_{L^p}^p
&= \int_0^1s^{\alpha p}\left(\int_s^1 h(\sigma) \mathrm{d}\sigma \right)^p \mathrm{d}s \\
&= \left[ \frac{1}{\alpha p+1}s^{\alpha p+1}\left(\int_s^1 h(\sigma) \mathrm{d}\sigma \right)^p \right]_0^1
 + \frac{p}{\alpha p+1}\int_0^1 s^{\alpha p+1}h(s) \left(\int_s^1 h(\sigma) \mathrm{d}\sigma \right)^{p-1} \mathrm{d}s \\
&= \frac{p}{\alpha p+1} \int_0^1 s^{\alpha+\frac{1}{p}}h(s) \left(s^{\alpha+\frac{1}{p}}\int_s^1 h(\sigma) \mathrm{d}\sigma \right)^{p-1}
 \mathrm{d}s \\
&\leq \frac{p}{\alpha p+1} \left( \int_0^1 s^{\alpha+\frac{1}{p}}h(s) \mathrm{d}s \right)^p.
\end{align*}
Therefore, we obtain the desired estimate. 
\end{proof}

\begin{lemma}\label{lem:EstSolBVP3}
For any $M>0$ there exists a constant $C=C(M)>0$ such that if $\|\sigma^{\frac12}\bm{x}''\|_{L^2} \leq M$, then the solution $\tau$ 
to the boundary value problem \eqref{TBVP} satisfies 
\[
\|s^\alpha\tau'\|_{L^p} \leq C(|a|+\|s^{\alpha+\frac{1}{p}}h\|_{L^1})
\]
for any $p\in[1,\infty]$ and any $\alpha\geq0$ satisfying $\alpha+\frac{1}{p}\leq1$. 
\end{lemma}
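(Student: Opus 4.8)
The plan is to use the explicit representation \eqref{Sol'BVP} of $\tau'$ together with Lemma \ref{lem:CalIneq}, after first reducing the pointwise bounds on the fundamental solutions from Lemmas \ref{lem:EstPhi} and \ref{lem:EstPsi} to the qualitative facts $\varphi(s)\simeq s$, $\varphi'(s)\simeq 1$, $\psi(s)\simeq 1$, and $s|\psi'(s)|\lesssim 1$, all with constants depending only on $M$. Writing out \eqref{Sol'BVP}, there are three terms to control in $\|s^\alpha\tau'\|_{L^p}$: the term $a\varphi'(s)/\varphi'(1)$, the term involving $\psi'(s)\int_0^s\varphi(\sigma)h(\sigma)\,\mathrm{d}\sigma$, and the term involving $\varphi'(s)\int_s^1\psi(\sigma)h(\sigma)\,\mathrm{d}\sigma$.

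First I would dispatch the $a$-term: since $\varphi'(s)/\varphi'(1)$ is bounded by a constant $C(M)$ and $s^\alpha\le 1$ on $(0,1)$, this contributes at most $C|a|$ in any $L^p$ norm. Second, for the middle term I would use $s|\psi'(s)|\lesssim 1$ and $\varphi(\sigma)\lesssim\sigma$ to bound it pointwise by $C s^{-1}\int_0^s\sigma|h(\sigma)|\,\mathrm{d}\sigma \le C s^{-1}\cdot s^{1-\alpha-\frac1p}\int_0^s\sigma^{\alpha+\frac1p}|h(\sigma)|\,\mathrm{d}\sigma$ when $\alpha+\frac1p\le 1$ (so that $\sigma\le s$ gives $\sigma^{1-\alpha-\frac1p}\le s^{1-\alpha-\frac1p}$); hence $s^\alpha$ times this term is bounded by $C s^{-\frac1p}\|s^{\alpha+\frac1p}h\|_{L^1}$, and $\|s^{-\frac1p}\|_{L^p}$ over $(0,1)$ is \emph{not} finite, so a cruder split is needed — instead I would keep the factor $s^\alpha \cdot s|\psi'(s)|\cdot s^{-1}=s^\alpha|\psi'(s)|\cdot s \le s^\alpha$ hmm, let me reorganize: write the middle term's contribution to $s^\alpha\tau'$ as (bounded by) $C\, s^{\alpha}|\psi'(s)|\int_0^s\sigma|h(\sigma)|\,\mathrm{d}\sigma$, and since $|\psi'(s)|\lesssim s^{-1}$ this is $\lesssim s^{\alpha-1}\int_0^s\sigma|h|\,\mathrm{d}\sigma$. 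Because $\alpha+\frac1p\le 1$, I can use $\sigma^{1-(\alpha+\frac1p)}\le s^{1-(\alpha+\frac1p)}$ on $[0,s]$ to pull out $s^{1-\alpha-\frac1p}$, leaving $\lesssim s^{-\frac1p}\|s^{\alpha+\frac1p}h\|_{L^1}$ — and the $L^p$-norm of this over $(0,1)$ is indeed divergent for $p<\infty$. The fix is to use the weaker estimate $\int_0^s\sigma|h(\sigma)|\,\mathrm{d}\sigma = \int_0^s \sigma^{1-\alpha-\frac1p}\cdot\sigma^{\alpha+\frac1p}|h|\,\mathrm{d}\sigma$ only once we have absorbed one extra power: actually the clean route is to replace the pointwise bound on the inner integral by $\int_0^s\sigma|h|\,\mathrm d\sigma \le s^{1-\alpha}\int_0^s \sigma^{\alpha}|h|\,\mathrm d\sigma$ (using $\alpha\le 1$), giving $s^\alpha|\psi'|\int_0^s\sigma|h|\,\mathrm d\sigma\lesssim s^{\alpha-1}s^{1-\alpha}\int_0^1\sigma^\alpha|h|\,\mathrm d\sigma = \int_0^1\sigma^\alpha|h|\,\mathrm d\sigma\le \|s^{\alpha+\frac1p}h\|_{L^1}$ only if $\alpha\ge\alpha+\frac1p$, which fails. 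So the genuinely correct and only delicate term is the \emph{third} one.

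For the third term, $\varphi'(s)/\varphi'(1)\simeq 1$ uniformly, so its contribution to $s^\alpha\tau'$ is bounded pointwise by $C\, s^\alpha\bigl|\int_s^1\psi(\sigma)h(\sigma)\,\mathrm{d}\sigma\bigr| \le C\, s^\alpha H(s)$ where $H(s)=\int_s^1|h(\sigma)|\,\mathrm d\sigma$ (absorbing $\psi(\sigma)\simeq 1$). Since $\alpha+\frac1p>0$ whenever $p<\infty$ (and the $p=\infty$ case follows by letting $p\to\infty$ or treating it directly since then $\alpha>0$ is not even needed — one just bounds $s^\alpha H(s)\le H(0)=\|h\|_{L^1}$), Lemma \ref{lem:CalIneq} applies and gives $\|s^\alpha H\|_{L^p}\le (\alpha+\frac1p)^{-1/p}\|s^{\alpha+\frac1p}h\|_{L^1}\le C\|s^{\alpha+\frac1p}h\|_{L^1}$; one should note the endpoint $\alpha=0,p=\infty$ is excluded from Lemma \ref{lem:CalIneq} but is trivial. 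The middle term is handled the same way after observing $s^\alpha|\psi'(s)|\int_0^s\varphi(\sigma)|h(\sigma)|\,\mathrm d\sigma \lesssim s^{\alpha}\cdot s^{-1}\cdot \int_0^s\sigma|h(\sigma)|\,\mathrm d\sigma$, and then using that $s^{\alpha-1}\int_0^s\sigma|h|\,\mathrm d\sigma \le s^\alpha H'$-type bound fails, so instead I integrate by parts exactly as in Lemma \ref{lem:CalIneq}: set $\widetilde H(s)=\int_0^s\sigma|h(\sigma)|\,\mathrm d\sigma$ and estimate $\|s^{\alpha-1}\widetilde H\|_{L^p}$ directly by a one-line integration-by-parts argument (valid since $\alpha-1+\frac1p<0\le \alpha-1+1$, i.e. the boundary terms at $0$ and $1$ vanish/are controlled), yielding $\lesssim \|s^{\alpha+\frac1p}h\|_{L^1}$.

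I expect the main obstacle to be precisely this bookkeeping around the two integral terms: making sure the exponents line up so that Lemma \ref{lem:CalIneq} (or its mirror image for the lower-limit integral) applies with a constant depending only on $M$, and checking the borderline cases $\alpha=0$, $p=\infty$, and $\alpha+\frac1p=1$ separately where the $L^p\to L^1$ duality/integration-by-parts identities degenerate. Once the reduction $\varphi\simeq s$, $\varphi'\simeq 1$, $\psi\simeq 1$, $|\psi'|\lesssim s^{-1}$ is in place, each of the three pieces of \eqref{Sol'BVP} is a short computation, and the claimed estimate $\|s^\alpha\tau'\|_{L^p}\le C(|a|+\|s^{\alpha+\frac1p}h\|_{L^1})$ follows by the triangle inequality.
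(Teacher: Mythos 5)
There is a genuine gap in your treatment of the middle term of \eqref{Sol'BVP}, namely the term $s^\alpha|\psi'(s)|\int_0^s\varphi(\sigma)|h(\sigma)|\,\mathrm{d}\sigma$. Your final plan is to use only the pointwise bound $|\psi'(s)|\lesssim s^{-1}$ and then to prove $\|s^{\alpha-1}\widetilde H\|_{L^p}\lesssim\|s^{\alpha+\frac1p}h\|_{L^1}$ with $\widetilde H(s)=\int_0^s\sigma|h(\sigma)|\,\mathrm{d}\sigma$ by an integration-by-parts (Hardy-type) argument, which you yourself note requires $\alpha-1+\frac1p<0$. But the lemma allows $\alpha+\frac1p=1$ (e.g.\ $\alpha=\frac12$, $p=2$, or $\alpha=0$, $p=1$), and at that endpoint the inequality you rely on is simply false: taking $h$ an approximate point mass at $s_0$ gives $\|s^{\alpha-1}\widetilde H\|_{L^p}\simeq s_0\bigl(\log\frac1{s_0}\bigr)^{1/p}$ while $\|s^{\alpha+\frac1p}h\|_{L^1}\simeq s_0$. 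Even away from the endpoint your constant behaves like $\bigl(1-\alpha-\frac1p\bigr)^{-1/p}$, whereas the lemma asserts a single constant $C(M)$ valid for all admissible $(\alpha,p)$. The underlying reason is that $s|\psi'(s)|\lesssim1$ is too lossy: the hypothesis $\|\sigma^{1/2}\bm{x}''\|_{L^2}\le M$ actually forces $\int_s^1|\bm{x}''|^2\,\mathrm{d}\sigma=o(s^{-1})$ in an integrated sense, and this extra information is exactly what is needed at the endpoint.

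The paper's proof repairs this by not discarding the structure of $\psi'$: it keeps $|\psi'(s)|\lesssim\int_s^1|\bm{x}''(\sigma)|^2\,\mathrm{d}\sigma$ (from \eqref{PfEstPsi}), bounds $\int_0^s\sigma|h|\,\mathrm{d}\sigma\le s^{1-\alpha-\frac1p}\|s^{\alpha+\frac1p}h\|_{L^1}$ so that the middle term is controlled by $s^{1-\frac1p}|\psi'(s)|\,\|s^{\alpha+\frac1p}h\|_{L^1}$, and then applies Lemma \ref{lem:CalIneq} (with exponent sum $(1-\frac1p)+\frac1p=1$, which is unproblematic there) to get $\|s^{1-\frac1p}\psi'\|_{L^p}\lesssim\|s\,|\bm{x}''|^2\|_{L^1}=\|s^{1/2}\bm{x}''\|_{L^2}^2\le M^2$, uniformly in $(\alpha,p)$. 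Your handling of the $a$-term and of the third term (via Lemma \ref{lem:CalIneq} applied to $H(s)=\int_s^1|h|\,\mathrm{d}\sigma$) agrees with the paper and is fine; it is only the middle term where your route breaks down and needs to be replaced by this argument.
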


\begin{proof}
The case $p=\infty$ has already been proved in Lemma \ref{lem:EstSolBVP2}, so that we assume $p<\infty$. 
It follows from \eqref{Sol'BVP} and Lemmas \ref{lem:EstPhi} and \ref{lem:EstPsi} that 
\begin{align*}
s^\alpha|\tau'(s)|
&\lesssim |a|s^\alpha + s^\alpha|\psi'(s)|\int_0^s\sigma|h(\sigma)| \mathrm{d}\sigma
 + s^\alpha\int_s^1|h(\sigma)| \mathrm{d}\sigma \\
&\lesssim |a|s^\alpha + s^{1-\frac{1}{p}}|\psi'(s)| \int_0^s \sigma^{\alpha+\frac{1}{p}}|h(\sigma)| \mathrm{d}\sigma
 + s^\alpha\int_s^1|h(\sigma)| \mathrm{d}\sigma.
\end{align*}
Here, in view of \eqref{PfEstPsi} we have $|\psi'(s)| \lesssim \int_s^1 |\bm{x}''(\sigma)|^2 \mathrm{d}\sigma$, 
so that by Lemma \ref{lem:CalIneq} we get 
\[
\|s^{1-\frac{1}{p}}\psi'\|_{L^p} \lesssim \|s|\bm{x}''|^2\|_{L^1} = \|s^{\frac12}\bm{x}''\|_{L^2}^2 \lesssim 1. 
\]
Therefore, by Lemma \ref{lem:CalIneq} again we obtain 
\[
\|s^\alpha\tau'\|_{L^p}
\lesssim |a| + \biggl( 1+ \frac{1}{(\alpha+\frac{1}{p})^\frac{1}{p}} \biggr) \|s^{\alpha+\frac{1}{p}}h\|_{L^1}. 
\]
Since $(\alpha+\frac{1}{p})^\frac{1}{p} \geq (\frac{1}{p})^\frac{1}{p} \geq \exp(\exp(-1))$, we obtain the desired estimate. 
\end{proof}

\section{Function spaces}\label{sect:FS}
In this section, we present several properties related to the weighted Sobolev space $X^m$, 
which is equipped with the norm $\|\cdot\|_{X^m}$ defined by \eqref{WSS}.

\subsection{Weighted Sobolev space $X^m$}
The norm $\|\cdot\|_{X^m}$ is essentially the same one introduced by Reeken \cite{Reeken1979-1}. 
By the definition of the norm, it holds obviously that $\|u\|_{X^m} \leq \|u\|_{X^{m+1}}$ for $m=0,1,2,\ldots$. 
Moreover, this space is characterized as follows. 
Let $D$ be the unit disc in $\mathbb{R}^2$ and $H^m(D)$ the $L^2$ Sobolev space of order $m$ on $D$. 
For a function $u$ defined in the open interval $(0,1)$, we define $u^\sharp(x,y)=u(x^2+y^2)$ which is a function on $D$.

\begin{lemma}[{\cite[Proposition 3.2]{Takayama2018}}]\label{lem:NormEq}
Let $m$ be a non-negative integer. 
The map $X^m\ni u \mapsto u^\sharp \in H^m(D)$ is bijective and it holds that $\|u\|_{X^m} \simeq \|u^\sharp\|_{H^m(D)}$ 
for any $u\in X^m$. 
\end{lemma}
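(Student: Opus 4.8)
The plan is to establish the norm equivalence $\|u\|_{X^m} \simeq \|u^\sharp\|_{H^m(D)}$ by a direct change of variables computation, passing between the one-dimensional variable $s$ on $(0,1)$ and the radial variable $r = \sqrt{x^2+y^2}$ on $D$, with $s = r^2$. First I would record the basic differential relations: in polar coordinates $(r,\theta)$ on $D$, a radial function $u^\sharp(x,y) = u(r^2)$ has $\partial_r u^\sharp = 2r\,u'(r^2)$, and more generally each application of $\partial_r$ either lowers the power of $r$ by one or raises the order of the $s$-derivative of $u$ by one. Iterating, one finds that $\partial_r^k u^\sharp$ is a linear combination of terms $r^{2j-k} u^{(j)}(r^2)$ for $\lceil k/2\rceil \le j \le k$, with positive integer coefficients; the extreme terms $j = k$ (coefficient $2^k$, from always differentiating the argument) and $j = \lceil k/2 \rceil$ are the ones that will control the weighted norms.

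The key step is then to compute $\|u^\sharp\|_{H^m(D)}^2$. Since $u^\sharp$ is radial, the full $H^m(D)$ norm is equivalent to $\sum_{k=0}^m \int_0^1 |\partial_r^k u^\sharp(r)|^2\, r\,\mathrm{d}r$ (the angular derivatives contribute nothing, and the Jacobian is $r\,\mathrm{d}r\,\mathrm{d}\theta$); this reduction of the two-dimensional Sobolev norm of a radial function to a one-dimensional weighted norm is standard and I would state it, perhaps citing it or giving the one-line argument via the explicit form of the Laplacian in polar coordinates. Substituting $s = r^2$, $\mathrm{d}s = 2r\,\mathrm{d}r$, each term $\int_0^1 |r^{2j-k} u^{(j)}(r^2)|^2 r\,\mathrm{d}r$ becomes a constant times $\int_0^1 s^{2j-k} |u^{(j)}(s)|^2\,\mathrm{d}s = \|s^{j - k/2} u^{(j)}\|_{L^2}^2$. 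Running $k$ from $0$ to $m$ and $j$ over its allowed range, and matching against the definition \eqref{WSS} — splitting into the cases $m = 2k$ and $m = 2k+1$ — one checks that the collection of weights $\{s^{j - k/2}\,\partial_s^j : j \le k\}$ appearing here is, up to finitely many lower-order terms absorbable by interpolation, exactly the collection defining $\|u\|_{X^m}$. The upper bound $\|u^\sharp\|_{H^m(D)} \lesssim \|u\|_{X^m}$ follows immediately from the triangle inequality applied to the expansion of $\partial_r^k u^\sharp$; the lower bound $\|u\|_{X^m} \lesssim \|u^\sharp\|_{H^m(D)}$ requires inverting the linear system, i.e. recovering each weighted quantity $\|s^{j-k/2} u^{(j)}\|_{L^2}$ from the $\|\partial_r^i u^\sharp\|_{L^2(r\,\mathrm{d}r)}$ with $i \le 2j$, which is possible because the transformation matrix between these two families of quantities is triangular with nonzero diagonal.

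The bijectivity of $u \mapsto u^\sharp$ is comparatively soft: it is injective since $u$ is recovered from $u^\sharp$ by $u(s) = u^\sharp(\sqrt{s},0)$, and surjectivity onto $H^m(D)$ follows because a function $v \in H^m(D)$ need not be radial, so strictly one should interpret the map as having image the radial subspace of $H^m(D)$ — but given the norm equivalence, $u^\sharp \in H^m(D)$ precisely when $u \in X^m$, which is the content wanted. I expect the main obstacle to be bookkeeping: carefully identifying the combinatorial coefficients in $\partial_r^k u^\sharp = \sum_j c_{k,j}\, r^{2j-k} u^{(j)}(r^2)$ and verifying that no needed weight is lost and no spurious (more singular) weight is gained when passing between the two index sets, especially handling the parity split between even and odd $m$ in \eqref{WSS} and confirming that the ``missing'' intermediate terms (those $s^\beta \partial_s^j$ with $j < k$ or non-extremal $\beta$) are genuinely controlled by the listed norms via a weighted interpolation/Hardy-type inequality on $(0,1)$. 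Since the statement is quoted from Takayama \cite[Proposition 3.2]{Takayama2018}, I would keep this proof brief, emphasizing the change-of-variables identity and the triangular structure, and referring to that paper for the full combinatorial details if desired.
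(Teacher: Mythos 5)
The paper gives no proof of this lemma at all: it is quoted from Takayama \cite[Proposition 3.2]{Takayama2018}, so the only question is whether your argument is sound, and it has a genuine gap. The step you call standard --- that for the radial function $u^\sharp$ one has $\|u^\sharp\|_{H^m(D)}^2\simeq\sum_{k=0}^m\int_0^1|\partial_r^ku^\sharp|^2\,r\,\mathrm{d}r$ --- is false for $m\ge2$, and it fails exactly on the class of functions at issue. For a radial function the Cartesian derivatives are not exhausted by radial ones: already $|D^2v|^2\simeq|\partial_r^2v|^2+|r^{-1}\partial_rv|^2$, and the term $r^{-1}\partial_rv$ is invisible to your reduction. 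Concretely, take $u(s)=\sqrt{s}$, so $u^\sharp(x,y)=r$. Then $\partial_ru^\sharp\equiv1$ and $\partial_r^ku^\sharp\equiv0$ for $k\ge2$, so all your radial quantities are finite, yet $\|u^\sharp\|_{H^2(D)}=\infty$ (since $|D^2r|\sim r^{-1}\notin L^2(r\,\mathrm{d}r)$ near the origin) and correspondingly $\|u\|_{X^2}=\infty$ (because $u'=\tfrac12 s^{-1/2}\notin L^2$). The same example kills the ``triangular inversion'' in your matching step: after the substitution $s=r^2$ one has $\partial_r^2u^\sharp=2u'(s)+4su''(s)$, and for $u=\sqrt{s}$ this vanishes identically, so from $\|\partial_r^2u^\sharp\|_{L^2(r\,\mathrm{d}r)}$ and the lower-order radial quantities you cannot recover $\|u'\|_{L^2}$ and $\|su''\|_{L^2}$ separately --- norms of a sum do not control norms of the summands, and the needed quantity $\|u'\|_{L^2}$ (which is part of $\|u\|_{X^2}$) is simply absent from your list. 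So both the claimed equivalence and the lower bound $\|u\|_{X^m}\lesssim\|u^\sharp\|_{H^m(D)}$ break down.

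What does work, and what a correct direct proof must use, is the full family of Cartesian derivatives rather than only $\partial_r^k$: for $|\alpha|=k$, $\partial^\alpha u^\sharp$ is a linear combination of $r^{2j-k}u^{(j)}(r^2)$ with bounded angular coefficients, which after $s=r^2$ yields $\|u^\sharp\|_{H^m(D)}\lesssim\|u\|_{X^m}$ with precisely the weights of \eqref{WSS}; conversely, the mixed derivatives and their explicit angular dependence allow one to isolate each $r^{2j-k}u^{(j)}$ pointwise (for instance $4xy\,u''=\partial_x\partial_yu^\sharp$ together with $2u'=\partial_x^2u^\sharp-4x^2u''$ recovers $\|su''\|_{L^2}$ and $\|u'\|_{L^2}$ from $\|D^2u^\sharp\|_{L^2(D)}$), and this genuinely two-dimensional information is exactly what your radial reduction discards. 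Your handling of bijectivity is essentially fine provided the statement is read, as you suggest, as a bijection onto the rotation-invariant subspace of $H^m(D)$; but the analytic core of your argument needs to be replaced along the lines above (or simply deferred to Takayama's proof).
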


\begin{lemma}\label{lem:EstA2}
For a non-negative integer $m$, we have $\|(su')'\|_{X^m} \lesssim \|u\|_{X^{m+2}}$. 
\end{lemma}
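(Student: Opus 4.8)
The plan is to exploit the norm equivalence in Lemma \ref{lem:NormEq}, which reduces the claim to a statement about the standard Sobolev spaces $H^m(D)$ on the unit disc. First I would compute what the operator $u\mapsto(su')'$ becomes under the correspondence $u\mapsto u^\sharp$. Writing $r^2=x^2+y^2$ so that $s=r^2$ and $u^\sharp(x,y)=u(r^2)$, a direct calculation gives $\partial_x u^\sharp = 2x\,u'(r^2)$ and, summing the analogous $y$-derivative, $\Delta u^\sharp = 4r^2 u''(r^2)+4u'(r^2) = 4\bigl(s u''+u'\bigr)(s)\big|_{s=r^2} = 4\bigl((su')'\bigr)^\sharp$. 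Hence $\bigl((su')'\bigr)^\sharp = \tfrac14\Delta u^\sharp$, i.e. the weighted one-dimensional operator $(s\,\cdot\,')'$ intertwines with one quarter of the two-dimensional Laplacian.

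With that identity in hand the proof is immediate: by Lemma \ref{lem:NormEq},
\[
\|(su')'\|_{X^m} \simeq \bigl\|\bigl((su')'\bigr)^\sharp\bigr\|_{H^m(D)} = \tfrac14\|\Delta u^\sharp\|_{H^m(D)} \lesssim \|u^\sharp\|_{H^{m+2}(D)} \simeq \|u\|_{X^{m+2}},
\]
where the middle inequality is just the trivial bound $\|\Delta v\|_{H^m(D)}\le \|v\|_{H^{m+2}(D)}$ coming from the fact that each second derivative lowers the Sobolev index by two, and the outer equivalences are again Lemma \ref{lem:NormEq} applied at levels $m$ and $m+2$. One should note that $u\in X^{m+2}$ forces $u^\sharp\in H^{m+2}(D)$, so $\Delta u^\sharp\in H^m(D)$ and in particular $(su')'\in X^m$, which also justifies that the left-hand side is finite.

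The only genuine point requiring care — and the step I would write out carefully rather than the rest — is the chain-rule computation showing $\bigl((su')'\bigr)^\sharp=\tfrac14\Delta u^\sharp$; it is elementary but one must be a little careful that $u^\sharp$, defined a priori only through the radial variable, is genuinely a $C^\infty$ (indeed $H^{m+2}$) function of $(x,y)$ near the origin, which is exactly the content of the bijectivity half of Lemma \ref{lem:NormEq}. Everything else is bookkeeping. An entirely self-contained alternative, avoiding the disc picture, would be to unwind the definition \eqref{WSS} of $\|\cdot\|_{X^m}$ directly: expand $(su')'=su''+u'$, apply $\partial_s^{k+j}$, and match the weighted terms $s^j\partial_s^{k+j}(su')'$ against $s^{j-?}\partial_s^{k+2+?}u$ using the Leibniz rule and the elementary bound $\|s^{a}\partial_s^{b}u\|_{L^2}\lesssim\|u\|_{X^{\,?}}$ for the appropriate indices; this works but is bookkeeping-heavy, so I would prefer the $H^m(D)$ route as the clean one.
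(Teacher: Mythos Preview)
Your proof is correct and is essentially identical to the paper's own argument: the paper also sets $A_2u=-(su')'$, notes $\Delta u^\sharp=-4(A_2u)^\sharp$, and then applies Lemma~\ref{lem:NormEq} together with $\|\Delta v\|_{H^m(D)}\le\|v\|_{H^{m+2}(D)}$ exactly as you do.
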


\begin{proof}
Put $(A_2u)(s)=-(su'(s))'$. 
Since $\Delta u^\sharp = -4 (A_2u)^\sharp$, by Lemma \ref{lem:NormEq} we have 
$\|A_2u\|_{X^m} \simeq \|\Delta u^\sharp\|_{H^m(D)} \leq \|u^\sharp\|_{H^{m+2}(D)} \simeq \|u\|_{X^{m+2}}$, 
so that we obtain the desired estimate. 
\end{proof}

\begin{lemma}\label{lem:embedding}
For any $q\in[1,\infty)$ and any $\epsilon>0$ there exist positive constants $C_q=C(q)$ and $C_\epsilon=C(\epsilon)$ such that 
for any $u\in X^1$ we have $\|u\|_{L^q} \leq C_q\|u\|_{X^1}$ and $\|s^\epsilon u\|_{L^\infty} \leq C_\epsilon\|u\|_{X^1}$. 
\end{lemma}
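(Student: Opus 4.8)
The plan is to reduce both inequalities to a single pointwise bound on $|u(s)|$ by a logarithmic weight. First I would unwind the definition \eqref{WSS}: for $m=1$ (so $k=0$) one has $\|u\|_{X^1}^2=\|u\|_{L^2}^2+\|s^{1/2}u'\|_{L^2}^2$, so $u\in X^1$ means exactly $u\in L^2$ together with $s^{1/2}u'\in L^2$. By Cauchy--Schwarz this forces $u'\in L^1(\delta,1)$ for every $\delta\in(0,1)$, hence $u$ has an absolutely continuous representative on $(0,1]$ and the fundamental theorem of calculus is available on subintervals bounded away from $s=0$.

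The main step is the pointwise estimate
\[
|u(s)|\le\bigl(\sqrt2+(\log(1/s))^{1/2}\bigr)\|u\|_{X^1},\qquad 0<s<1.
\]
To prove it, for $0<s<\sigma\le 1$ write $u(s)=u(\sigma)-\int_s^\sigma u'(t)\,dt$, estimate $\int_s^\sigma|u'(t)|\,dt=\int_s^\sigma t^{-1/2}\,t^{1/2}|u'(t)|\,dt\le(\log(1/s))^{1/2}\|s^{1/2}u'\|_{L^2}$, and then pick $\sigma=\sigma_*\in(\tfrac12,1)$ at which $|u|$ does not exceed its average over $(\tfrac12,1)$, so that $|u(\sigma_*)|\le 2\int_{1/2}^1|u|\le\sqrt2\,\|u\|_{L^2}$. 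Combining the two bounds and using $\|u\|_{L^2}\le\|u\|_{X^1}$ and $\|s^{1/2}u'\|_{L^2}\le\|u\|_{X^1}$ gives the displayed inequality.

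Finally I would set $g(s):=\sqrt2+(\log(1/s))^{1/2}$ and note that $g\in L^q(0,1)$ for every finite $q$ and that $s^\epsilon g(s)$ is bounded on $(0,1)$ for every $\epsilon>0$; then $C_q:=\|g\|_{L^q}$ and $C_\epsilon:=\sup_{0<s<1}s^\epsilon g(s)$ do the job. (Alternatively, the $L^q$ bound alone drops out of Lemma \ref{lem:NormEq}, the two-dimensional Sobolev embedding $H^1(D)\hookrightarrow L^q(D)$, and the identity $\|u^\sharp\|_{L^q(D)}^q=\pi\|u\|_{L^q}^q$ obtained from polar coordinates and the substitution $s=r^2$; but the direct argument treats both inequalities simultaneously.) I do not anticipate a genuine obstacle; the only point needing care is that $u'$ need not be integrable up to $s=0$, so the fundamental theorem of calculus must be applied on $(s,\sigma)$ with $\sigma$ bounded away from $0$ rather than at the endpoint $s=0$ itself.
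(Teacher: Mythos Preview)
Your argument is correct and takes a genuinely different route from the paper's. The paper proves the $L^q$ bound by invoking the isomorphism $X^1\ni u\mapsto u^\sharp\in H^1(D)$ of Lemma~\ref{lem:NormEq} together with the two--dimensional Sobolev embedding $H^1(D)\hookrightarrow L^q(D)$ and the identity $\|u^\sharp\|_{L^q(D)}=\pi^{1/q}\|u\|_{L^q}$; it then proves the weighted $L^\infty$ bound separately, by writing $s|u(s)|^q=\int_0^s(\sigma|u(\sigma)|^q)'\,\mathrm d\sigma$, expanding, and bounding via H\"older and the already--established $L^q$ estimate to get $s^{1/q}|u(s)|\lesssim\|u\|_{X^1}$ for every $q\ge\tfrac32$. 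You instead derive the single pointwise logarithmic bound $|u(s)|\lesssim\bigl(1+(\log(1/s))^{1/2}\bigr)\|u\|_{X^1}$ directly from the fundamental theorem of calculus and Cauchy--Schwarz, and read off both conclusions from the integrability and boundedness of $s\mapsto 1+(\log(1/s))^{1/2}$. Your approach is more elementary (it avoids the passage through $H^1(D)$), treats both inequalities at once, and the logarithmic rate is sharp in view of the counterexample $u(s)=\log\log(e/s)$ in Remark~\ref{remark:embedding}(1); the paper's approach, on the other hand, is modular and reuses machinery that the paper needs anyway.

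One small point: your displayed constant $\sqrt2+(\log(1/s))^{1/2}$ is only justified by your argument for $s<\sigma_*$. For $s\in[\sigma_*,1)$ the integral runs the other way and you get $|u(s)|\le\sqrt2\,\|u\|_{L^2}+(\log 2)^{1/2}\|s^{1/2}u'\|_{L^2}$, which can exceed $\bigl(\sqrt2+(\log(1/s))^{1/2}\bigr)\|u\|_{X^1}$ when $s$ is close to $1$. This is harmless for the lemma---just replace the constant by $\sqrt2+(\log 2)^{1/2}+(\log(1/s))^{1/2}$, or state the bound with a generic $C$---but it is worth tidying up.
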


\begin{proof}
It is easy to see that $\|u^\sharp\|_{L^q(D)}=\pi^\frac1q\|u\|_{L^q}$ for any $q\in[1,\infty]$. 
Therefore, the Sobolev embedding theorem $H^1(D) \hookrightarrow L^q(D)$ and Lemma \ref{lem:NormEq} gives the first estimate of the lemma. 
As for the second one, we let $q\geq\frac32$. 
For $s\in[0,1]$ we see that 
\begin{align*}
s|u(s)|^q
&= \int_0^s(\sigma|u(\sigma)|^q)'\mathrm{d}\sigma \\
&\leq \int_0^1( |u(\sigma)|^q + q|u(\sigma)|^{q-1}\sigma|u'(\sigma)| )\mathrm{d}\sigma \\
&\leq \|u\|_{L^{2(q-1)}}^{q-1}(\|u\|_{L^2}+q\|\sigma u'\|_{L^2}) \\
&\lesssim \|u\|_{X^1}^q,
\end{align*}
where we used the Cauchy--Schwarz inequality and the first estimate of the lemma. 
This gives $s^\frac1q|u(s)| \lesssim \|u\|_{X^1}$. 
Since $q\geq\frac32$ is arbitrary, we obtain the second estimate of the lemma. 
\end{proof}

\begin{remark}\label{remark:embedding}
\begin{enumerate}
\item[\rm(1)]
In Lemma \ref{lem:embedding} we cannot take $q=\infty$ or equivalently $\epsilon=0$. 
In other words, the embedding $X^1 \hookrightarrow L^\infty(0,1)$ does not hold. 
A counter-example is given by $u(s)=\log(\log(\frac{\mathrm{e}}{s}))$. 
\item[\rm(2)]
Unlike the standard Sobolev spaces, $u\in X^{m+1}$ does not necessarily imply $u'\in X^m$. 
A counter-example is given by $u(s)=\int_0^s\log(\log(\frac{\mathrm{e}}{\sigma}))\mathrm{d}\sigma$, which is in $X^3$. 
However, in view of the embedding $X^2 \hookrightarrow L^\infty(0,1)$ we easily check that its first derivative $u'$ is not in $X^2$. 
\item[\rm(3)]
Alternatively, we have $\|su'\|_{X^m} \leq \|u\|_{X^{m+1}}$ and $\|u'\|_{X^m} \leq \|u\|_{X^{m+2}}$. 
\end{enumerate}
\end{remark}

\begin{lemma}\label{lem:embedding2}
For a non-negative integer $j$, we have $\|\ds^j u\|_{L^\infty} \lesssim \|u\|_{X^{2j+2}}$. 
\end{lemma}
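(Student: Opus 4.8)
The plan is to reduce the weighted one-dimensional estimate $\|\ds^j u\|_{L^\infty} \lesssim \|u\|_{X^{2j+2}}$ to a standard Sobolev embedding on the disc $D$ via the correspondence $u \mapsto u^\sharp$ of Lemma \ref{lem:NormEq}, exactly as in the proofs of Lemmas \ref{lem:EstA2} and \ref{lem:embedding}. The key observation is that the one-dimensional operator $A_2$ defined by $(A_2 u)(s) = -(s u'(s))'$ corresponds to $-\tfrac14\Delta$ acting on $u^\sharp$; iterating, $A_2^j u$ corresponds to $(-\tfrac14)^j\Delta^j u^\sharp$. Therefore, by Lemma \ref{lem:NormEq} applied with regularity index $2$, we have $\|A_2^j u\|_{X^2} \simeq \|\Delta^j u^\sharp\|_{H^2(D)} \lesssim \|u^\sharp\|_{H^{2j+2}(D)} \simeq \|u\|_{X^{2j+2}}$. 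Since $X^2 \hookrightarrow L^\infty(0,1)$ (this follows from Lemma \ref{lem:embedding} with, say, $\epsilon$ replaced by the direct embedding $H^2(D)\hookrightarrow L^\infty(D)$ and Lemma \ref{lem:NormEq}, or it is implicit in Remark \ref{remark:embedding}), we conclude $\|A_2^j u\|_{L^\infty} \lesssim \|u\|_{X^{2j+2}}$, and more generally all the lower-order pieces $\|A_2^i u\|_{L^\infty}$ for $0\le i\le j$ are controlled by $\|u\|_{X^{2j+2}}$.

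The remaining step is purely one-dimensional: to express $\ds^j u$ in terms of $u$, $A_2 u$, $A_2^2 u$, $\dots$, $A_2^j u$ with coefficients that are bounded functions of $s$ on any interval bounded away from $s=0$, together with controlling the behavior near $s=0$. The point is that $A_2 u = -s u'' - u'$, so $u'' = -\tfrac1s(A_2 u + u')$; inductively one finds that $\ds^j u$ is a linear combination, with coefficients that are rational functions of $s$ (powers of $1/s$ times polynomials), of $u, A_2 u, \dots, A_2^j u$ and their derivatives of order $<j$. To make this rigorous and uniform up to $s=0$, I would instead argue on the half-line via the substitution $s = r^2$: writing $v(r) = u(r^2) = u^\sharp$ restricted to a radial slice, one has $v \in H^{2j+2}_{\mathrm{loc}}$ with norm controlled by $\|u^\sharp\|_{H^{2j+2}(D)}$ (by trace/restriction estimates, or directly from the definition of $X^m$), hence $v \in C^{2j+1}$, and then $\ds^j u(s) = \ds^j [v(\sqrt{s})]$; by the chain rule this is $\sum_{i} c_{j,i} s^{-(2j-i)/2} v^{(i)}(\sqrt s)$ for $j \le i \le 2j$ with constants $c_{j,i}$, and here one must verify the negative powers of $s$ are harmless — which holds because $v$ is even in $r$, so $v^{(i)}(0)=0$ for $i$ odd and the odd-order vanishing absorbs the half-integer powers, while the even-order terms come with integer powers $s^{-(j-i/2)}$ that are again cancelled by the vanishing of lower Taylor coefficients forced by evenness.

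The main obstacle is precisely this bookkeeping near $s = 0$: naively $\ds^j u$ involves factors $s^{-1}$ which are singular, and one must exploit that $u^\sharp \in H^{2j+2}(D)$ forces $u(s)$ to have an expansion in integer powers of $s$ up to the relevant order (equivalently, $v(r)$ is even and $C^{2j+1}$), so that all the apparent singularities cancel. Cleanly, the safest route is to avoid explicit chain-rule bookkeeping altogether: observe that on $D$, pointwise $|\ds^j u(x^2+y^2)|$ equals (a constant times) a specific radial derivative of $u^\sharp$, and $H^{2j+2}(D) \hookrightarrow C^j(D)$ by the two-dimensional Sobolev embedding, hence $\sup_D |D^j u^\sharp| \lesssim \|u^\sharp\|_{H^{2j+2}(D)}$; then one checks by induction on $j$, using $\ds u^\sharp = 2x\, u'(x^2+y^2)$ type identities together with $\ds[s u'(s)] = (A_2u)(s)$ correspondences, that $\|\ds^j u\|_{L^\infty(0,1)}$ is dominated by $\sum_{i=0}^j \|D^i u^\sharp\|_{L^\infty(D)} \lesssim \|u^\sharp\|_{H^{2j+2}(D)} \simeq \|u\|_{X^{2j+2}}$, which is the claim.
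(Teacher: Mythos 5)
Your reduction to the disc is workable in principle, but the final step as you state it is false, and that is exactly where the whole difficulty of this route sits. Writing $v(r)=u^\sharp(r,0)=u(r^2)$, one has $\ds^j u(s)=\bigl((2r)^{-1}\partial_r\bigr)^j v\big|_{r=\sqrt s}$, which is a combination of terms $r^{-(2j-i)}v^{(i)}(r)$, $1\le i\le j$. These are \emph{not} pointwise dominated by $\sum_{i\le j}\|D^iu^\sharp\|_{L^\infty(D)}$: already for $j=1$, $u'(s)=(2\sqrt s)^{-1}\partial_xu^\sharp(\sqrt s,0)$, and boundedness of $u^\sharp$ and $\nabla u^\sharp$ only yields $\sqrt s\,|u'(s)|\lesssim 1$ (the example $u(s)=\sqrt s$, $u^\sharp(x,y)=\sqrt{x^2+y^2}$ shows that $C^1$ control of $u^\sharp$ cannot bound $\|u'\|_{L^\infty}$). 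Moreover, the cancellation mechanism you invoke is not the correct one: evenness of $v$ kills the odd-order Taylor coefficients at $r=0$ but not the even ones, so for $j\ge2$ the individual terms $r^{-(2j-i)}v^{(i)}(r)$ remain singular; there is no term-by-term absorption. What actually saves the argument is that the operator $\bigl((2r)^{-1}\partial_r\bigr)^j$ annihilates every even monomial $r^{2i}$ with $0\le i<j$ (the falling factorial $i(i-1)\cdots(i-j+1)$ vanishes), so one must subtract the degree-$2j$ Taylor polynomial of $v$ at $0$ and estimate the remainder, and this requires $\|v\|_{C^{2j}}\lesssim\|u^\sharp\|_{C^{2j}(\overline D)}\lesssim\|u^\sharp\|_{H^{2j+2}(D)}$, i.e.\ sup-norm control of $2j$ derivatives of $u^\sharp$, not $j$. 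With this repair your route can be closed, but as written the key estimate is wrong and the cancellation argument is only gestured at; the same objection applies to your first variant via $A_2^j u$, where the conversion of $\ds^j u$ into $A_2^iu$ carries the identical singular coefficients near $s=0$.

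You also miss a much shorter argument, which is the one the paper uses: directly from the definition \eqref{WSS} one has the elementary inequality $\|u'\|_{X^m}\le\|u\|_{X^{m+2}}$ (Remark \ref{remark:embedding}(3)) — it is a pure reindexing of the terms of the norm — so iterating gives $\|\ds^ju\|_{X^2}\le\|u\|_{X^{2j+2}}$, and the conclusion follows from $X^2\hookrightarrow H^1\hookrightarrow L^\infty$. No passage to the disc and no bookkeeping at $s=0$ is needed.
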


\begin{proof}
Since $X^2 \hookrightarrow H^1 \hookrightarrow L^\infty$, we have $\|u\|_{L^\infty} \lesssim \|u\|_{X^2}$. 
This together with $\|u'\|_{X^m} \leq \|u\|_{X^{m+2}}$ yields the desired estimate. 
\end{proof}

The following lemma gives a weighted $L^p$ estimate for derivatives of functions defined in $(0,1)$ in terms of $X^m$ norm.

\begin{lemma}\label{lem:CalIneqLp1}
For a positive integer $k$ there exists a positive constant $C$ such that for any $p\in[2,\infty]$ and any $j=1,2,\ldots,k$ we have 
\[
\begin{cases}
 \|s^{j-\frac12-\frac1p}\ds^{k+j-1}u\|_{L^p}
  \leq C\bigl( \|s^{j-1}\ds^{k+j-1}u\|_{L^2} + \|s^{j-1}\ds^{k+j-1}u\|_{L^2}^{\frac12+\frac1p} 
   \|s^{j}\ds^{k+j}u\|_{L^2}^{\frac12-\frac1p} \bigr), \\[1ex]
 \|s^{j-\frac1p}\ds^{k+j}u\|_{L^p}
  \leq C\bigl( \|s^{j-\frac12}\ds^{k+j}u\|_{L^2}
   + \|s^{j-\frac12}\ds^{k+j}u\|_{L^2}^{\frac12+\frac1p} \|s^{j+\frac12}\ds^{k+j+1}u\|_{L^2}^{\frac12-\frac1p} \bigr).
\end{cases}
\]
Particularly, 
\[
\begin{cases}
 \displaystyle
 \sum_{j=1}^k\|s^{j-\frac12-\frac1p}\ds^{k+j-1}u\|_{L^p} \leq C\|u\|_{X^{2k}}, \\
 \displaystyle
 \sum_{j=1}^k\|s^{j-\frac1p}\ds^{k+j}u\|_{L^p} \leq C\|u\|_{X^{2k+1}}.
\end{cases}
\]
\end{lemma}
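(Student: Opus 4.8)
The plan is to prove the pointwise (in $p$) estimates first and then obtain the two summed estimates as immediate corollaries. The key tool is the elementary one-dimensional interpolation inequality: for a smooth function $v$ on $(0,1)$ and $p\in[2,\infty]$, one has $\|v\|_{L^p} \lesssim \|v\|_{L^2} + \|v\|_{L^2}^{1/2+1/p}\|v'\|_{L^2}^{1/2-1/p}$, which follows from writing $|v(s)|^2 = |v(s_0)|^2 + \int_{s_0}^s (|v|^2)'$ for a suitable $s_0$, estimating the right-hand side via Cauchy--Schwarz, and then interpolating $L^p$ between $L^2$ and $L^\infty$ (a Gagliardo--Nirenberg type argument). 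I would state this as a preliminary sublemma or simply invoke it as standard.

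For the first estimate, I would apply this interpolation inequality to $v(s) = s^{j-1}\ds^{k+j-1}u(s)$. Then $v'(s) = (j-1)s^{j-2}\ds^{k+j-1}u + s^{j-1}\ds^{k+j}u$. The first term is controlled: by the weighted Hardy-type structure built into the $X^m$ norm, or more directly by noting that for $j\geq 2$ the exponent $j-2\geq 0$ is an improvement over $j-1$ and one can bound $\|s^{j-2}\ds^{k+j-1}u\|_{L^2}\lesssim \|s^{j-1}\ds^{k+j-1}u\|_{L^2} + \|s^{j-1}\ds^{k+j}u\|_{L^2}$ (for $j=1$ the term is absent). Hence $\|v'\|_{L^2}\lesssim \|s^{j-1}\ds^{k+j-1}u\|_{L^2} + \|s^j\ds^{k+j}u\|_{L^2}$, and plugging into the interpolation inequality, using $a^{1/2-1/p}b^{\text{fixed}}\lesssim a + b$ type manipulations and the fact that all these pieces appear in $\|u\|_{X^{2k}}$, gives the claimed bound. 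Finally $\|v\|_{L^p} = \|s^{(j-1)}\ds^{k+j-1}u\|_{L^p}$; since we actually want $\|s^{j-\frac12-\frac1p}\ds^{k+j-1}u\|_{L^p}$ and $j-\frac12-\frac1p \geq j-1$ on $[0,1]$ exactly when $\frac1p\leq\frac12$, i.e. $p\geq 2$, the extra weight only helps, so $\|s^{j-\frac12-\frac1p}\ds^{k+j-1}u\|_{L^p}\leq \|s^{j-1}\ds^{k+j-1}u\|_{L^p}$ and we are done. The second estimate is proved identically with $v(s) = s^{j-\frac12}\ds^{k+j}u(s)$, noting $v' = (j-\frac12)s^{j-\frac32}\ds^{k+j}u + s^{j-\frac12}\ds^{k+j+1}u$ and that $s^{j-\frac12-\frac1p}\geq s^{j-\frac12}$ wait---here one checks the weight comparison goes the right way: we want $\|s^{j-\frac1p}\ds^{k+j}u\|_{L^p}$ and $j-\frac1p\geq j-\frac12$ for $p\geq 2$, so again the target weight dominates $s^{j-\frac12}$ and $\|s^{j-\frac1p}\ds^{k+j}u\|_{L^p}\leq\|s^{j-\frac12}\ds^{k+j}u\|_{L^p}$.

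For the summed estimates, I would sum the pointwise bounds over $j=1,\dots,k$ and observe that every term $\|s^{j-1}\ds^{k+j-1}u\|_{L^2}$, $\|s^j\ds^{k+j}u\|_{L^2}$ (resp. $\|s^{j-\frac12}\ds^{k+j}u\|_{L^2}$, $\|s^{j+\frac12}\ds^{k+j+1}u\|_{L^2}$) is, by definition \eqref{WSS}, one of the summands defining $\|u\|_{X^{2k}}$ (resp. $\|u\|_{X^{2k+1}}$) — here $\|s^{j-1}\ds^{k+j-1}u\|_{L^2}$ for $j=1$ is $\|\ds^k u\|_{L^2}\leq\|u\|_{H^k}$, and for $j\geq 2$ it is the $(j-1)$-th weighted term — together with an application of Young's inequality $ab^{1/2-1/p}\cdot c^{1/2+1/p}\lesssim a + (b+c)$ wait, more precisely $\|v\|_{L^2}^{1/2+1/p}\|v'\|_{L^2}^{1/2-1/p}\leq \|v\|_{L^2} + \|v'\|_{L^2}$ by weighted AM--GM since the exponents sum to $1$. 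The main (though mild) obstacle is the bookkeeping: making sure the weight exponents line up so that the target weighted $L^p$ norm is genuinely controlled by the $L^2$ norms appearing in $X^m$ (the inequality $p\geq 2$ is used precisely to make the target weight the largest), and handling the endpoint $j=1$ separately since there is no ``$s^{-1}$'' term available. There is no deep difficulty; it is a careful but routine interpolation-and-summation argument.
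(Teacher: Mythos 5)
Your overall plan (prove the two displayed estimates, then read off the summed bounds from the definition \eqref{WSS}) is reasonable, but the central step has a genuine gap. Applying the \emph{unweighted} interpolation inequality to $v=s^{j-1}\ds^{k+j-1}u$ forces you to control $\|v'\|_{L^2}$, and $v'$ contains the term $s^{j-1}\ds^{k+j}u$, whose weight is $s^{j-1}$, not $s^{j}$. Since $s\in(0,1)$, lowering the exponent of the weight makes the norm \emph{stronger}, so your claim $\|v'\|_{L^2}\lesssim\|s^{j-1}\ds^{k+j-1}u\|_{L^2}+\|s^{j}\ds^{k+j}u\|_{L^2}$ is false: for $k=j=1$ and $u(s)=s^{3/2}$ one has $\|v'\|_{L^2}=\|u''\|_{L^2}=\infty$ while the right-hand side (and $\|u\|_{X^{2}}$) is finite. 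The quantity $\|s^{j-1}\ds^{k+j}u\|_{L^2}$ is simply not controlled by $\|u\|_{X^{2k}}$; the same defect reappears in your Hardy-type bound for the lower-order term and in the second displayed estimate. (Incidentally, the remark that the exponent $j-2$ is ``an improvement over $j-1$'' has the weight comparison backwards.)

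Moreover this is not a repairable bookkeeping slip: your very first reduction, $\|s^{j-\frac12-\frac1p}\ds^{k+j-1}u\|_{L^p}\le\|s^{j-1}\ds^{k+j-1}u\|_{L^p}$, discards exactly the extra weight that makes the lemma true. For $k=j=1$ take $u$ with $u'(s)=\log(1/s)$; then $u\in X^{2}$ and the right-hand side of the first estimate is bounded uniformly in $p$, yet $\|u'\|_{L^\infty}=\infty$ and $\|u'\|_{L^p}=\Gamma(p+1)^{1/p}\simeq p$, so the reduced inequality fails at $p=\infty$ and admits no constant independent of $p\in[2,\infty]$, as the lemma requires. The weight has to be kept and distributed \emph{asymmetrically}: write $s^{2j-1}|\ds^{k+j-1}u(s)|^2=\int_0^s\bigl(\sigma^{2j-1}|\partial_\sigma^{k+j-1}u(\sigma)|^2\bigr)'\,\mathrm{d}\sigma$ and split $\sigma^{2j-1}=\sigma^{j-1}\cdot\sigma^{j}$ so that the derivative factor carries the full weight $\sigma^{j}$; this gives the $p=\infty$ case with the weight $s^{j-\frac12}$, and the general case follows by interpolating the weighted norms between the trivial $p=2$ case and $p=\infty$ (the weight exponent interpolates to $j-\frac12-\frac1p$). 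This is the paper's argument; with that main estimate in hand, your passage to the summed bounds via the definition of $\|\cdot\|_{X^{2k}}$, $\|\cdot\|_{X^{2k+1}}$ is fine.
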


\begin{proof}
It is sufficient to show the first two estimates. 
\begin{align*}
s^{2j-1}|\ds^{k+j-1}u(s)|^2
&= \int_0^s (\sigma^{2j-1}|\partial_\sigma^{k+j-1}u(\sigma)|^2)' \mathrm{d}\sigma \\
&\lesssim \int_0^1\{ \sigma^{2(j-1)}|\partial_\sigma^{k+j-1}u(\sigma)|^2
 + \sigma^{j-1}|\partial_\sigma^{k+j-1}u(\sigma)| \sigma^j|\partial_\sigma^{k+j}u(\sigma)| \}\mathrm{d}\sigma \\
&\leq \|\sigma^{j-1}\partial_\sigma^{k+j-1}u\|_{L^2}^2
 + \|\sigma^{j-1}\partial_\sigma^{k+j-1}u\|_{L^2}\|\sigma^{j}\partial_\sigma^{k+j}u\|_{L^2},
\end{align*}
which shows the first estimate in the case $p=\infty$. 
The case $p=2$ is trivial. 
Therefore, interpolating the estimates in the cases $p=2$ and $p=\infty$ we obtain the first estimate in the case $2\leq p\leq \infty$. 
The second estimate can be proved in the same way so that we omit the proof. 
\end{proof}

\begin{lemma}\label{lem:Algebra}
Let $m$ be a non-negative integer. 
Then, we have 
\[
\begin{cases}
 \|uv\|_{L^2} \lesssim \|u\|_{X^1} \|v\|_{X^1}, \\
 \|uv\|_{X^m} \lesssim \|u\|_{X^{m \vee 2}} \|v\|_{X^m}.
\end{cases}
\]
\end{lemma}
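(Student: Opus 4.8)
The plan is to prove both bilinear estimates by transferring to the disc $D\subset\mathbb{R}^2$ via the map $u\mapsto u^\sharp$ of Lemma \ref{lem:NormEq}, where the weighted norms $\|\cdot\|_{X^m}$ become the ordinary Sobolev norms $\|\cdot\|_{H^m(D)}$, and then invoking the standard product (Moser-type) estimates on the two-dimensional domain $D$. The crucial elementary observation is that the map is multiplicative: $(uv)^\sharp(x,y) = u(x^2+y^2)v(x^2+y^2) = u^\sharp(x,y)\,v^\sharp(x,y)$, so that questions about $\|uv\|_{X^m}$ reduce verbatim to questions about $\|u^\sharp v^\sharp\|_{H^m(D)}$.

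For the first estimate I would argue as follows. By Lemma \ref{lem:NormEq} and multiplicativity, $\|uv\|_{L^2} = \pi^{-1/2}\|u^\sharp v^\sharp\|_{L^2(D)}$ (the constant coming from the change of variables $s=x^2+y^2$, which is harmless), and then by H\"older's inequality $\|u^\sharp v^\sharp\|_{L^2(D)} \leq \|u^\sharp\|_{L^4(D)}\|v^\sharp\|_{L^4(D)}$. The Sobolev embedding $H^1(D)\hookrightarrow L^4(D)$ in two dimensions, together again with Lemma \ref{lem:NormEq}, gives $\|u^\sharp\|_{L^4(D)}\lesssim\|u^\sharp\|_{H^1(D)}\simeq\|u\|_{X^1}$, and similarly for $v$. (Alternatively, one can avoid the embedding constants and simply cite Lemma \ref{lem:embedding}: $\|uv\|_{L^2}\leq\|u\|_{L^4}\|v\|_{L^4}\lesssim\|u\|_{X^1}\|v\|_{X^1}$ directly on $(0,1)$, which is the cleaner route and does not even need the $\sharp$-transfer.)

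For the second estimate, in the case $m=0$ it coincides with the first one (since $X^0=L^2$ and $X^{0\vee 2}=X^2$), so assume $m\geq1$. By Lemma \ref{lem:NormEq} and multiplicativity, $\|uv\|_{X^m}\simeq\|u^\sharp v^\sharp\|_{H^m(D)}$. Now I invoke the classical product estimate on the bounded Lipschitz domain $D\subset\mathbb{R}^2$: for $m\geq1$,
\[
\|fg\|_{H^m(D)} \lesssim \|f\|_{H^m(D)}\|g\|_{L^\infty(D)} + \|f\|_{L^\infty(D)}\|g\|_{H^m(D)},
\]
and, since $H^2(D)\hookrightarrow L^\infty(D)$ in two dimensions (but $H^1(D)\not\hookrightarrow L^\infty(D)$), one has $\|f\|_{L^\infty(D)}\lesssim\|f\|_{H^{m\vee 2}(D)}$. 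Combining, $\|u^\sharp v^\sharp\|_{H^m(D)}\lesssim\|u^\sharp\|_{H^{m\vee 2}(D)}\|v^\sharp\|_{H^m(D)}$, and translating back via Lemma \ref{lem:NormEq} yields $\|uv\|_{X^m}\lesssim\|u\|_{X^{m\vee 2}}\|v\|_{X^m}$, as claimed. The reason the index $m\vee 2$ (rather than $m$) appears on one factor is precisely that in dimension two one needs $H^2$, not $H^1$, to control $L^\infty$; this is the only subtlety, and it is exactly why the lemma is stated with the $\vee 2$.

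The main obstacle — really the only non-bookkeeping point — is making sure the standard product estimate and the $L^\infty$ embedding are legitimately available on $D$: $D$ is a smooth bounded domain in $\mathbb{R}^2$, so both the Moser product inequality and the Sobolev embedding $H^2(D)\hookrightarrow C^0(\overline D)$ hold in their usual form, and no regularity issue arises. One should also keep track that the asymmetry in the product estimate (putting the higher-regularity factor on $u$) is compatible with the asserted asymmetric form $\|u\|_{X^{m\vee 2}}\|v\|_{X^m}$; since the left-hand side $\|uv\|_{X^m}$ is symmetric in $u,v$, the estimate a fortiori holds after symmetrizing, but the stated one-sided form is what the transfer gives directly.
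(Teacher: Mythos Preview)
Your approach---transferring to the disc via Lemma~\ref{lem:NormEq} and the multiplicativity $(uv)^\sharp=u^\sharp v^\sharp$, then invoking standard product estimates on $H^m(D)$---is exactly what the paper does; its proof is the one-line observation that the lemma follows from Lemma~\ref{lem:NormEq} together with $\|UV\|_{L^2(D)}\lesssim\|U\|_{H^1(D)}\|V\|_{H^1(D)}$ and $\|UV\|_{H^m(D)}\lesssim\|U\|_{H^{m\vee2}(D)}\|V\|_{H^m(D)}$.

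There is, however, a gap in your justification of the second inequality at $m=1$. The Moser estimate you invoke, $\|fg\|_{H^1}\lesssim\|f\|_{H^1}\|g\|_{L^\infty}+\|f\|_{L^\infty}\|g\|_{H^1}$, has \emph{two} terms, and you only control $\|f\|_{L^\infty}\lesssim\|f\|_{H^2}$; the first term still carries $\|g\|_{L^\infty}$, which is not bounded by $\|g\|_{H^1}$ in two dimensions (as you yourself remark). So the Moser estimate alone does not deliver the asymmetric bound $\|u^\sharp v^\sharp\|_{H^1}\lesssim\|u^\sharp\|_{H^2}\|v^\sharp\|_{H^1}$. The fix is immediate: treat the gradient term directly via H\"older and the two-dimensional embedding $H^1\hookrightarrow L^4$, namely $\|(\nabla u^\sharp)\, v^\sharp\|_{L^2}\leq\|\nabla u^\sharp\|_{L^4}\|v^\sharp\|_{L^4}\lesssim\|u^\sharp\|_{H^2}\|v^\sharp\|_{H^1}$, and combine with $\|u^\sharp\,\nabla v^\sharp\|_{L^2}\leq\|u^\sharp\|_{L^\infty}\|\nabla v^\sharp\|_{L^2}\lesssim\|u^\sharp\|_{H^2}\|v^\sharp\|_{H^1}$. (A minor side remark: the case $m=0$ of the second line reads $\|uv\|_{L^2}\lesssim\|u\|_{X^2}\|v\|_{L^2}$, which follows from $X^2\hookrightarrow L^\infty$ and is not literally the same statement as the first line.)
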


\begin{proof}
These estimates follows directly from Lemma \ref{lem:NormEq} together with the well-known inequalities 
$\|UV\|_{L^2(D)} \lesssim \|U\|_{H^1(D)} \|V\|_{H^1(D)}$ and $\|UV\|_{H^m(D)} \lesssim \|U\|_{H^{m\vee2}(D)} \|V\|_{H^m(D)}$. 
\end{proof}

\begin{lemma}\label{lem:EstCompFunc1}
Let $m$ be an non-negative integer, $\Omega$ an open set in $\mathbb{R}^N$, and $F\in C^m(\Omega)$. 
There exists a positive constant $C=C(m,N)$ such that if $u\in X^m$ takes its value in a compact set $K$ in $\Omega$, then we have 
\[
\|F(u)\|_{X^m} \leq C\|F\|_{C^m(K)}(1+\|u\|_{X^m})^m.
\]
If, in addition, $u$ depends also on time $t$, then we have also 
\[
\begin{cases}
 \opnorm{ F(u(t)) }_m \leq C\|F\|_{C^m(K)}(1+\opnorm{ u(t) }_m )^m, \\
 \opnorm{ F(u(t)) }_{m,*} \leq C\|F\|_{C^m(K)}(1+\opnorm{ u(t) }_{m,*} )^m.
\end{cases}
\]
\end{lemma}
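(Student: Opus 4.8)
The proof is a standard Moser-type composition estimate, and the natural route is to transfer everything to the disc $D$ via Lemma \ref{lem:NormEq} and invoke the classical composition estimate for $H^m(D)$. First I would observe that $F(u)^\sharp = \tilde F(u^\sharp)$, where $\tilde F = F$ acting pointwise on the $\mathbb{R}^N$-valued function $u^\sharp$ on $D$, so that by Lemma \ref{lem:NormEq} we have $\|F(u)\|_{X^m} \simeq \|\tilde F(u^\sharp)\|_{H^m(D)}$. Since $u$ takes values in the compact set $K\subset\Omega$, so does $u^\sharp$, and the standard nonlinear composition (Moser) estimate on the bounded smooth domain $D$ gives $\|\tilde F(u^\sharp)\|_{H^m(D)} \le C(m,N)\,\|F\|_{C^m(K)}(1+\|u^\sharp\|_{H^m(D)})^m$; combining with $\|u^\sharp\|_{H^m(D)}\simeq\|u\|_{X^m}$ yields the first inequality. (The classical estimate is usually stated with $\|u^\sharp\|_{H^m}^m$ rather than $(1+\|u^\sharp\|_{H^m})^m$ and with a factor accounting for $\|F\|_{L^\infty}$; one absorbs the low-order term using $\|F\|_{C^m}\ge\|F\|_{L^\infty}$ and the $1+(\cdot)$ to cover the case $m=0$ and small-norm $u$.)

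For the time-dependent statements, I would reduce them to the stationary one by expanding the time derivatives. For $j\le m$, $\dt^j F(u)$ is, by the Faà di Bruno formula, a finite sum of terms of the form $(\partial^\beta F)(u)\prod_{i} \dt^{j_i} u$ with $\sum_i j_i = j$ and $|\beta|$ equal to the number of factors. Applying the $X^{m-j}$ norm to such a term and using the algebra/Moser-type product inequality of Lemma \ref{lem:Algebra} together with the already-established bound on $\|(\partial^\beta F)(u)\|_{X^{(m-j)\vee 2}}$, one estimates each factor $\dt^{j_i}u$ in the appropriate $X^{m-j_i}$ norm; since $\sum_i j_i = j$ one has $\sum_i (m - j_i) \ge m - j$ (and at least one factor controls the top $m-j$ derivatives), so the product is controlled by $\|F\|_{C^m(K)}$ times a product of factors each bounded by $1+\opnorm{u(t)}_m$, giving at most the $m$-th power. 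Summing over $j$ from $0$ to $m$ and over the finitely many Faà di Bruno terms yields $\opnorm{F(u(t))}_m \le C\|F\|_{C^m(K)}(1+\opnorm{u(t)}_m)^m$. The estimate for $\opnorm{\cdot}_{m,*}$ follows by exactly the same computation, simply restricting the sum over $j$ to $0\le j\le m-1$ and noting that then each $\dt^{j_i}u$ carries at most $m-1$ time derivatives, so everything is controlled by $\opnorm{u(t)}_{m,*}$.

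The one point requiring a little care is the bookkeeping that shows the product of the $X$-norms of the Faà di Bruno factors is controlled by $(1+\opnorm{u(t)}_m)^m$ and not a higher power. The cleanest way is to note that in a term with $\ell$ factors $\dt^{j_1}u,\dots,\dt^{j_\ell}u$ (so $|\beta|=\ell$), one applies Lemma \ref{lem:Algebra} repeatedly to bound the $X^{m-j}$ norm of the product $(\partial^\beta F)(u)\,\dt^{j_1}u\cdots\dt^{j_\ell}u$ by $C$ times the product of $X^{(m-j)\vee 2}$-type norms of the factors, which is bounded by $C\|F\|_{C^m(K)}(1+\opnorm{u(t)}_m)^{\ell}\cdot(1+\opnorm{u(t)}_m)$ — the extra factor coming from $\|(\partial^\beta F)(u)\|_{X^{(m-j)\vee2}}\le C\|F\|_{C^m}(1+\opnorm{u(t)}_m)^{(m-j)\vee2}$, which one must also fold in. So the honest power is $\ell + ((m-j)\vee 2)$, and since $\ell\le j$ this is $\le m$ whenever $m\ge 2$; the remaining small cases $m=0,1$ are immediate. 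I expect this degree-counting — making sure the exponent never exceeds $m$ after accounting for the composition estimate applied to $\partial^\beta F$ itself — to be the only mildly delicate part; everything else is a direct appeal to Lemmas \ref{lem:NormEq} and \ref{lem:Algebra} and the classical Moser estimate on $D$.
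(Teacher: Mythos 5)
Your first estimate is proved exactly as in the paper: transfer to the disc by Lemma \ref{lem:NormEq} and quote the classical Moser composition estimate on $H^m(D)$; nothing to add there. The paper also disposes of the two time-dependent estimates by the same transfer (the analogous composition estimate for the mixed norms $\sum_{j}\|\dt^jU(t)\|_{H^{m-j}(D)}$ is standard), whereas you attempt an explicit Fa\`a di Bruno argument based on Lemma \ref{lem:Algebra} and the stationary estimate applied to $\partial^\beta F$. That argument, as written, has a genuine gap in the low-order cases $j=m$ and $j=m-1$, i.e.\ when the target norm is $X^0=L^2$ or $X^1$. Two things go wrong there. First, your own degree count is incorrect: you claim $\ell+\bigl((m-j)\vee 2\bigr)\le m$ whenever $m\ge 2$ because $\ell\le j$, but for $j=m$ with the partition $j_1=\dots=j_\ell=1$ one has $\ell=m$ and the count is $m+2>m$; the same overshoot occurs for $j=m-1$. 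Second, and more seriously, controlling $\|(\partial^\beta F)(u)\|_{X^{(m-j)\vee 2}}$ by the stationary estimate costs $\|\partial^\beta F\|_{C^{(m-j)\vee2}(K)}\le\|F\|_{C^{|\beta|+((m-j)\vee2)}(K)}$, which exceeds the assumed $C^m$ regularity of $F$ as soon as $|\beta|>m-2$ (e.g.\ $m=4$, $j=\ell=4$ would require $F\in C^6$). In addition, the blanket use of Lemma \ref{lem:Algebra} forces all but one factor $\dt^{j_i}u$ into $X^{(m-j)\vee2}=X^2$, which is not controlled by $\opnorm{u(t)}_m$ when $j_i\in\{m-1,m\}$ (for $m=4$, $j=4=3+1$ you would need $\|\dt^3u\|_{X^2}$, while only $\|\dt^3u\|_{X^1}$ is available).

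The strategy is repairable, but with a different distribution of norms in these cases: bound $(\partial^\beta F)(u)$ (and the terms produced when an $s$-derivative falls on it) simply in $L^\infty$ by $\|F\|_{C^m(K)}$, which is legitimate since $|\beta|\le j\le m$, and estimate the product $\prod_i\dt^{j_i}u$ in $L^2$ (resp.\ the weighted $X^1$ seminorm) by H\"older together with the embeddings $X^1\hookrightarrow L^q$ ($q<\infty$), $X^2\hookrightarrow L^\infty$ and Lemma \ref{lem:CalIneqLp1}; this yields the stated power $(1+\opnorm{u(t)}_m)^m$ with only $\|F\|_{C^m(K)}$. Alternatively, and closer to the paper, note $(\dt^ju)^\sharp=\dt^j(u^\sharp)$ and invoke the known Moser-type composition estimate for the norms $\sum_{j\le m}\|\dt^jU(t)\|_{H^{m-j}(D)}$ (and $\sum_{j\le m-1}$ for the $*$-norm), then pull back with Lemma \ref{lem:NormEq}; that is what the paper's ``similarly'' amounts to. As it stands, your second and third estimates are not established.
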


\begin{proof}
It is well-known that if $U\in H^m(D)$ takes its value in a compact set $K$, then we have 
$\|F(U)\|_{H^m(D)} \leq C\|F\|_{C^m(K)}(1+\|U\|_{H^m(D)})^m$. 
This together with Lemma \ref{lem:NormEq} implies the first estimate of the lemma. 
Similarly, we can obtain the later estimates. 
\end{proof}

\begin{lemma}\label{lem:commutator}
Let $j$ be a non-negative integer. 
It holds that 
\begin{align*}
& \|s^\frac{j}{2}[\ds^{j+1},u]v\|_{L^2} \lesssim
\begin{cases}
 \min\{ \|u'\|_{L^\infty}\|v\|_{L^2}, \|u'\|_{L^2}\|v\|_{L^\infty} \} &\mbox{for}\quad j=0, \\
 \min\{ \|u'\|_{X^2}\|v\|_{X^1}, \|u'\|_{X^1}\|v\|_{X^2} \} &\mbox{for}\quad j=1, \\
 \|u'\|_{X^j}\|v\|_{X^j} &\mbox{for}\quad j\geq2.
\end{cases}
\end{align*}
\end{lemma}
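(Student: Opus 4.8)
\textbf{Proof plan for Lemma \ref{lem:commutator}.}

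The plan is to expand the commutator by the Leibniz rule and estimate each resulting term using the weighted $L^p$ inequalities from Lemma \ref{lem:CalIneqLp1} together with the algebra and embedding properties of $X^m$ (Lemmas \ref{lem:Algebra}, \ref{lem:embedding}, \ref{lem:embedding2}). Writing $[\ds^{j+1},u]v = \sum_{i=0}^{j}\binom{j+1}{i}(\ds^{j+1-i}u)(\ds^i v)$, we must bound $\|s^{j/2}(\ds^{j+1-i}u)(\ds^i v)\|_{L^2}$ for $0\le i\le j$. It is convenient to set $\ell = j-i \in \{0,1,\dots,j\}$, so the factor on $u$ is $\ds^{\ell+1}u = \ds^\ell u'$, the factor on $v$ is $\ds^{j-\ell}v$, and the weight is $s^{j/2} = s^{\ell/2}\cdot s^{(j-\ell)/2}$. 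The idea is to distribute the weight $s^{\ell/2}$ onto the $u'$ factor and $s^{(j-\ell)/2}$ onto the $v$ factor, matching exactly the weights that appear in the definitions \eqref{WSS} of $\|u'\|_{X^\ell}$ and $\|v\|_{X^{j-\ell}}$, and then apply Hölder with a suitable split of exponents.

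For the case $j=0$ the commutator is simply $u'v$, and the two bounds are immediate from $\|u'v\|_{L^2}\le\|u'\|_{L^\infty}\|v\|_{L^2}$ and $\le\|u'\|_{L^2}\|v\|_{L^\infty}$. For $j=1$ we have $[\ds^2,u]v = 2u'v' + u''v$; using $X^2\hookrightarrow L^\infty$ (Lemma \ref{lem:embedding2} with $k=0$) and Lemma \ref{lem:Algebra} one estimates $\|s^{1/2}u'v'\|_{L^2}$ and $\|s^{1/2}u''v\|_{L^2}$, distributing the half-power of $s$ so that one factor lands in an unweighted $L^\infty$ space and the other in the appropriate weighted $L^2$ norm appearing in $X^1$; the two alternatives in the $\min$ correspond to the two ways of assigning the $L^\infty$ role to $u'$ versus $v$. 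For $j\ge 2$ the general term $s^{\ell/2}(\ds^\ell u')\,s^{(j-\ell)/2}(\ds^{j-\ell}v)$ is handled by Hölder's inequality with exponents $p$ and $p'$ chosen so that $s^{\ell/2}\ds^\ell u' \in L^p$ follows from Lemma \ref{lem:CalIneqLp1} applied to $u'$ (with the weight exponent $\ell/2 = (\ell - \tfrac12) + \tfrac12$, identifying $\tfrac1{p'}$ with the leftover power and using $\ell\le j$) and $s^{(j-\ell)/2}\ds^{j-\ell}v \in L^{p'}$ follows from the same lemma applied to $v$; the extreme cases $\ell=0$ (so $u'$ sits in $L^\infty$ paired with $v$ in weighted $L^2$, controlled by $\|v\|_{X^j}$) and $\ell=j$ (so $v$ or a low derivative of $v$ sits in $L^\infty$ via Lemma \ref{lem:embedding2}) are treated directly, and the intermediate $\ell$ by the interpolated estimates.

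The main obstacle is the bookkeeping of weights and Lebesgue exponents: one has to verify in each case that the half-integer powers of $s$ split cleanly as $s^{\ell/2}\cdot s^{(j-\ell)/2}$ in a way compatible with the specific weighted norms in \eqref{WSS} (which behave differently for even and odd order), that the constraint $\alpha + \tfrac1p \le 1$ in Lemma \ref{lem:CalIneqLp1} is respected when choosing the exponent split, and that all the lower-order derivative factors that are thrown into $L^\infty$ are indeed controlled by $\|u'\|_{X^j}$ or $\|v\|_{X^j}$ through Lemma \ref{lem:embedding2}. Once the right pairing of weights and exponents is identified, each individual estimate is a routine application of Hölder together with the cited lemmas, and summing the finitely many terms of the Leibniz expansion yields the stated bounds.
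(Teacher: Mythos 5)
Your plan follows essentially the same route as the paper's proof: expand the commutator by Leibniz, split the weight $s^{\frac{j}{2}}$ between the two factors, and pair a (weighted) $L^\infty$ bound from Lemma \ref{lem:CalIneqLp1} or the embeddings with a weighted $L^2$ bound controlled by $\|u'\|_{X^j}$, $\|v\|_{X^j}$; your symmetric split $s^{\frac{\ell}{2}}\cdot s^{\frac{j-\ell}{2}}$ always dominates the minimal weights the paper assigns in its even/odd case analysis, so the same estimates close. Two cosmetic slips: the constraint $\alpha+\frac1p\le1$ belongs to Lemma \ref{lem:EstSolBVP3}, not Lemma \ref{lem:CalIneqLp1}, and for $j=1$ the term $s^{\frac12}u''v$ in the bound by $\|u'\|_{X^2}\|v\|_{X^1}$ requires the weighted estimate $\|s^{\frac12}u''\|_{L^\infty}\lesssim\|u'\|_{X^2}$ rather than an unweighted $L^\infty$ factor.
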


\begin{proof}
The case $j=0$ is trivial. 
By Lemmas \ref{lem:CalIneqLp1} and \ref{lem:embedding2} with $j=0$, we see that 
\begin{align*}
\|s^\frac12[\ds^2,u]v\|_{L^2}
&\leq \|s^\frac12u''\|_{L^\infty}\|v\|_{L^2} + 2\|u'\|_{L^\infty}\|s^\frac12v'\|_{L^2} \\
&\lesssim \|u'\|_{X^2} \|v\|_{X^1}, \\
\|s^\frac12[\ds^2,u]v\|_{L^2}
&\leq \|s^\frac12u''\|_{L^2}\|v\|_{L^\infty} + 2\|u'\|_{L^2}\|s^\frac12v'\|_{L^\infty} \\
&\lesssim \|u'\|_{X^1} \|v\|_{X^2},
\end{align*}
which yield the estimate in the case $j=1$. 
In the case $j\geq2$, we evaluate it as 
\[
\|s^\frac{j}{2}[\ds^{j+1},u]v\|_{L^2}
\lesssim \|s^\frac{j}{2}\ds^j u'\|_{L^2} \|v\|_{L^\infty} 
 +\sum_{j_1+j_2=j, j_1\leq j-1} \|s^{\alpha_1}\ds^{j_1}u'\|_{L^\infty} \|s^{\alpha_2}\ds^{j_2}v\|_{L^2},
\]
where $\alpha_1$ and $\alpha_2$ should be taken so that $\alpha_1+\alpha_2 \leq \frac{j}{2}$. 
We choose these indices as follows: 
\begin{enumerate}
\item[(1)]
The case $j=2k$: 
\[
\alpha_1 = 
\begin{cases}
 0 &\mbox{if}\quad j_1\leq k-1, \\
 j_1-k+\frac12 &\mbox{if}\quad j_1\geq k, 
\end{cases}
\qquad
\alpha_2 = 
\begin{cases}
 0 &\mbox{if}\quad j_2\leq k, \\
 j_2-k &\mbox{if}\quad j_2\geq k+1.
\end{cases}
\]
\item[(2)]
The case $j=2k+1$: 
\[
\alpha_1 = 
\begin{cases}
 0 &\mbox{if}\quad j_1\leq k-1, \\
 \frac12 &\mbox{if}\quad j_1=k, \\
 j_1-k &\mbox{if}\quad j_1\geq k+1, 
\end{cases}
\qquad
\alpha_2 = 
\begin{cases}
 0 &\mbox{if}\quad j_2\leq k, \\
 j_2-k-\frac12 &\mbox{if}\quad j_2\geq k+1.
\end{cases}
\]
\end{enumerate}
Then, we see that these indices satisfy the desired property. 
Here, in the case $j=2k+1$ by Lemmas \ref{lem:embedding} with $\epsilon=\frac12$ we have 
$\|s^\frac12\ds^ku'\|_{L^\infty} \lesssim \|\ds^ku'\|_{X^1} \leq \|u'\|_{X^j}$, 
which together with Lemma \ref{lem:CalIneqLp1} implies $\|s^{\alpha_1}\ds^{j_1}u'\|_{L^\infty} \lesssim \|u'\|_{X^j}$ and 
$\|s^{\alpha_2}\ds^{j_2}v\|_{L^2} \lesssim \|v\|_{X^j}$. 
Therefore, we obtain the the estimate in the case $j\geq2$. 
\end{proof}

\subsection{Weighted Sobolev space $Y^m$}
For a non-negative integer $m$ we define another weighted Sobolev space $Y^m$ as the set of all function $u$ defined in the open interval $(0,1)$ 
equipped with a norm $\|\cdot\|_{Y^m}$ defined by 
\[
\|u\|_{Y^m}^2 =
\begin{cases}
 \|s^\frac12 u\|_{L^2}^2 &\mbox{for}\quad m=0, \\
 \displaystyle
 \|u\|_{H^k}^2 + \sum_{j=1}^{k+1}\|s^j\ds^{k+j}u\|_{L^2}^2 &\mbox{for}\quad m=2k+1, \\
 \displaystyle
 \|u\|_{H^k}^2 + \sum_{j=1}^{k+2}\|s^{j-\frac12}\ds^{k+j}u\|_{L^2}^2 &\mbox{for}\quad m=2k+2.
\end{cases}
\]
Obviously, it holds that $\|u\|_{Y^m} \leq \|u\|_{Y^{m+1}}$ and $\|u\|_{Y^m} \leq \|u\|_{X^m} \leq \|u\|_{Y^{m+1}}$ for $m=0,1,2,\ldots$. 
This function space $Y^m$ is introduced so that the identity 
\begin{equation}\label{Ym1}
\|u\|_{X^{m+1}}^2 = \|u\|_{L^2}^2 + \|u'\|_{Y^m}^2
\end{equation}
holds for any $m=0,1,2,\ldots$. 
Particularly, $u\in X^{m+1}$ implies $u'\in Y^m$. 
Note also that the identity 
\begin{equation}\label{Ym2}
\|u\|_{Y^{m+1}}^2 = \|u\|_{X^m}^2 + \|s^{\frac{m+2}{2}}\ds^{m+1}u\|_{L^2}^2
\end{equation}
holds for any $m=0,1,2,\ldots$.

\begin{lemma}\label{lem:CalIneq1}
It holds that 
\[
\|u'v'\|_{Y^m} \lesssim
\begin{cases}
 \|u\|_{X^{m+2}} \|v\|_{X^{m+2}} &\mbox{for}\quad m=0,1, \\
 \|u\|_{X^{m+1\vee4}} \|v\|_{X^{m+1}} &\mbox{for}\quad m=0,1,2,\ldots.
\end{cases}
\]
\end{lemma}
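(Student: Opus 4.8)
The plan is to reduce the estimate for $\|u'v'\|_{Y^m}$ to the product estimates for the spaces $X^m$ already available, namely Lemma \ref{lem:Algebra}, together with the identities \eqref{Ym1} and \eqref{Ym2} relating $Y^m$ and $X^m$. The basic observation is that $Y^m$ sits between $X^m$ and $X^{m+1}$: we have $\|w\|_{Y^m}\le\|w\|_{X^m}$ from the chain of inequalities recorded just after the definition of $Y^m$, and, by \eqref{Ym2}, $\|w\|_{Y^{m+1}}^2=\|w\|_{X^m}^2+\|s^{(m+2)/2}\ds^{m+1}w\|_{L^2}^2$, so the only quantity in $\|w\|_{Y^{m+1}}$ that is \emph{not} controlled by $\|w\|_{X^m}$ is the single top-order weighted term $\|s^{(m+2)/2}\ds^{m+1}w\|_{L^2}$.

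First I would dispose of the generic bound $\|u'v'\|_{Y^m}\lesssim\|u\|_{X^{m+1\vee4}}\|v\|_{X^{m+1}}$ for all $m\ge0$. Write $w=u'v'$ and use $\|w\|_{Y^m}\le\|w\|_{X^m}$ together with Remark \ref{remark:embedding}(3), which gives $\|u'\|_{X^k}\le\|u\|_{X^{k+2}}$ — but that would cost too many derivatives, so instead I would apply Lemma \ref{lem:Algebra} in the form $\|u'v'\|_{X^m}\lesssim\|u'\|_{X^{m\vee2}}\|v'\|_{X^m}$ and then bound $\|v'\|_{X^m}$ not by $\|v\|_{X^{m+2}}$ but by observing, via \eqref{Ym1}, that $\|v'\|_{X^m}\le\|v'\|_{Y^m}+\|v'\|_{L^2}$ — this circular route does not close cleanly. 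The clean route: $\|u'v'\|_{X^m}\lesssim\|u'\|_{X^{m\vee2}}\|v'\|_{X^m}$, and then use $\|u'\|_{X^{m\vee2}}\le\|u\|_{X^{(m\vee2)+1}}$ which follows from \eqref{Ym1} since $\|u'\|_{X^k}^2\le\|u'\|_{Y^{k}}^2+\cdots$; more directly, note $\|u'\|_{X^k}\le\|u'\|_{Y^{k}}$? No — rather use that by \eqref{Ym1} $\|u'\|_{Y^{k-1}}\le\|u\|_{X^{k}}$ and then $X^k$ is larger than $Y^{k-1}$, i.e. $\|u'\|_{X^{k-1}}\le\|u'\|_{Y^{k-1}}\le\|u\|_{X^k}$. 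Hence $\|u'\|_{X^{m\vee2}}\le\|u\|_{X^{(m\vee2)+1}}\le\|u\|_{X^{(m+1)\vee4}}$ (the right side being at least $X^3$ when $m\le 2$ and exactly $X^{m+1}$ when $m\ge3$, which is why the paper writes $m+1\vee4$), and similarly $\|v'\|_{X^m}\le\|v\|_{X^{m+1}}$. Combining yields the claimed bound with the $\max$ index; the two stated cases $m=0,1$ land inside this by taking the index to be $4$.

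The sharper statement for $m=0,1$, namely $\|u'v'\|_{Y^m}\lesssim\|u\|_{X^{m+2}}\|v\|_{X^{m+2}}$, requires a bit more care because $Y^0$ and $Y^1$ are genuinely weaker than $X^0$ and $X^1$ and one wants to exploit the weights to save derivatives; this is the step I expect to be the main obstacle. For $m=0$, $\|u'v'\|_{Y^0}=\|s^{1/2}u'v'\|_{L^2}$, and I would distribute the weight as $s^{1/2}=s^{1/4}\cdot s^{1/4}$ and bound $\|s^{1/4}u'\|_{L^4}\|s^{1/4}v'\|_{L^4}$, estimating each factor by Lemma \ref{lem:CalIneqLp1} (with $k=1$, $j=1$, $p=4$, which gives $\|s^{1/4}u'\|_{L^4}\lesssim\|s^{1/2}u''\|_{L^2}^{1/4}\|u''\|_{L^2}^{3/4}+\|s^{1/2}u''\|_{L^2}$, controlled by $\|u\|_{X^2}$ via \eqref{Ym1} and the definition of $X^2$) — wait, one must be careful that $\|u''\|_{L^2}$ is \emph{not} part of $\|u\|_{X^2}$; rather $\|u\|_{X^2}^2=\|u\|_{H^1}^2+\|s u''\|_{L^2}^2$, so the unweighted $L^2$ norm of $u''$ is unavailable. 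The correct split is therefore $s^{1/2}u'v'$ with the \emph{full} weight on one factor: $\|s^{1/2}u'v'\|_{L^2}\le\|v'\|_{L^\infty}\|s^{1/2}u'\|_{L^2}$? but $\|v'\|_{L^\infty}$ is not controlled by $\|v\|_{X^2}$ either (only $\|s^\epsilon v'\|_{L^\infty}$, or $\|v'\|_{L^\infty}\lesssim\|v\|_{X^4}$ by Lemma \ref{lem:embedding2}). So the genuinely right approach is $\|s^{1/2}u'v'\|_{L^2}\le\|s^\epsilon v'\|_{L^\infty}\,\|s^{1/2-\epsilon}u'\|_{L^2}$ for small $\epsilon$, using Lemma \ref{lem:embedding} ($\|s^\epsilon v'\|_{L^\infty}\lesssim\|v'\|_{X^1}\le\|v\|_{X^2}$ via \eqref{Ym1}) and $\|s^{1/2-\epsilon}u'\|_{L^2}\le\|s^{1/2}u'\|_{L^2}\lesssim\|u\|_{X^2}$. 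For $m=1$, by \eqref{Ym2} it suffices to control $\|u'v'\|_{X^0}=\|u'v'\|_{L^2}$ and the single top term $\|s^{3/2}(u'v')''\|_{L^2}$; the former is $\lesssim\|u'\|_{X^1}\|v'\|_{X^1}\lesssim\|u\|_{X^2}\|v\|_{X^2}$ by Lemma \ref{lem:Algebra} and \eqref{Ym1}, and for the latter I would expand $(u'v')''=u'''v'+2u''v''+u'v'''$, put the weight $s^{3/2}=s\cdot s^{1/2}$ or $s^{3/2}=s^{3/2}$ entirely on the third-derivative factor and bound it in $L^2$ by the definition of $X^3\supset$ the relevant term, while the other factor (a first or second derivative) is put in $L^\infty$ with a harmless weight via Lemmas \ref{lem:embedding}, \ref{lem:embedding2} and $\|\cdot\|_{X^3}$; since $3\le m+2=3$ this costs exactly $\|u\|_{X^3}\|v\|_{X^3}$. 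The whole argument is routine distribution of weights plus the calculus inequalities of Section \ref{sect:FS}; the only real care is to keep every weighted $L^\infty$ factor of the form $\|s^\epsilon\ds^i(\cdot)\|_{L^\infty}$ with $\epsilon>0$ when $i$ is at the borderline, so that Lemma \ref{lem:embedding} rather than the false embedding $X^1\hookrightarrow L^\infty$ is invoked, and to track that the residual weight on the paired $L^2$ factor stays nonnegative.
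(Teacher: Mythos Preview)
Your argument for the general case $m\ge 0$ collapses at the step ``$\|u'\|_{X^{k-1}}\le\|u'\|_{Y^{k-1}}$''. The inequality goes the other way: the paper records $\|w\|_{Y^m}\le\|w\|_{X^m}$, not the reverse (e.g.\ $\|w\|_{Y^0}=\|s^{1/2}w\|_{L^2}\le\|w\|_{L^2}=\|w\|_{X^0}$). Without that false step, your black-box route gives only
\[
\|u'v'\|_{Y^m}\le\|u'v'\|_{X^m}\lesssim\|u'\|_{X^{m\vee2}}\|v'\|_{X^m}\le\|u\|_{X^{(m+2)\vee4}}\|v\|_{X^{m+2}},
\]
using Remark~\ref{remark:embedding}(3), which is one derivative too many on each factor. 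The point of the lemma is precisely to save that derivative, and this saving comes from the extra half-power of $s$ that $Y^m$ carries on its top term relative to $X^m$ (visible in \eqref{Ym2}). The paper therefore does \emph{not} pass through $\|u'v'\|_{X^m}$; it estimates the $Y^m$ norm term by term, putting one factor in a weighted $L^\infty$ and the other in a weighted $L^2$ via Lemma~\ref{lem:CalIneqLp1}, with indices chosen so that each factor is controlled by $\|\cdot\|_{X^{m+1}}$ (or $X^4$ for the $L^\infty$ factor when $m$ is small). For $m\ge3$ the paper uses \eqref{Ym2} to isolate the single top weighted term and handles it by splitting $\ds^m(u'v')$ with Leibniz and distributing the weight $s^{(m+1)/2}$; the lower part $\|u'v'\|_{X^{m-1}}$ is then handled by Lemma~\ref{lem:Algebra} and $\|u'\|_{X^{m-1}}\le\|u\|_{X^{m+1}}$.

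Two smaller points. For $m=0$, your worry about the $L^4\times L^4$ split is unfounded: Lemma~\ref{lem:CalIneqLp1} with $k=1$, $j=1$, $p=4$ gives $\|s^{1/4}u'\|_{L^4}\lesssim\|u'\|_{L^2}+\|u'\|_{L^2}^{3/4}\|su''\|_{L^2}^{1/4}\lesssim\|u\|_{X^2}$, with no unweighted $\|u''\|_{L^2}$ appearing; this is exactly what the paper does. (Your alternative also contains the false line $\|s^{1/2-\epsilon}u'\|_{L^2}\le\|s^{1/2}u'\|_{L^2}$, though it is salvageable via $\|s^{1/2-\epsilon}u'\|_{L^2}\le\|u'\|_{L^2}$.) For $m=1$, the top term of $\|\cdot\|_{Y^1}$ is $\|s(u'v')'\|_{L^2}$, not $\|s^{3/2}(u'v')''\|_{L^2}$ (apply \eqref{Ym2} with $m=0$), so you are expanding the wrong derivative; the correct expansion $(u'v')'=u''v'+u'v''$ is then handled as in the paper by pairing $\|s^{1/2}u''\|_{L^2}$ with $\|s^{1/2}v'\|_{L^\infty}\lesssim\|v'\|_{X^1}$.
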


\begin{proof}
By Lemmas \ref{lem:embedding} and \ref{lem:CalIneqLp1}, we see that 
\begin{align*}
\|u'v'\|_{Y^0}
&\leq \|s^\frac14u'\|_{L^4} \|s^\frac14v'\|_{L^4} \\
&\lesssim \|u\|_{X^2} \|v\|_{X^2}, 
\end{align*}
\begin{align*}
\|u'v'\|_{Y^1}
&\leq \|u'\|_{L^4} \|v'\|_{L^4} + \|s^\frac12u'\|_{L^\infty}\|s^\frac12v''\|_{L^2} + \|s^\frac12u''\|_{L^2}\|s^\frac12v''\|_{L^\infty} \\
&\lesssim \|u'\|_{X^1} \|v'\|_{X^1} + \|u'\|_{X^1}\|v\|_{X^3} + \|u\|_{X^3}\|v'\|_{X^1} \\
&\lesssim \|u\|_{X^3} \|v\|_{X^3},
\end{align*}
which give the first estimate of the lemma.

Similarly, by Lemmas \ref{lem:embedding2} and \ref{lem:CalIneqLp1} we see that 
\begin{align*}
\|u'v'\|_{Y^0}
&\leq \|u'\|_{L^\infty} \|s^\frac12v'\|_{L^2} \\
&\lesssim \|u\|_{X^4} \|v\|_{X^1}, 
\end{align*}
\begin{align*}
\|u'v'\|_{Y^1}
&\leq \|u'\|_{L^\infty}( \|v'\|_{L^2} + \|sv''\|_{L^2}) + \|s^\frac12u''\|_{L^\infty}\|v'\|_{L^2} \\
&\lesssim \|u\|_{X^4} \|v\|_{X^2},
\end{align*}
\begin{align*}
\|u'v'\|_{Y^2}
&\leq \|u'\|_{L^\infty}( \|v'\|_{L^2} + \|s^\frac12v''\|_{L^2} + \|sv'''\|_{L^2} ) \\
&\quad\;
 + \|s^\frac12u''\|_{L^\infty}( \|v'\|_{L^2} + \|s^\frac12v''\|_{L^2} ) + \|s^\frac32u'''\|_{L^\infty}\|v'\|_{L^2}\\
&\lesssim \|u\|_{X^4} \|v\|_{X^3},
\end{align*}
which give the second estimate for $m=0,1,2$.

We then consider the case $m=2k+1$ with $k\geq1$. 
By \eqref{Ym2} and Lemmas \ref{lem:Algebra} and \ref{lem:CalIneqLp1}, we see that 
\begin{align*}
\|u'v'\|_{Y^{2k+1}}
&\leq \|u'v'\|_{X^{2k}} + \|s^{k+1}\ds^{2k+1}(u'v')\|_{L^2} \\
&\lesssim \|u'\|_{X^{2k}} \|v'\|_{X^{2k}}
 + \sum_{j=0}^k \bigl( \|s^j\ds^{j+1}u\|_{L^\infty} \|s^{k+1-j}\ds^{2k+2-j}v\|_{L^2} \\
&\qquad
  + \|s^j\ds^{j+1}v\|_{L^\infty} \|s^{k+1-j}\ds^{2k+2-j}u\|_{L^2} \bigr) \\
&\lesssim \|u\|_{X^{2k+2}} \|v\|_{X^{2k+2}},
\end{align*}
which gives the second estimate for $m=2k+1$.

Finally, we consider the case $m=2k$ with $k\geq2$. 
Similarly as above, we see that 
\begin{align*}
\|u'v'\|_{Y^{2k}}
&\lesssim \|u'v'\|_{X^{2k-1}} + \|s^{k+\frac12}\ds^{2k}(u'v')\|_{L^2} \\
&\lesssim \|u'\|_{X^{2k-1}} \|v'\|_{X^{2k-1}}
 + \sum_{j=0}^k \bigl( \|s^j\ds^{j+1}u\|_{L^\infty} \|s^{k+\frac12-j}\ds^{2k+1-j}v\|_{L^2} \\
&\qquad
 + \|s^j\ds^{j+1}v\|_{L^\infty} \|s^{k+\frac12-j}\ds^{2k+1-j}u\|_{L^2} \bigr) \\
&\lesssim \|u\|_{X^{2k+1}} \|v\|_{X^{2k+1}},
\end{align*}
which gives the second estimate for $m=2k$. 
The proof is complete. 
\end{proof}

\begin{lemma}\label{lem:CalIneq2}
If $\tau|_{s=0}=0$, then we have 
\[
\|\tau u''v''\|_{Y^m} \lesssim
\begin{cases}
 \|\tau'\|_{L^2} \|u\|_{X^4} \|v\|_{X^3} &\mbox{for}\quad m=0, \\
 \|\tau'\|_{L^\infty} \min\{ \|u\|_{X^4} \|v\|_{X^2}, \|u\|_{X^3} \|v\|_{X^3} \} &\mbox{for}\quad m=0, \\
 \min\{ \|\tau'\|_{L^2} \|u\|_{X^4},\|\tau'\|_{L^\infty} \|u\|_{X^3}\} \|v\|_{X^4} &\mbox{for}\quad m=1, \\
 \|\tau'\|_{L^\infty \cap X^{m-1}} \|u\|_{X^{m+2}} \|v\|_{X^{m+2}} &\mbox{for}\quad m\geq2.
\end{cases}
\]
\end{lemma}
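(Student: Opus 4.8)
The plan is to mimic the structure of Lemma \ref{lem:CalIneq1}: first exploit the key structural identity \eqref{Ym2}, which reduces estimating $\|\tau u''v''\|_{Y^m}$ to estimating $\|\tau u''v''\|_{X^{m-1}}$ (where the product rule of Lemma \ref{lem:Algebra} applies directly) plus the single top-weight term $\|s^{(m+1)/2}\ds^m(\tau u''v'')\|_{L^2}$ (for $m$ odd, with the analogous half-integer weight for $m$ even). The hypothesis $\tau|_{s=0}=0$ is essential here: it lets us write $\tau(s)=\int_0^s\tau'$, hence $|\tau(s)|\le s\|\tau'\|_{L^\infty}$ and $s^{-1}|\tau(s)|\lesssim\|\tau'\|_{L^2}$ via Hardy-type bounds; this is precisely what converts the three extra derivatives carried by $\tau u''v''$ into a gain of one power of $s$, matching the weight budget of $Y^m$ against $X^{m+2}$.

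**Low-order cases $m=0,1$.** For $m=0$ I would bound $\|s^{1/2}\tau u''v''\|_{L^2}$ directly. Writing $\tau=s\cdot(\tau/s)$, the factor $s^{3/2}$ is distributed as $s^{1/2}u''\cdot s^{1/2}v''\cdot s^{1/2}(\tau/s)$ or similar, and one applies Lemma \ref{lem:CalIneqLp1} to turn $\|s^{1/2}u''\|$, $\|s^{1/2}v''\|$, etc., into $X^2$, $X^3$ norms, together with $\|\tau/s\|_{L^\infty}\lesssim\|\tau'\|_{L^\infty}$ or $\|\tau/s\|_{L^2}\lesssim\|\tau'\|_{L^2}$; choosing which of $u,v$ absorbs the $L^\infty$ slot gives the two stated $m=0$ bounds. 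The $m=1$ case uses \eqref{Ym2} with $k=0$: $\|\tau u''v''\|_{Y^1}^2=\|\tau u''v''\|_{X^0}^2+\|s^{3/2}\ds(\tau u''v'')\|_{L^2}^2$. The first piece is the $m=0$ estimate; in the second, the Leibniz expansion produces $\tau'u''v''$, $\tau u'''v''$, $\tau u''v'''$, and I distribute the $s^{3/2}$ weight so each factor lands in a norm controlled by Lemmas \ref{lem:embedding}, \ref{lem:embedding2}, \ref{lem:CalIneqLp1}, using $|\tau|\lesssim s\|\tau'\|_{L^\infty}$ (or $s\|\tau'\|_{L^2}$ after trading an $L^\infty$) to handle the terms without a $\tau'$.

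**General case $m\ge2$.** Here I iterate \eqref{Ym2}: for $m=2k+1$, $\|\tau u''v''\|_{Y^{2k+1}}\le\|\tau u''v''\|_{X^{2k}}+\|s^{k+1}\ds^{2k+1}(\tau u''v'')\|_{L^2}$, and for $m=2k$, the companion decomposition with weight $s^{k+1/2}$ at order $2k$. The $X^{m-1}$ term is handled by applying Lemma \ref{lem:Algebra} twice — $\|\tau u''v''\|_{X^{m-1}}\lesssim\|\tau\|_{X^{(m-1)\vee 2}}\|u''v''\|_{X^{m-1}}\lesssim\|\tau\|_{X^{m-1}}\|u''\|_{X^{m-1}}\|v''\|_{X^{m-1}}$ — but this is too crude since it wants $\|\tau\|_{X^{m-1}}$ rather than $\|\tau'\|_{L^\infty\cap X^{m-1}}$; so instead I write $\tau u''v''=\tau\cdot(u''v'')$ and note $u''v''=(u')'(v')'$, regroup as $\tau\ds(\cdots)$ using $\tau|_{s=0}=0$, or more simply observe $\|\tau w\|_{X^{m-1}}\lesssim\|s^{-1}\tau\|_{\cdots}\|sw\|_{\cdots}$ with $s^{-1}\tau$ controlled by $\tau'$; this is the point requiring care. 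For the top term, the Leibniz expansion of $\ds^{2k+1}(\tau u''v'')$ splits into: terms where all derivatives avoid $\tau$ (then $|\tau|\lesssim s\|\tau'\|_{L^\infty}$ supplies the extra $s$, and $\|\tau'\|_{X^{m-1}}$-type norms are not even needed), and terms with at least one derivative on $\tau$ (then $\ds^a\tau$ with $a\ge1$ is controlled by $\|\tau'\|_{X^{m-2}}$-level quantities via Lemma \ref{lem:CalIneqLp1}). In each summand the three-fold product of weighted $\ds$-derivatives is estimated by putting the lowest-order factor in $L^\infty$ (Lemma \ref{lem:embedding2} or Lemma \ref{lem:embedding} with $\epsilon=\tfrac12$) and the other two in weighted $L^2$ via Lemma \ref{lem:CalIneqLp1}, checking the weight exponents sum correctly — exactly the bookkeeping already carried out in Lemmas \ref{lem:commutator} and \ref{lem:CalIneq1}.

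**Main obstacle.** The delicate point is the allocation of the single spare power of $s$ (coming from $\tau|_{s=0}=0$) together with the distribution of the total weight $s^{(m+1)/2}$ (or $s^{(m+2)/2}$) across a triple product of high-order derivatives, while keeping $\tau$ in the weak norm $\|\tau'\|_{L^\infty\cap X^{m-1}}$ rather than $\|\tau\|_{X^{m+1}}$. Concretely, when the extra derivatives pile onto $u$ or $v$ one must verify that the remaining weight still leaves both of them inside $X^{m+2}$ — this forces the minimal-weight factor to be taken in $L^\infty$ precisely when its derivative order is low enough for Lemma \ref{lem:embedding2}/\ref{lem:embedding} to apply, and checking this is compatible in every split (especially the worst case where two derivatives hit the same high-order factor) is the bulk of the work. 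I expect the odd/even parity of $m$ to require separate but parallel index choices, just as in Lemma \ref{lem:commutator}.
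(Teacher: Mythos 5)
Your $m=0,1$ cases and your outline for the single top-weight term $\|s^{\frac{m+1}{2}}\ds^m(\tau u''v'')\|_{L^2}$ are in line with the paper's argument, but there is a genuine gap in how you treat the remaining piece $\|\tau u''v''\|_{X^{m-1}}$ for $m\geq2$, and moreover you have misdiagnosed why the Lemma~\ref{lem:Algebra} route fails there. The factor $\tau$ is not the obstruction: since $\tau|_{s=0}=0$, \eqref{Ym1} gives $\|\tau\|_{X^{m-1}}^2=\|\tau\|_{L^2}^2+\|\tau'\|_{Y^{m-2}}^2\lesssim\|\tau'\|_{L^\infty\cap X^{m-1}}^2$, so trading $\|\tau\|_{X^{m-1}}$ for $\|\tau'\|_{L^\infty\cap X^{m-1}}$ is harmless. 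The real problem is the factors $u''$, $v''$: in these weighted spaces differentiation costs two units, not one (Remark~\ref{remark:embedding}(2)--(3), $\|u'\|_{X^k}\leq\|u\|_{X^{k+2}}$ only), so $\|u''\|_{X^{m-1}}$ is controlled by $\|u\|_{X^{m+3}}$ but not by $\|u\|_{X^{m+2}}$; e.g.\ if $m-1=2k$, the $X^{m-1}$ norm of $u''$ contains the unweighted $\|\ds^{k+2}u\|_{L^2}$, whereas $\|u\|_{X^{m+2}}=\|u\|_{X^{2k+3}}$ controls $\ds^{k+2}u$ only with the weight $s^{\frac12}$. Hence the algebra estimate overshoots by one derivative on $u$ and $v$, and your proposed repairs (an unspecified bound of the form $\|\tau w\|_{X^{m-1}}\lesssim\|s^{-1}\tau\|\,\|sw\|$, or regrouping as $\tau\ds(\cdots)$) are aimed at the wrong factor and are not carried out; they do not restore the missing weight on the top unweighted derivatives of $u,v$ hidden inside $\|\cdot\|_{X^{m-1}}$.

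What closes the argument --- and what the paper actually does --- is to refuse to treat $\|\tau u''v''\|_{X^{m-1}}$ as a black-box product: every term of the $Y^m$ norm (the unweighted Sobolev part and all intermediate weighted terms, not only the top one) is expanded by Leibniz into products $(\ds^{j_0}\tau)(\ds^{j_1+2}u)(\ds^{j_2+2}v)$, and the available weight is allocated case by case as exponents $\alpha_0+\alpha_1+\alpha_2$, using $|\tau|\leq s^{\frac12}\|\tau'\|_{L^2}$ or $|\tau|\leq s\|\tau'\|_{L^\infty}$ precisely in the $j_0=0$ terms and $\ds^{j_0}\tau=\ds^{j_0-1}\tau'$ otherwise, together with Lemmas~\ref{lem:embedding}, \ref{lem:embedding2}, and \ref{lem:CalIneqLp1}. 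In other words, the bookkeeping you reserve for the top-order term must be performed for the whole norm; the decomposition via \eqref{Ym2} does not reduce the work, it only relocates the problematic intermediate terms into $\|\cdot\|_{X^{m-1}}$, where your proposal leaves them unestimated.
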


\begin{proof}
We first note that under the assumption $\tau|_{s=0}=0$ we have $\tau(s)=\int_0^s\tau'(\sigma)\mathrm{d}\sigma$, 
so that $|\tau(s)| \leq s^{1-\frac1p}\|\tau'\|_{L^p}$ for $1\leq p\leq\infty$. 
Therefore, we see that 
\[
\|\tau u''v''\|_{Y^0} \leq \min\{ \|\tau'\|_{L^2}\|s u''v''\|_{L^2},\|\tau'\|_{L^\infty}\|s^\frac32 u''v''\|_{L^2} \}
\]
and that by Lemma \ref{lem:CalIneqLp1} 
\begin{align*}
\|s u''v''\|_{L^2} 
&\leq \|s^\frac12u''\|_{L^\infty} \|s^\frac12v''\|_{L^2} \\
&\lesssim \|u\|_{X^4}\|v\|_{X^3}
\end{align*}
and 
\begin{align*}
\|s^\frac32 u''v''\|_{L^2} 
&\leq \min\{ \|s^\frac12u''\|_{L^\infty} \|sv''\|_{L^2}, \|su''\|_{L^\infty} \|s^\frac12v''\|_{L^2} \} \\
&\lesssim \min\{ \|u\|_{X^4} \|v\|_{X^2}, \|u\|_{X^3} \|v\|_{X^3} \},
\end{align*}
which give the estimates of the lemma in the case $m=0$. 
Similarly, we see that 
\begin{align*}
\|\tau u''v''\|_{Y^1}
&\leq \|\tau'\|_{L^2}( \|s^\frac12u''v''\|_{L^2} + \|s^\frac32 u'''v''\|_{L^2} + \|s^\frac32u''v'''\|_{L^2} ) \\
&\leq \|\tau'\|_{L^2}( \|s^\frac12u''\|_{L^\infty} \|v''\|_{L^2} 
 + \|s^\frac32u'''\|_{L^\infty}\|v''\|_{L^2} + \|u''\|_{L^2}\|s^\frac32v'''\|_{L^\infty} ) \\
&\lesssim \|\tau'\|_{L^2} \|u\|_{X^4} \|v\|_{X^4}, 
\end{align*}
and 
\begin{align*}
\|\tau u''v''\|_{Y^1}
&\leq \|\tau'\|_{L^\infty}( \|su''v''\|_{L^2} + \|s^2u'''v''\|_{L^2} + \|s^2u''v'''\|_{L^2} ) \\
&\leq  \|\tau'\|_{L^\infty}( \|s^\frac12u''\|_{L^2} \|s^\frac12v''\|_{L^\infty} 
 + \|s^\frac32u'''\|_{L^2}\|s^\frac12v''\|_{L^\infty} + \|s^\frac12u''\|_{L^2}\|s^\frac32v'''\|_{L^\infty} ) \\
&\lesssim \|\tau'\|_{L^\infty} \|u\|_{X^3} \|v\|_{X^4}, 
\end{align*}
which give the estimates of the lemma in the case $m=1$.

We then consider the case $m=2k$ with $k\geq1$. 
We have 
\begin{align*}
\|\tau u''v''\|_{Y^{2k}}
&\lesssim \sum_{j_0+j_1+j_2\leq k-1}\|(\ds^{j_0}\tau)(\ds^{j_1+2}u)(\ds^{j_2+2}v)\|_{L^2} \\
&\quad\;
 + \sum_{j=1}^{k+1}\sum_{j_0+j_1+j_2=k+j-1} \|s^{j-\frac12}(\ds^{j_0}\tau)(\ds^{j_1+2}u)(\ds^{j_2+2}v)\|_{L^2}.
\end{align*}
We first evaluate $I_1(j_0,j_1,j_2)=\|(\ds^{j_0}\tau)(\ds^{j_1+2}u)(\ds^{j_2+2}v)\|_{L^2}$, where $j_0+j_1+j_2\leq k-1$. 
In the following calculations, we use frequently Lemma \ref{lem:CalIneqLp1}. 
\begin{enumerate}
\item[(i)]
The case $(j_0,j_1,j_2)=(0,k-1,0),(0,0,k-1)$. 
\begin{align*}
I_1(0,k-1,0)
&\leq \|\tau'\|_{L^2} \|\ds^{k+1}u\|_{L^2}\|s^\frac12v''\|_{L^\infty} \\
&\lesssim \|\tau'\|_{L^2} \|u\|_{H^{k+1}} \|v\|_{X^4} \\
&\lesssim \|\tau'\|_{L^2} \|u\|_{X^{2k+2}} \|v\|_{X^{2k+2}}.
\end{align*}
Similar estimate holds for $I_1(0,0,k-1)$. 

\item[(ii)]
The other cases. 
Since $j_1,j_2 \leq k-2$, we have 
\begin{align*}
I_1(j_0,j_1,j_2)
&\lesssim \|\ds^{j_0}\tau\|_{L^\infty} \|\ds^{j_1+2}u\|_{H^1} \|\ds^{j_2+2}v\|_{H^1} \\
&\lesssim \|\tau\|_{W^{k-1,\infty}} \|u\|_{H^{k+1}} \|v\|_{H^{k+1}} \\
&\lesssim \|\tau'\|_{X^{2(k-1)}} \|u\|_{X^{2k+2}} \|v\|_{X^{2k+2}}.
\end{align*}
\end{enumerate}
In any of these cases, we have 
$I_1(j_0,j_1,j_2) \lesssim \|\tau'\|_{X^{2(k-1)}} \|u\|_{X^{2k+2}} \|v\|_{X^{2k+2}}$.

We then evaluate $I_2(j_0,j_1,j_2;j)=\|s^{j-\frac12}(\ds^{j_0}\tau)(\ds^{j_1+2}u)(\ds^{j_2+2}v)\|_{L^2}$, 
where $1\leq j\leq k+1$ and $j_0+j_1+j_2=k+j-1$. 
In the following calculations, we will use Lemmas \ref{lem:embedding} and \ref{lem:CalIneqLp1}. 
\begin{enumerate}
\item[(i)]
The case $j=k+1$ and $j_1=j_2=0$. 
\begin{align*}
I_2(2k,0,0;k+1)
&\leq \|s^{k-\frac12}\ds^{2k}\tau\|_{L^2} \|s^\frac12u''\|_{L^\infty} \|s^\frac12v''\|_{L^\infty} \\
&\lesssim \|\tau'\|_{X^{2k-1}} \|u\|_{X^4} \|v\|_{X^4} \\
&\leq \|\tau'\|_{X^{2k-1}} \|u\|_{X^{2(k+1)}} \|v\|_{X^{2k+2}}.
\end{align*}
\item[(ii)]
The case $(j_0,j_1,j_2)=(0,k+j-1,0),(0,0,k+j-1)$. 
\begin{align*}
I_2(0,k+j-1,0;j)
&\leq \|\tau'\|_{L^\infty} \|s^j\ds^{k+j+1}u\|_{L^2} \|s^\frac12v''\|_{L^\infty} \\
&\lesssim \|\tau'\|_{L^\infty} \|u\|_{X^{2k+2}} \|v\|_{X^{2k+2}}.
\end{align*}
Similar estimate holds for $I_2(0,0,k+j-1;j)$. 
\item[(iii)]
The case $j_0=0$ and $j_1,j_2\ne k+j-1$. 
We evaluate it as 
\[
I_2(0,j_1,j_2;j) \leq \|\tau'\|_{L^\infty} \|s^{\alpha_1}\ds^{j_1+2}u \|_{L^\infty} \|s^{\alpha_2}\ds^{j_2+2}v\|_{L^2},
\]
where $\alpha_1$ and $\alpha_2$ should be chosen so that $\alpha_1+\alpha_2\leq j+\frac12$. 
We choose these indices as 
\[
\alpha_1 = 
\begin{cases}
 0 &\mbox{if}\quad j_1+2 \leq k, \\
 j_1-k+\frac32 &\mbox{if}\quad j_1+2 \geq k+1,
\end{cases}
\qquad
\alpha_2 = 
\begin{cases}
 0 &\mbox{if}\quad j_2+2 \leq k+1, \\
 j_2-k+1 &\mbox{if}\quad j_2+2 \geq k+2.
\end{cases} 
\]
Then, we see that $\alpha_1$ and $\alpha_2$ satisfy in fact $\alpha_1+\alpha_2\leq j+\frac12$. 
Moreover, we have $\|s^{\alpha_1}\ds^{j_1+2}u \|_{L^\infty} \lesssim \|u\|_{X^{2k+2}}$ and 
$\|s^{\alpha_2}\ds^{j_2+2}v\|_{L^2} \lesssim \|v\|_{X^{2k+2}}$.

\item[(iv)]
The other cases. 
In view of $1\leq j_0\leq 2k-1$ and $j_1,j_2\leq k+j-2$, we evaluate it as 
\[
I_2(0,j_1,j_2;j) \leq \|s^{\alpha_0}\ds^{j_0}\tau\|_{L^\infty} \|s^{\alpha_1}\ds^{j_1+2}u \|_{L^\infty} \|s^{\alpha_2}\ds^{j_2+2}v\|_{L^2},
\]
where $\alpha_0$, $\alpha_1$, and $\alpha_2$ should be chosen so that $\alpha_0+\alpha_1+\alpha_2\leq j-\frac12$. 
We choose the indices $\alpha_0$ as 
\[
\alpha_0 = 
\begin{cases}
 0 &\mbox{if}\quad j_0=1 \mbox{ or } 1\leq j_0\leq k-1, \\
 \frac12 &\mbox{if}\quad j_0=k\geq2, \\
 j_0-k &\mbox{if}\quad j_0\geq k+1,
\end{cases}
\]
$\alpha_1$ and $\alpha_2$ as in the above case (iii). 
Then, we see that these indices satisfy in fact $\alpha_0+\alpha_1+\alpha_2\leq j-\frac12$. 
Moreover, we have $\|s^{\alpha_0}\ds^{j_0}\tau\|_{L^\infty} \lesssim \|\tau'\|_{L^\infty \cap X^{2k-1}}$, 
$\|s^{\alpha_1}\ds^{j_1+2}u \|_{L^\infty} \lesssim \|u\|_{X^{2(k+1)}}$ and 
$\|s^{\alpha_2}\ds^{j_2+2}v\|_{L^2} \lesssim \|v\|_{X^{2k+2}}$. 
\end{enumerate}
In any of these cases, we have 
$I_2(j_0,j_1,j_2;j) \lesssim \|\tau'\|_{L^\infty \cap X^{m-1}} \|u\|_{X^{2k+2}} \|v\|_{X^{2k+2}}$. 
To summarize, we obtain the desired estimate of the lemma in the case $m=2k$.

The case $m=2k+1$ with $k\geq1$ can be proved in the same way as above so we omit the proof in the case. 
\end{proof}

\subsection{Averaging operator $\mathscr{M}$}\label{sect:AOM}
For a function $u$ defined in the open interval $(0,1)$ we define an averaging operator $\mathscr{M}$ by 
\begin{equation}\label{defM}
(\mathscr{M}u)(s) = \frac{1}{s}\int_0^su(\sigma) \mathrm{d}\sigma = \int_0^1u(sr) \mathrm{d}r.
\end{equation}
Then, we have 
\begin{equation}\label{derM}
\ds^j(\mathscr{M}u)(s)
 = \int_0^1r^j(\partial_\sigma^ju)(sr) \mathrm{d}r
 = \frac{1}{s^{j+1}}\int_0^s \sigma^j\partial_\sigma^ju(\sigma) \mathrm{d}\sigma.
\end{equation}
We will evaluate this in a weighted $L^p$ space.

\begin{lemma}\label{lem:EstM}
Let $1\leq p\leq\infty$. 
Suppose that $\alpha$ and $\beta$ satisfy $\alpha+1>\beta+\frac1p$. 
For a function $u$ defined in $(0,1)$ we put 
\[
U_\alpha(s)=\frac{1}{s^{\alpha+1}}\int_0^s\sigma^\alpha u(\sigma) \mathrm{d}\sigma. 
\]
Then, we have 
\[
\|s^\beta U_\alpha\|_{L^p} \leq \frac{1}{\alpha-\beta+1-\frac1p}\|s^\beta u\|_{L^p}.
\]
\end{lemma}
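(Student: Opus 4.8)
The plan is to reduce the weighted $L^p$ bound for the averaging-type operator $U_\alpha$ to a one-dimensional Hardy-type inequality. First I would substitute $\sigma = sr$ in the defining integral to get the scale-invariant representation
\[
U_\alpha(s) = \int_0^1 r^\alpha u(sr)\,\mathrm{d}r,
\]
so that $s^\beta U_\alpha(s) = \int_0^1 r^{\alpha-\beta}\,\bigl(s r\bigr)^\beta u(sr)\,\mathrm{d}r$. Writing $g(s) = s^\beta u(s)$, this says $s^\beta U_\alpha(s) = \int_0^1 r^{\alpha-\beta} g(sr)\,\mathrm{d}r$, i.e. $s^\beta U_\alpha$ is an average of dilates of $g$ against the finite measure $r^{\alpha-\beta}\,\mathrm{d}r$ on $[0,1]$ (the exponent $\alpha-\beta$ need not be positive, but that is harmless because the dilation $g(s\cdot)\mapsto g(s r)$ still makes sense pointwise; the delicate weight is tracked inside $g$).

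The key step is then Minkowski's integral inequality for the $L^p(0,1)$ norm in the $s$ variable applied to this representation:
\[
\|s^\beta U_\alpha\|_{L^p} = \left\| \int_0^1 r^{\alpha-\beta} g(\,\cdot\,r)\,\mathrm{d}r \right\|_{L^p}
 \leq \int_0^1 r^{\alpha-\beta}\,\| g(\,\cdot\,r)\|_{L^p}\,\mathrm{d}r.
\]
Next I would compute $\|g(\,\cdot\,r)\|_{L^p}$ for fixed $r\in(0,1]$: by the change of variables $s' = sr$, $\|g(\,\cdot\,r)\|_{L^p}^p = \int_0^1 |g(sr)|^p\,\mathrm{d}s = r^{-1}\int_0^r |g(s')|^p\,\mathrm{d}s' \leq r^{-1}\|g\|_{L^p}^p$, hence $\|g(\,\cdot\,r)\|_{L^p} \leq r^{-1/p}\|g\|_{L^p}$ (for $p=\infty$ one has simply $\|g(\,\cdot\,r)\|_{L^\infty}\le\|g\|_{L^\infty}$, consistent with the convention $1/p = 0$). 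Plugging this back in,
\[
\|s^\beta U_\alpha\|_{L^p} \leq \|g\|_{L^p}\int_0^1 r^{\alpha-\beta-\frac1p}\,\mathrm{d}r = \frac{1}{\alpha-\beta+1-\frac1p}\,\|s^\beta u\|_{L^p},
\]
where the integral converges precisely because the hypothesis $\alpha+1>\beta+\frac1p$ makes the exponent $\alpha-\beta-\frac1p > -1$. This is exactly the claimed bound.

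The only real subtlety — and the step I would be most careful about — is the case $p=\infty$ and the handling of $r^{\alpha-\beta}$ when $\alpha-\beta<0$: one must confirm the pointwise representation $U_\alpha(s)=\int_0^1 r^\alpha u(sr)\,\mathrm{d}r$ is legitimate (it is, by the same substitution used to derive \eqref{derM}) and that the measure $r^{\alpha-\beta}\,\mathrm{d}r$ may still fail to be finite if $\alpha-\beta\le -1$; however that never happens under the hypothesis since $\alpha-\beta > \frac1p - 1 \ge -1$. For $p=\infty$ I would just estimate directly: $|s^\beta U_\alpha(s)| \le \int_0^1 r^{\alpha-\beta}\,\mathrm{d}r\,\cdot\,\|s^\beta u\|_{L^\infty} = \frac{1}{\alpha-\beta+1}\|s^\beta u\|_{L^\infty}$, matching the formula with $\frac1p=0$. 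With that case noted, the Minkowski-inequality argument above goes through cleanly for all $p\in[1,\infty]$, and no further approximation or density argument is needed beyond the standard justification of Minkowski's integral inequality.
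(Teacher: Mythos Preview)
Your proof is correct but takes a different route from the paper's. The paper first reduces to $\beta=0$ via the identity $s^\beta U_\alpha(s)=\frac{1}{s^{\alpha-\beta+1}}\int_0^s\sigma^{\alpha-\beta}(\sigma^\beta u(\sigma))\,\mathrm{d}\sigma$, then treats $p=\infty$ by the trivial pointwise bound and $1\leq p<\infty$ by integration by parts in $\int_0^1 s^{-(\alpha+1)p}\bigl(\int_0^s\sigma^\alpha u\bigr)^p\mathrm{d}s$ followed by H\"older's inequality, which yields the same constant $\frac{1}{\alpha-\beta+1-\frac1p}$. Your argument instead uses the dilation representation $s^\beta U_\alpha(s)=\int_0^1 r^{\alpha-\beta}g(sr)\,\mathrm{d}r$ with $g=s^\beta u$ and Minkowski's integral inequality together with the scaling bound $\|g(\cdot\,r)\|_{L^p(0,1)}\leq r^{-1/p}\|g\|_{L^p(0,1)}$. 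Both are standard proofs of the Hardy inequality and give the identical sharp constant; your approach has the minor advantage of treating all $p\in[1,\infty]$ in one stroke, while the paper's integration-by-parts argument makes the boundary behaviour at $s=0$ (and the role of the condition $(\alpha+1)p-1>0$) more explicit.
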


\begin{proof}
Since $s^\beta U_\alpha(s) = \frac{1}{s^{\alpha-\beta+1}}\int_0^s \sigma^{\alpha-\beta}(\sigma^\beta u(\sigma)) \mathrm{d}\sigma$, 
it is sufficient to show the estimate in the case $\beta=0$. 
We may assume also that $u(s)$ is non-negative. 
We first consider the case $p=\infty$, so that we assume $\alpha+1>0$. 
\begin{align*}
|U_\alpha(s)|
&\leq \frac{1}{s^{\alpha+1}}\int_0^s \sigma^\alpha \mathrm{d}\sigma \|u\|_{L^\infty} = \frac{1}{\alpha+1}\|u\|_{L^\infty},
\end{align*}
which shows the estimate in the case $p=\infty$. 
Therefore, we suppose that $1\leq p<\infty$. 
By using integration by parts and noting the condition $(\alpha+1)p-1>0$, we see that 
\begin{align*}
\|U_\alpha\|_{L^p}^p
&= \left[ -\frac{1}{(\alpha+1)p-1}\frac{1}{s^{(\alpha+1)p-1}}\biggl( \int_0^s \sigma^\alpha u(\sigma) \mathrm{d}\sigma \biggr)^p \right]_0^1 \\
&\quad\;
 + \frac{p}{(\alpha+1)p-1}\int_0^1\frac{s^\alpha u(s)}{s^{(\alpha+1)p+1}} 
 \biggl( \int_0^s \sigma^\alpha u(\sigma) \mathrm{d}\sigma \biggr)^{p-1} \mathrm{d}s \\
&\leq \frac{p}{(\alpha+1)p-1}\int_0^1 u(s)(U_\alpha(s))^{p-1} \mathrm{d}s \\
&\leq \frac{p}{(\alpha+1)p-1}\|u\|_{L^p}\|U_\alpha\|_{L^p}^{p-1},
\end{align*}
where we used H\"older's inequality. 
This shows the desired estimate. 
\end{proof}

\begin{corollary}\label{cor:WEM1}
Let $j$ be a non-negative integer, $1\leq p\leq \infty$, and $\beta<j+1-\frac1p$. 
Then, we have 
\[
\|s^\beta\ds^j(\mathscr{M}u)\|_{L^p} \leq \frac{1}{j+1-\beta-\frac1p}\|s^\beta\ds^j u\|_{L^p}.
\]
Particularly, $\|\mathscr{M}u\|_{X^m} \leq 2\|u\|_{X^m}$ for $m=0,1,2,\ldots$. 
\end{corollary}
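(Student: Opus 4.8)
The plan is to recognize the corollary as an immediate consequence of Lemma \ref{lem:EstM} combined with the differentiation formula \eqref{derM}. Indeed, \eqref{derM} gives
\[
\ds^j(\mathscr{M}u)(s) = \frac{1}{s^{j+1}}\int_0^s \sigma^j\partial_\sigma^j u(\sigma)\,\mathrm{d}\sigma,
\]
which is exactly the quantity $U_\alpha$ appearing in Lemma \ref{lem:EstM} with $\alpha = j$ and with $\ds^j u$ in place of $u$. Applying that lemma with this choice of $\alpha$ and with the same exponents $\beta$ and $p$ — the hypothesis $\alpha+1>\beta+\frac1p$ of Lemma \ref{lem:EstM} becomes precisely $\beta<j+1-\frac1p$, which is assumed here — yields
\[
\|s^\beta\ds^j(\mathscr{M}u)\|_{L^p} \le \frac{1}{j-\beta+1-\frac1p}\|s^\beta\ds^j u\|_{L^p},
\]
which is the first assertion.

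For the particular case, I would apply the first estimate term by term to the definition \eqref{WSS} of the $X^m$-norm, checking in each instance that the resulting constant does not exceed $2$. Consider $m=2k$ (the case $m=2k+1$ is handled the same way, and $m=0$ is trivial since the weighted sums are then empty). For the $H^k$-part, take $p=2$ and $\beta=0$ with $j$ ranging over $0,1,\ldots,k$; the constant is $\frac{1}{j+\frac12}\le 2$, so $\|\ds^j(\mathscr{M}u)\|_{L^2}\le 2\|\ds^j u\|_{L^2}$ and hence $\|\mathscr{M}u\|_{H^k}\le 2\|u\|_{H^k}$. For the weighted terms $\|s^j\ds^{k+j}(\mathscr{M}u)\|_{L^2}$ with $1\le j\le k$, apply the first estimate with integer index $k+j$, with $p=2$, and with weight exponent $\beta=j$; the constant becomes $\frac{1}{(k+j)+1-j-\frac12}=\frac{1}{k+\frac12}\le 2$ (in the odd case the analogous constant is $\frac{1}{k+1}\le 2$). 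Thus every term of $\|\mathscr{M}u\|_{X^m}^2$ is bounded by $4$ times the corresponding term of $\|u\|_{X^m}^2$, and taking square roots gives $\|\mathscr{M}u\|_{X^m}\le 2\|u\|_{X^m}$.

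I do not anticipate a real obstacle here: the entire analytic content is in Lemma \ref{lem:EstM}, and the only care required is (i) matching the formula \eqref{derM} to the shape of the operator $U_\alpha$, and (ii) verifying that the order/weight bookkeeping in the definition of the $X^m$-norm always keeps the constant at or below $2$, which the estimates above confirm in each case.
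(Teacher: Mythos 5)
Your proposal is correct and follows the same route as the paper: the pointwise estimate is exactly Lemma \ref{lem:EstM} applied via the formula \eqref{derM} with $\alpha=j$ and $\ds^j u$ in place of $u$, which is all the paper says. Your term-by-term verification that each constant in the $X^m$-norm is at most $2$ is accurate (the paper leaves this bookkeeping implicit) and adds no new ideas beyond the paper's argument.
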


\begin{proof}
In view of \eqref{derM}, the estimate follows directly from Lemma \ref{lem:EstM}. 
\end{proof}

\section{Equivalence of the systems}\label{sect:equiv}
In this section we prove Theorem \ref{th:equiv_0}, which ensures the equivalence of the original system \eqref{Eq}--\eqref{IC} 
and the transformed system \eqref{HP}, \eqref{BVP}, and \eqref{IC} under the stability condition \eqref{SolClass0_SC}.

\subsection{Auxiliary estimates for solutions}
Taking into account the class \eqref{SolClass0}, we assume that 
\begin{equation}\label{SolClassEst0}
 \|\bm{x}'(t)\|_{X^2}+\|\dot{\bm{x}}'(t)\|_{X^1} \leq M \quad\mbox{for}\quad 0\leq t\leq T
\end{equation}
with a positive constant $M$. 
Particularly, we have 
\[
\|\bm{x}''(t)\|_{L^2}, \|s\bm{x}'''(t)\|_{L^2}, \|\dot{\bm{x}}'(t)\|_{L^2}, \|s^\frac12\dot{\bm{x}}''(t)\|_{L^2} \leq M
 \quad\mbox{for}\quad 0\leq t\leq T. 
\]

\begin{lemma}\label{lem:EstSol1}
Let $\bm{x}$ satisfy \eqref{SolClassEst0}. 
Then, we have 
\[
\begin{cases}
 \|s^{\frac12-\frac1p}\bm{x}''(t)\|_{L^p} \leq C(M) &\mbox{for}\quad 2\leq p\leq\infty, \\
 \|\dot{\bm{x}}'(t)\|_{L^q} \leq C(q,M) &\mbox{for}\quad 1\leq q<\infty, \\
 \|s^\epsilon\dot{\bm{x}}'(t)\|_{L^\infty} \leq C(\epsilon,M) &\mbox{for}\quad \epsilon>0.
\end{cases}
\]
\end{lemma}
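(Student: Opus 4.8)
The plan is to prove the three estimates in Lemma \ref{lem:EstSol1} as direct consequences of the embeddings and calculus inequalities established in Section \ref{sect:FS}, applied to the bounds \eqref{SolClassEst0}. Throughout I suppress the time variable $t$, since all bounds are taken at a fixed time and are uniform for $0\leq t\leq T$.

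First I would treat the estimate for $\bm{x}''$. The hypothesis \eqref{SolClassEst0} gives $\|\bm{x}'\|_{X^2}\leq M$, hence $\|\bm{x}''\|_{Y^0}\leq\|\bm{x}'\|_{X^2}\leq M$ by the identity \eqref{Ym1} (equivalently, $\|\bm{x}''\|_{L^2}\leq M$ and $\|s\bm{x}'''\|_{L^2}\leq M$). I now apply Lemma \ref{lem:CalIneqLp1} with $k=1$ and $j=1$ to the function $u=\bm{x}$: the first inequality there reads
\[
\|s^{\frac12-\frac1p}\ds^{1}u\|_{L^p}
 \leq C\bigl( \|\ds^{1}u\|_{L^2} + \|\ds^{1}u\|_{L^2}^{\frac12+\frac1p}\|s\ds^{2}u\|_{L^2}^{\frac12-\frac1p} \bigr),
\]
but applied with $u$ replaced by $\bm{x}'$ this is exactly a bound for $\|s^{\frac12-\frac1p}\bm{x}''\|_{L^p}$ in terms of $\|\bm{x}''\|_{L^2}$ and $\|s\bm{x}'''\|_{L^2}$, both $\leq M$. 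Hence $\|s^{\frac12-\frac1p}\bm{x}''\|_{L^p}\leq C(M)$ for $2\leq p\leq\infty$, which is the first claim.

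Next I would handle the two estimates for $\dot{\bm{x}}'$. Since \eqref{SolClassEst0} gives $\|\dot{\bm{x}}'\|_{X^1}\leq M$, the first part of Lemma \ref{lem:embedding} yields $\|\dot{\bm{x}}'\|_{L^q}\leq C_q\|\dot{\bm{x}}'\|_{X^1}\leq C(q,M)$ for every $q\in[1,\infty)$, proving the second claim. The third part of Lemma \ref{lem:embedding} gives $\|s^\epsilon\dot{\bm{x}}'\|_{L^\infty}\leq C_\epsilon\|\dot{\bm{x}}'\|_{X^1}\leq C(\epsilon,M)$ for every $\epsilon>0$, proving the third claim.

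There is no real obstacle here: the lemma is essentially a dictionary entry recording, in concrete weighted $L^p$ language, what the abstract results of Section \ref{sect:FS} say about functions in $X^2$ and $X^1$. The only points requiring a moment's care are the bookkeeping in applying Lemma \ref{lem:CalIneqLp1} with $u$ shifted by one derivative (so that "$\ds^{k+j-1}u$" becomes $\bm{x}''$ and "$\ds^{k+j}u$" becomes $\bm{x}'''$), and checking that the endpoint cases $p=2$ and $p=\infty$ are covered — the former being trivial and the latter being the $L^\infty$ endpoint already built into Lemma \ref{lem:CalIneqLp1}. The displayed consequences $\|\bm{x}''\|_{L^2},\|s\bm{x}'''\|_{L^2},\|\dot{\bm{x}}'\|_{L^2},\|s^{\frac12}\dot{\bm{x}}''\|_{L^2}\leq M$ stated just before the lemma are used freely and follow immediately from the definitions of $\|\cdot\|_{X^2}$ and $\|\cdot\|_{X^1}$.
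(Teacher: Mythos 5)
Your proof is correct and follows exactly the paper's route: the first estimate via Lemma \ref{lem:CalIneqLp1} (applied to $\bm{x}'$, using $\|\bm{x}''\|_{L^2},\|s\bm{x}'''\|_{L^2}\leq\|\bm{x}'\|_{X^2}\leq M$), and the last two via Lemma \ref{lem:embedding} applied to $\dot{\bm{x}}'$ with $\|\dot{\bm{x}}'\|_{X^1}\leq M$. (The only blemish is the passing reference to $\|\bm{x}''\|_{Y^0}$, where the natural space is $Y^1$; the parenthetical bounds you actually use are the correct ones and follow directly from the definition of $\|\cdot\|_{X^2}$.)
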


\begin{proof}
The first estimate follows from Lemma \ref{lem:CalIneqLp1}, and the last two estimates follow from Lemma \ref{lem:embedding}. 
\end{proof}

\begin{lemma}\label{lem:EstSol2}
Let $M$ be a positive constant and $q\in[1,\infty)$. 
There exist positive constants $C_1=C(M)$ and $C_q=C(M,q)$ such that if $\bm{x}$ satisfies \eqref{SolClassEst0}, 
then the solution $\tau$ to the boundary value problem \eqref{BVP} satisfies 
\[
\tau(s,t) \leq C_1s, \quad |\tau'(s,t)| \leq C_1, \quad  \|\tau''(t)\|_{L^q} \leq C_q
\]
for any $(s,t)\in[0,1]\times[0,T]$. 
\end{lemma}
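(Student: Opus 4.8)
The strategy is to apply the estimates of Section \ref{sect:BVP} (Lemmas \ref{lem:EstSolBVP1}, \ref{lem:EstSolBVP2}, and \ref{lem:EstSolBVP3}) to the two-point boundary value problem \eqref{BVP} with the specific data $h = |\dot{\bm{x}}'|^2$ and $a = -\bm{g}\cdot\bm{x}'(1)$. First I would record that the hypothesis \eqref{SolClassEst0} already gives $\|\sigma^{\frac12}\bm{x}''(t)\|_{L^2} \le M$, which is exactly the smallness-free bound required to invoke those lemmas. Since $h = |\dot{\bm{x}}'|^2 \ge 0$, Lemma \ref{lem:EstSolBVP1} applies and yields $\tau(s,t) \le s(a + \|h\|_{L^1})$ and $|\tau'(s,t)| \le \max\{|a + \|h\|_{L^1}|,\ |a| + (a+\|h\|_{L^1})\|\sigma^{\frac12}\bm{x}''\|_{L^2}^2\}$ — but I must be careful here, because Lemma \ref{lem:EstSolBVP1} as stated presumes $a \ge 0$ (it requires $a + \|\sigma h\|_{L^1}\exp(-\|\sigma^{\frac12}\bm{x}''\|_{L^2}^2) \ge 0$ for the lower bound), whereas $a = -\bm{g}\cdot\bm{x}'(1)$ need not be non-negative. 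So for the two pointwise bounds $\tau \le C_1 s$ and $|\tau'| \le C_1$ the robust tool is instead Lemma \ref{lem:EstSolBVP2} with $\alpha = 0$: it gives $|\tau(s,t)| \le C(M)(|a|s + \|h\|_{L^1}s)$ and $|\tau'(s,t)| \le C(M)(|a| + \|h\|_{L^1})$, with no sign restriction on $a$ or $h$.

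The quantities $|a|$ and $\|h\|_{L^1}$ must then be bounded by a constant depending only on $M$. For $a$: since $|\bm{g}| \le 1$ and $|\bm{x}'(1,t)| \le 1$ by the constraint (or, absent the constraint at this stage, since $\|\bm{x}'(t)\|_{L^\infty} \lesssim \|\bm{x}'(t)\|_{X^2} \le M$ via the embedding $X^2 \hookrightarrow H^1 \hookrightarrow L^\infty$ from Lemma \ref{lem:embedding2}), we get $|a| \le C(M)$. For $\|h\|_{L^1} = \|\dot{\bm{x}}'(t)\|_{L^2}^2$: this is $\le M^2$ directly from \eqref{SolClassEst0}. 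Hence $\tau(s,t) \le C_1 s$ and $|\tau'(s,t)| \le C_1$ with $C_1 = C(M)$.

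For the third bound $\|\tau''(t)\|_{L^q} \le C_q$ I would read off the second derivative directly from the differential equation in \eqref{BVP}: $\tau'' = |\bm{x}''|^2\tau - |\dot{\bm{x}}'|^2$. Thus $\|\tau''(t)\|_{L^q} \le \||\bm{x}''|^2\tau\|_{L^q} + \||\dot{\bm{x}}'|^2\|_{L^q}$. For the first term, using $\tau(s,t) \le C_1 s$ from the previous step, $\||\bm{x}''|^2\tau\|_{L^q} \le C_1 \|s^{1/q}\,s^{1-1/q}|\bm{x}''|^2\|_{L^q}$; more simply, $\||\bm{x}''|^2 \tau\|_{L^q} \le C_1 \|s|\bm{x}''|^2\|_{L^q} = C_1\|s^{1-1/q}(s^{1/2}\bm{x}'')^2 s^{1/q-1/2}\cdots\|$ — rather than juggle exponents I would just write $s|\bm{x}''|^2 = (s^{1/2}\bm{x}'')^2$ and estimate $\|(s^{1/2}\bm{x}'')^2\|_{L^q} = \|s^{1/2}\bm{x}''\|_{L^{2q}}^2 \le C(M,q)$ using the first estimate of Lemma \ref{lem:EstSol1} (which gives $\|s^{1/2 - 1/(2q)}\bm{x}''\|_{L^{2q}} \le C(M)$, hence a fortiori $\|s^{1/2}\bm{x}''\|_{L^{2q}} \le C(M)$ since $s \le 1$). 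For the second term, $\||\dot{\bm{x}}'|^2\|_{L^q} = \|\dot{\bm{x}}'\|_{L^{2q}}^2 \le C(M,q)$ by the second estimate of Lemma \ref{lem:EstSol1}. Combining gives $\|\tau''(t)\|_{L^q} \le C_q = C(M,q)$.

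The only genuine subtlety — the "obstacle" — is the sign of $a = -\bm{g}\cdot\bm{x}'(1)$: one cannot route this lemma through the sharp lower-bound machinery of Lemma \ref{lem:EstSolBVP1}, and one should be explicit that the upper bounds here come from the sign-agnostic Lemma \ref{lem:EstSolBVP2}. (The positivity $\tau \ge C^{-1}s$, which does require the stability condition and the sharp lower bound, is not asserted in this lemma and is dealt with separately.) Everything else is a routine bookkeeping of weighted $L^p$ norms via the embeddings of Section \ref{sect:FS}.
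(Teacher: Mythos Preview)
Your proof is correct and follows essentially the same route as the paper: apply the Section~\ref{sect:BVP} estimates to bound $\tau$ and $\tau'$, then read $\tau''$ off the equation $-\tau'' + |\bm{x}''|^2\tau = |\dot{\bm{x}}'|^2$ and control each term via Lemma~\ref{lem:EstSol1}. The one difference is that the paper cites Lemma~\ref{lem:EstSolBVP1} for the first two bounds, whereas you invoke Lemma~\ref{lem:EstSolBVP2}; your observation is well taken, since the sign hypothesis $a + \|\sigma h\|_{L^1}\exp(-\|\sigma^{1/2}\bm{x}''\|_{L^2}^2)\ge 0$ of Lemma~\ref{lem:EstSolBVP1} is not guaranteed by \eqref{SolClassEst0} alone, so routing through the sign-agnostic Lemma~\ref{lem:EstSolBVP2} with $\alpha=0$ is the cleaner justification under the stated hypotheses.
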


\begin{proof}
The first two estimates follow from Lemma \ref{lem:EstSolBVP1}. 
As for the last one, we see that 
\begin{align*}
\|\tau''(t)\|_{L^q}
&\leq \|\tau(t)|\bm{x}''(t)|^2\|_{L^q} + \||\dot{\bm{x}}'(t)|^2\|_{L^q} \\
&\leq C_1\|s^\frac12\bm{x}''(t)\|_{L^{2q}}^2 + \|\dot{\bm{x}}'(t)\|_{L^{2q}}^2,
\end{align*}
which together with Lemma \ref{lem:EstSol1} gives the last estimate. 
\end{proof}

\begin{lemma}\label{lem:EstSol3}
For any positive constant $M$ there exists a positive constant $C_1=C(M)$ such that 
for any solution $(\bm{x},\tau)$ to the problem \eqref{Eq}--\eqref{BC} satisfying \eqref{SolClassEst0} 
we have $\|\ddot{\bm{x}}'(t)\|_{L^2} \leq C_1$ for $0\leq t\leq T$. 
\end{lemma}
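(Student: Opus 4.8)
The plan is to use the equation of motion to express $\ddot{\bm{x}}'$ in terms of spatial derivatives of the tension and of $\bm{x}$, and then to estimate each resulting term in $L^2$ using the bounds already available under hypothesis \eqref{SolClassEst0} together with the estimates for $\tau$ from Lemma \ref{lem:EstSol2}. Concretely, differentiating the first equation in \eqref{Eq} with respect to $s$ gives
\[
\ddot{\bm{x}}' = (\tau\bm{x}')'' = \tau''\bm{x}' + 2\tau'\bm{x}'' + \tau\bm{x}''',
\]
so it suffices to bound $\|\tau''\bm{x}'\|_{L^2}$, $\|\tau'\bm{x}''\|_{L^2}$, and $\|\tau\bm{x}'''\|_{L^2}$.

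For the three terms I would argue as follows. First, since $|\bm{x}'|=1$ we have $\|\tau''\bm{x}'\|_{L^2} = \|\tau''\|_{L^2} \leq C(M)$ by the last estimate of Lemma \ref{lem:EstSol2} with $q=2$. Second, $\|\tau'\bm{x}''\|_{L^2} \leq \|\tau'\|_{L^\infty}\|\bm{x}''\|_{L^2} \leq C(M)$, using the pointwise bound $|\tau'(s,t)|\leq C_1$ from Lemma \ref{lem:EstSol2} and the fact that $\|\bm{x}''(t)\|_{L^2}\leq M$ follows from \eqref{SolClassEst0}. Third, and this is where the degeneracy of $\tau$ is exploited, $\|\tau\bm{x}'''\|_{L^2} \leq \|\tau/s\|_{L^\infty}\|s\bm{x}'''\|_{L^2}$; since $\tau(s,t)\leq C_1 s$ we have $\|\tau/s\|_{L^\infty}\leq C_1$, and $\|s\bm{x}'''(t)\|_{L^2}\leq M$ again follows from $\|\bm{x}'(t)\|_{X^2}\leq M$ (the weighted term $\|s\ds^3\bm{x}'\|_{L^2}$ is exactly one of the norms in $\|\bm{x}'\|_{X^2}$, up to the identification; alternatively use $\|s\bm{x}'''\|_{L^2}\leq\|\bm{x}'\|_{X^2}$). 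Combining the three bounds yields $\|\ddot{\bm{x}}'(t)\|_{L^2}\leq C(M)$ for $0\leq t\leq T$.

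The only mild subtlety — and the step I would be most careful about — is that the identity $\ddot{\bm{x}}' = (\tau\bm{x}')''$ requires enough regularity on $\tau$ and $\bm{x}$ to commute the $t$- and $s$-derivatives and to make sense of the second spatial derivative of $\tau\bm{x}'$; under the class \eqref{SolClass0} the quantity $\tau''$ lies in $L^q$ (Lemma \ref{lem:EstSol2}), $\bm{x}''\in X^1\subset L^q$ for all finite $q$, and $\bm{x}'''$ is controlled in the weighted space, so the product $(\tau\bm{x}')''$ is a well-defined $L^2$ function and the differentiated equation holds in the distributional sense, which is all that is needed. No genuine obstacle arises; the estimate is essentially a bookkeeping exercise in which the weight $s$ compensates exactly for the top-order term $\tau\bm{x}'''$, reflecting the linear degeneracy of the tension at the free end.
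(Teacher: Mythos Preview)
Your proof is correct and follows essentially the same approach as the paper: differentiate the equation of motion in $s$ to write $\ddot{\bm{x}}'=\tau\bm{x}'''+2\tau'\bm{x}''+\tau''\bm{x}'$, then bound each term using Lemma~\ref{lem:EstSol2} (for $\tau\leq C_1 s$, $|\tau'|\leq C_1$, $\|\tau''\|_{L^2}\leq C_1$) together with the $X^2$-control of $\bm{x}'$ from \eqref{SolClassEst0}. Your additional remark on the regularity needed to justify the differentiated identity is fine and does not depart from the paper's argument.
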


\begin{proof}
We remind here that $\tau$ is also the solution to the boundary value problem \eqref{BVP}. 
It follows from the hyperbolic equations for $\bm{x}$ that $\ddot{\bm{x}}'=(\tau\bm{x}')'' = \tau\bm{x}'''+2\tau'\bm{x}''+\tau''\bm{x}'$, 
so that by Lemma \ref{lem:EstSol2} we have 
$|\ddot{\bm{x}}'| \lesssim s|\bm{x}'''|+|\bm{x}''|+|\tau''|$, which yields the desired estimate. 
\end{proof}

\begin{lemma}\label{lem:EstSol4}
For any positive constant $M$ there exists a positive constant $C_1=C(M)$ such that 
for any solution $(\bm{x},\tau)$ to the problem \eqref{Eq}--\eqref{BC} satisfying \eqref{SolClassEst0} we have 
\[
|\dot{\tau}(s,t)| \leq C_1s, \quad |\dot{\tau}'(s,t)| \leq C_1
\]
for any $(s,t)\in[0,1]\times[0,T]$. 
\end{lemma}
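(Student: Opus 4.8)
The plan is to differentiate the two-point boundary value problem \eqref{BVP} with respect to $t$ and to view $\dot\tau$ as the solution of a boundary value problem of the same type \eqref{TBVP}, so that the pointwise estimates of Lemma \ref{lem:EstSolBVP2} apply. Writing $L\tau = -\tau'' + |\bm{x}''|^2\tau$, we have $L\dot\tau = \dot h$ with $\dot h = 2(\dot{\bm{x}}'\cdot\ddot{\bm{x}}') - 2(\bm{x}''\cdot\dot{\bm{x}}'')\tau$, together with the boundary conditions obtained by differentiating those in \eqref{BVP}: $\dot\tau(0,t)=0$ and $\dot\tau'(1,t) = -\bm{g}\cdot\dot{\bm{x}}'(1,t)$. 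Since $\bm{x}$ satisfies \eqref{SolClassEst0}, Lemma \ref{lem:EstSol1} gives $\|s^{\frac12}\bm{x}''(t)\|_{L^2}\leq M$, so Lemma \ref{lem:EstSolBVP2} is applicable with $M$ fixed; it remains to bound the data $a = -\bm{g}\cdot\dot{\bm{x}}'(1,t)$ and $\|\sigma\dot h\|_{L^1}$ (taking $\alpha=1$ in that lemma will yield exactly the claimed bounds $|\dot\tau|\lesssim s$ and $|\dot\tau'|\lesssim 1$).

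First I would bound $a$. By the embedding $X^1\hookrightarrow L^q$ for finite $q$ we do not directly get an $L^\infty$ bound on $\dot{\bm{x}}'$, but the boundary trace at $s=1$ is harmless: away from the degenerate endpoint $s=0$ the space $X^1$ behaves like $H^1$, so $|\dot{\bm{x}}'(1,t)|\lesssim \|\dot{\bm{x}}'(t)\|_{X^1}\leq M$, whence $|a|\leq C(M)$. (Concretely, $|\dot{\bm{x}}'(1,t)|^2 \le |\dot{\bm{x}}'(1/2,t)|^2 + 2\int_{1/2}^1 |\dot{\bm{x}}'||\dot{\bm{x}}''|\,\mathrm{d}s \lesssim \|\dot{\bm{x}}'(t)\|_{L^2}^2 + \|\dot{\bm{x}}'(t)\|_{L^2}\|s^{1/2}\dot{\bm{x}}''(t)\|_{L^2}$, which is $\le C(M)$ using the pointwise bounds following \eqref{SolClassEst0} and, for the first term, $\|\dot{\bm{x}}'(t)\|_{L^\infty(1/2,1)}\lesssim\|\dot{\bm{x}}'(t)\|_{X^1}$ by a one-dimensional Sobolev inequality on $(1/2,1)$.)

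Next I would bound $\|\sigma\dot h\|_{L^1}$. For the term $2(\bm{x}''\cdot\dot{\bm{x}}'')\tau$, using $\tau(s,t)\leq C(M)s$ from Lemma \ref{lem:EstSol2} and Cauchy--Schwarz, $\|\sigma\cdot\sigma\,\bm{x}''\dot{\bm{x}}''\|_{L^1}\lesssim \|s^{\frac12}\bm{x}''(t)\|_{L^2}\|s^{\frac32}\dot{\bm{x}}''(t)\|_{L^2} \le \|s^{\frac12}\bm{x}''(t)\|_{L^2}\|s^{\frac12}\dot{\bm{x}}''(t)\|_{L^2}\le M^2$. For the term $2\dot{\bm{x}}'\cdot\ddot{\bm{x}}'$, I use Lemma \ref{lem:EstSol3}, which gives $\|\ddot{\bm{x}}'(t)\|_{L^2}\leq C(M)$; combined with $\|\dot{\bm{x}}'(t)\|_{L^2}\leq M$ this yields $\|\sigma\,\dot{\bm{x}}'\cdot\ddot{\bm{x}}'\|_{L^1}\leq\|\dot{\bm{x}}'(t)\|_{L^2}\|\ddot{\bm{x}}'(t)\|_{L^2}\leq C(M)$. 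Hence $\|\sigma\dot h(t)\|_{L^1}\leq C(M)$, and Lemma \ref{lem:EstSolBVP2} with $\alpha=1$ gives $|\dot\tau(s,t)|\leq C(M)s$ and $s|\dot\tau'(s,t)|\leq C(M)$. The latter is a bound with a weight $s$ in front of $\dot\tau'$, which is weaker than claimed near $s=0$; to upgrade it to $|\dot\tau'(s,t)|\leq C(M)$ I instead integrate the equation $L\dot\tau=\dot h$ over $[s,1]$ as in the proof of Lemma \ref{lem:EstSolBVP1}: $\dot\tau'(s,t) = a + \int_s^1\dot h\,\mathrm{d}\sigma - \int_s^1|\bm{x}''|^2\dot\tau\,\mathrm{d}\sigma$, and each term is $\leq C(M)$ since $|a|\le C(M)$, $\|\dot h(t)\|_{L^1}\le\|\sigma\dot h(t)\|_{L^1}\cdot\|\sigma^{-1}\|$ must be handled more carefully — actually here I use directly $\int_s^1|\dot h|\,\mathrm{d}\sigma \le \|\dot{\bm{x}}'(t)\|_{L^2}\|\ddot{\bm{x}}'(t)\|_{L^2} + C(M)\int_0^1 s|\bm{x}''||\dot{\bm{x}}''|\,\mathrm{d}\sigma \le C(M)$, and $\int_s^1|\bm{x}''|^2|\dot\tau|\,\mathrm{d}\sigma \le C(M)\int_0^1 \sigma|\bm{x}''|^2\,\mathrm{d}\sigma = C(M)\|\sigma^{1/2}\bm{x}''\|_{L^2}^2 \le C(M)$. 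This gives $|\dot\tau'(s,t)|\leq C(M)$ uniformly.

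The main obstacle is organizing the bound on $\dot h$ so that the singular weight is absorbed — in particular the term $(\bm{x}''\cdot\dot{\bm{x}}'')\tau$, where one must use crucially that $\tau$ vanishes linearly at $s=0$ (Lemma \ref{lem:EstSol2}) to trade one power of $s$ against the weight already present on $\bm{x}''$, and the term $\dot{\bm{x}}'\cdot\ddot{\bm{x}}'$, where one must invoke the a priori bound $\|\ddot{\bm{x}}'(t)\|_{L^2}\le C(M)$ from Lemma \ref{lem:EstSol3} rather than any naive estimate. Once $\dot h$ is under control, the rest is a routine application of the Green's-function representation and the integrated form of the ODE exactly as in Section \ref{sect:BVP}.
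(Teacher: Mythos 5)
Your overall strategy coincides with the paper's: differentiate \eqref{BVP} in $t$, recognize $\dot\tau$ as the solution of a problem of the form \eqref{TBVP} with data $h_1=2(\dot{\bm{x}}'\cdot\ddot{\bm{x}}')-2(\bm{x}''\cdot\dot{\bm{x}}'')\tau$ and $a_1=-\bm{g}\cdot\dot{\bm{x}}'|_{s=1}$, bound these using Lemmas \ref{lem:EstSol2} and \ref{lem:EstSol3} (and the trace bound $|a_1|\lesssim\|\dot{\bm{x}}'\|_{X^1}$), and conclude by Lemma \ref{lem:EstSolBVP2}. However, your application of Lemma \ref{lem:EstSolBVP2} with $\alpha=1$ is misread: at $\alpha=1$ its first estimate is $|\dot\tau(s)|\le C(|a_1|s+\|\sigma\dot h\|_{L^1}s^{1-\alpha})=C(|a_1|s+\|\sigma\dot h\|_{L^1})$, which gives only $\|\dot\tau\|_{L^\infty}\lesssim1$, not $|\dot\tau(s)|\lesssim s$ as you claim; and, as you yourself note, the second estimate gives only $s|\dot\tau'(s)|\lesssim1$. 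This matters because your upgrade of the $\dot\tau'$ bound uses $\int_s^1|\bm{x}''|^2|\dot\tau|\,\mathrm{d}\sigma\le C\int_0^1\sigma|\bm{x}''|^2\,\mathrm{d}\sigma$, which silently invokes the not-yet-proved bound $|\dot\tau(\sigma)|\le C\sigma$; as written, that step is circular.

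The gap is easily closed, and your own computations contain the fix: the unweighted norm $\|\dot h\|_{L^1}$ is already bounded, since $\|\dot{\bm{x}}'\cdot\ddot{\bm{x}}'\|_{L^1}\le\|\dot{\bm{x}}'\|_{L^2}\|\ddot{\bm{x}}'\|_{L^2}$ and $\|\tau\,\bm{x}''\cdot\dot{\bm{x}}''\|_{L^1}\lesssim\|s\,\bm{x}''\dot{\bm{x}}''\|_{L^1}\le\|\bm{x}''\|_{L^2}\|s^{1/2}\dot{\bm{x}}''\|_{L^2}$, both $\le C(M)$ by \eqref{SolClassEst0} and Lemmas \ref{lem:EstSol2}, \ref{lem:EstSol3}. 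This is exactly what the paper does: it applies Lemma \ref{lem:EstSolBVP2} with $\alpha=0$, which yields both $|\dot\tau(s)|\le C(|a_1|+\|\dot h\|_{L^1})s$ and $|\dot\tau'(s)|\le C(|a_1|+\|\dot h\|_{L^1})$ at once, with no need for the weight $\sigma$ on $\dot h$ or the integrated ODE. Alternatively, keep your route but replace the circular step by $\int_s^1|\bm{x}''|^2|\dot\tau|\,\mathrm{d}\sigma\le\|\dot\tau\|_{L^\infty}\|\bm{x}''\|_{L^2}^2\lesssim1$, using the $L^\infty$ bound that $\alpha=1$ actually provides and $\|\bm{x}''\|_{L^2}\le M$, and recover $|\dot\tau(s)|\le Cs$ at the end from $\dot\tau(0,t)=0$ and the uniform bound on $\dot\tau'$.
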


\begin{proof}
Note that $\dot{\tau}$ is a solution to the boundary value problem 
\[
\begin{cases}
 -\dot{\tau}''+|\bm{x}''|^2\dot{\tau} = h_1 \quad\mbox{in}\quad (0,1), \\
 \dot{\tau}(0)=0, \quad \dot{\tau}'(1)=a_1,
\end{cases}
\]
where $h_1=2(\dot{\bm{x}}'\cdot\ddot{\bm{x}}')-2(\bm{x}''\cdot\dot{\bm{x}}'')\tau$ and $a_1=-\bm{g}\cdot\dot{\bm{x}}'|_{s=1}$. 
By Lemmas \ref{lem:EstSol2} and \ref{lem:EstSol3}, we see that 
$\|h_1(t)\|_{L^1} \lesssim \|\dot{\bm{x}}'(t)\|_{L^2}\|\ddot{\bm{x}}'(t)\|_{L^2} + \|\bm{x}''(t)\|_{L^2}\|s\dot{\bm{x}}''(t)\|_{L^2} \lesssim1$. 
Moreover, by the standard Sobolev embedding theorem we have $|a_1|\lesssim \|\dot{\bm{x}}'(t)\|_{X^1} \lesssim 1$. 
Therefore, the desired estimates follow from Lemma \ref{lem:EstSolBVP2} in the case $\alpha=0$. 
\end{proof}

\subsection{Proof of Theorem \ref{th:equiv_0}}
We now give a proof of Theorem \ref{th:equiv_0}. 
Suppose that $(\bm{x},\tau)$ is a solution to the transformed system \eqref{HP}, \eqref{BVP}, and \eqref{IC} in the class \eqref{SolClass0} 
satisfying the stability condition \eqref{SolClass0_SC}. 
Then, there exist a positive constant $M$ such that \eqref{SolClassEst0} holds. 
Therefore, we can use the estimates for $\tau$ and $\dot{\tau}$ obtained in Lemmas \ref{lem:EstSol2} and \ref{lem:EstSol4}. 
We will use such estimates freely in the following.

Put $h(s,t)=|\bm{x}'(s,t)|^2-1$. 
It is sufficient to show that $h(s,t)\equiv0$. 
By a straightforward calculation, we have 
\[
\begin{cases}
 \ddot{h} = \tau h'' + 2\tau' h' + 2\tau''h &\mbox{in}\quad (0,1)\times(0,T), \\
 \tau h' + 2\tau'h = 0 &\mbox{on}\quad \{s=1\}\times(0,T), \\
 (h,\dot{h})|_{t=0} = (0,0) &\mbox{in}\quad (0,1).
\end{cases}
\]
Therefore, 
\begin{align*}
\frac{\mathrm{d}}{\mathrm{d}t}\int_0^1(\tau|\dot{h}|^2+\tau^2|h'|^2) \mathrm{d}s
&= \int_0^1(4\tau\tau''h\dot{h}+\dot{\tau}|\dot{h}|^2+2\tau\dot{\tau}|h'|^2) \mathrm{d}s
 + 2(\tau^2h'\dot{h})|_{s=1},
\end{align*}
where we used the hyperbolic equation for $h$, integration by parts, and the boundary condition $\tau|_{s=0}=0$. 
As for the boundary term, by using the boundary condition for $h$ at $s=1$ we see that 
\begin{align*}
2(\tau^2h'\dot{h})|_{s=1}
&= -4(\tau\tau'h\dot{h})|_{s=1} \\
&= -2\frac{\mathrm{d}}{\mathrm{d}t}(\tau\tau'h^2)|_{s=1} + 2((\dot{\tau}\tau'+\tau\dot{\tau}')h^2)|_{s=1}. 
\end{align*}
Putting $E(t)=\int_0^1( \lambda s|h|^2+\tau|\dot{h}|^2+\tau^2|h'|^2 ) \mathrm{d}s + 2(\tau\tau'h^2)|_{s=1}$, 
where $\lambda>0$ is a parameter to be chosen below, we obtain 
\begin{align*}
\frac{\mathrm{d}}{\mathrm{d}t}E(t)
&= \int_0^1( 2\lambda sh\dot{h} + 4\tau\tau''h\dot{h}+\dot{\tau}|\dot{h}|^2+2\tau\dot{\tau}|h'|^2) \mathrm{d}s + 2((\dot{\tau}\tau'+\tau\dot{\tau}')h^2)|_{s=1}. 
\end{align*}
As in the proof of Lemma \ref{lem:CalIneqLp1}, we have 
$|h(1)|^2 \leq \|sh\|_{L^\infty}^2 \lesssim \|s^\frac12h\|_{L^2}^2 + \|s^\frac12h\|_{L^2}\|sh\|_{L^2}$, so that by taking $\lambda$ sufficiently large, 
we have the equivalence $E(t) \simeq \|s^\frac12 h(t)\|_{L^2}^2 + \|s^\frac12 \dot{h}(t)\|_{L^2}^2 + \|s h'(t)\|_{L^2}^2$. 
Therefore, we see easily that 
\[
\int_0^1( 2\lambda sh\dot{h}+\dot{\tau}|\dot{h}|^2+2\tau\dot{\tau}|h'|^2) \mathrm{d}s + 2((\dot{\tau}\tau'+\tau\dot{\tau}')h^2)|_{s=1} \lesssim E(t).
\]
To evaluate the integral of $\tau\tau''h\dot{h}$, we observe that 
\begin{align*}
|h(s,t)| 
&\leq |h(1,t)|+\int_s^1|h'(\sigma,t)|\mathrm{d}\sigma \\
&\leq |h(1,t)|+\left(\int_s^1\sigma^{-2}\mathrm{d}\sigma\right)^{\frac12}\left(\int_s^1\sigma^2|h'(\sigma,t)|^2\mathrm{d}\sigma\right)^{\frac12} \\
&\lesssim \|\sigma^{\frac12}h\|_{L^2} + s^{-\frac12}\|\sigma h'\|_{L^2},
\end{align*}
where we used $|h(1)|^2 \lesssim \|\sigma^\frac12h\|_{L^2} + \|\sigma h\|_{L^2}$. 
Therefore, 
\begin{align*}
\int_0^1|\tau\tau''h\dot{h}| \mathrm{d}s
&\lesssim \int_0^1 |\tau''|(s^{\frac12}|h|)(s^{\frac12}|\dot{h}|) \mathrm{d}s \\
&\lesssim \|\tau''\|_{L^2}(\|s^{\frac12}h\|_{L^2}+\|s h'\|_{L^2})\|s^{\frac12}\dot{h}\|_{L^2} \\
&\lesssim E(t).
\end{align*}
Summarizing the above estimates, we get $\frac{\mathrm{d}}{\mathrm{d}t}E(t) \lesssim E(t)$, 
which together with the initial condition $E(0)=0$ and Gronwall's inequality yields the desired result. 
\hfill$\Box$

\section{Energy estimate for a linearized system}\label{sect:EE}
%
\subsection{Differential operator $\mathscr{A}_\tau$}
We introduce a variable coefficient differential operator $\mathscr{A}_\tau$ by 
\begin{equation}\label{defA}
\mathscr{A}_\tau\bm{x}=-(\tau\bm{x}')',
\end{equation}
so that \eqref{Eq} can be written as 
\[
\begin{cases}
 \ddot{\bm{x}}+\mathscr{A}_\tau\bm{x}=\bm{g} &\mbox{in}\quad (0,1)\times(0,T), \\
 |\bm{x}'|=1 &\mbox{in}\quad (0,1)\times(0,T).
\end{cases}
\]

\begin{lemma}\label{lem:EstA}
For any $C_1\geq1$ there exists a constant $C\geq1$ such that if $\tau(s)$ satisfies 
\[
 C_1^{-1}s \leq \tau(s) \leq C_1s, \quad |\tau'(s)| \leq C_1
\]
for any $s\in[0,1]$, then we have an equivalence of the norms 
\[
C^{-1}( \|s\bm{x}''\|_{L^2}+\|\bm{x}'\|_{L^2} ) \leq \|\mathscr{A}_\tau\bm{x}\|_{L^2} \leq C( \|s\bm{x}''\|_{L^2}+\|\bm{x}'\|_{L^2}). 
\]
\end{lemma}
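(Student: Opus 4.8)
The operator is $\mathscr{A}_\tau\bm{x} = -(\tau\bm{x}')' = -\tau\bm{x}'' - \tau'\bm{x}'$, so the statement is essentially a weighted elliptic regularity/coercivity estimate: the $L^2$ norm of $\mathscr{A}_\tau\bm{x}$ controls and is controlled by $\|s\bm{x}''\|_{L^2} + \|\bm{x}'\|_{L^2}$, uniformly over tensions $\tau$ in the given class $C_1^{-1}s \le \tau \le C_1 s$, $|\tau'|\le C_1$.

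The upper bound is the easy direction. From $\mathscr{A}_\tau\bm{x} = -\tau\bm{x}'' - \tau'\bm{x}'$ and the pointwise bounds $\tau(s)\le C_1 s$, $|\tau'(s)|\le C_1$, we get $|\mathscr{A}_\tau\bm{x}(s)| \le C_1 s|\bm{x}''(s)| + C_1|\bm{x}'(s)|$ pointwise, hence $\|\mathscr{A}_\tau\bm{x}\|_{L^2} \le C_1(\|s\bm{x}''\|_{L^2} + \|\bm{x}'\|_{L^2})$ by the triangle inequality.

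For the lower bound — the main obstacle — the plan is an energy/integration-by-parts argument exploiting the degeneracy $\tau\simeq s$, $\tau(0)=0$. Write $f = \mathscr{A}_\tau\bm{x} = -(\tau\bm{x}')'$. First I would recover control of $\bm{x}'$: from $-(\tau\bm{x}')' = f$ and $\tau(0)=0$, integrate to get $\tau(s)\bm{x}'(s) = -\int_0^s f(\sigma)\,d\sigma$, so $|\bm{x}'(s)| \le \tau(s)^{-1}\int_0^s|f| \le C_1 s^{-1}\int_0^s|f|$; then $\|\bm{x}'\|_{L^2} \le C_1\|\mathscr{M}(|f|)\|_{L^2} \le 2C_1\|f\|_{L^2}$ by Corollary \ref{cor:WEM1} (the averaging operator bound, case $j=0$, $p=2$, $\beta=0$). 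This also handles the boundary term issues since $\tau\bm{x}'$ vanishes at $s=0$. Next, to get $\|s\bm{x}''\|_{L^2}$, use $s\bm{x}'' = -\frac{s}{\tau}f - \frac{s\tau'}{\tau}\bm{x}'$ and note $\frac{s}{\tau}\le C_1$ and $\frac{s|\tau'|}{\tau}\le C_1^2$ pointwise, giving $\|s\bm{x}''\|_{L^2}\le C_1\|f\|_{L^2} + C_1^2\|\bm{x}'\|_{L^2} \le C\|f\|_{L^2}$. Combining the two displays yields $\|s\bm{x}''\|_{L^2}+\|\bm{x}'\|_{L^2}\le C\|\mathscr{A}_\tau\bm{x}\|_{L^2}$, which is the desired lower bound.

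The subtlety to check is integrability/boundary behavior: the formula $\tau\bm{x}' = -\int_0^s f\,d\sigma$ requires justifying that $(\tau\bm{x}')(0)=0$, which follows because $\tau(0)=0$ and $\bm{x}'\in L^2\subset L^1$ makes $\tau\bm{x}'$ vanish at $s=0$ (it is $H^1$ near $0$ with value $0$ there, or one argues by the fundamental theorem once $\mathscr{A}_\tau\bm{x}\in L^2$ is given). I would phrase the argument so that all quantities are a priori finite — working with $\bm{x}$ such that the right-hand side norm is finite — and then the identities are legitimate. No genuinely hard estimate is needed; the only care is the pointwise algebra $\frac{s}{\tau}, \frac{s\tau'}{\tau} \lesssim 1$ and one application of the averaging operator bound.
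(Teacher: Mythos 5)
Your proof is correct and follows essentially the same route as the paper: the upper bound by expanding $-(\tau\bm{x}')'=-\tau\bm{x}''-\tau'\bm{x}'$ with the pointwise bounds, and the lower bound by integrating to get $\tau\bm{x}'=-s\mathscr{M}(\mathscr{A}_\tau\bm{x})$, applying Corollary \ref{cor:WEM1} with $\beta=j=0$, $p=2$, and then recovering $\|s\bm{x}''\|_{L^2}$ from the same algebraic identity. The only difference is cosmetic (dividing by $\tau$ rather than writing $\tau\bm{x}''=(\tau\bm{x}')'-\tau'\bm{x}'$), so nothing further is needed.
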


\begin{proof}
It is sufficient to show the first estimate. 
Moreover, in view of $\|s\bm{x}''\|_{L^2} \leq C_1\|\tau\bm{x}''\|_{L^2} \leq C_1 (\|(\tau\bm{x}')'\|_{L^2}+\|\tau'\bm{x}'\|_{L^2})$, 
it is sufficient to show $\|\bm{x}'\|_{L^2} \lesssim\|\mathscr{A}_\tau\bm{x}\|_{L^2}$. 
Integrating the definition \eqref{defA} of $\mathscr{A}_\tau$ over $[0,s]$, we have 
\[
\tau(s)\bm{x}'(s) = -\int_0^s (\mathscr{A}_\tau\bm{x})(\sigma) \mathrm{d}\sigma = - s\mathscr{M}(\mathscr{A}_\tau\bm{x})(s),
\]
where $\mathscr{M}$ is the averaging operator defined in Section \ref{sect:AOM}. 
Therefore, $|\bm{x}'(s)| \leq C_1|\mathscr{M}(\mathscr{A}_\tau\bm{x})(s)|$ for any $s\in[0,1]$. 
Now, the desired estimate follows from Corollary \ref{cor:WEM1} in the case $\beta=j=0$ and $p=2$. 
\end{proof}

\subsection{Linearized system}
In this subsection we derive an energy estimate for solutions to a linearized system for \eqref{Eq}, \eqref{BC}, and \eqref{BVP}. 
We denote variations of $(\bm{x},\tau)$ by $(\bm{y},\nu)$ in the linearization. 
Then, the linearized system has the form 
\begin{equation}\label{LEq}
\begin{cases}
 \ddot{\bm{y}}+\mathscr{A}_\tau\bm{y}+\mathscr{A}_\nu\bm{x} = \bm{f} &\mbox{in}\quad (0,1)\times(0,T), \\
 \bm{x}'\cdot\bm{y}'=f &\mbox{in}\quad (0,1)\times(0,T), \\
 \bm{y}=\bm{0} &\mbox{on}\quad \{s=1\}\times(0,T),
\end{cases}
\end{equation}
and 
\begin{equation}\label{LBVP}
\begin{cases}
 -\nu''+|\bm{x}''|^2\nu = 2\dot{\bm{x}}'\cdot\dot{\bm{y}}' - 2(\bm{x}''\cdot\bm{y}'')\tau + h &\mbox{in}\quad (0,1)\times(0,T), \\
 \nu = 0 &\mbox{on}\quad \{s=0\}\times(0,T), \\
 \nu' = -\bm{g}\cdot\bm{y}' &\mbox{on}\quad \{s=1\}\times(0,T),
\end{cases}
\end{equation}
where $\bm{f}$, $f$, and $h$ can be regarded as given functions. 
As for $\bm{x}$, in addition to \eqref{SolClassEst0} we assume that 
\begin{equation}\label{SolClassEst2}
\|\bm{x}(t)\|_{X^3} + \|\dot{\bm{x}}(t)\|_{X^2} \leq M_1.
\end{equation}
Note that this condition is less restrictive than \eqref{SolClassEst0}. 
An advantage to assume this condition is that we can take the constant $M_1$ smaller than the constant $M$ in \eqref{SolClassEst0} in applications. 
We are going to evaluate the functional $E(t)$ defined by 
\begin{equation}\label{DefE}
E(t) = \|\dot{\bm{y}}(t)\|_{X^1}^2 + \|\bm{y}(t)\|_{X^2}^2.
\end{equation}

\begin{proposition}\label{prop:EE}
For any positive constants $M_1$, $M$, and $c_0$ and any $\epsilon \in (0,\frac12)$, 
there exist positive constants $C_1=C(M_1,c_0)$ and $C_2(\epsilon)=C(M,M_1,c_0,\epsilon)$ such that if $(\bm{x},\tau)$ is a solution to the problem 
\eqref{Eq}--\eqref{BC} satisfying \eqref{SolClassEst0}, \eqref{SolClassEst2}, and the stability condition \eqref{SolClass0_SC}, 
then for any solution $(\bm{y},\nu)$ to \eqref{LEq}--\eqref{LBVP} we have 
\[
E(t) \leq C_1 \mathrm{e}^{C_2(\epsilon) t}\left( E(0) + S_1(0) + C_2(\epsilon)\int_0^t S_2(t')\mathrm{d}t' \right), 
\]
where 
\begin{equation}\label{DefS12}
\begin{cases}
 S_1(t) = \|\bm{f}\|_{L^2}^2 + \|sh\|_{L^1}^2,\\
 S_2(t) = \|\dot{\bm{f}}\|_{L^2}^2 + \|s^{\frac12-\epsilon}\dot{f}\|_{L^2}^2 + |\dot{f}|_{s=1}|^2
 + \|s\dot{h}\|_{L^1}^2 + \|s^{\frac12+\epsilon}h\|_{L^2}^2.
\end{cases}
\end{equation}
\end{proposition}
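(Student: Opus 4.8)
The plan is to prove Proposition~\ref{prop:EE} by a weighted energy method applied to the linearized system, differentiating once in time to reach the $X^1$-level for $\dot{\bm y}$ and using the structure of $\mathscr{A}_\tau=-(\tau\,\cdot\,')'$ together with the degeneracy $\tau(s)\simeq s$ to produce the weights in $E(t)=\|\dot{\bm y}\|_{X^1}^2+\|\bm y\|_{X^2}^2$. First I would record the basic (undifferentiated) energy identity: multiplying the first equation of \eqref{LEq} by $\dot{\bm y}$ and integrating by parts, using $\bm y|_{s=1}=\bm 0$ and $\tau|_{s=0}=0$, gives control of $\|\dot{\bm y}\|_{L^2}^2+\|\sqrt{\tau}\,\bm y'\|_{L^2}^2$, with the term $\int(\mathscr{A}_\nu\bm x)\cdot\dot{\bm y}$ moved to the right and estimated after integrating by parts once more so that only $\nu$ (not $\nu'$) appears, paired with $\bm x''$ and $\bm x'$, which are bounded by $M_1$ via \eqref{SolClassEst2}. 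The boundary term at $s=1$ produced here involves $\nu$ and is handled using the Dirichlet condition on $\bm y$ and the boundary condition $\nu'|_{s=1}=-\bm g\cdot\bm y'$ from \eqref{LBVP}. The forcing contributes $\|\bm f\|_{L^2}$, i.e.\ part of $S_1$.

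Next, the main step: differentiate \eqref{LEq} in $t$ and run the energy estimate for $(\dot{\bm y},\dot\nu)$. Testing $\ddot{\dot{\bm y}}+\mathscr{A}_\tau\dot{\bm y}+[\dt,\mathscr{A}_\tau]\bm y+\dt(\mathscr{A}_\nu\bm x)=\dot{\bm f}$ against $\dot{\dot{\bm y}}$ and integrating by parts in $s$ yields $\frac{d}{dt}\bigl(\|\dot{\dot{\bm y}}\|_{L^2}^2+\|\sqrt{\tau}\,\dot{\bm y}'\|_{L^2}^2\bigr)$ plus commutator and coupling terms; then, to upgrade $\|\sqrt{\tau}\,\dot{\bm y}'\|_{L^2}\simeq\|s^{1/2}\dot{\bm y}'\|_{L^2}$ to the full $X^1$-norm $\|\dot{\bm y}\|_{X^1}^2=\|\dot{\bm y}\|_{H^0}^2+\|s^{1/2}\dot{\bm y}''\|_{L^2}^2$ (plus $\|\dot{\bm y}\|_{L^2}$), I would additionally test the $t$-differentiated equation against $-\mathscr{A}_\tau\dot{\bm y}$ (equivalently apply $\ds$ to the equation with an $s$-weight), exploiting $\mathscr{A}_\tau\dot{\bm y}\simeq s\dot{\bm y}''+\dot{\bm y}'$ in $L^2$ by Lemma~\ref{lem:EstA}, and absorbing the resulting time-boundary contribution $2\frac{d}{dt}(\cdots)$ into the energy. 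This second multiplier is also where the weight structure of $X^2$ for $\bm y$ is recovered, since $\|\bm y\|_{X^2}^2\simeq\|\bm y\|_{H^1}^2+\|s\bm y''\|_{L^2}^2\simeq\|\bm y\|_{L^2}^2+\|\mathscr{A}_\tau\bm y\|_{L^2}^2$, and $\mathscr{A}_\tau\bm y$ is read off from the un-differentiated equation as $\bm f-\ddot{\bm y}-\mathscr{A}_\nu\bm x$. The stability condition \eqref{SolClass0_SC} is used precisely to guarantee $\tau(s)/s\geq c_0$, so $\sqrt{\tau}\,\dot{\bm y}'$-control is genuine $s^{1/2}$-weighted control, and to make the coercivity constants depend only on $c_0,M_1$; the bounds $\tau\lesssim s$, $|\tau'|\lesssim 1$ (from Lemma~\ref{lem:EstSol2}) and the estimates on $\dot\tau$ (Lemma~\ref{lem:EstSol4}) bound the commutator $[\dt,\mathscr{A}_\tau]\bm y=-(\dot\tau\bm y')'$ and all coefficient-derivative terms.

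The coupling through $\nu$ and $\dot\nu$ is handled by the two-point boundary value problem estimates of Section~\ref{sect:BVP}. For $\nu$ solving \eqref{LBVP} with right-hand side $2\dot{\bm x}'\cdot\dot{\bm y}'-2(\bm x''\cdot\bm y'')\tau+h$, Lemma~\ref{lem:EstSolBVP2} (with suitable $\alpha$) gives $|\nu(s)|\lesssim s(\ldots)$ and $s^\alpha|\nu'(s)|\lesssim(\ldots)$ in terms of $|a|=|\bm g\cdot\bm y'|_{s=1}|$ (controlled by $\|\bm y'\|_{X^1}\lesssim\|\bm y\|_{X^2}$, hence by $E$) and the weighted $L^1$-norm of the source, where $\|s\,\dot{\bm x}'\cdot\dot{\bm y}'\|_{L^1}\lesssim\|s^{1/2}\dot{\bm x}'\|_{L^\infty_{\mathrm{loc}}}\|s^{1/2}\dot{\bm y}'\|_{L^2}$ type bounds and $\|s\cdot\bm x''\bm y''\tau\|_{L^1}\lesssim\|s^{1/2}\bm x''\|\|s^{1/2}\bm y''\|$ are absorbed into $\sqrt{E(t)}$ and $\|sh\|_{L^1}$; differentiating \eqref{LBVP} in $t$ and using Lemma~\ref{lem:EstSol4} plus Lemma~\ref{lem:EstSolBVP2} again gives $|\dot\nu(s)|\lesssim s(\ldots)$, $|\dot\nu'(s)|\lesssim(\ldots)$ in terms of $\dot E$-type quantities and $\|s\dot h\|_{L^1}$, $|\dot f|_{s=1}|$, the $\epsilon$-loss entering through the need to estimate $\|s^{1/2-\epsilon}\dot f\|_{L^2}$ against $\|s^{1/2+\epsilon}h\|_{L^2}$ when the $f$-correction to the constraint is fed into the $\dot\nu$-equation; this is exactly where $S_2$ with its $\epsilon$-weights appears and why $C_2$ depends on $\epsilon$. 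Assembling, one obtains a differential inequality $\frac{d}{dt}\widetilde E(t)\leq C_2(\epsilon)\bigl(\widetilde E(t)+S_2(t)\bigr)$ for a modified energy $\widetilde E\simeq E$ that incorporates the time-boundary correction terms and $S_1$ at the initial time, and Gronwall's inequality gives the claimed bound with $\widetilde E(0)\lesssim E(0)+S_1(0)$.

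The main obstacle I expect is the simultaneous recovery of all the weights in $E(t)$: the energy identity natively controls only $\|\sqrt{\tau}\,(\cdot)'\|_{L^2}\simeq\|s^{1/2}(\cdot)'\|_{L^2}$, and upgrading this to $\|s^{1/2}(\cdot)''\|_{L^2}$ and $\|s(\cdot)''\|_{L^2}$ requires carefully pairing against $\mathscr{A}_\tau$-type multipliers while ensuring that every commutator term $-(\tau'(\cdot)')'$, $-(\dot\tau(\cdot)')'$, and the $\nu$-coupling terms are estimable by $E(t)$ itself (times a constant depending only on $M_1,c_0$) up to the listed source terms $S_1,S_2$ — in particular the time-boundary terms at $s=1$ generated by the second multiplier must be shown to be absorbable into $\widetilde E$ with a small or controllable constant, and the borderline $s$-weights near $s=0$ in the $\dot\nu$-estimate force the cosmetic but essential $\epsilon$-loss. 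Keeping the constant in front of $E(0)$ of the form $C_1=C(M_1,c_0)$ independent of $M$, with the $M$-dependence confined to $C_2(\epsilon)$ multiplying the time integral, is a bookkeeping point that requires isolating which estimates use \eqref{SolClassEst0} versus only \eqref{SolClassEst2}.
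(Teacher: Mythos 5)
Your plan diverges from the paper's proof (the paper never differentiates the hyperbolic equation in time: it tests the original equation \eqref{LEq} against $\mathscr{A}_\tau\dot{\bm y}$, which produces $\frac{\mathrm{d}}{\mathrm{d}t}\{\|\tau^{1/2}\dot{\bm y}'\|_{L^2}^2+\|\mathscr{A}_\tau\bm y\|_{L^2}^2\}$ in one stroke), and the divergence is not merely cosmetic: your route runs into a loss of derivatives that you have not resolved. In your main step the coupling term $\dt(\mathscr{A}_\nu\bm x)=\mathscr{A}_{\dot\nu}\bm x+\mathscr{A}_\nu\dot{\bm x}$ must be paired with $\ddot{\bm y}$, so you need \emph{interior} bounds on $\dot\nu$ and $\dot\nu'$. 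But the $t$-differentiated boundary value problem for $\dot\nu$ has a source containing $2\dot{\bm x}'\cdot\ddot{\bm y}'$ and $-2\tau\,\bm x''\cdot\dot{\bm y}''$, which are one derivative beyond what $E(t)$ controls ($E$ gives only $\|s^{1/2}\dot{\bm y}'\|_{L^2}$, $\|s\bm y''\|_{L^2}$, and, through the equation, $\|\ddot{\bm y}\|_{L^2}$). Hence the step ``differentiating \eqref{LBVP} in $t$ and using Lemma \ref{lem:EstSol4} plus Lemma \ref{lem:EstSolBVP2} again gives $|\dot\nu|\lesssim s(\ldots)$, $|\dot\nu'|\lesssim(\ldots)$ in terms of $\dot E$-type quantities'' is exactly where the argument breaks: those lemmas require weighted $L^1$ control of the source, which is not available. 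The paper's way out is structural: it writes the bad part of the source as $h_{II}'$ with $h_{II}=2(\dot{\bm x}'\cdot\ddot{\bm y})-2(\bm x''\cdot\dot{\bm y}')\tau$ and integrates by parts against the Green's function, and even then it only ever needs the \emph{boundary trace} $\dot\nu|_{s=1}$, because with its multiplier the sole leftover is the boundary term $-2(\nu\tau\bm x''\cdot\dot{\bm y}')|_{s=1}$, which Lemma \ref{lem:BT} converts into the total derivative $\frac{\mathrm{d}}{\mathrm{d}t}(\frac{\varphi'}{\varphi}\nu^2)|_{s=1}$. Your scheme needs strictly more (interior estimates of $\dot\nu,\dot\nu'$) and supplies no mechanism for it.

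A second, related gap: you never use the linearized constraint $\bm x'\cdot\bm y'=f$ in an essential way. Under only \eqref{SolClassEst0}--\eqref{SolClassEst2}, $\dot{\bm x}'\in X^1$ does not embed into $L^\infty$, so pairings such as $\tau\nu''$ against $\bm x'\cdot\dot{\bm y}'$, or $\nu'$ against $\dot{\bm x}'$, cannot be closed directly; the paper substitutes $\bm x'\cdot\dot{\bm y}'=\dot f-\dot{\bm x}'\cdot\bm y'$ (this is precisely where the terms $\|s^{\frac12-\epsilon}\dot f\|_{L^2}$ and $|\dot f|_{s=1}|$ in $S_2$ originate, matched against $\|s^{\frac12+\epsilon}\nu''\|_{L^2}$ from Lemma \ref{lem:EstNu}), and Remark \ref{re:EE} states explicitly that without this the estimate demands more regularity of $\bm x$ than $M,M_1$ provide. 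As written, your argument would need either such extra regularity or source norms (e.g.\ $\ddot f$-type) that are not in $S_2$, so it does not prove the proposition as stated. Two smaller points: $\|\dot{\bm y}\|_{X^1}^2=\|\dot{\bm y}\|_{L^2}^2+\|s^{1/2}\dot{\bm y}'\|_{L^2}^2$ (no $\dot{\bm y}''$ appears, so the basic energy already reaches the $X^1$ level), and testing the $t$-differentiated equation against $-\mathscr{A}_\tau\dot{\bm y}$ does not yield a clean total time derivative, unlike the paper's pairing of the undifferentiated equation with $\mathscr{A}_\tau\dot{\bm y}$.
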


\begin{remark}\label{re:EE}
The linearized system \eqref{LEq} is overdetermined due to the second equation in \eqref{LEq}. 
Therefore, it is natural to ask if one could obtain a similar energy estimate as above without using the second equation. 
The answer is affirmative if we impose further regularities on $\bm{x}$ in addition to \eqref{SolClassEst0} and \eqref{SolClassEst2}. 
In other words, thanks to the almost orthogonality condition $\bm{x}'\cdot\bm{y}'=f$ we can minimize the regularity imposed on $\bm{x}$. 
For more details, we refer to Iguchi and Takayama \cite{IguchiTakayama2023}. 
\end{remark}

\begin{proof}[Proof of Proposition \ref{prop:EE}]
By taking $L^2$-inner product of the first equation in \eqref{LEq} with $\mathscr{A}_{\tau}\dot{\bm{y}}$ and using integration by parts, 
we have 
\begin{align*}
\frac{\mathrm{d}}{\mathrm{d}t}\{ \|\tau^\frac12\dot{\bm{y}}'\|_{L^2}^2 + \|\mathscr{A}_{\tau}\bm{y}\|_{L^2}^2 \}
&= (\dot{\tau}\dot{\bm{y}}',\dot{\bm{y}}')_{L^2} + 2(\mathscr{A}_{\dot{\tau}}\bm{y},\mathscr{A}_{\tau}\bm{y})_{L^2} 
  + 2(\mathscr{A}_{\tau}\dot{\bm{y}},\bm{f})_{L^2} - 2(\mathscr{A}_{\tau}\dot{\bm{y}}, \mathscr{A}_{\nu}\bm{x})_{L^2},
\end{align*}
where the boundary terms are vanished due to the boundary conditions. 
In view of Lemmas \ref{lem:EstA}, \ref{lem:EstSol2}, and \ref{lem:EstSol4}, the first two terms in the right-hand side can be easily 
handled so that we will focus on the last two terms. 
We evaluate the third term as 
$(\mathscr{A}_{\tau}\dot{\bm{y}},\bm{f})_{L^2} = \frac{\rm d}{{\rm d}t}(\mathscr{A}_{\tau}\bm{y},\bm{f})_{L^2}
 - (\mathscr{A}_{\dot{\tau}}\bm{y},\bm{f})_{L^2} - (\mathscr{A}_{\tau}\bm{y},\dot{\bm{f}})_{L^2}$. 
As for the last term, by integration by parts, we see that 
\begin{align*}
(\mathscr{A}_{\tau}\dot{\bm{y}}, \mathscr{A}_{\nu}\bm{x})_{L^2}
&= ((\tau\dot{\bm{y}}')',(\nu\bm{x}')')_{L^2} \\
&= -(\tau\dot{\bm{y}}',(\nu\bm{x}')'')_{L^2} + ( \tau\dot{\bm{y}}'\cdot(\nu\bm{x}')' )|_{s=1} \\
&= -(\tau\dot{\bm{y}}',\nu''\bm{x}'+2\nu'\bm{x}''+\nu\bm{x}''')_{L^2}
 + ( \tau\dot{\bm{y}}'\cdot(\nu\bm{x}''+\nu'\bm{x}') )|_{s=1}.
\end{align*}
Here, the term $(\tau\dot{\bm{y}}',\nu''\bm{x}')_{L^2}$ would be troublesome if we evaluate it directly. 
To bypass the trouble, we make use of the second equation in \eqref{LEq}. 
Differentiating it with respect to $t$ we have 
\begin{equation}\label{OR}
\bm{x}'\cdot\dot{\bm{y}}'=\dot{f}-\dot{\bm{x}}'\cdot\bm{y}', 
\end{equation}
so that the term can be written as 
$(\tau\dot{\bm{y}}',\nu''\bm{x}')_{L^2}=(\tau\dot{f},\nu'')_{L^2}-(\tau\bm{y}',\nu''\dot{\bm{x}}')_{L^2}$, 
which can now be easily handled. 
Thanks to the relation \eqref{OR} again, we have 
$( (\tau\dot{\bm{y}}')\cdot(\nu'\bm{x}') )|_{s=1} = ( \tau\nu'(\dot{f}-\bm{y}'\cdot\dot{\bm{x}}'))|_{s=1}$, 
which can also be easily handled. 
Therefore, we obtain 
\begin{equation}\label{LEE1}
\frac{\mathrm{d}}{\mathrm{d}t}\{ \|\tau^\frac12\dot{\bm{y}}'\|_{L^2}^2 + \|\mathscr{A}_{\tau}\bm{y}\|_{L^2}^2
 - 2(\mathscr{A}_\tau\bm{y},\bm{f})_{L^2} \}
= I_1 - 2( \nu\tau\bm{x}''\cdot\dot{\bm{y}}' )|_{s=1}, 
\end{equation}
where 
\begin{align*}
I_1 
&= (\dot{\tau}\dot{\bm{y}}',\dot{\bm{y}}')_{L^2} + 2(\mathscr{A}_{\dot{\tau}}\bm{y},\mathscr{A}_{\tau}\bm{y})_{L^2}
 - 2(\mathscr{A}_{\dot{\tau}}\bm{y},\bm{f})_{L^2} - 2(\mathscr{A}_{\tau}\bm{y},\dot{\bm{f}})_{L^2} \\
&\quad\;
 - 2(\tau\bm{y}',\nu''\dot{\bm{x}}')_{L^2} + 2(\tau\dot{\bm{y}}',2\nu'\bm{x}''+\nu\bm{x}''')_{L^2} + 2(\tau\nu'',\dot{f})_{L^2} \\
&\quad\;
 + 2( \tau\nu'(\dot{\bm{x}}'\cdot\bm{y}'-\dot{f}))|_{s=1}.
\end{align*}
The only remaining term that we have to evaluate is $( \nu\tau\bm{x}''\cdot\dot{\bm{y}}' )|_{s=1}$, 
to which we need more careful analysis.

\begin{lemma}\label{lem:BT}
For any positive constants $M$ and $c_0$, 
there exists a positive constant $C_2=C(M,c_0)$ such that 
if $(\bm{x},\tau)$ is a solution to the problem \eqref{Eq}--\eqref{BC} satisfying \eqref{SolClassEst0} and the stability condition \eqref{SolClass0_SC}, 
then for any solution $(\bm{y},\nu)$ to \eqref{LEq}--\eqref{LBVP} we have 
\begin{equation}\label{LEE2}
- 2( \nu\tau\bm{x}''\cdot\dot{\bm{y}}' )|_{s=1}
= \frac{\mathrm{d}}{\mathrm{d}t}\left( \frac{\varphi'}{\varphi}\nu^2 \right)\biggr|_{s=1} + I_2,
\end{equation}
where $\varphi$ is the fundamental solution defined by \eqref{IVPphi} and $I_2$ satisfies 
\begin{align*}
|I_2| \leq C_2( \|\tau^\frac12\dot{\bm{y}}'\|_{L^2}^2 + \|\mathscr{A}_{\tau}\bm{y}\|_{L^2}^2
 + \|\nu\|_{L^\infty}^2 + \|\nu'\|_{L^2}^2  + \|\bm{f}\|_{L^2}^2 + |\dot{f}|_{s=1}|^2 + \|s\dot{h}\|_{L^1}^2 ).
\end{align*}
\end{lemma}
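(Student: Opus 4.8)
The plan is to control the boundary term $-2(\nu\tau\bm{x}''\cdot\dot{\bm{y}}')|_{s=1}$ by exploiting the boundary conditions at $s=1$ available for both $\bm{y}$ and $\nu$, namely $\bm{y}|_{s=1}=\bm{0}$ (so that $\dot{\bm{y}}|_{s=1}=\bm{0}$, and all purely temporal information at $s=1$ is trivial) and $\nu'|_{s=1}=-\bm{g}\cdot\bm{y}'$, together with the structural fact that at $s=1$ the solution $\nu$ of the elliptic problem \eqref{LBVP} can be read off from the fundamental solution $\varphi$ of \eqref{IVPphi}. First I would use $\bm{x}'\cdot\dot{\bm{y}}'=\dot f-\dot{\bm{x}}'\cdot\bm{y}'$ from \eqref{OR} and the orthogonality relations coming from $|\bm{x}'|^2=1$ (namely $\bm{x}'\cdot\bm{x}''=0$) to decompose $\dot{\bm{y}}'$ at $s=1$ into its component along $\bm{x}'$ and the rest; the component along $\bm{x}'$ contributes only lower-order, controllable quantities ($\dot f$, $\bm{y}'\cdot\dot{\bm{x}}'$), while the transverse part is what pairs with $\bm{x}''$. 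The point is that $\bm{x}''\cdot\dot{\bm{y}}'$ at $s=1$ must be converted into something expressible via $\nu$ and its derivatives so that a time-derivative structure emerges.

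The key identity to extract is that $\tau(1)\nu(1,t)=\varphi(1,t)\,(\text{something})$: since $\nu$ solves the same ODE $-\nu''+|\bm{x}''|^2\nu=(\text{RHS})$ with $\nu(0)=0$, and $\varphi$ is the solution of the homogeneous problem with $\varphi(0)=0,\varphi'(0)=1$, the representation \eqref{SolBVP}–\eqref{Sol'BVP} gives $\nu$ and $\nu'$ at $s=1$ in terms of $\varphi,\psi$ and the data; in particular one gets a clean relation between $\nu(1)$, $\nu'(1)=-\bm{g}\cdot\bm{y}'(1)$ and $\varphi'(1)/\varphi(1)$. Differentiating the equation of motion \eqref{LEq} dotted with $\bm{x}'$ and taking the trace at $s=1$ — mirroring the derivation of \eqref{BVP} — should produce the missing link: $(\mathscr{A}_\nu\bm{x})\cdot\bm{x}'$ at $s=1$ involves $\nu'(1)$ and curvature terms, and pairing the first equation in \eqref{LEq} with $\bm{x}'$ at $s=1$ lets me replace $\ddot{\bm{y}}'\cdot\bm{x}'|_{s=1}$ (obtainable by differentiating \eqref{OR} once more in $t$) by tension-related quantities. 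Assembling these, the term $\bm{x}''\cdot\dot{\bm{y}}'|_{s=1}$ becomes, up to controllable remainders, proportional to $\frac{d}{dt}$ of $\nu(1)^2$ weighted by $\varphi'(1)/\varphi(1)$, which is exactly the claimed structure $\frac{\mathrm{d}}{\mathrm{d}t}\bigl(\frac{\varphi'}{\varphi}\nu^2\bigr)|_{s=1}$.

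After the time-derivative term is split off, the remainder $I_2$ collects: (i) the time derivative of the coefficient $\varphi'(1)/\varphi(1)$, controlled by Lemma \ref{lem:EstPhi} and Lemma \ref{lem:EstDtPhi} (which give $1\le\varphi'(1)\le C$, $\varphi(1)\ge1$, and $|\dot\varphi'(1)|,|\dot\varphi(1)|\le C$ under \eqref{SolClassEst0}); (ii) the lower-order boundary contributions $\dot f|_{s=1}$, $\bm{y}'(1)$, $\dot{\bm{x}}'(1)$, each bounded using the one-dimensional trace/Sobolev embedding $\|u\|_{L^\infty(0,1)}\lesssim\|u\|_{X^1}$ from Lemma \ref{lem:embedding} so that $|\bm{y}'(1)|\lesssim\|\bm{y}'\|_{X^1}\lesssim\|\bm{y}\|_{X^2}$, and $\|\mathscr{A}_\tau\bm{y}\|_{L^2}\simeq\|s\bm{y}''\|_{L^2}+\|\bm{y}'\|_{L^2}$ via Lemma \ref{lem:EstA}; (iii) the $L^1$ norm $\|s\dot h\|_{L^1}$, which appears because $\dot\nu$ solves a two-point boundary value problem whose source involves $\dot h$, and Lemma \ref{lem:EstSolBVP2} with $\alpha=1$ bounds $|\dot\nu(1)|$ and $|\dot\nu'(1)|$ accordingly. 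The main obstacle I anticipate is item (i)–(ii) bookkeeping combined with correctly identifying the coefficient $\varphi'/\varphi$: one must be careful that $\nu(1)$ is genuinely nonlocal in $\bm{y}$ (it depends on the full profile through the Green's function), so the reduction to a boundary time-derivative is not purely algebraic — it hinges on re-deriving, at the linearized level, the analogue of the boundary condition $\tau'(1)=-\bm{g}\cdot\bm{x}'(1)$ for $\nu$ and differentiating it in time. Getting that derivation exactly right, and verifying that the cross terms it produces are all among the quantities listed in the estimate for $|I_2|$ (not worse, e.g.\ no $\|\nu''\|$ at $s=1$), is where the real work lies; everything after that is an application of the calculus inequalities in Section \ref{sect:FS} and the pointwise tension bounds of Section \ref{sect:BVP}.
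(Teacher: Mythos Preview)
Your outline has the right endpoints (a time-derivative $\tfrac{\mathrm d}{\mathrm dt}\bigl(\tfrac{\varphi'}{\varphi}\nu^2\bigr)|_{s=1}$ should appear, $\dot\nu$ solves a two-point BVP, and Lemmas~\ref{lem:EstPhi}--\ref{lem:EstDtPhi} control the $\varphi$-coefficients), but the route you sketch does not reach them. Two concrete pieces are missing.

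First, the relation you need at $s=1$ is not obtained by dotting the linearized equation \eqref{LEq} with $\bm{x}'$; it comes from the trace of the \emph{background} equation for $\bm{x}$. Since $\ddot{\bm{x}}|_{s=1}=\bm0$, one has $(\tau\bm{x}''+\tau'\bm{x}'+\bm{g})|_{s=1}=\bm0$, and dotting with $\dot{\bm{y}}'$ together with \eqref{OR} gives
\[
\bm{g}\cdot\dot{\bm{y}}'+\tau(\bm{x}''\cdot\dot{\bm{y}}')=I_{2,1}\quad\text{on }s=1,
\]
with $I_{2,1}$ built from $\tau'$, $\dot{\bm{x}}'\cdot\bm{y}'$ and $\dot f$. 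Your proposed manipulation of \eqref{LEq}$\cdot\bm{x}'$ at $s=1$ instead produces $\bm{y}''\cdot\bm{x}'$ and $\nu'$, which does not isolate $\bm{x}''\cdot\dot{\bm{y}}'$.

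Second, and more seriously, the solution formula you invoke must be applied to $\dot\nu$, not to $\nu$, and the right-hand side of the $\dot\nu$-equation has to be rewritten in the specific form $h_I+h_{II}'$ with
\[
h_{II}=2(\dot{\bm{x}}'\cdot\ddot{\bm{y}})-2(\bm{x}''\cdot\dot{\bm{y}}')\tau,
\]
so that after one integration by parts in the representation \eqref{SolBVP} the boundary value $h_{II}(1)=-2\tau(\bm{x}''\cdot\dot{\bm{y}}')|_{s=1}$ is pulled out. This gives the second relation
\[
\bm{g}\cdot\dot{\bm{y}}'+2\tau(\bm{x}''\cdot\dot{\bm{y}}')=-\tfrac{\varphi'}{\varphi}\dot\nu+I_{2,2}\quad\text{on }s=1,
\]
and subtracting the two relations yields $-2\nu\tau(\bm{x}''\cdot\dot{\bm{y}}')|_{s=1}=2\tfrac{\varphi'}{\varphi}\nu\dot\nu|_{s=1}+\text{l.o.t.}$, which is the claimed structure. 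Without this decomposition you would face bulk integrals of $\dot{\bm{x}}'\cdot\ddot{\bm{y}}'$ or $\bm{x}''\cdot\dot{\bm{y}}''$, which are one derivative beyond what $E(t)$ controls; simply citing Lemma~\ref{lem:EstSolBVP2} for $\dot\nu(1)$ does not get around this, because the issue is the regularity of the source, not the pointwise bound on the output.

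A smaller point: you invoke ``$\|u\|_{L^\infty}\lesssim\|u\|_{X^1}$ from Lemma~\ref{lem:embedding}'', but that embedding is false (Remark~\ref{remark:embedding}(1)). The trace $|\bm{y}'(1)|$ is instead controlled by $\|\bm{y}'\|_{L^2}+\|s\bm{y}''\|_{L^2}\lesssim\|\mathscr{A}_\tau\bm{y}\|_{L^2}$ via the standard one-dimensional trace near $s=1$ (where the weight is harmless) and Lemma~\ref{lem:EstA}.
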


\begin{proof}
By taking the trace of the hyperbolic equations for $\bm{x}$ on $s=1$ and using the boundary condition, 
we have $(\tau\bm{x}''+\tau'\bm{x}')|_{s=1}+\bm{g}=\bm{0}$. 
Therefore, by \eqref{OR} again 
\begin{equation}\label{LI21}
\bm{g}\cdot\dot{\bm{y}}'+\tau(\bm{x}''\cdot\dot{\bm{y}}')=I_{2,1} \quad\mbox{on}\quad s=1,
\end{equation}
where $I_{2,1} = (\tau'( \dot{\bm{x}}'\cdot\bm{y}' - \dot{f} ))|_{s=1}$. 
Differentiating \eqref{LBVP} with respect to $t$, we see that $\dot{\nu}$ satisfies 
\[
\begin{cases}
 -\dot{\nu}''+|\bm{x}''|^2\dot{\nu} = h_I + h_{II}' &\mbox{in}\quad (0,1)\times(0,T), \\
 \dot{\nu} = 0 &\mbox{on}\quad \{s=0\}\times(0,T), \\
 \dot{\nu}' = -\bm{g}\cdot\dot{\bm{y}}' &\mbox{on}\quad \{s=1\}\times(0,T),
\end{cases}
\]
where 
\begin{align*}
h_I &= 2(\ddot{\bm{x}}'\cdot\dot{\bm{y}}') - 2(\dot{\bm{x}}''\cdot\bm{y}'')\tau - 2(\bm{x}''\cdot\bm{y}'')\dot{\tau}
 - 2(\bm{x}''\cdot\dot{\bm{x}}'')\nu - 2(\dot{\bm{x}}''\cdot\ddot{\bm{y}}) + 2(\tau\bm{x}'')'\cdot\dot{\bm{y}}' + \dot{h}, \\
h_{II} &= 2(\dot{\bm{x}}'\cdot\ddot{\bm{y}}) - 2(\bm{x}''\cdot\dot{\bm{y}}')\tau.
\end{align*}
Now, we use the solution formula \eqref{SolBVP} to $\dot{\nu}$ at $s=1$ to obtain 
\[
\dot{\nu}=\frac{\varphi}{\varphi'}(-\bm{g}\cdot\dot{\bm{y}}'+h_{II})
 + \frac{1}{\varphi'}\int_0^1( \varphi(\sigma)h_I(\sigma)-\varphi'(\sigma)h_{II}(\sigma))\mathrm{d}\sigma
 \quad\mbox{on}\quad s=1,
\]
where we used integration by parts. 
Since $h_{II}=- 2(\bm{x}''\cdot\dot{\bm{y}}')\tau$ on $s=1$, we get 
\begin{equation}\label{LI22}
\bm{g}\cdot\dot{\bm{y}}'+2\tau(\bm{x}''\cdot\dot{\bm{y}}')=-\frac{\varphi'}{\varphi}\dot{\nu} + I_{2,2}
 \quad\mbox{on}\quad s=1,
\end{equation}
where 
\[
I_{2,2} = \frac{1}{\varphi|_{s=1}}\int_0^1( \varphi_1(\sigma)h_I(\sigma)-\varphi_1'(\sigma)h_{II}(\sigma))\mathrm{d}\sigma.
\]
By \eqref{LI21} and \eqref{LI22}, we obtain \eqref{LEE2} with 
\[
I_2 = \left[ -\left\{ \frac{\mathrm{d}}{\mathrm{d}t}\left( \frac{\varphi'}{\varphi} \right) \right\}\nu^2
 + 2\nu(I_{2,1}-I_{2,2}) \right]\biggr|_{s=1}.
\]

It remains to show the estimate for $I_2$ of the lemma. 
Since $\bm{x}$ is supposed to satisfy \eqref{SolClassEst0}, we will use freely the estimates in 
Lemmas \ref{lem:EstPhi}, \ref{lem:EstDtPhi}, \ref{lem:EstSol1}--\ref{lem:EstSol4} without any comment. 
Then, we have $|I_2| \lesssim \|\nu\|_{L^\infty}^2 + \|\nu\|_{L^\infty}( |I_{2,1}|+|I_{2,2}| )$. 
By the standard Sobolev embedding theorem, we have 
$|I_{2,1}| \lesssim |\bm{y}'|_{s=1}| + |\dot{f}|_{s=1}| \lesssim \|\bm{y}'\|_{L^2}+\|s\bm{y}''\|_{L^2}+|\dot{f}|_{s=1}|$. 
We have also 
\begin{align*}
|I_{2,2}| 
&\lesssim \|s h_I\|_{L^1}+\|h_{II}\|_{L^1} \\
&\lesssim \|s^\frac12\dot{\bm{y}}'\|_{L^2}+\|s\bm{y}''\|_{L^2} + \|\ddot{\bm{y}}\|_{L^2} + \|\nu\|_{L^\infty} + \|s\dot{h}\|_{L^1}.
\end{align*}
It follows from the first equation in \eqref{LEq} for $\bm{y}$ that 
$\|\ddot{\bm{y}}\|_{L^2} \leq \|\mathscr{A}_{\tau}\bm{y}\|_{L^2} + \|\mathscr{A}_{\nu}\bm{x}\|_{L^2} + \|\bm{f}\|_{L^2}$. 
Here, we have 
$\|\mathscr{A}_{\nu}\bm{x}\|_{L^2} \leq \|\nu\|_{L^\infty}\|\bm{x}''\|_{L^2} + \|\nu'\|_{L^2}
 \lesssim \|\nu\|_{L^\infty} + \|\nu'\|_{L^2}$. 
These estimates together with Lemma \ref{lem:EstA} yield the desired estimate. 
\end{proof}

We need to evaluate $\nu$ in terms of $\bm{y}$, which will be given in the next lemma.

\begin{lemma}\label{lem:EstNu}
Under the same hypotheses and the same notations in Lemma \ref{lem:BT}, we have 
\[
\begin{cases}
 \|s^{-\frac12}\nu\|_{L^\infty} + \|s^\frac12\nu'\|_{L^\infty} + \|\nu'\|_{L^2}
  \leq C_1( \|\tau^\frac12\dot{\bm{y}}'\|_{L^2} + \|\mathscr{A}_{\tau}\bm{y}\|_{L^2} + \|s^\frac12h\|_{L^1} ), \\
 \|s^{\frac12+\epsilon}\nu''\|_{L^2}
  \leq C_1(\epsilon)( \|\tau^\frac12\dot{\bm{y}}'\|_{L^2} + \|\mathscr{A}_{\tau}\bm{y}\|_{L^2} ) + \|s^{\frac12+\epsilon}h\|_{L^2},
\end{cases}
\]
where $\epsilon\in(0,\frac12)$ is arbitrary and 
$C_1(\epsilon)=C(M,c_0,\epsilon)$. 
Particularly, we have 
\[
\|\ddot{\bm{y}}\|_{L^2} \leq C_1( \|\tau^\frac12\dot{\bm{y}}'\|_{L^2} + \|\mathscr{A}_{\tau}\bm{y}\|_{L^2} 
 + \|\bm{f}\|_{L^2} + \|s^\frac12h\|_{L^1}). 
\]
\end{lemma}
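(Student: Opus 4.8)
The plan is to read \eqref{LBVP} as an instance of the two-point boundary value problem \eqref{TBVP}, with source $H:=2\dot{\bm{x}}'\cdot\dot{\bm{y}}'-2(\bm{x}''\cdot\bm{y}'')\tau+h$ and Neumann datum $a:=-\bm{g}\cdot\bm{y}'|_{s=1}$, and then to feed $H$ and $a$ into the pointwise and $L^p$ bounds of Lemmas \ref{lem:EstSolBVP2} and \ref{lem:EstSolBVP3}, whose hypothesis $\|s^{1/2}\bm{x}''\|_{L^2}\le M$ is guaranteed by \eqref{SolClassEst0}. First I would bound the datum: the trace inequality used in the proof of Lemma \ref{lem:CalIneqLp1} gives $|\bm{y}'(1)|^2\lesssim\|\bm{y}'\|_{L^2}^2+\|\bm{y}'\|_{L^2}\|s\bm{y}''\|_{L^2}$, hence $|a|\lesssim\|\bm{y}'\|_{L^2}+\|s\bm{y}''\|_{L^2}\lesssim\|\mathscr{A}_\tau\bm{y}\|_{L^2}$ by Lemma \ref{lem:EstA}. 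Next I would estimate the weighted source norm $\|s^{1/2}H\|_{L^1}$: by the Cauchy--Schwarz inequality $\|s^{1/2}\dot{\bm{x}}'\cdot\dot{\bm{y}}'\|_{L^1}\le\|\dot{\bm{x}}'\|_{L^2}\|s^{1/2}\dot{\bm{y}}'\|_{L^2}$, while $\|s^{1/2}(\bm{x}''\cdot\bm{y}'')\tau\|_{L^1}\lesssim\|s^{1/2}\bm{x}''\|_{L^2}\|s\bm{y}''\|_{L^2}$ after using $\tau\le C_1 s$ from Lemma \ref{lem:EstSol2}.

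The decisive ingredient is the stability condition \eqref{SolClass0_SC}, which gives $s\le c_0^{-1}\tau$ and hence $\|s^{1/2}\dot{\bm{y}}'\|_{L^2}\le c_0^{-1/2}\|\tau^{1/2}\dot{\bm{y}}'\|_{L^2}$; combined with $\|\dot{\bm{x}}'\|_{L^2},\|s^{1/2}\bm{x}''\|_{L^2}\le M$ from \eqref{SolClassEst0} and $\|s\bm{y}''\|_{L^2}\lesssim\|\mathscr{A}_\tau\bm{y}\|_{L^2}$ from Lemma \ref{lem:EstA}, this yields $\|s^{1/2}H\|_{L^1}\lesssim\|\tau^{1/2}\dot{\bm{y}}'\|_{L^2}+\|\mathscr{A}_\tau\bm{y}\|_{L^2}+\|s^{1/2}h\|_{L^1}$. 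Plugging $a$ and $H$ into Lemma \ref{lem:EstSolBVP2} with $\alpha=\frac12$ then produces $\|s^{-1/2}\nu\|_{L^\infty}$ and $\|s^{1/2}\nu'\|_{L^\infty}$, and into Lemma \ref{lem:EstSolBVP3} with $\alpha=0$, $p=2$ produces $\|\nu'\|_{L^2}$; each of these is bounded by $|a|+\|s^{1/2}H\|_{L^1}$, which gives the first inequality.

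For the bound on $\|s^{1/2+\epsilon}\nu''\|_{L^2}$ I would use the equation itself, $\nu''=|\bm{x}''|^2\nu-H$, and estimate the three resulting terms: $\|s^{1/2+\epsilon}|\bm{x}''|^2\nu\|_{L^2}\le\|s|\bm{x}''|^2\|_{L^2}\|s^{-1/2}\nu\|_{L^\infty}$ with $\|s|\bm{x}''|^2\|_{L^2}\le\|s^{1/4}\bm{x}''\|_{L^4}^2\lesssim1$ by Lemma \ref{lem:EstSol1}; $\|s^{1/2+\epsilon}(\bm{x}''\cdot\bm{y}'')\tau\|_{L^2}\lesssim\|s^{1/2}\bm{x}''\|_{L^\infty}\|s\bm{y}''\|_{L^2}\lesssim\|\mathscr{A}_\tau\bm{y}\|_{L^2}$, again using $\tau\le C_1 s$ together with Lemmas \ref{lem:EstSol1} and \ref{lem:EstA}; and $\|s^{1/2+\epsilon}\dot{\bm{x}}'\cdot\dot{\bm{y}}'\|_{L^2}\le\|s^\epsilon\dot{\bm{x}}'\|_{L^\infty}\|s^{1/2}\dot{\bm{y}}'\|_{L^2}$, where the $\epsilon$-dependent constant enters through Lemma \ref{lem:embedding}, $\|s^\epsilon\dot{\bm{x}}'\|_{L^\infty}\le C_\epsilon\|\dot{\bm{x}}'\|_{X^1}$, this loss being unavoidable since $X^1$ does not embed into $L^\infty$ (Remark \ref{remark:embedding}). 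Using the first inequality to absorb $\|s^{-1/2}\nu\|_{L^\infty}$ then yields the second. Finally, from the first equation in \eqref{LEq} one has $\ddot{\bm{y}}=\bm{f}-\mathscr{A}_\tau\bm{y}-\mathscr{A}_\nu\bm{x}$ with $\mathscr{A}_\nu\bm{x}=-\nu'\bm{x}'-\nu\bm{x}''$, so $\|\mathscr{A}_\nu\bm{x}\|_{L^2}\le\|\bm{x}'\|_{L^\infty}\|\nu'\|_{L^2}+\|\nu\|_{L^\infty}\|\bm{x}''\|_{L^2}\lesssim\|\nu'\|_{L^2}+\|s^{-1/2}\nu\|_{L^\infty}$ by Lemmas \ref{lem:embedding2} and \eqref{SolClassEst0}, and the already-established first inequality closes the estimate on $\|\ddot{\bm{y}}\|_{L^2}$.

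All of the steps are routine manipulations of the inequalities collected in Sections \ref{sect:BVP} and \ref{sect:FS}; there is no serious obstacle beyond the careful bookkeeping of the $s$-weights. The one genuine point is to arrange that every factor appearing in $H$ carries enough powers of $s$ to be absorbed either into $\tau^{1/2}\dot{\bm{y}}'$ via the stability condition or into $s$-weighted norms of $\bm{x}$ already under control, and to notice that the velocity factor $\dot{\bm{x}}'$ lies only in $X^1$, which is precisely what forces the extra weight $s^\epsilon$ and the $\epsilon$-dependent constant in the estimate for $\nu''$.
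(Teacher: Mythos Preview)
Your proposal is correct and follows essentially the same route as the paper: view \eqref{LBVP} as an instance of \eqref{TBVP}, bound $|a|$ and $\|s^{1/2}H\|_{L^1}$ exactly as you describe, invoke Lemmas \ref{lem:EstSolBVP2}--\ref{lem:EstSolBVP3} for the first line, and read $\nu''$ directly from the equation for the second line, with the $s^\epsilon$-loss coming from $\|s^\epsilon\dot{\bm{x}}'\|_{L^\infty}\le C_\epsilon\|\dot{\bm{x}}'\|_{X^1}$. Your treatment is in fact slightly more explicit than the paper's in two places: you spell out the use of the stability condition \eqref{SolClass0_SC} to pass from $\|s^{1/2}\dot{\bm{y}}'\|_{L^2}$ to $\|\tau^{1/2}\dot{\bm{y}}'\|_{L^2}$, and you note that absorbing $\|s^{-1/2}\nu\|_{L^\infty}$ via the first inequality requires $\|s^{1/2}h\|_{L^1}\lesssim\|s^{1/2+\epsilon}h\|_{L^2}$ (the paper records this as $\|s^{1/2}h\|_{L^1}\le(1-2\epsilon)^{-1/2}\|s^{1/2+\epsilon}h\|_{L^2}$).
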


\begin{proof}
We note that $\nu$ is the solution to the boundary value problem \eqref{LBVP} and that we have 
\[
\begin{cases}
 \|s^\frac12( \dot{\bm{x}}'\cdot\dot{\bm{y}}'-(\bm{x}''\cdot\bm{y}'')\tau )\|_{L^1}
  \lesssim \|s^\frac12\dot{\bm{y}}'\|_{L^2}+\|s\bm{y}''\|_{L^2}, \\
 |\bm{g}\cdot\bm{y}'|_{s=1}| \lesssim \|\bm{y}'\|_{L^2}+\|s\bm{y}''\|_{L^2}.
\end{cases}
\]
Therefore, the first estimate of the lemma follows from Lemmas \ref{lem:EstSolBVP2}, \ref{lem:EstSolBVP3}, and \ref{lem:EstA}. 
Moreover, by the first equation in \eqref{LBVP} we have 
\[
\|s^{\frac12+\epsilon}(\nu''+h)\|_{L^2}
\lesssim \|s^\frac14\bm{x}''\|_{L^4}^2\|s^{-\frac12}\nu\|_{L^\infty}
 + \|s^\epsilon\dot{\bm{x}}'\|_{L^\infty}\|s^\frac12\dot{\bm{y}}'\|_{L^2} + \|s^\frac12\bm{x}''\|_{L^\infty}\|s\bm{y}''\|_{L^2},
\]
which together with the first estimate, Lemmas \ref{lem:EstSol1} and \ref{lem:EstA}, 
and $\|s^\frac12 h\|_{L^1} \leq (1-2\epsilon)^{-\frac12}\|s^{\frac12+\epsilon}h\|_{L^2}$ gives the second one. 
\end{proof}

In view of \eqref{LEE1} and \eqref{LEE2}, we define an energy functional $\mathscr{E}(t)$ by 
\begin{align*}
\mathscr{E}(t) &= \|\tau^\frac12\dot{\bm{y}}'\|_{L^2}^2 + \|\mathscr{A}_{\tau}\bm{y}\|_{L^2}^2 - 2(\mathscr{A}_\tau\bm{y},\bm{f})_{L^2}
 - \left( \frac{\varphi'}{\varphi}\nu^2 \right)\biggr|_{s=1}
 + \lambda( \|\dot{\bm{y}}\|_{L^2}^2 + \|\bm{y}\|_{X^1}^2 ),
\end{align*}
where the parameter $\lambda>0$ will be chosen so large that the following lemma holds.

\begin{lemma}\label{lem:EquiNorm2}
For any positive constants $M_1$ and $c_0$, there exist $\lambda_0=\lambda(M_1,c_0)>0$ and $C_1=C(M_1,c_0)\geq1$ 
such that if $\lambda=\lambda_0$, then we have 
\[
\mathscr{E}(t) \leq C_1( E(t) + S_1(t) ), \qquad E(t) \leq C_1( \mathscr{E}(t) +S_1(t)), 
\]
where $E(t)$ and $S_1(t)$ are defined by \eqref{DefE} and \eqref{DefS12}. 
\end{lemma}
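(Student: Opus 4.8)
The plan is to establish the two-sided equivalence by showing that, after discarding the cross-term $-2(\mathscr{A}_\tau\bm{y},\bm{f})_{L^2}$ (which is absorbed into $E(t)+S_1(t)$ via Cauchy--Schwarz) and the boundary term $-(\varphi'\varphi^{-1}\nu^2)|_{s=1}$ (which is controlled by $\|\nu\|_{L^\infty}^2$ and hence, through Lemma \ref{lem:EstNu}, by $E(t)+S_1(t)$ since $\varphi'/\varphi \simeq 1/s \le$ finite at $s=1$), the genuine energy $\|\tau^\frac12\dot{\bm{y}}'\|_{L^2}^2+\|\mathscr{A}_\tau\bm{y}\|_{L^2}^2+\lambda(\|\dot{\bm{y}}\|_{L^2}^2+\|\bm{y}\|_{X^1}^2)$ is comparable to $E(t)=\|\dot{\bm{y}}\|_{X^1}^2+\|\bm{y}\|_{X^2}^2$. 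The one direction $\mathscr{E}(t)\lesssim E(t)+S_1(t)$ is the easy one: under \eqref{SolClassEst2} we have $\tau(s)\simeq s\lesssim1$, so $\|\tau^\frac12\dot{\bm{y}}'\|_{L^2}\le\|s^\frac12\dot{\bm{y}}'\|_{L^2}\lesssim\|\dot{\bm{y}}\|_{X^1}$; by Lemma \ref{lem:EstA} (whose hypotheses hold since $\tau$ satisfies the two-sided bound and $|\tau'|\lesssim1$ by Lemma \ref{lem:EstSol2}, using \eqref{SolClassEst2}), $\|\mathscr{A}_\tau\bm{y}\|_{L^2}\lesssim\|s\bm{y}''\|_{L^2}+\|\bm{y}'\|_{L^2}\lesssim\|\bm{y}\|_{X^2}$; the cross-term is $\le\|\mathscr{A}_\tau\bm{y}\|_{L^2}^2+\|\bm{f}\|_{L^2}^2$; the boundary term is $\lesssim\|\nu\|_{L^\infty}^2\lesssim E(t)+S_1(t)$; and $\lambda(\|\dot{\bm{y}}\|_{L^2}^2+\|\bm{y}\|_{X^1}^2)\le\lambda E(t)$.

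The substantive direction is $E(t)\lesssim\mathscr{E}(t)+S_1(t)$, and this is where the largeness of $\lambda$ enters. First, $\|s\bm{y}''\|_{L^2}+\|\bm{y}'\|_{L^2}\lesssim\|\mathscr{A}_\tau\bm{y}\|_{L^2}$ again by Lemma \ref{lem:EstA}, so combined with $\|\bm{y}\|_{L^2}$ (from the $\lambda$-term, via $\bm{y}|_{s=1}=\bm 0$ and Poincaré, or directly since it sits in $\|\bm{y}\|_{X^1}$) we control $\|\bm{y}\|_{X^2}$. For the velocity part, $\|\tau^\frac12\dot{\bm{y}}'\|_{L^2}^2=\|\tau^\frac12\dot{\bm{y}}'\|_{L^2}^2$ gives only a degenerate weight, but the stability condition \eqref{SolClass0_SC}, $\tau(s)/s\ge c_0>0$, yields $\|s^\frac12\dot{\bm{y}}'\|_{L^2}^2\le c_0^{-1}\|\tau^\frac12\dot{\bm{y}}'\|_{L^2}^2$, and together with $\|\dot{\bm{y}}\|_{L^2}^2$ from the $\lambda$-term this bounds $\|\dot{\bm{y}}\|_{X^1}^2$. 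Hence $E(t)\lesssim\|\tau^\frac12\dot{\bm{y}}'\|_{L^2}^2+\|\mathscr{A}_\tau\bm{y}\|_{L^2}^2+\lambda(\|\dot{\bm{y}}\|_{L^2}^2+\|\bm{y}\|_{X^1}^2)$. Finally one must move the (possibly negative) terms $-2(\mathscr{A}_\tau\bm{y},\bm{f})_{L^2}$ and $-(\varphi'\varphi^{-1}\nu^2)|_{s=1}$ from $\mathscr{E}(t)$ back to the right side: the first is $\ge-\delta\|\mathscr{A}_\tau\bm{y}\|_{L^2}^2-\delta^{-1}\|\bm{f}\|_{L^2}^2$ for small $\delta$, so its $\mathscr{A}_\tau\bm{y}$-part is absorbed; the second is $\ge-C_1\|\nu\|_{L^\infty}^2\ge-C_1'(\|\tau^\frac12\dot{\bm{y}}'\|_{L^2}^2+\|\mathscr{A}_\tau\bm{y}\|_{L^2}^2+\|s^\frac12h\|_{L^1}^2)$ by the first estimate in Lemma \ref{lem:EstNu}. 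Choosing $\lambda=\lambda_0$ larger than twice this $C_1'$, the corresponding multiples of $\|\tau^\frac12\dot{\bm{y}}'\|_{L^2}^2$ and $\|\mathscr{A}_\tau\bm{y}\|_{L^2}^2$ reappearing on the negative side are dominated by $\lambda_0(\|\dot{\bm{y}}\|_{L^2}^2+\|\bm{y}\|_{X^1}^2)\le\lambda_0 E(t)$ plus a controlled constant times the already-bounded genuine energy, closing the inequality; the leftover $\|s^\frac12h\|_{L^1}^2\le S_1(t)$.

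The main obstacle is the bookkeeping around $\lambda$: one must verify that the lower-order terms $\|\dot{\bm{y}}\|_{L^2}^2+\|\bm{y}\|_{X^1}^2$, weighted by $\lambda_0$, genuinely dominate both (i) the $\|\tau^\frac12\dot{\bm{y}}'\|_{L^2}^2$ and $\|\mathscr{A}_\tau\bm{y}\|_{L^2}^2$ contributions reintroduced when estimating $-(\varphi'\varphi^{-1}\nu^2)|_{s=1}$ from below via Lemma \ref{lem:EstNu}, and (ii) simultaneously allow the forward direction $\mathscr{E}(t)\le C_1(E(t)+S_1(t))$ to hold with the same $\lambda_0$ — i.e. that a single threshold $\lambda_0=\lambda(M_1,c_0)$ works for both inequalities. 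Everything else is a routine chain of Cauchy--Schwarz, the coercivity of $\mathscr{A}_\tau$ from Lemma \ref{lem:EstA}, the weight comparison from the stability condition, and the elliptic bound on $\nu$ from Lemma \ref{lem:EstNu}; no new idea beyond what those lemmas already supply is required.
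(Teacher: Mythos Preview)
Your argument has a genuine circularity that the choice of $\lambda$ cannot fix. When you bound the boundary term from below via Lemma \ref{lem:EstNu},
\[
-(\varphi'/\varphi)\,\nu^2\big|_{s=1}\ \ge\ -C_1'\bigl(\|\tau^{\frac12}\dot{\bm y}'\|_{L^2}^2+\|\mathscr{A}_\tau\bm y\|_{L^2}^2+\|s^{\frac12}h\|_{L^1}^2\bigr),
\]
the terms $\|\tau^{\frac12}\dot{\bm y}'\|_{L^2}^2$ and $\|\mathscr{A}_\tau\bm y\|_{L^2}^2$ that reappear on the negative side are the \emph{principal} parts of $\mathscr{E}(t)$, not lower-order ones. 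The $\lambda$-term $\lambda(\|\dot{\bm y}\|_{L^2}^2+\|\bm y\|_{X^1}^2)$ controls only $\|\dot{\bm y}\|_{L^2}^2$, $\|\bm y\|_{L^2}^2$, and $\|s^{\frac12}\bm y'\|_{L^2}^2$; it does \emph{not} dominate $\|s^{\frac12}\dot{\bm y}'\|_{L^2}^2$ or $\|s\bm y''\|_{L^2}^2$, no matter how large $\lambda$ is. Since the constant $C_1'$ in Lemma \ref{lem:EstNu} is fixed (it depends on $M,c_0$) and need not be small, you end up with $G\le \mathscr{E}(t)+C_1'G+\cdots$, which closes only if $C_1'<1$ --- something you have no control over. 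A second, independent gap is the claim $\|s^{\frac12}h\|_{L^1}^2\le S_1(t)$: on $(0,1)$ one has $s\le s^{\frac12}$, hence $\|sh\|_{L^1}\le\|s^{\frac12}h\|_{L^1}$, and $S_1(t)$ contains only the weaker norm $\|sh\|_{L^1}^2$.

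The paper's proof avoids both problems by estimating the \emph{trace} $\nu|_{s=1}$ directly from the solution formula \eqref{SolBVP} rather than through $\|\nu\|_{L^\infty}$. After one integration by parts in $\int_0^1\varphi(\dot{\bm x}'\cdot\dot{\bm y}')\,\mathrm d\sigma$ and in $\int_0^1\varphi\tau(\bm x''\cdot\bm y'')\,\mathrm d\sigma$ (using $\dot{\bm y}|_{s=1}=\bm 0$), one obtains
\[
|\nu|_{s=1}|\ \le\ C_1\bigl(\|s^{\frac12}\bm y'\|_{L^2}^{\frac12}\|s\bm y''\|_{L^2}^{\frac12}+\|s^{\frac12}\bm y'\|_{L^2}+\|\dot{\bm y}\|_{L^2}\bigr)+\|sh\|_{L^1},
\]
with $C_1=C(M_1,c_0)$. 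Here the only top-order quantity $\|s\bm y''\|_{L^2}$ enters with the interpolation factor $\|s^{\frac12}\bm y'\|_{L^2}^{1/2}$, so by Young's inequality its contribution carries an arbitrarily small coefficient and can be absorbed into $\|\mathscr{A}_\tau\bm y\|_{L^2}^2$; the remaining terms $\|s^{\frac12}\bm y'\|_{L^2}^2$ and $\|\dot{\bm y}\|_{L^2}^2$ are genuinely lower order and are exactly what the $\lambda$-term provides. Moreover, since $\varphi(\sigma)\lesssim\sigma$, the $h$-contribution comes out as $\|sh\|_{L^1}$, matching $S_1(t)$. This integration-by-parts step is the missing idea in your plan.
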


\begin{proof}
It is sufficient to evaluate $(\nu|_{s=1})^2$. 
Since $\nu$ is a solution to the boundary value problem \eqref{LBVP}, the solution formula \eqref{SolBVP} yields 
\begin{align*}
\nu 
&= -\frac{\varphi}{\varphi'}( \bm{g}\cdot\bm{y}' + 2\tau(\bm{x}''\cdot\bm{y}') )
 + \frac{1}{\varphi'} \int_0^1 ( 2(\varphi\tau\bm{x}'')'\cdot\bm{y}' - 2(\varphi\dot{\bm{x}}')'\cdot\dot{\bm{y}} + \varphi h)\mathrm{d}\sigma
 \quad\mbox{on}\quad s=1,
\end{align*}
where we used integration by parts. 
This together with Lemmas \ref{lem:EstPhi} and \ref{lem:EstSolBVP1} gives 
\begin{align*}
|\nu|_{s=1}|
&\leq C_1(1+|\bm{x}''|_{s=1}|)|\bm{y}'|_{s=1}|
 + C_1(\|s ^\frac12\bm{x}''\|_{L^2}+\|s^\frac32\bm{x}'''\|_{L^2})\|s^\frac12\bm{y}'\|_{L^2} \\
&\quad\;
 + C_1(\|\dot{\bm{x}}'\|_{L^2}+\|s\dot{\bm{x}}''\|_{L^2})\|\dot{\bm{y}}\|_{L^2} + \|sh\|_{L^1} \\
&\leq C_1(\|s^\frac12\bm{y}'\|_{L^2}^\frac12\|s\bm{y}''\|_{L^2}^\frac12 + \|s^\frac12\bm{y}'\|_{L^2} + \|\dot{\bm{y}}\|_{L^2} + \|sh\|_{L^1},
\end{align*}
where we used the Sobolev embedding theorem $\|u\|_{L^\infty} \lesssim \|u\|_{L^2}^\frac12\|u'\|_{L^2}^\frac12 + \|u\|_{L^2}$. 
Therefore, by Lemma \ref{lem:EstA} we obtain the desired equivalence. 
\end{proof}

We go back to the proof of Proposition \ref{prop:EE}. 
We fix the parameter $\lambda$ in the energy functional $\mathscr{E}(t)$ as $\lambda=\lambda_0$. 
It follows form \eqref{LEE1} and \eqref{LEE2} that $\frac{\mathrm{d}}{\mathrm{d}t}\mathscr{E}(t)
 = I_1+I_2 + 2\lambda_0\{(\dot{\bm{y}},\ddot{\bm{y}})_{L^2}+(s^\frac12\bm{y}',s^\frac12\dot{\bm{y}}')_{L^2}+(\bm{y},\dot{\bm{y}})_{L^2} \}$. 
Since $\bm{x}$ is supposed to satisfy \eqref{SolClassEst0}, 
we will use freely the estimates in Lemmas \ref{lem:EstPhi}, \ref{lem:EstDtPhi}, \ref{lem:EstSol1}--\ref{lem:EstSol4} without any comment. 
Then, by Lemmas \ref{lem:EstA} and \ref{lem:EstNu} we see that 
\begin{align*}
|I_1|
&\lesssim \|\tau^\frac12\dot{\bm{y}}'\|_{L^2}^2 + \|\mathscr{A}_\tau\bm{y}\|_{L^2}^2 + \|(\bm{f},\dot{\bm{f}})\|_{L^2}^2
 + \|\bm{y}'\|_{L^2} \|s^\frac34\nu''\|_{L^2} \|s^\frac14\dot{\bm{x}}'\|_{L^\infty} \\
&\quad\;
 + \|\tau^\frac12\dot{\bm{y}}'\|_{L^2}( \|s^\frac12\nu'\|_{L^\infty}\|\bm{x}''\|_{L^2} + \|s^{-\frac12}\nu\|_{L^\infty}\|s\bm{x}'''\|_{L^2}) \\
&\quad\;
 + \|s^{\frac12+\epsilon}\nu''\|_{L^2}\|s^{\frac12-\epsilon}\dot{f}\|_{L^2}
 + \|s^\frac12\nu'\|_{L^\infty}(|\bm{y}'|_{s=1}| + |\dot{f}|_{s=1}| ) \\
&\lesssim \|\tau^\frac12\dot{\bm{y}}'\|_{L^2}^2 + \|\mathscr{A}_\tau\bm{y}\|_{L^2}^2 + S_1(t) + S_2(t),
\end{align*}
where we used again $\|s^\frac12h\|_{L^1} \leq (1-2\epsilon)^{-\frac12}\|s^{\frac12+\epsilon}h\|_{L^2}$. 
Similarly, by Lemma \ref{lem:BT} we obtain $|I_2| \leq C_2( E(t) + S_1(t) + S_2(t))$, 
so that $\frac{\mathrm{d}}{\mathrm{d}t}\mathscr{E}(t) \leq C_2(\epsilon)( E(t)+S_1(t)+S_2(t))$. 
Moreover, we see easily that $S_1(t) \leq 2 S_1(0) + 4t\int_0^tS_2(t')\mathrm{d}t'$. 
These together with Lemma \ref{lem:EquiNorm2} and Gronwall's inequality gives the desired estimate. 
\end{proof}

\section{Uniqueness of solutions}\label{sect:unique}
In this section we prove Theorems \ref{th:unique_g<>0} and \ref{th:unique_g==0}, 
which ensures the uniqueness of solutions to the initial boundary value problem \eqref{Eq}--\eqref{IC}.

\subsection{Proof of Theorem \ref{th:unique_g<>0}}
In this subsection, we will show the uniqueness of solutions to the problem \eqref{Eq}--\eqref{IC} under the stability condition \eqref{SolClass0_SC}. 
Suppose that $(\bm{x}_1,\tau_1)$ and $(\bm{x}_2,\tau_2)$ are solutions to the problem in the class \eqref{SolClass0} 
satisfying the stability condition \eqref{SolClass0_SC}. 
Then, we can assume that both $\bm{x}_1$ and $\bm{x}_2$ satisfy \eqref{SolClassEst0} with a positive constant $M$. 
Therefore, we will use freely the estimates in Lemmas \ref{lem:EstPhi}, \ref{lem:EstDtPhi}, \ref{lem:EstSol1}--\ref{lem:EstSol4} without any comment. 
Putting $\bm{y}=\bm{x}_1-\bm{x}_2$, $\nu=\tau_1-\tau_2$, $\bm{x}=\frac12(\bm{x}_1+\bm{x}_2)$, $\tau=\frac12(\tau_1+\tau_2)$, 
we are going to show that $\bm{y}=\bm{0}$ and $\nu=0$. 
We see that $\bm{y}$ satisfies 
\begin{equation}\label{EqDiff1}
\begin{cases}
 \ddot{\bm{y}}+\mathscr{A}_{\tau}\bm{y}+\mathscr{A}_\nu\bm{x}=\bm{0} &\mbox{in}\quad (0,1)\times(0,T), \\
 \bm{x}'\cdot\bm{y}'=0 &\mbox{in}\quad (0,1)\times(0,T), \\
 \bm{y}=\bm{0} &\mbox{on}\quad \{s=1\}\times(0,T), \\
 (\bm{y},\dot{\bm{y}})=(\bm{0},\bm{0}) &\mbox{on}\quad (0,1)\times\{t=0\},
\end{cases}
\end{equation}
and that $\nu$ satisfies 
\begin{equation}\label{EqDiff2}
\begin{cases}
 -\nu''+|\bm{x}''|^2\nu = 2(\dot{\bm{x}}'\cdot\dot{\bm{y}}')-2(\bm{x}''\cdot\bm{y}'')\tau + h &\mbox{in}\quad (0,1)\times(0,T), \\
 \nu = 0 &\mbox{on}\quad \{s=0\}\times(0,T), \\
 \nu' = -\bm{g}\cdot\bm{y}' &\mbox{on}\quad \{s=1\}\times(0,T),
\end{cases}
\end{equation}
where $h=-\frac14|\bm{y}''|^2\nu$. 
Although $\bm{x}$ satisfies \eqref{SolClassEst0}, $(\bm{x},\tau)$ is not a solution to the problem \eqref{Eq}--\eqref{IC} in general. 
Therefore, we cannot apply the energy estimate obtained in Proposition \ref{prop:EE} directly and we need to modify the estimate. 
In this case, in place of \eqref{LEE1} we have 
\[
\frac{\mathrm{d}}{\mathrm{d}t}\{ \|\tau^\frac12\dot{\bm{y}}'\|_{L^2}^2 + \|\mathscr{A}_{\tau}\bm{y}\|_{L^2}^2 \}
= I_1 - 2( \nu\tau\bm{x}''\cdot\dot{\bm{y}}' )|_{s=1}, 
\]
where 
\begin{align*}
I_1 
&= (\dot{\tau}\dot{\bm{y}}',\dot{\bm{y}}')_{L^2} + 2(\mathscr{A}_{\dot{\tau}}\bm{y},\mathscr{A}_{\tau}\bm{y})_{L^2} \\
&\quad\;
 - 2(\tau\bm{y}',\nu''\dot{\bm{x}}')_{L^2} + 2(\tau\dot{\bm{y}}',2\nu'\bm{x}''+\nu\bm{x}''')_{L^2}
 + 2( \tau\nu'(\dot{\bm{x}}'\cdot\bm{y}'))|_{s=1}.
\end{align*}
To evaluate the boundary term $( \tau\nu'(\dot{\bm{x}}'\cdot\bm{y}'))|_{s=1}$, we showed Lemma \ref{lem:BT}, 
where we used essentially the fact that $(\bm{x},\tau)$ is a solution to the problem \eqref{Eq}--\eqref{IC}. 
Therefore, we need to modify Lemma \ref{lem:BT} as follows.

\begin{lemma}\label{lem:MBT}
It holds that 
\begin{equation}\label{LEE3}
-2( \tau\nu\bm{x}''\cdot\dot{\bm{y}}' )|_{s=1}
= 2\frac{\mathrm{d}}{\mathrm{d}t}\left( \frac{\tau^2}{\tau_1^2+\tau_2^2}\frac{\varphi'}{\varphi}\nu^2 \right)\biggr|_{s=1} + I_2,
\end{equation}
where $\varphi$ is the fundamental solution defined by \eqref{IVPphi} for $\bm{x}$ and $I_2$ satisfies 
\begin{align*}
|I_2| \leq C_2( \|\tau^\frac12\dot{\bm{y}}'\|_{L^2}^2 + \|\mathscr{A}_{\tau}\bm{y}\|_{L^2}^2
 + \|\nu\|_{L^\infty}^2+\|\nu'\|_{L^2}^2 + \|s\dot{h}\|_{L^1}^2).
\end{align*}
\end{lemma}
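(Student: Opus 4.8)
The plan is to mimic the proof of Lemma \ref{lem:BT}, but now keeping track of the fact that $(\bm{x},\tau)$ is only the arithmetic mean of two genuine solutions, not itself a solution. The starting point is to exploit the two identities available from $(\bm{x}_i,\tau_i)$ being actual solutions: taking the trace of the hyperbolic equation for $\bm{x}_i$ on $s=1$ and using $\bm{x}_i|_{s=1}=\bm{0}$ gives $(\tau_i\bm{x}_i''+\tau_i'\bm{x}_i')|_{s=1}+\bm{g}=\bm{0}$ for $i=1,2$. Averaging these two relations and using the orthogonality relation $\bm{x}'\cdot\bm{y}'=0$ from \eqref{EqDiff1} (differentiated in $t$, so $\bm{x}'\cdot\dot{\bm{y}}'=-\dot{\bm{x}}'\cdot\bm{y}'$ on $s=1$), I would derive a first expression for $\bm{g}\cdot\dot{\bm{y}}'+\tau(\bm{x}''\cdot\dot{\bm{y}}')$ on $s=1$ with an error term $I_{2,1}$ controlled by $|\bm{y}'|_{s=1}|$. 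The new feature compared with Lemma \ref{lem:BT} is that the cross term $\frac12(\tau_1\bm{x}_2''+\tau_2\bm{x}_1'')$ will not collapse exactly to $\tau\bm{x}''$; the discrepancy is $-\tfrac14(\nu\,\bm{y}''\cdot)|_{s=1}$-type quantities, which one absorbs into $I_{2,1}$ after noting $\nu|_{s=1}$ is controlled by $\|\nu\|_{L^\infty}$ and $\bm{y}''|_{s=1}$ by $\|\mathscr{A}_\tau\bm{y}\|_{L^2}$ via Lemma \ref{lem:EstA} and the Sobolev trace estimate.

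Next I would produce a second expression for the same boundary quantity, but now with a $\dot{\nu}$ term, by differentiating \eqref{EqDiff2} in $t$ and applying the solution formula \eqref{SolBVP} for the resulting two-point boundary value problem at $s=1$, exactly as in the proof of Lemma \ref{lem:BT}. The right-hand side of the differentiated problem has the schematic form $h_I+h_{II}'$ with $h_{II}=-2(\bm{x}''\cdot\dot{\bm{y}}')\tau + 2(\dot{\bm{x}}'\cdot\ddot{\bm{y}})$ and $h_I$ collecting all remaining lower-order terms, now including contributions from $h=-\tfrac14|\bm{y}''|^2\nu$ and its time derivative; the point is that $\|sh_I\|_{L^1}$ and $\|h_{II}\|_{L^1}$ are bounded by $\|\tau^{1/2}\dot{\bm{y}}'\|_{L^2}+\|\mathscr{A}_\tau\bm{y}\|_{L^2}+\|\nu\|_{L^\infty}+\|s\dot h\|_{L^1}$ plus quadratically small terms, using the estimates from Lemmas \ref{lem:EstSol1}--\ref{lem:EstSol4} and Lemma \ref{lem:EstNu} (applied to $\nu$). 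Combining the two expressions eliminates the singular $\bm{g}\cdot\dot{\bm{y}}'$ piece and yields
\[
\bm{g}\cdot\dot{\bm{y}}'+2\tau(\bm{x}''\cdot\dot{\bm{y}}')=-\frac{\varphi'}{\varphi}\dot{\nu}+I_{2,2}\quad\text{on }s=1,
\]
and subtracting the first relation gives $\tau(\bm{x}''\cdot\dot{\bm{y}}')$ on $s=1$ in terms of $-\tfrac{\varphi'}{\varphi}\dot\nu$ plus controlled errors.

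Finally, multiplying by $-2\nu$ and recognizing a total time derivative is where the weight $\tau^2/(\tau_1^2+\tau_2^2)$ enters: since $(\bm{x},\tau)$ is not a solution, $\dot\nu\cdot\nu$ does not reconstruct $\tfrac{\mathrm d}{\mathrm dt}\nu^2$ against a clean coefficient, but the combination $\dot{\nu}\nu = \tfrac{\tau_1^2+\tau_2^2}{2\tau^2}\cdot\tfrac{\tau^2}{\tau_1^2+\tau_2^2}\,\partial_t(\nu^2)/2$ can be rearranged using $\partial_t\bigl(\tfrac{\tau^2}{\tau_1^2+\tau_2^2}\tfrac{\varphi'}{\varphi}\nu^2\bigr)$; the extra terms from differentiating the weight and from $\partial_t(\varphi'/\varphi)$ are bounded by $\|\nu\|_{L^\infty}^2$ times factors controlled by \eqref{SolClassEst0} and the stability condition (which guarantees $\tau_1^2+\tau_2^2\gtrsim s^2$, so the weight is smooth and bounded with bounded time derivative), together with Lemma \ref{lem:EstDtPhi}. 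Collecting everything, moving the total derivative to the left, and invoking Lemma \ref{lem:EstNu} to reexpress $\|\nu\|_{L^\infty}$, $\|\nu'\|_{L^2}$ when convenient, produces \eqref{LEE3} with the asserted bound on $I_2$. The main obstacle is bookkeeping the weight $\tau^2/(\tau_1^2+\tau_2^2)$ correctly so that the leftover terms are genuinely lower order and bounded by the quantities listed — in particular checking that its time derivative, which involves $\dot\tau,\dot\tau_1,\dot\tau_2$, is pointwise $O(1)$ down to $s=0$ thanks to the stability condition and Lemma \ref{lem:EstSol4}.
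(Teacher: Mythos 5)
There is a genuine gap at the very first step, and it is exactly the step where the paper's proof differs from Lemma \ref{lem:BT}. You average the two traced equations $(\tau_i\bm{x}_i''+\tau_i'\bm{x}_i')|_{s=1}=-\bm{g}$ directly, which leaves the discrepancy $\tfrac14(\nu\,\bm{y}''+\nu'\bm{y}')\cdot\dot{\bm{y}}'|_{s=1}$, and you propose to absorb it by bounding $\bm{y}''|_{s=1}$ through Lemma \ref{lem:EstA} and a trace estimate. That does not work: Lemma \ref{lem:EstA} only controls $\|s\bm{y}''\|_{L^2}+\|\bm{y}'\|_{L^2}$ by $\|\mathscr{A}_\tau\bm{y}\|_{L^2}$, which gives no control of the pointwise value $\bm{y}''(1,t)$; and, more seriously, your error term still carries the factor $\dot{\bm{y}}'|_{s=1}$, which is precisely the trace that cannot be bounded by $\|\tau^{1/2}\dot{\bm{y}}'\|_{L^2}$ and whose elimination is the whole purpose of the lemma — you never address it. The paper avoids both problems by dividing each traced equation by $\tau_i$ \emph{before} summing, so that $2\bm{x}''|_{s=1}$ is expressed exactly in terms of $\bm{g}$, $\bm{x}'$, $\bm{y}'$ with no second derivatives and no $\dot{\bm{y}}'$-trace left over; after pairing with $\dot{\bm{y}}'$ and using $\bm{x}'\cdot\dot{\bm{y}}'=-\dot{\bm{x}}'\cdot\bm{y}'$ together with $\bm{x}_i'\cdot\dot{\bm{x}}_i'=0$, the remainder $I_{2,1}$ involves only $\bm{y}'|_{s=1}$ and $\dot{\bm{x}}_i'|_{s=1}$, which are harmless.

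This also shows that your explanation of the weight is off. Because the paper's first boundary identity \eqref{I21} carries the coefficient $\tfrac{\tau}{2}(\tfrac1{\tau_1}+\tfrac1{\tau_2})=\tfrac{\tau^2}{\tau_1\tau_2}$ in front of $\bm{g}\cdot\dot{\bm{y}}'$ (not $1$), eliminating $\bm{g}\cdot\dot{\bm{y}}'$ between \eqref{I21} and \eqref{LI22} forces the multipliers $\tfrac{2\tau^2}{\tau_1^2+\tau_2^2}$ and $\tfrac{2\tau_1\tau_2}{\tau_1^2+\tau_2^2}$, and the algebraic identity $4\tau^2-2\tau_1\tau_2=\tau_1^2+\tau_2^2$ makes the coefficient of $\tau(\bm{x}''\cdot\dot{\bm{y}}')$ exactly one; the weight in \eqref{LEE3} is thus produced by the elimination, not inserted afterwards. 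Your route (with $\bm{g}$-coefficient $1$) would most naturally give the unweighted identity of Lemma \ref{lem:BT}, and the manipulation you sketch — writing $\nu\dot{\nu}=\tfrac{\tau_1^2+\tau_2^2}{2\tau^2}\cdot\tfrac{\tau^2}{\tau_1^2+\tau_2^2}\partial_t(\nu^2)/2$ — leaves a non-unit prefactor in front of the total derivative and a residual $\dot{\nu}|_{s=1}$-term; turning that into \eqref{LEE3} with a remainder bounded as claimed would require additional input (e.g. $1-\tfrac{2\tau^2}{\tau_1^2+\tau_2^2}=\tfrac{\nu^2}{2(\tau_1^2+\tau_2^2)}$ plus pointwise bounds on $\dot{\nu}$ at $s=1$), none of which appears in your argument. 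The second half of your proposal (the $\dot{\nu}$-relation \eqref{LI22} via the differentiated boundary value problem and the solution formula) is fine and matches the paper, but as written the proof does not close.
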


\begin{proof}
By taking the trace of the hyperbolic equations for $\bm{x}_j$ on $s=1$ and using the boundary condition, we have 
$-(\tau_j\bm{x}_j''+\tau_j'\bm{x}_j')|_{s=1}=\bm{g}$, so that 
\begin{align*}
2\bm{x}''
&= -\left(\frac{1}{\tau_1}+\frac{1}{\tau_2}\right)\bm{g} - \left(\frac{\tau_1'}{\tau_1}+\frac{\tau_2'}{\tau_2}\right)\bm{x}'
 - \frac12\left(\frac{\tau_1'}{\tau_1}-\frac{\tau_2'}{\tau_2}\right)\bm{y}' \quad\mbox{on}\quad s=1.
\end{align*}
Differentiating the second equation in \eqref{EqDiff1} with respect to $t$ we have $\bm{x}'\cdot\dot{\bm{y}}'=-\dot{\bm{x}}'\cdot\bm{y}'$. 
These two identities yield 
\begin{equation}\label{I21}
\frac{\tau}{2}\left(\frac{1}{\tau_1}+\frac{1}{\tau_2}\right)\bm{g}\cdot\dot{\bm{y}}' + \tau(\bm{x}''\cdot\dot{\bm{y}}') = I_{2,1}
 \quad\mbox{on}\quad s=1,
\end{equation}
where 
\[
I_{2,1}=\left\{\frac{\tau}{2}\left(\frac{\tau_2'}{\tau_2}\dot{\bm{x}}_1'+\frac{\tau_1'}{\tau_1}\dot{\bm{x}}_2'\right)
 \cdot\bm{y}'\right\}\biggr|_{s=1}.
\]
On the other hand, \eqref{LI22} is still valid. 
By \eqref{I21} and \eqref{LI22}, we obtain \eqref{LEE3} with 
\[
I_2 = \left[ -\left\{\frac{\mathrm{d}}{\mathrm{d}t}\left( \frac{2\tau^2}{\tau_1^2+\tau_2^2}\frac{\varphi'}{\varphi} \right) \right\}\nu^2
 + 2\nu\left( \frac{2\tau_1\tau_2}{\tau_1^2+\tau_2^2} I_{2,1}-\frac{2\tau^2}{\tau_1^2+\tau_2^2} I_{2,2} \right) \right]\biggr|_{s=1}.
\]
The estimate for $I_2$ is exactly the same as in the proof of Lemma \ref{lem:BT}, so we omit it. 
\end{proof}

Thanks to this lemma, we see that the energy estimate obtained in Proposition \ref{prop:EE} is still valid in this case. 
We use the estimate with, for example, $\epsilon=\frac14$ and obtain 
\[
E(t) \leq C_1\mathrm{e}^{C_2t}\left( E(0) + S_1(0) + C_2\int_0^tS_2(t')\mathrm{d}t' \right),
\]
where $E(t)=\|\dot{\bm{y}}(t)\|_{X^1}^2 + \|\bm{y}(t)\|_{X^2}^2$, $S_1(t)=\|sh(t)\|_{L^1}^2$, and 
$S_2(t)=\|s\dot{h}(t)\|_{L^1}^2+\|s^\frac34h(t)\|_{L^2}^2$. 
Here, we remind that $h$ was defined by $h=-\frac14|\bm{y}''|^2\nu$. 
By the initial conditions in \eqref{EqDiff1}, we have $E(0)=S_1(0)=0$. 
In view of 
\[
\begin{cases}
 |h| \lesssim s(|\bm{x}_1''|+|\bm{x}_2''|)|\bm{y}''|, \\
 |\dot{h}| \lesssim s(|\dot{\bm{x}}_1''|+|\dot{\bm{x}}_2''|+|\bm{x}_1''|+|\bm{x}_2''|)|\bm{y}''|,
\end{cases}
\]
we have 
\[
\begin{cases}
 \|s^\frac12h\|_{L^2} \lesssim ( \|s^\frac12\bm{x}_1''\|_{L^\infty} + \|s^\frac12\bm{x}_2''\|_{L^\infty} ) \|s\bm{y}''\|_{L^2}, \\
 \|s^\frac12\dot{h}\|_{L^1} \lesssim ( \|s^\frac12\dot{\bm{x}}_1''\|_{L^2} + \|s^\frac12\dot{\bm{x}}_2''\|_{L^2}
  + \|\bm{x}_1''\|_{L^2} + \|\bm{x}_2''\|_{L^2} )\|s\bm{y}''\|_{L^2}. 
\end{cases}
\]
Therefore, we obtain $E(t) \lesssim \int_0^tE(t')\mathrm{d}t'$, 
which together with Gronwall's inequality implies $E(t)\equiv0$ so that $\dot{\bm{y}}=\bm{0}$ and that $\bm{y}=\bm{0}$ due to the initial conditions. 
Then, by the uniqueness of solutions to the boundary value problem \eqref{EqDiff2} implies $\nu=0$. 
Therefore, the proof of Theorem \ref{th:unique_g<>0} is complete. 
\hfill$\Box$

\subsection{Proof of Theorem \ref{th:unique_g==0}}
In this subsection, we will show the uniqueness of solutions to the problem \eqref{Eq}--\eqref{IC} in the case $\bm{g}=\bm{0}$ 
without assuming a priori the stability condition \eqref{SolClass0_SC}. 
The following lemma ensures that in the case $\bm{g}=\bm{0}$ if a solution to \eqref{Eq}--\eqref{IC} does not satisfy the stability condition, 
then it is necessarily the trivial one.

\begin{lemma}\label{lem:SpecialCase}
Let $\bm{g}=\bm{0}$ and $(\bm{x},\tau)$ be a solution to the problem \eqref{Eq}--\eqref{IC} in the class \eqref{SolClass0}. 
Suppose that the stability condition \eqref{SolClass0_SC} is not satisfied, that is, 
\begin{equation}\label{SolClass0_non-SC}
\inf_{(s,t)\in(0,1)\times(0,T)}\frac{\tau(s,t)}{s}\leq 0. 
\end{equation}
Then, we have $\bm{x}(s,t)\equiv\bm{x}_0^{\mathrm{in}}(s)$ and $\tau(s,t)\equiv 0$. 
Particularly, $\bm{x}_1^{\mathrm{in}}(s)\equiv\bm{0}$ must hold. 
\end{lemma}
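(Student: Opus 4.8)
The plan is to show that the hypothesis \eqref{SolClass0_non-SC} forces the tension to vanish identically, after which the equation of motion collapses to $\ddot{\bm{x}}=\bm{0}$ and the initial conditions finish the argument. The starting point is the lower bound for the tension coming from the two-point boundary value problem \eqref{BVP}. Since $\bm{g}=\bm{0}$, the boundary condition at $s=1$ becomes $\tau'(1,t)=0$, i.e.\ $a=0$, and the source term $h=|\dot{\bm{x}}'|^2$ is non-negative, so Lemma \ref{lem:EstSolBVP1} (with $a=0$) applies at each fixed time $t$. It gives
\[
\frac{\tau(s,t)}{s}\geq \|\sigma|\dot{\bm{x}}'(\cdot,t)|^2\|_{L^1}\exp\bigl(-2\|\sigma^{\frac12}\bm{x}''(\cdot,t)\|_{L^2}^2\bigr)\geq 0
\]
for all $(s,t)$, where the coefficient norms are finite because $\bm{x}'\in L^\infty(0,T;X^2)$ implies $\|\sigma^{\frac12}\bm{x}''(t)\|_{L^2}\lesssim\|\bm{x}'(t)\|_{X^2}$ is bounded uniformly in $t$. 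In particular $\tau(s,t)/s\geq 0$ everywhere, so the assumed infimum $\leq 0$ in \eqref{SolClass0_non-SC} must actually be $0$, and there is a sequence $(s_n,t_n)$ along which $\tau(s_n,t_n)/s_n\to 0$.

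Next I would combine this with the same lower bound to deduce that $\dot{\bm{x}}'$ vanishes identically. From the displayed inequality, $\tau(s,t)/s\geq c(t)\,\|\sigma^{\frac12}\dot{\bm{x}}'(\cdot,t)\|_{L^2}^2$ with $c(t)\geq \exp(-2M^2)>0$ uniform in $t$; wait — the exponential multiplies $\|\sigma|\dot{\bm{x}}'|^2\|_{L^1}=\|\sigma^{\frac12}\dot{\bm{x}}'\|_{L^2}^2$, so in fact $\inf_s \tau(s,t)/s\geq c(t)\|\sigma^{\frac12}\dot{\bm{x}}'(\cdot,t)\|_{L^2}^2$. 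Since the left-hand side, infimized over $(s,t)$, is $\leq 0$ and each term is non-negative, we must have $\inf_t \|\sigma^{\frac12}\dot{\bm{x}}'(\cdot,t)\|_{L^2}^2=0$. This alone does not immediately give that $\dot{\bm{x}}'\equiv\bm{0}$ for all $t$; to upgrade it I would use the constraint $|\bm{x}'|=1$ together with the structure of the problem. A cleaner route: apply Lemma \ref{lem:EstSolBVP1} pointwise in $t$ to get, for \emph{every} $t$,
\[
0\;\leq\;\inf_{0<s<1}\frac{\tau(s,t)}{s}\;-\;\inf_{(s,t')}\frac{\tau(s,t')}{s}
\]
and since the outer infimum is $\leq 0$ by hypothesis while the inner quantity is $\geq 0$, continuity of $t\mapsto \inf_s \tau(s,t)/s$ (which follows from the regularity \eqref{SolClass0}, via the estimates of Section \ref{sect:BVP}) shows the infimum over $(0,T)$ is attained, say at $t_*$, with value $0$; hence $\|\sigma^{\frac12}\dot{\bm{x}}'(\cdot,t_*)\|_{L^2}=0$, so $\dot{\bm{x}}'(\cdot,t_*)\equiv\bm{0}$, and together with the boundary condition $\bm{x}(1,t)=\bm{0}$ this gives $\dot{\bm{x}}(\cdot,t_*)\equiv\bm{0}$. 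Then I would invoke the uniqueness statement already proved under the stability condition — but that does not apply here since stability fails; instead, the honest path is an energy argument for the difference between the given solution and the trivial one $(\bm{x}_0^{\mathrm{in}},0)$.

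So the main line I would actually carry out: first establish $\tau\geq 0$ and, from \eqref{SolClass0_non-SC}, the existence of a time $t_*$ at which $\dot{\bm{x}}'(\cdot,t_*)=\bm{0}$, hence $\dot{\bm{x}}(\cdot,t_*)=\bm{0}$; then note that $\tau(\cdot,t_*)$ solves \eqref{BVP} with $h\equiv 0$ and $a=0$, so by uniqueness for the two-point problem $\tau(\cdot,t_*)\equiv 0$. Now run the energy estimate of Proposition \ref{prop:EE} (in the modified form used in Section \ref{sect:unique} for the uniqueness proof, comparing $(\bm{x},\tau)$ with $(\bm{x}(\cdot,t_*),0)=(\bm{x}_0^{\mathrm{in}},0)$), but \emph{started from time $t_*$ instead of $0$}, noting that in the case $\bm{g}=\bm{0}$ the stabilizing structure is not needed because the comparison solution is stationary: the functional $E(t)=\|\dot{\bm{x}}(t)-\bm{0}\|_{X^1}^2+\|\bm{x}(t)-\bm{x}_0^{\mathrm{in}}\|_{X^2}^2$ satisfies $E(t_*)=0$ and a Gronwall inequality $E(t)\lesssim\int_{t_*}^t E(t')\,dt'$, yielding $E\equiv 0$ on $[t_*,T]$ and, running time backwards, on $[0,t_*]$ as well. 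Hence $\bm{x}(s,t)\equiv\bm{x}_0^{\mathrm{in}}(s)$ and $\tau\equiv 0$, and evaluating $\dot{\bm{x}}|_{t=0}$ gives $\bm{x}_1^{\mathrm{in}}\equiv\bm{0}$. The main obstacle is the middle step: turning the one-sided failure of stability at possibly a single point into the vanishing of $\dot{\bm{x}}'$ at an \emph{interior} time $t_*\in(0,T)$, which requires the pointwise-in-$t$ lower bound of Lemma \ref{lem:EstSolBVP1} to be uniform and the map $t\mapsto\inf_s\tau(s,t)/s$ to be continuous so that the infimum is attained; once $t_*$ is in hand, the backward-and-forward energy argument is routine given the machinery of Sections \ref{sect:EE} and \ref{sect:unique}.
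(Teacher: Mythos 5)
The first half of your argument is essentially the paper's: from Lemma \ref{lem:EstSolBVP1} with $a=-\bm{g}\cdot\bm{x}'(1,t)=0$ and $h=|\dot{\bm{x}}'|^2\geq0$ one gets $\tau(s,t)/s\geq \|\sigma^{\frac12}\dot{\bm{x}}'(t)\|_{L^2}^2\exp(-2\|\sigma^{\frac12}\bm{x}''(t)\|_{L^2}^2)\geq0$, and continuity in $t$ of this lower bound together with \eqref{SolClass0_non-SC} produces a time $t_*$ with $\dot{\bm{x}}'(\cdot,t_*)\equiv\bm{0}$, hence $\dot{\bm{x}}(\cdot,t_*)\equiv\bm{0}$ by the boundary condition at $s=1$. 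Up to here your reasoning is sound (your detour through continuity of $t\mapsto\inf_s\tau(s,t)/s$ is an acceptable variant, justified by the Lipschitz-in-$t$ bound $|\dot{\tau}(s,t)|\lesssim s$ of Lemma \ref{lem:EstSol4}).

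The gap is in the concluding step. You propose to finish by a Gronwall inequality $E(t)\lesssim\int_{t_*}^tE(t')\,\mathrm{d}t'$ for the difference between $(\bm{x},\tau)$ and the stationary solution $(\bm{x}(\cdot,t_*),0)$, invoking the machinery of Proposition \ref{prop:EE} and Section \ref{sect:unique}. That machinery is not available here: Proposition \ref{prop:EE} and the modified boundary identity of Lemma \ref{lem:MBT} require the stability condition \eqref{SolClass0_SC} for the (averaged) tension, both for the coercivity of the weighted energy $\|\tau^{\frac12}\dot{\bm{y}}'\|_{L^2}^2+\|\mathscr{A}_\tau\bm{y}\|_{L^2}^2$ (Lemma \ref{lem:EstA} needs $\tau\simeq s$) and because Lemma \ref{lem:MBT} divides by $\tau_1$ and $\tau_2$ at $s=1$ — here the comparison tension is identically zero and the given $\tau$ is exactly the one whose lower bound fails, so the asserted Gronwall inequality is not derivable from what you cite, and you give no independent derivation. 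The paper avoids this entirely with a one-line observation that makes no use of stability: since $\bm{g}=\bm{0}$, testing the equation with $\dot{\bm{x}}$ and using $\bm{x}'\cdot\dot{\bm{x}}'=0$ (from $|\bm{x}'|=1$), $\tau|_{s=0}=0$, and $\dot{\bm{x}}|_{s=1}=\bm{0}$ gives $\frac{\mathrm{d}}{\mathrm{d}t}\int_0^1|\dot{\bm{x}}|^2\mathrm{d}s=0$, so $\|\dot{\bm{x}}(t)\|_{L^2}=\|\dot{\bm{x}}(t_*)\|_{L^2}=0$ for all $t$, whence $\bm{x}\equiv\bm{x}_0^{\mathrm{in}}$, $\bm{x}_1^{\mathrm{in}}\equiv\bm{0}$, and then $0\leq\tau(s,t)\leq s\|\dot{\bm{x}}'(t)\|_{L^2}^2=0$ by the upper bound in Lemma \ref{lem:EstSolBVP1}. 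Replacing your final energy argument by this conservation-of-kinetic-energy identity closes the proof.
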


\begin{proof}
By Lemma \ref{lem:EstSolBVP1} we see that 
$\frac{\tau(s,t)}{s} \geq \|\sigma^{\frac12}\dot{\bm{x}}'(t)\|_{L^2}^2 \exp(-2\|\sigma^{\frac12}\bm{x}''(t)\|_{L^2}^2)$ for any $(s,t)\in(0,1]\times[0,T]$. 
Then, continuity of the right-hand side with respect to $t$ together with \eqref{SolClass0_non-SC} yields that there exists a $t_0\in[0,T]$ such that 
$\dot{\bm{x}}'(s,t_0)\equiv\bm{0}$, which together with the boundary condition \eqref{BC} implies $\dot{\bm{x}}(s,t_0)\equiv\bm{0}$. 
On the other hand, by a standard argument, we see easily that $\frac{\mathrm{d}}{\mathrm{d}t}\int_0^1|\dot{\bm{x}}(s,t)|^2 \mathrm{d}s=0$, 
where we used $\bm{x}'\cdot\dot{\bm{x}}'=0$, which comes from the second equation in \eqref{Eq}. 
Therefore, for any $t\in[0,T]$, it holds that $\|\dot{\bm{x}}(t)\|_{L^2}=\|\dot{\bm{x}}(t_0)\|_{L^2}=0$. 
This implies $\dot{\bm{x}}(s,t)\equiv\bm{0}$, and hence we have $\bm{x}(s,t)\equiv \bm{x}_0^{\mathrm{in}}(s)$ and $\bm{x}_1^{\mathrm{in}}(s)\equiv\bm{0}$. 
Moreover, by Lemma \ref{lem:EstSolBVP1} we see that $0\leq\tau(s,t)\leq s\|\dot{\bm{x}}'(t)\|_{L^2}^2=0$, which implies $\tau(s,t)\equiv 0$. 
\end{proof}

As a corollary of Lemma \ref{lem:SpecialCase} we have the following.

\begin{corollary}\label{cor:SpecialCase}
Let $\bm{g}=\bm{0}$ and $(\bm{x},\tau)$ be a solution to the problem \eqref{Eq}--\eqref{IC} in the class \eqref{SolClass0}. 
Suppose that the initial data satisfy $\bm{x}_1^{\mathrm{in}}(s)\equiv\bm{0}$. 
Then, we have $\bm{x}(s,t)\equiv\bm{x}_0^{\mathrm{in}}(s)$ and $\tau(s,t)\equiv 0$. 
\end{corollary}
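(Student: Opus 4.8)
The statement is explicitly advertised as a corollary of Lemma \ref{lem:SpecialCase}, so the plan is to \emph{reduce to that lemma}: I will check that, under the hypotheses $\bm{g}=\bm{0}$ and $\bm{x}_1^{\mathrm{in}}\equiv\bm{0}$, the negation \eqref{SolClass0_non-SC} of the stability condition is automatically satisfied by any solution in the class \eqref{SolClass0}. Once this is done, Lemma \ref{lem:SpecialCase} delivers $\bm{x}(s,t)\equiv\bm{x}_0^{\mathrm{in}}(s)$ and $\tau(s,t)\equiv0$ directly.

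To verify \eqref{SolClass0_non-SC}, I would first note that when $\bm{g}=\bm{0}$ the two-point boundary value problem \eqref{BVP} is exactly \eqref{TBVP} with $a=-\bm{g}\cdot\bm{x}'(1,t)=0$ and $h=|\dot{\bm{x}}'(t)|^2\geq0$, so the hypotheses of Lemma \ref{lem:EstSolBVP1} hold trivially (the condition $a+\|\sigma h\|_{L^1}\exp(-\|\sigma^{1/2}\bm{x}''\|_{L^2}^2)\geq0$ is a sum of nonnegative terms). Its upper bound then gives the pointwise control $\tau(s,t)\leq s\,\|\dot{\bm{x}}'(t)\|_{L^2}^2$, i.e. $\tau(s,t)/s\leq\|\dot{\bm{x}}'(t)\|_{L^2}^2$ for all $(s,t)\in(0,1)\times(0,T)$. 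Since, in the class \eqref{SolClass0}, one may take $\dot{\bm{x}}'\in C^0([0,T];L^2)$ with $\dot{\bm{x}}'|_{t=0}=\bm{x}_1^{\mathrm{in}\,\prime}\equiv\bm{0}$ (as recorded in the discussion preceding Theorem \ref{th:unique_g<>0}), we get $\|\dot{\bm{x}}'(t)\|_{L^2}\to0$ as $t\to0^+$. Combining, $\inf_{(s,t)\in(0,1)\times(0,T)}\tau(s,t)/s\leq\inf_{t\in(0,T)}\|\dot{\bm{x}}'(t)\|_{L^2}^2=0$, which is precisely \eqref{SolClass0_non-SC}, and the reduction is complete.

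This argument is short and there is no serious obstacle; the only point needing a little care is that the initial velocity is attained in $L^2$ as $t\to0^+$, which is exactly what the reductions of Section \ref{sect:results} guarantee for the class \eqref{SolClass0}. If one wishes to avoid any reference to the trace at $t=0$, an equivalent route is to reuse the energy identity from the proof of Lemma \ref{lem:SpecialCase}: testing the equation of motion with $\dot{\bm{x}}$ and using the constraint $\bm{x}'\cdot\dot{\bm{x}}'=0$ together with the boundary conditions gives $\frac{\mathrm{d}}{\mathrm{d}t}\|\dot{\bm{x}}(t)\|_{L^2}^2=0$, hence $\|\dot{\bm{x}}(t)\|_{L^2}=\|\bm{x}_1^{\mathrm{in}}\|_{L^2}=0$ and $\dot{\bm{x}}'(t)\equiv\bm{0}$ for every $t$; then $\bm{x}(s,t)\equiv\bm{x}_0^{\mathrm{in}}(s)$ follows from the initial condition, and $\tau(s,t)/s\leq\|\dot{\bm{x}}'(t)\|_{L^2}^2=0$ forces $\tau\equiv0$, recovering the conclusion without invoking Lemma \ref{lem:SpecialCase} as a black box.
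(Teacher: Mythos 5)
Your proposal is correct and follows essentially the same route as the paper: apply Lemma \ref{lem:EstSolBVP1} with $a=-\bm{g}\cdot\bm{x}'(1,t)=0$ and $h=|\dot{\bm{x}}'|^2$ to get $0\leq\tau(s,t)\leq s\|\dot{\bm{x}}'(t)\|_{L^2}^2$, deduce \eqref{SolClass0_non-SC} from $\bm{x}_1^{\mathrm{in}}\equiv\bm{0}$, and invoke Lemma \ref{lem:SpecialCase}. The paper simply evaluates at $t=0$ (where $\tau(s,0)=0$ outright) rather than taking $t\to0^+$ by continuity, a negligible difference; your alternative energy-identity route is also fine but is just the interior of Lemma \ref{lem:SpecialCase} unpacked.
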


\begin{proof}
By Lemma \ref{lem:EstSolBVP1} we see that $0\leq\tau(s,0) \leq s\|\dot{\bm{x}}'(0)\|_{L^2}^2 = s\|\bm{x}_1^{\mathrm{in}\,\prime}\|_{L^2}^2=0$, 
and hence $\tau(s,t)$ satisfies \eqref{SolClass0_non-SC}. 
Therefore, by Lemma \ref{lem:SpecialCase} we have the desired result. 
\end{proof}

We now give a proof of Theorem \ref{th:unique_g==0}. 
Suppose that $(\bm{x}_1,\tau_1)$ and $(\bm{x}_2,\tau_2)$ are solutions to the problem in the class \eqref{SolClass0}. 
If $\bm{x}_1^{\mathrm{in}}(s)\equiv\bm{0}$, then Corollary \ref{cor:SpecialCase} implies $(\bm{x}_1,\tau_1)=(\bm{x}_2,\tau_2)=(\bm{x}_0^\mathrm{in},0)$. 
Otherwise, it follows from Lemma \ref{lem:SpecialCase} that 
$\tau_1$ and $\tau_2$ satisfy the stability condition \eqref{SolClass0_SC}. 
Therefore, the result follows from Theorem \ref{th:unique_g<>0}. 
\hfill$\Box$

\section{Estimates for the tension $\tau$}\label{sect:ETau}
\begin{lemma}\label{lem:EstTau1}
Let $M$ be a positive constant and $m$ and $j$ integers such that $m\geq5$ and $0\leq j\leq m-2$. 
There exists a positive constant $C=C(M,m)$ such that if $\bm{x}$ satisfies 
\[
\sum_{l=0}^{j+1} \|\dt^l\bm{x}(t)\|_{X^{m-l}} \leq M, 
\]
then the solution $\tau$ to the boundary value problem \eqref{BVP} satisfies the following estimates: 
\[
\begin{cases}
 \|\dt^j\tau'(t)\|_{L^\infty \cap X^{m-1-j}} \leq C &\mbox{in the case $j\leq m-3$}, \\
 \|\dt^{m-2}\tau'(t)\|_{X^1} \leq C &\mbox{in the case $j=m-2$}.
\end{cases}
\]
\end{lemma}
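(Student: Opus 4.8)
The plan is to differentiate the two-point boundary value problem \eqref{BVP} $j$ times in $t$ and view the result as an instance of the abstract problem \eqref{TBVP} with a modified (lower-order) right-hand side and a modified boundary datum at $s=1$, then apply the pointwise and $L^p$ estimates already established in Lemmas \ref{lem:EstSolBVP2} and \ref{lem:EstSolBVP3}, together with the higher-order weighted estimates that must be obtained by differentiating the equation in $s$. First I would write $\dt^j\tau$ as the solution of
\[
\begin{cases}
 -(\dt^j\tau)'' + |\bm{x}''|^2\,\dt^j\tau = h_j \quad\mbox{in}\quad (0,1), \\
 \dt^j\tau(0)=0, \quad (\dt^j\tau)'(1) = a_j,
\end{cases}
\]
where $h_j = \dt^j(|\dot{\bm{x}}'|^2) - [\dt^j,|\bm{x}''|^2]\tau$ and $a_j = -\bm{g}\cdot\dt^j\bm{x}'|_{s=1}$, the latter being easily controlled by the Sobolev embedding $X^1\hookrightarrow H^1\hookrightarrow L^\infty$ and the hypothesis. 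The weighted $L^1$-type norms of $h_j$ must be estimated in terms of the quantities $\sum_{l\le j+1}\|\dt^l\bm{x}(t)\|_{X^{m-l}}$; this uses the product and commutator estimates of Lemmas \ref{lem:Algebra}, \ref{lem:commutator}, \ref{lem:CalIneqLp1}, and \ref{lem:embedding2}, plus an induction on $j$ that feeds back the lower-order bounds on $\dt^l\tau$ (for $l<j$) already proved. Here one should note that the term $[\dt^j,|\bm{x}''|^2]\tau$ involves at most $\dt^{j}\bm{x}''$ paired with $\dt^{l}\tau$ for $l\le j-1$, so no loss occurs, since $\dt^{j}\bm{x}''$ sits in a space controlled by $\|\dt^j\bm{x}\|_{X^{m-j}}$ with $m-j\ge 2$.

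The $L^\infty\cap X^{m-1-j}$ bound in the case $j\le m-3$ would then be proved by induction on the number of spatial derivatives, analogously to the definition \eqref{WSS} of the $X^m$-norm. The $L^\infty$ and $X^0=L^2$ parts of $\|\dt^j\tau'\|$ follow directly from Lemmas \ref{lem:EstSolBVP2} (with $\alpha=0$) and \ref{lem:EstSolBVP3}, once the $L^1$-bound on $h_j$ is in hand, together with the bound $\|\sigma^{1/2}\bm{x}''\|_{L^2}\le M$ required there. To reach $X^{m-1-j}$, I would differentiate the equation $-(\dt^j\tau)'' = h_j - |\bm{x}''|^2\dt^j\tau$ repeatedly in $s$; each differentiation expresses a higher $s$-derivative of $\dt^j\tau$ in terms of $s$-derivatives of $h_j$ and of $|\bm{x}''|^2\dt^j\tau$, which are controlled via the algebra property of $X^m$ (Lemma \ref{lem:Algebra}) and the already-bounded lower-order spatial norms of $\dt^j\tau$ and $\dt^j\tau'$. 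One must check the weights match the definition of $\|\cdot\|_{X^{m-1-j}}$: since $m-1-j\ge 2$, the relevant weighted $L^2$-norms of $\ds^{k+i}(\dt^j\tau')$ are recovered from weighted $L^2$-norms of $\ds^{k+i-2}h_j$ and $\ds^{k+i-2}(|\bm{x}''|^2\dt^j\tau)$ after multiplying the $s$-differentiated equation by the appropriate power of $s$; the bound on the latter uses $\|\bm{x}''\|$ in $X^{m-2}$, which is available because $\|\bm{x}(t)\|_{X^m}\le M$ and Remark \ref{remark:embedding}(3).

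In the critical case $j=m-2$ I would only claim $\|\dt^{m-2}\tau'(t)\|_{X^1}\le C$, which is weaker: here $\dt^{m-2}\bm{x}$ lies only in $X^2$ and $\dt^{m-1}\bm{x}$ only in $X^1$, so $h_{m-2}$ contains the term $|\dt^{m-1}\bm{x}'|^2$, whose weighted $L^1$-norm $\|s^{1/2}\,|\dt^{m-1}\bm{x}'|^2\|_{L^1}=\|s^{1/4}\dt^{m-1}\bm{x}'\|_{L^4}^2$ is finite by Lemma \ref{lem:CalIneqLp1} but gives only an $s^{1/2}$-weighted control; applying Lemma \ref{lem:EstSolBVP3} with $p=2$, $\alpha=\tfrac12$ yields $\|s^{1/2}\dt^{m-2}\tau'\|_{L^2}\lesssim 1$, and combined with the $L^\infty$-free lower-order bound one assembles the $X^1$-norm as in \eqref{WSS} with $k=0$: $\|\dt^{m-2}\tau'\|_{X^1}^2 = \|\dt^{m-2}\tau'\|_{L^2}^2 + \|s^{1/2}(\dt^{m-2}\tau')'\|_{L^2}^2$, the second term being handled directly from the differentiated equation. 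I expect the main obstacle to be the bookkeeping in the inductive estimate of $\|h_j\|$ in weighted $L^1$ and weighted Sobolev norms — distributing the $s$-weights among the three or more factors (from the commutator and the product $\dt^j(|\dot{\bm{x}}'|^2)$) so that each factor lands in a space genuinely controlled by the hypothesis, exactly as in the index-chasing of Lemmas \ref{lem:commutator} and \ref{lem:CalIneq2}, and verifying at each stage that no derivative count exceeds $m$.
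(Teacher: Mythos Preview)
Your overall strategy---induction on $j$, rewriting $\dt^j\tau$ as a solution of \eqref{TBVP} with right-hand side $h_j=\dt^j|\dot{\bm x}'|^2-[\dt^j,|\bm x''|^2]\tau$, applying Lemmas \ref{lem:EstSolBVP2}--\ref{lem:EstSolBVP3} for the pointwise and $L^2$ bounds on $\dt^j\tau'$, and then climbing in $s$-regularity by differentiating the equation---is exactly the approach of the paper.

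Two points where your proposal deviates from the paper and should be corrected. First, for the higher $X^{m-1-j}$ estimate the paper does not use the $X^m$-algebra (Lemma \ref{lem:Algebra}) directly; it instead exploits the identity $\|u\|_{X^{k+1}}^2=\|u\|_{L^2}^2+\|u'\|_{Y^k}^2$ and estimates $\|\dt^j\tau''\|_{Y^k}$ via the tailor-made product Lemmas \ref{lem:CalIneq1} and \ref{lem:CalIneq2}. This is not merely cosmetic: the term $(\dt^j\tau)|\bm x''|^2$ involves two factors $\bm x''$ with only $\|\bm x\|_{X^m}$ available, and the $Y^k$ framework with Lemma \ref{lem:CalIneq2} handles precisely this triple product with the boundary condition $\dt^j\tau|_{s=0}=0$; the raw $X^m$-algebra would force you to control $\bm x''$ in $X^{k\vee 2}$, which costs one derivative too many.

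Second, in the critical case $j=m-2$ the worst term in $h_{m-2}$ is $\dot{\bm x}'\cdot\dt^{m-1}\bm x'$, not $|\dt^{m-1}\bm x'|^2$; the latter does not appear (Leibniz on $\dt^{m-2}(\dot{\bm x}'\cdot\dot{\bm x}')$ never puts all $m-2$ derivatives on both factors). This matters because your claimed bound $\|s^{1/4}\dt^{m-1}\bm x'\|_{L^4}\lesssim\|\dt^{m-1}\bm x\|_{X^1}$ is false---$X^1$ only gives $\|s^{1/2}\dt^{m-1}\bm x'\|_{L^2}$. The paper simply writes $\|s^{1/2}\dot{\bm x}'\cdot\dt^{m-1}\bm x'\|_{L^1}\le\|\dot{\bm x}'\|_{L^2}\|s^{1/2}\dt^{m-1}\bm x'\|_{L^2}$, and then applies Lemma \ref{lem:EstSolBVP3} with $\alpha=0$, $p=2$ (not $\alpha=\tfrac12$) to obtain $\|\dt^{m-2}\tau'\|_{L^2}\lesssim|a_{m-2}|+\|s^{1/2}h_{m-2}\|_{L^1}$ directly, which is the unweighted $L^2$ piece you need for $\|\dt^{m-2}\tau'\|_{X^1}$.
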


\begin{proof}
We prove this lemma by induction on $j$. 
Assuming that 
\begin{equation}\label{IndAss1}
\sum_{l=0}^{j-1} \|\dt^l\tau'(t)\|_{L^\infty \cap X^{m-1-l}} \lesssim 1
\end{equation}
holds in the case $j\geq1$, we are going to evaluate $\dt^j\tau$. 
We note that 
the above induction hypothesis together with the boundary condition $\dt^l\tau|_{s=0}=0$ implies that $|\dt^l\tau(s,t)| \lesssim s$ for $l=0,1,\ldots,j-1$ 
and that 
$\dt^j\tau$ is a solution to the boundary value problem 
\begin{equation}\label{BVPdtj}
\begin{cases}
 -\dt^j\tau''+|\bm{x}''|^2\dt^j\tau = h_j &\mbox{in}\quad (0,1)\times(0,T), \\
 \dt^j\tau=0 &\mbox{on}\quad \{s=0\}\times(0,T), \\
 \dt^j\tau'= a_j&\mbox{on}\quad \{s=1\}\times(0,T),
\end{cases}
\end{equation}
where $h_j=\dt^j|\dot{\bm{x}}'|^2-[\dt^j,|\bm{x}''|^2]\tau$ and $a_j=-\bm{g}\cdot\dt^j\bm{x}'|_{s=1}$. 
Here, in view of $2\leq m-j$ and the standard Sobolev embedding theorem we see easily that 
$|a_j(t)| \lesssim \|\dt^j\bm{x}(t)\|_{X^{m-j}} \lesssim 1$.

\medskip
\noindent
{\bf Step 1.} 
We first derive pointwise estimates for $\dt^j\tau$. 
By \eqref{IndAss1}, we see that 
\[
|h_j| \lesssim \sum_{j_1+j_2=j, j_1\leq j_2}|\dt^{j_1+1}\bm{x}'\cdot\dt^{j_2+1}\bm{x}'|
 + \sum_{j_1+j_2\leq j, j_1\leq j_2}s|\dt^{j_1}\bm{x}''\cdot\dt^{j_2}\bm{x}''|.
\]

(i) The case $j\leq m-3$. 
In this case, we have $j_1\leq j_2\leq m-3$ so that $2\leq m-(j_2+1) \leq m-(j_2+1)$ and that $3\leq m-j_2 \leq m-j_1$. 
Therefore, we obtain 
\begin{align*}
\|\dt^{j_1+1}\bm{x}'\cdot\dt^{j_2+1}\bm{x}'\|_{L^1}
&\leq \|\dt^{j_1+1}\bm{x}'\|_{L^2} \|\dt^{j_2+1}\bm{x}'\|_{L^2} \\
&\leq \|\dt^{j_1+1}\bm{x}\|_{X^{m-(j_1+1)}} \|\dt^{j_2+1}\bm{x}\|_{X^{m-(j_2+1)}}
\end{align*}
and 
\begin{align*}
\|s\dt^{j_1}\bm{x}''\cdot\dt^{j_2}\bm{x}''\|_{L^1}
&\leq \|s^\frac12\dt^{j_1}\bm{x}''\|_{L^2} \|s^\frac12\dt^{j_2}\bm{x}''\|_{L^2} \\
&\leq \|\dt^{j_1}\bm{x}\|_{X^{m-j_1}}\|\dt^{j_2}\bm{x}\|_{X^{m-j_2}}
\end{align*}
which imply $\|h_j\|_{L^1}\lesssim 1$. 
By Lemma \ref{lem:EstSolBVP2} with $\alpha=0$ we obtain 
$\|\dt^j\tau'(t)\|_{L^\infty} \lesssim 1$.

(ii) The case $j=m-2$. 
In this case, we have $j_1\leq m-3$ and $j_2\leq m-2$ so that $2\leq m-(j_1+1)$, $1\leq m-(j_2+1)$ and that $3\leq m-j_1$ and $2\leq m-j_2$. 
Therefore, we obtain 
\begin{align*}
\|s^\frac12\dt^{j_1+1}\bm{x}'\cdot\dt^{j_2+1}\bm{x}'\|_{L^1}
&\leq \|\dt^{j_1+1}\bm{x}'\|_{L^2} \|s^\frac12\dt^{j_2+1}\bm{x}'\|_{L^2} \\
&\leq \|\dt^{j_1+1}\bm{x}\|_{X^{m-(j_1+1)}} \|\dt^{j_2+1}\bm{x}\|_{X^{m-(j_2+1)}}
\end{align*}
and 
\begin{align*}
\|s^\frac32\dt^{j_1}\bm{x}''\cdot\dt^{j_2}\bm{x}''\|_{L^1}
&\leq \|s^\frac12\dt^{j_1}\bm{x}''\|_{L^2} \|s\dt^{j_2}\bm{x}''\|_{L^2} \\
&\leq \|\dt^{j_1}\bm{x}\|_{X^{m-j_1}}\|\dt^{j_2}\bm{x}\|_{X^{m-j_2}}
\end{align*}
which imply $\|s^\frac12h_{m-2}\|_{L^1}\lesssim 1$. 
By Lemma \ref{lem:EstSolBVP3} with $\alpha=0$ and $p=2$, we obtain $\|\dt^{m-2}\tau'(t)\|_{L^2} \lesssim 1$.

\medskip
\noindent
{\bf Step 2.} 
We then derive an estimate for $\|\dt^j\tau'\|_{X^{m-1-j}}$. 
To this end, we will evaluate $\|\dt^j\tau'\|_{X^{k+1}}$ inductively on $k$ for $0\leq k\leq m-2-j$. 
By \eqref{Ym1}, we have $\|\dt^j\tau'\|_{X^{k+1}}^2 = \|\dt^j\tau'\|_{L^2}^2 + \|\dt^j\tau''\|_{Y^k}^2$ so that 
it is sufficient to evaluate $\|\dt^j\tau''\|_{Y^k}$. 
Moreover, by \eqref{BVPdtj} we have 
$\|\dt^j\tau''\|_{Y^k} \leq \|(\dt^j\tau)|\bm{x}''|^2\|_{Y^k} + \|h_j\|_{Y^k}$. 
We evaluate the first term $\|(\dt^j\tau)|\bm{x}''|^2\|_{Y^k}$ by using the estimates obtained in the previous Step 1 
and Lemma \ref{lem:CalIneq2} as follows. 

(i) The case $k=0$. 
\[
\|(\dt^j\tau)|\bm{x}''|^2\|_{Y^0} \lesssim \|\dt^j\tau'\|_{L^2}\|\bm{x}\|_{X^4}^2. 
\]

(ii) The case $k=1$. 
Due to the restriction $0\leq k\leq m-2-j$, we have $j\leq m-3$, so that 
\[
\|(\dt^j\tau)|\bm{x}''|^2\|_{Y^1} \lesssim \|\dt^j\tau'\|_{L^\infty} \|\bm{x}\|_{X^4}^2. 
\]

(iii) The case $2\leq k\leq m-2-j$. In this case we have $j\leq m-4$ and $k+2\leq m$, so that 
\[
\|(\dt^j\tau)|\bm{x}''|^2\|_{Y^k} \lesssim \|\dt^j\tau'\|_{L^\infty \cap X^{k-1}} \|\bm{x}\|_{X^{k+2}}^2. 
\]
Therefore, we have 
\[
\|(\dt^j\tau)|\bm{x}''|^2\|_{Y^k} \lesssim
\begin{cases}
 1 &\mbox{for}\quad k=0,1, \\
 1+\|\dt^j\tau'\|_{X^{k-1}} &\mbox{for}\quad 2\leq k\leq m-2-j,
\end{cases}
\]
so that by induction we obtain $\|\dt^j\tau'\|_{X^{k+1}} \lesssim 1+\|h_j\|_{Y^k}$ for $0\leq k\leq m-2-j$. 
Particularly, we get $\|\dt^j\tau'\|_{X^{m-1-j}} \lesssim 1+\|h_j\|_{Y^{m-2-j}}$.

\medskip
\noindent
{\bf Step 3.} 
We finally derive an estimate for $\|h_j\|_{Y^{m-2-j}}$. 
In view of $h_j=\dt^j|\dot{\bm{x}}'|^2-[\dt^j,|\bm{x}''|^2]\tau$, we have 
\[
\|h_j\|_{Y^{m-2-j}} \lesssim \sum_{j_1+j_2=j,j_1\leq j_2}I_1(j_1,j_2;j)
 + \sum_{j_0+j_1+j_2=j, j_0\leq j-1, j_1\leq j_2}I_2(j_0,j_1,j_2;j),
\]
where 
\[
\begin{cases}
 I_1(j_1,j_2;j) = \|\dt^{j_1+1}\bm{x}'\cdot\dt^{j_2+1}\bm{x}'\|_{Y^{m-2-j}}, \\
 I_2(j_0,j_1,j_2;j) = \|(\dt^{j_0}\tau)\dt^{j_1}\bm{x}''\cdot\dt^{j_2}\bm{x}''\|_{Y^{m-2-j}}.
\end{cases}
\]

We evaluate $I_1(j_1,j_2;j)$ by using Lemma \ref{lem:CalIneq1} as follows. 

(i) The case $j\leq m-4$. 
In this case we have $j_1\leq m-5$, so that $4\leq m-(j_1+1)$. 
Therefore, 
\begin{align*}
I_1(j_1,j_2;j)
&\lesssim \|\dt^{j_1+1}\bm{x}\|_{X^{m-1-j \vee 4}} \|\dt^{j_2+1}\bm{x}\|_{X^{m-1-j}} \\
&\leq \|\dt^{j_1+1}\bm{x}\|_{X^{m-(j_1+1)}} \|\dt^{j_2+1}\bm{x}\|_{X^{m-(j_2+1)}}.
\end{align*}

(ii) The case $j=m-2,m-3$. 
In the case $j_2=j$, 
\begin{align*}
I_1(0,j;j)
&\lesssim \|\dt\bm{x}\|_{X^4} \|\dt^{j+1}\bm{x}\|_{X^{m-1-j}}  \\
&\leq \|\dt\bm{x}\|_{X^{m-1}} \|\dt^{j+1}\bm{x}\|_{X^{m-(j+1)}},
\end{align*}
and in the case $j_2\leq j-1$ 
\begin{align*}
I_1(j_1,j_2;j)
&\lesssim \|\dt^{j_1+1}\bm{x}\|_{X^{m-j}} \|\dt^{j_2+1}\bm{x}\|_{X^{m-j}} \\
&\leq \|\dt^{j_1+1}\bm{x}\|_{X^{m-(j_1+1)}} \|\dt^{j_2+1}\bm{x}\|_{X^{m-(j_2+1)}}.
\end{align*}
In any of these cases, we have $I_1(j_1,j_2;j) \lesssim 1$.

We proceed to evaluate $I_2(j_0,j_1,j_2;j)$ by using Lemma \ref{lem:CalIneq2} as follows. 

(i) The case $j\leq m-3$. 
In this case we have $m-2-j\geq1$ and $j_1\leq m-4$, so that 
\begin{align*}
I_2(j_0,j_1,j_2;j)
&\lesssim \|\dt^{j_0}\tau'\|_{L^\infty \cap X^{m-3-j}} \|\dt^{j_1}\bm{x}\|_{X^{m-j \vee 4}} \|\dt^{j_2}\bm{x}\|_{X^{m-j}} \\
&\lesssim \|\dt^{j_0}\tau'\|_{L^\infty \cap X^{m-1-j_0}} \|\dt^{j_1}\bm{x}\|_{X^{m-j_1}} \|\dt^{j_2}\bm{x}\|_{X^{m-j_2}}.
\end{align*}

(ii) The case $j=m-2$. 
In the case $j_2=j$, 
\begin{align*}
I_2(0,0,j;j)
&\lesssim \|\tau'\|_{L^\infty} \|\bm{x}\|_{X^4} \|\dt^j\bm{x}\|_{X^2} \\
&\lesssim \|\tau'\|_{L^\infty} \|\bm{x}\|_{X^m} \|\dt^j\bm{x}\|_{X^{m-j}},
\end{align*}
and in the case $j_2\leq j-1=m-3$, 
\begin{align*}
I_2(j_0,j_1,j_2;j)
&\lesssim \|\dt^{j_0}\tau'\|_{L^\infty} \|\dt^{j_1}\bm{x}\|_{X^3} \|\dt^{j_2}\bm{x}\|_{X^3} \\
&\lesssim \|\dt^{j_0}\tau'\|_{L^\infty} \|\dt^{j_1}\bm{x}\|_{X^{m-j_1}} \|\dt^{j_2}\bm{x}\|_{X^{m-j_2}}.
\end{align*}
In any of these cases, we have $I_2(j_0,j_1,j_2;j) \lesssim 1$. 
Therefore, we have shown that $\|h_j\|_{Y^{m-2-j}} \lesssim 1$.

Summarizing the above calculations, we have proved that 
\[
\begin{cases}
 \|\dt^j\tau'(t)\|_{L^\infty \cap X^{m-1-j}} \lesssim 1 &\mbox{in the case $j\leq m-3$}, \\
 \|\dt^{m-2}\tau'(t)\|_{X^1} \lesssim 1&\mbox{in the case $j=m-2$}
\end{cases}
\]
under the inductive hypothesis \eqref{IndAss1}. 
Therefore, we obtain the desired estimates. 
\end{proof}

In the case $m=4$ we cannot expect that the estimates for the tension $\tau$ obtained in Lemma \ref{lem:EstTau1} hold. 
In this critical case, we obtain weaker estimates for the tension $\tau$, which are given in the following lemma.

\begin{lemma}\label{lem:EstTau2}
Let $M$ be a positive constant and $j$ an integer such that $0\leq j\leq 2$. 
For any $\epsilon>0$, there exists a positive constant $C(\epsilon)=C(M,\epsilon)$ such that if $\bm{x}$ satisfies 
\begin{equation}\label{EstTauAss}
\sum_{l=0}^{j+1} \|\dt^l\bm{x}(t)\|_{X^{4-l}} \leq M, 
\end{equation}
then the solution $\tau$ to the boundary value problem \eqref{BVP} satisfies the following estimates: 
\[
\begin{cases}
 \|\dt^j\tau'(t)\|_{X_\epsilon^{3-j}} \leq C(\epsilon) &\mbox{in the case $j=0,1$}, \\
 \|\dt^2\tau'(t)\|_{X_\epsilon^1} \leq C(\epsilon) &\mbox{in the case $j=2$}.
\end{cases}
\]
In addition to \eqref{EstTauAss} with $j=2$, if we assume $\|\dt\bm{x}'(t)\|_{L^\infty} \leq M$, then we have $\opnorm{ \tau'(t) }_{3,*} \leq C$, where $C=C(M)>0$. 
\end{lemma}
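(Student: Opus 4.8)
The plan is to prove Lemma~\ref{lem:EstTau2} by induction on $j$, following the same three-step structure as in the proof of Lemma~\ref{lem:EstTau1}, but replacing the $X^m$-based estimates for $\tau$ (which fail at the critical regularity $m=4$) by the weaker $X_\epsilon^k$-based ones. Throughout, the inductive hypothesis is that $\dt^l\tau'(t)$ is bounded in $X_\epsilon^{3-l}$ for $l\le j-1$, which in particular (via $\dt^l\tau|_{s=0}=0$ and the definition of the $X_\epsilon^k$ norms) gives the pointwise bounds $|\dt^l\tau(s,t)|\lesssim s$ for $l\le j-1$; also $\dt^l\tau'$ is bounded in $L^\infty$ for $l\le j-1$, and in the base case $j=0$ the bounds $\tau(s,t)\lesssim s$, $|\tau'|\lesssim 1$ come directly from Lemma~\ref{lem:EstSolBVP1}. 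As in Lemma~\ref{lem:EstTau1}, $\dt^j\tau$ solves the two-point boundary value problem \eqref{BVPdtj} with $h_j=\dt^j|\dot{\bm{x}}'|^2-[\dt^j,|\bm{x}''|^2]\tau$ and $a_j=-\bm{g}\cdot\dt^j\bm{x}'|_{s=1}$, and $|a_j(t)|\lesssim\|\dt^j\bm{x}(t)\|_{X^{4-j}}\lesssim 1$ since $4-j\ge 2$.

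\textbf{Step 1 (pointwise bounds).} First I would bound the forcing $h_j$ in a suitable weighted $L^1$ space and feed it into Lemmas~\ref{lem:EstSolBVP2} and~\ref{lem:EstSolBVP3}. For $j=0$: $\|h_0\|_{L^1}=\||\dot{\bm{x}}'|^2\|_{L^1}=\|\dot{\bm{x}}'\|_{L^2}^2\lesssim 1$, so Lemma~\ref{lem:EstSolBVP2} with $\alpha=0$ gives $\tau(s,t)\lesssim s$ and $|\tau'(s,t)|\lesssim 1$, hence $\|\tau\|_{L^\infty}\lesssim 1$; also, using Lemma~\ref{lem:EstSolBVP3} with various $(p,\alpha)$ one reads off the $\|s^\epsilon\tau'\|_{L^\infty}$, $\|s^{\frac12+\epsilon}\tau''\|_{L^2}$, $\|s^{\frac32+\epsilon}\tau'''\|_{L^2}$ components needed for $X_\epsilon^3$. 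For $j=1,2$ I estimate $h_j$ using the product/commutator structure, the embeddings in Lemmas~\ref{lem:embedding}, \ref{lem:embedding2}, \ref{lem:CalIneqLp1}, and the inductive pointwise bounds on lower time-derivatives of $\tau$; the key point is that with only $\|\dt^l\bm{x}\|_{X^{4-l}}\le M$ available, terms like $s\,\dt^{j_1}\bm{x}''\cdot\dt^{j_2}\bm{x}''$ can only be controlled in $\|s^{\frac12+\epsilon}\cdot\|_{L^1}$-type norms, which produces the $\epsilon$-loss. One then applies Lemma~\ref{lem:EstSolBVP2} (for $\|\dt^j\tau'\|_{L^\infty}$ with appropriate $\alpha$) and Lemma~\ref{lem:EstSolBVP3} (for the weighted $L^2$ norms of $\dt^j\tau''$, $\dt^j\tau'''$ after further differentiating \eqref{BVPdtj}).

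\textbf{Step 2 (higher $s$-derivatives).} To control the remaining components of $\|\dt^j\tau'\|_{X_\epsilon^{3-j}}$ — namely $\|s^{\frac32+\epsilon}\dt^j\tau'''\|_{L^2}$ for $j=0$, $\|s^{1+\epsilon}\dt\tau''\|_{L^2}$ for $j=1$, etc. — I would differentiate the equation $-\dt^j\tau''=-|\bm{x}''|^2\dt^j\tau+h_j$ once more in $s$ where needed, writing $\dt^j\tau'''= (|\bm{x}''|^2\dt^j\tau)' - h_j'$, and estimate $(|\bm{x}''|^2\dt^j\tau)'$ in the relevant weighted $L^2$ norm using $|\dt^j\tau|\lesssim s^{1-1/p}\|\dt^j\tau'\|_{L^p}$ (valid since $\dt^j\tau|_{s=0}=0$), the bounds from Step~1, and $\|s^{\frac12}\bm{x}''\|_{L^\infty}\lesssim\|\bm{x}\|_{X^4}$, $\|s^{\frac32}\bm{x}'''\|_{L^\infty}\lesssim\|\bm{x}\|_{X^4}$. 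One must also bound $\|s^{\cdot+\epsilon}h_j'\|_{L^2}$: here the extra $\epsilon$ in the weight is exactly what is needed to absorb the borderline logarithmic failure of $X^1\hookrightarrow L^\infty$. For the final assertion, the additional hypothesis $\|\dt\bm{x}'(t)\|_{L^\infty}\le M$ removes the one place (a term of the form $\dt\bm{x}'\cdot\dt^2\bm{x}'$ or similar in $h_2$, and analogous subcritical terms) where an $s^\epsilon$-weight was unavoidable, allowing one to take $\epsilon=0$ and conclude $\opnorm{\tau'(t)}_{3,*}\le C$; I would simply re-run Steps~1--2 tracking that this single term is now in the unweighted space.

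\textbf{Main obstacle.} The delicate part is the bookkeeping in Steps~1--2: deciding, for each product appearing in $h_j$ and in $(|\bm{x}''|^2\dt^j\tau)'$, exactly which factor is put in $L^\infty$ (with an $s^\epsilon$ weight when it is a critical $X^1$-type quantity) and which in a weighted $L^2$, so that the total weight matches the target $X_\epsilon^{3-j}$-norm component and every factor is controlled by $\|\dt^l\bm{x}\|_{X^{4-l}}\le M$ and the inductive bounds on $\tau$. I expect the $j=2$ case feeding the $X_\epsilon^1$ norm, and the verification that the extra hypothesis $\|\dt\bm{x}'(t)\|_{L^\infty}\le M$ is precisely enough to reach $\opnorm{\tau'(t)}_{3,*}$, to require the most care; the rest is a routine if lengthy application of the calculus inequalities of Section~\ref{sect:FS} together with the Green's-function estimates Lemmas~\ref{lem:EstSolBVP2} and~\ref{lem:EstSolBVP3}.
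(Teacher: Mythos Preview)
Your proposal is correct and follows essentially the same route as the paper: induction on $j$, the two-point boundary value problem \eqref{BVPdtj} together with Lemmas~\ref{lem:EstSolBVP2}--\ref{lem:EstSolBVP3} for the lowest-order pieces, direct differentiation of the equation for the higher $s$-derivatives, and the observation that the $s^\epsilon$ weight is needed precisely to handle $\dot{\bm{x}}'$ via $\|s^\epsilon\dot{\bm{x}}'\|_{L^\infty}\lesssim\|\dot{\bm{x}}'\|_{X^1}$, which is why the extra hypothesis $\|\dot{\bm{x}}'\|_{L^\infty}\le M$ allows $\epsilon=0$. One small correction to your Step~1: the higher components of $\|\tau'\|_{X_\epsilon^3}$, namely $\|s^\epsilon\tau''\|_{L^\infty}$, $\|s^{\frac12+\epsilon}\tau'''\|_{L^2}$, and $\|s^{\frac32+\epsilon}\tau''''\|_{L^2}$, cannot be read off from Lemma~\ref{lem:EstSolBVP3} (which bounds only $\tau'$); as you correctly say in Step~2, they come from substituting $\tau''=|\bm{x}''|^2\tau-|\dot{\bm{x}}'|^2$ and its $s$-derivatives directly, which is exactly what the paper does.
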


\begin{proof}
As before, we prove this lemma by induction on $j$. 
In the following we will use estimates in Lemmas \ref{lem:embedding} and \ref{lem:CalIneq1} and 
$\|u'\|_{X^k} \lesssim \|u\|_{X^{k+2}}$ without any comment. 

(i) The case $j=0$. 
The estimate $\|\tau'\|_{L^\infty} \lesssim 1$ in the proof of Lemma \ref{lem:EstTau1} is still valid. 
By using the equation for $\tau$, we have $|\tau''|\lesssim |\dot{\bm{x}}'|^2+s|\bm{x}''|^2$, so that 
\begin{align}\label{EstTau''}
\|s^\epsilon \tau''\|_{L^\infty}
&\lesssim \|s^\frac{\epsilon}{2}\dot{\bm{x}}'\|_{L^\infty}^2 + \|s^\frac12\bm{x}''\|_{L^\infty}^2 \\
&\lesssim \|\dot{\bm{x}}\|_{X^3}^2 + \|\bm{x}\|_{X^4}^2. \nonumber
\end{align}
Differentiating the equation for $\tau$ with respect to $s$, 
we have $|\tau'''| \lesssim |\dot{\bm{x}}'\cdot\dot{\bm{x}}''|+|\bm{x}''|^2+s|\bm{x}''\cdot\bm{x}'''|$, so that 
\begin{align}\label{EstTau'''}
\|s^{\frac12+\epsilon}\tau'''\|_{L^2}
&\lesssim \|s^\epsilon\dot{\bm{x}}'\|_{L^\infty} \|s^\frac12\dot{\bm{x}}''\|_{L^2}
 + \|s^\frac12\bm{x}''\|_{L^\infty}( \|s\bm{x}'''\|_{L^2} + \|\bm{x}''\|_{L^2} ) \\
&\lesssim \|\dot{\bm{x}}\|_{X^3}^2 + \|\bm{x}\|_{X^4}^2. \nonumber
\end{align}
Similarly, we have $|\tau''''| \lesssim |\dot{\bm{x}}'\cdot\dot{\bm{x}}'''|+|\dot{\bm{x}}''|^2 + s(|\bm{x}''\cdot\bm{x}''''|+|\bm{x}'''|^2)
 + |\bm{x}''\cdot\bm{x}'''| + |\bm{x}''|^2|\tau''|$, so that 
\begin{align*}
\|s^{\frac32+\epsilon}\tau''''\|_{L^2}
&\lesssim \|s^\epsilon\dot{\bm{x}}'\|_{L^\infty} \|s^\frac32\dot{\bm{x}}'''\|_{L^2}
 + \|s\dot{\bm{x}}''\|_{L^\infty} \|s^\frac12\dot{\bm{x}}''\|_{L^2} \\
&\quad\;
 + \|s^\frac12\bm{x}''\|_{L^\infty}( \|s^2\bm{x}''''\|_{L^2} + \|s\bm{x}'''\|_{L^2} + \|s^\frac12\tau''\|_{L^\infty}\|\bm{x}''\|_{L^2} ) \\
&\quad\;
 + \|s^\frac32\bm{x}''\|_{L^\infty}\|s\bm{x}'''\|_{L^2} \\
&\lesssim \|\dot{\bm{x}}\|_{X^3}^2 + (1+\|s^\frac12\tau''\|_{L^\infty})\|\bm{x}\|_{X^4}^2.
\end{align*}
Therefore, we obtain $\|\tau'(t)\|_{X_\epsilon^3} \lesssim 1$.

(ii) The case $j=1$. 
The estimate $\|\dot{\tau}'\|_{L^\infty} \lesssim 1$ in the proof of Lemma \ref{lem:EstTau1} is still valid. 
By the equation for $\dot{\tau}$, we have 
$|\dot{\tau}''|\lesssim |\dot{\bm{x}}'\cdot\ddot{\bm{x}}'|+s(|\bm{x}''\cdot\dot{\bm{x}}''|+|\bm{x}''|^2)$, so that 
\begin{align}\label{EstdtTau''}
\|s^\epsilon\dot{\tau}''\|_{L^2}
&\lesssim \|s^\epsilon\dot{\bm{x}}'\|_{L^\infty} \|\ddot{\bm{x}}'\|_{L^2}
 + \|s^\frac12\bm{x}''\|_{L^\infty}( \|s^\frac12\dot{\bm{x}}''\|_{L^2} + \|\bm{x}''\|_{L^2} ) \\
&\lesssim \|\dot{\bm{x}}\|_{X^3} \|\ddot{\bm{x}}\|_{X^2} 
 + \|\bm{x}\|_{X^4} ( \|\dot{\bm{x}}\|_{X^3} + \|\bm{x}\|_{X^4} ). \nonumber
\end{align}
Similarly, we have 
$|\dot{\tau}'''| \lesssim |\dot{\bm{x}}'\cdot\ddot{\bm{x}}''| + |\dot{\bm{x}}''\cdot\ddot{\bm{x}}'|
 + s(|\bm{x}''\cdot\dot{\bm{x}}'''| + |\bm{x}'''\cdot\dot{\bm{x}}''| + |\bm{x}''\cdot\bm{x}'''| )
 + |\bm{x}''\cdot\dot{\bm{x}}''| + |\bm{x}''|^2$, so that 
\begin{align*}
\|s^{1+\epsilon}\dot{\tau}'''\|_{L^2}
&\lesssim \|s^\epsilon\dot{\bm{x}}'\|_{L^\infty} \|s\ddot{\bm{x}}''\|_{L^2}
 + \|s\dot{\bm{x}}''\|_{L^\infty} ( \|\ddot{\bm{x}}'\|_{L^2} + \|s\bm{x}'''\|_{L^2} ) \\
&\quad\;
 + \|s^\frac12\bm{x}''\|_{L^\infty} ( \|s^\frac32\dot{\bm{x}}'''\|_{L^2} + \|s\bm{x}'''\|_{L^2}
 + \|s^\frac12\dot{\bm{x}}''\|_{L^2} + \|\bm{x}''\|_{L^2} ) \\
&\lesssim \|\dot{\bm{x}}\|_{X^3} \|\ddot{\bm{x}}\|_{X^2} + \|\bm{x}\|_{X^4} \|\dot{\bm{x}}\|_{X^3} + \|\bm{x}\|_{X^4}^2.
\end{align*}
Therefore, we obtain $\|\dot{\tau}'(t)\|_{X_\epsilon^2} \lesssim 1$.

(iii) The case $j=2$. 
The estimate $\|\ddot{\tau}'\|_{L^2} \lesssim 1$ in the proof of Lemma \ref{lem:EstTau1} is still valid. 
By the equation for $\ddot{\tau}$, we have 
$|\ddot{\tau}''| \lesssim |\dot{\bm{x}}'\cdot \dddot{\bm{x}}'| + |\ddot{\bm{x}}'|^2 
 + s( |\bm{x}''\cdot\ddot{\bm{x}}''| + |\dot{\bm{x}}''|^2 + |\bm{x}''\cdot\dot{\bm{x}}''|)
 + s^\frac12|\bm{x}''|^2$, so that 
\begin{align*}
\|s^{\frac12+\epsilon}\ddot{\tau}''\|_{L^2}
&\lesssim \|s^\epsilon\dot{\bm{x}}'\|_{L^\infty} \|s^\frac12\dddot{\bm{x}}'\|_{L^2}
 + \|s^\frac12\ddot{\bm{x}}'\|_{L^\infty} \|\ddot{\bm{x}}'\|_{L^2} + \|s\dot{\bm{x}}''\|_{L^\infty}\|s^\frac12\dot{\bm{x}}''\|_{L^2} \\
&\quad\;
 + \|s^\frac12\bm{x}''\|_{L^\infty} ( \|s\ddot{\bm{x}}''\|_{L^2} + \|s^\frac12\dot{\bm{x}}''\|_{L^2} + \|\bm{x}''\|_{L^2} ) \\
&\lesssim \|\dot{\bm{x}}\|_{X^3} \|\dddot{\bm{x}}\|_{X^1} + (\|\ddot{\bm{x}}\|_{X^2}+\|\dot{\bm{x}}\|_{X^3}+\|\bm{x}\|_{X^4})^2.
\end{align*}
Moreover, it follows from \eqref{BVPdtj} with $j=2$ and Lemma \ref{lem:EstSolBVP3} that 
$\|s^\epsilon\ddot{\tau}'\|_{L^\infty} \lesssim |a_2| + \|s^\epsilon h_2\|_{L^1}$. 
Here, we see that $\|s^\epsilon h_2\|_{L^1} \leq \epsilon^{-\frac12}\|s^{\frac12(1+\epsilon)}h_2\|_{L^2}$, 
which can be evaluated as above. 
Therefore, we obtain $\|\ddot{\tau}'(t)\|_{X_\epsilon^1} \lesssim 1$.

The only reason to use an additional weight $s^\epsilon$ in the above estimates is a lack of the estimate for $\|\dt\bm{x}'(t)\|_{L^\infty}$; 
we note that $\dt\bm{x}(t) \in X^3$ does not necessarily imply $\dt\bm{x}'(t) \in L^\infty$. 
Therefore, we obtain the later assertion of the lemma. 
The proof is complete. 
\end{proof}

\section{Estimates for initial values}\label{sect:EstID}
In this section we evaluate the initial value $\opnorm{\bm{x}(0)}_m$ in terms of the initial data $\|\bm{x}(0)\|_{X^m}$ and 
$\|\dot{\bm{x}}(0)\|_{X^{m-1}}$. 
Although it is sufficient to evaluate $\dt^j\bm{x}$ only at time $t=0$, we will evaluate them at general time $t$. 
We remind that the operator $\mathscr{A}_\tau$ was defined by \eqref{defA}.

\begin{lemma}\label{lem:EstAtau}
If $\tau|_{s=0}=0$, then we have 
\[
\|\mathscr{A}_\tau\bm{x}\|_{X^m} \lesssim
\begin{cases}
 \min\{ \|\tau'\|_{L^\infty} \|\bm{x}\|_{X^2}, \|\tau'\|_{X^1} \|\bm{x}\|_{X^3} \} &\mbox{for}\quad m=0, \\
 \min\{ \|\tau'\|_{X^{m \vee 2}} \|\bm{x}\|_{X^{m+2}}, \|\tau'\|_{X^m} \|\bm{x}\|_{X^{m+2 \vee 4}} \} &\mbox{for}\quad m=0,1,2,\ldots.
\end{cases}
\]
\end{lemma}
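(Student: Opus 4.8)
The plan is to expand $\mathscr{A}_\tau\bm{x}=-(\tau\bm{x}')'=-\tau'\bm{x}'-\tau\bm{x}''$ and estimate the two summands separately, using the hypothesis $\tau|_{s=0}=0$ to rewrite the tension as $\tau(s)=\int_0^s\tau'(\sigma)\,\mathrm{d}\sigma=s(\mathscr{M}\tau')(s)$, where $\mathscr{M}$ is the averaging operator of Section \ref{sect:AOM}. This replaces the troublesome factor $\tau$ by the harmless weight $s$ times $\mathscr{M}\tau'$, and it lets me absorb that weight into the second derivative: $\tau\bm{x}''=(\mathscr{M}\tau')(s\bm{x}'')$. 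I will use three auxiliary bounds throughout: $\|\mathscr{M}\tau'\|_{X^k}\lesssim\|\tau'\|_{X^k}$ (Corollary \ref{cor:WEM1}); $\|\bm{x}'\|_{X^k}\lesssim\|\bm{x}\|_{X^{k+2}}$ (Remark \ref{remark:embedding}(3)); and $\|s\bm{x}''\|_{X^k}\lesssim\|\bm{x}\|_{X^{k+2}}$, which follows by writing $s\bm{x}''=(s\bm{x}')'-\bm{x}'$ and invoking Lemma \ref{lem:EstA2} together with Remark \ref{remark:embedding}(3).

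For the general range $m=0,1,2,\ldots$, I will apply the algebra estimate $\|uv\|_{X^m}\lesssim\|u\|_{X^{m\vee2}}\|v\|_{X^m}$ of Lemma \ref{lem:Algebra} to both $\tau'\bm{x}'$ and $(\mathscr{M}\tau')(s\bm{x}'')$; since that inequality is symmetric in $u$ and $v$, the $X^{m\vee2}$ norm may be placed either on the $\tau'$–factor or on the $\bm{x}$–factor. Placing it on $\tau'$ (resp.\ $\mathscr{M}\tau'$) and combining with the three auxiliary bounds gives $\|\mathscr{A}_\tau\bm{x}\|_{X^m}\lesssim\|\tau'\|_{X^{m\vee2}}\|\bm{x}\|_{X^{m+2}}$; placing it on the $\bm{x}$–factor instead gives $\|\mathscr{A}_\tau\bm{x}\|_{X^m}\lesssim\|\tau'\|_{X^m}\|\bm{x}\|_{X^{(m+2)\vee4}}$, using $(m\vee2)+2=(m+2)\vee4$. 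Taking the minimum yields the second (general) case of the claim.

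It remains to upgrade the $m=0$ case. For the bound with $\|\tau'\|_{L^\infty}$ I will use the pointwise inequalities $|\tau(s)|\le s\|\tau'\|_{L^\infty}$ and $|\tau'(s)|\le\|\tau'\|_{L^\infty}$ directly in $L^2$, together with $\|\bm{x}'\|_{L^2}+\|s\bm{x}''\|_{L^2}\lesssim\|\bm{x}\|_{X^2}$, which is immediate from the definition \eqref{WSS} of the $X^2$–norm. For the bound with $\|\tau'\|_{X^1}$ I will instead use the product estimate $\|uv\|_{L^2}\lesssim\|u\|_{X^1}\|v\|_{X^1}$ (the first estimate in Lemma \ref{lem:Algebra}) applied to $\tau'\bm{x}'$ and to $(\mathscr{M}\tau')(s\bm{x}'')$, together with $\|\bm{x}'\|_{X^1}+\|s\bm{x}''\|_{X^1}\lesssim\|\bm{x}\|_{X^3}$. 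None of these steps is delicate; the only place where a naive computation would lose a derivative is the term $\tau\bm{x}''$, and writing $\tau\bm{x}''=(\mathscr{M}\tau')\big((s\bm{x}')'-\bm{x}'\big)$ rather than leaving $\bm{x}''$ bare is precisely what keeps the $\bm{x}$–regularity at $X^{m+2}$ (or $X^3$, $X^2$) instead of $X^{m+3}$. So the main—and essentially only—point to get right is this reorganization of the second-derivative term via the identities $\tau=s\mathscr{M}\tau'$ and $s\bm{x}''=(s\bm{x}')'-\bm{x}'$.
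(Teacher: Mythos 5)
Your proposal is correct and follows essentially the same route as the paper: the paper's identity $\mathscr{A}_\tau\bm{x}=\mu A_2\bm{x}+(\mu-\tau')\bm{x}'$ with $\mu=\mathscr{M}\tau'$ is just a regrouping of your decomposition $-\tau'\bm{x}'-(\mathscr{M}\tau')\,s\bm{x}''$ with $s\bm{x}''=(s\bm{x}')'-\bm{x}'$, and both arguments then invoke the same ingredients (Lemma \ref{lem:EstA2}, Lemma \ref{lem:Algebra} with the two placements of the $X^{m\vee2}$ norm, Corollary \ref{cor:WEM1}, and $\|\bm{x}'\|_{X^k}\leq\|\bm{x}\|_{X^{k+2}}$), including the separate elementary treatment of the $m=0$ case.
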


\begin{proof}
We put $\mu(s)=\frac{\tau(s)}{s}=\frac{1}{s}\int_0^s\tau'(\sigma)\mathrm{d}\sigma=(\mathscr{M}\tau')(s)$, where $\mathscr{M}$ is the 
averaging operator defined by \eqref{defM}, and $(A_2u)(s)=-(su'(s))'$. 
Then, we have the identity 
\begin{equation}\label{IdAtau}
\mathscr{A}_\tau\bm{x} = \mu A_2\bm{x}+(\mu-\tau')\bm{x}'.
\end{equation}
Therefore, by Lemmas \ref{lem:EstA2} and \ref{lem:Algebra} and Corollary \ref{cor:WEM1} we see that 
\begin{align*}
\|\mathscr{A}_\tau\bm{x}\|_{L^2}
&\leq \|\mu\|_{L^\infty}\|A_2\bm{x}\|_{L^2} + (\|\mu\|_{L^\infty}+\|\tau'\|_{L^\infty})\|\bm{x}'\|_{L^2} \\
&\lesssim \|\tau'\|_{L^\infty} \|\bm{x}\|_{X^2}, \\
\|\mathscr{A}_\tau\bm{x}\|_{L^2}
&\lesssim \|\mu\|_{X^1}\|A_2\bm{x}\|_{X^1} + (\|\mu\|_{X^1}+\|\tau'\|_{X^1})\|\bm{x}'\|_{X^1} \\
&\lesssim \|\tau'\|_{X^1}\|\bm{x}\|_{X^3}, 
\end{align*}
and that 
\begin{align*}
\|\mathscr{A}_\tau\bm{x}\|_{X^m}
&\lesssim \|\mu\|_{X^{m \vee 2}} \|A_2\bm{x}\|_{X^m} + ( \|\mu\|_{X^{m \vee 2}} + \|\tau'\|_{X^{m \vee 2}} )\|\bm{x}'\|_{X^m} \\
&\lesssim \|\tau'\|_{X^{m \vee 2}} \|\bm{x}\|_{X^{m+2}}, \\
\|\mathscr{A}_\tau\bm{x}\|_{X^m}
&\lesssim \|\mu\|_{X^m} \|A_2\bm{x}\|_{X^{m \vee 2}} + ( \|\mu\|_{X^m} + \|\tau'\|_{X^m} )\|\bm{x}'\|_{X^{m \vee 2}} \\
&\lesssim \|\tau'\|_{X^m} \|\bm{x}\|_{X^{m+2 \vee 4}}.
\end{align*}
Therefore, we obtain the desired estimates. 
\end{proof}

\begin{lemma}\label{lem:EstID}
Let $M$ be a positive constant and $m$ an integer such that $m\geq4$. 
There exists a positive constant $C=C(M,m)$ such that if $(\bm{x},\tau)$ is a solution to \eqref{Eq} and \eqref{BC} satisfying 
$\|\bm{x}(t)\|_{X^m} + \|\dt\bm{x}(t)\|_{X^{m-1}} \leq M$, then we have $\opnorm{\bm{x}(t)}_m \leq C$. 
\end{lemma}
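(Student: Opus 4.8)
The plan is to establish the bound by finite induction on $j=0,1,\ldots,m$ for the quantities $\|\dt^j\bm{x}(t)\|_{X^{m-j}}$, since $\opnorm{\bm{x}(t)}_m^2=\sum_{j=0}^m\|\dt^j\bm{x}(t)\|_{X^{m-j}}^2$. The cases $j=0,1$ are exactly the hypothesis. Since $m\ge4$, Remark \ref{remark:embedding}\,(3) gives $\|\bm{x}'(t)\|_{X^2}+\|\dot{\bm{x}}'(t)\|_{X^1}\le\|\bm{x}(t)\|_{X^4}+\|\dot{\bm{x}}(t)\|_{X^3}\le M$, so \eqref{SolClassEst0} holds and the basic tension bounds of Lemmas \ref{lem:EstSol2} and \ref{lem:EstSol4} ($\tau(s,t)\lesssim s$, $|\tau'(s,t)|\lesssim1$, $|\dot{\tau}'(s,t)|\lesssim1$) are at our disposal; moreover, differentiating the boundary condition $\tau|_{s=0}=0$ in $t$ yields $\dt^l\tau|_{s=0}=0$ for every $l$, which is precisely what makes Lemma \ref{lem:EstAtau} applicable below.

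For $j\ge2$, writing the equation of motion as $\ddot{\bm{x}}=\bm{g}-\mathscr{A}_\tau\bm{x}$, differentiating $(j-2)$ times in $t$, and using the bilinearity of $(\tau,\bm{x})\mapsto\mathscr{A}_\tau\bm{x}$ together with the fact that $\dt$ commutes with $\ds$, one gets
\[
\dt^j\bm{x}=\delta_{j,2}\,\bm{g}-\sum_{l=0}^{j-2}\binom{j-2}{l}\,\mathscr{A}_{\dt^l\tau}\bigl(\dt^{j-2-l}\bm{x}\bigr),
\]
where $\|\bm{g}\|_{X^{m-2}}\le|\bm{g}|\le1$ is harmless. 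Each summand will be estimated in $X^{m-j}$ by Lemma \ref{lem:EstAtau}. The $\bm{x}$-factor is controlled: since $m-(j-2-l)=(m-j)+2+l\ge(m-j)+2$ and $j-2-l<j$, the norm $\|\dt^{j-2-l}\bm{x}\|_{X^{(m-j)+2}}$ — and likewise $\|\dt^{j-2-l}\bm{x}\|_{X^{((m-j)+2)\vee4}}$, $\|\dt^{j-2-l}\bm{x}\|_{X^2}$, $\|\dt^{j-2-l}\bm{x}\|_{X^3}$ — is already bounded by the inductive hypothesis (or by the assumption, when $j-2-l\le1$). For the $\dt^l\tau$-factor, note $l\le j-2\le m-2$; when $m\ge5$ Lemma \ref{lem:EstTau1} applies: for $l\le m-3$ it gives $\|\dt^l\tau'(t)\|_{L^\infty\cap X^{m-1-l}}\le C$, and since $m-1-l\ge(m-j)\vee2$ for all such $l$ occurring when $2\le j\le m-1$, this together with the first bound of Lemma \ref{lem:EstAtau} closes those terms, while the only remaining summand, $l=j-2=m-2$ (which arises only for $j=m$ and is then paired with $\dt^{j-2-l}\bm{x}=\bm{x}$), is handled using $\|\dt^{m-2}\tau'(t)\|_{X^1}\le C$, the second bound of Lemma \ref{lem:EstAtau}, and $\|\bm{x}\|_{X^3}\le\|\bm{x}\|_{X^m}\le M$. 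Crucially, the hypotheses of Lemma \ref{lem:EstTau1} at index $l$ only involve $\dt^{l''}\bm{x}$ with $l''\le l+1\le j-1$, i.e.\ quantities already bounded at earlier steps, so no circularity occurs.

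When $m=4$ the estimates of Lemma \ref{lem:EstTau1} are unavailable, and I would instead invoke Lemma \ref{lem:EstTau2} together with the elementary embeddings $X_\epsilon^3\hookrightarrow X^2$ and $X_\epsilon^2\hookrightarrow X^1$, valid for $\epsilon\in(0,\tfrac12)$ because $s\le s^{1/2+\epsilon}$ and $s^{-2\epsilon}\in L^1(0,1)$. This yields $\|\tau'(t)\|_{X^2}\le C$ and $\|\dot{\tau}'(t)\|_{X^1}\le C$, which suffice, via Lemma \ref{lem:EstAtau}, for the steps $j=2$ and $j=3$; here again the hypothesis \eqref{EstTauAss} of Lemma \ref{lem:EstTau2} at index $l$ only involves $\dt^{l''}\bm{x}$ with $l''\le l+1\le j-1$. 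The one term not covered this way is $\mathscr{A}_{\dt^2\tau}\bm{x}$ in $\dt^4\bm{x}$, since $X_\epsilon^1$ does not embed in $X^1$; for it I would use $\mathscr{A}_{\dt^2\tau}\bm{x}=-\dt^2\tau'\,\bm{x}'-\dt^2\tau\,\bm{x}''$ and estimate $\|\dt^2\tau'\,\bm{x}'\|_{L^2}\le\|\dt^2\tau'\|_{L^2}\|\bm{x}'\|_{L^\infty}$ and $\|\dt^2\tau\,\bm{x}''\|_{L^2}\le\|s^{-1/2}\dt^2\tau\|_{L^\infty}\|s^{1/2}\bm{x}''\|_{L^2}$, where $\|\dt^2\tau'(t)\|_{L^2}\le C(\epsilon)$ follows from the $\|s^\epsilon\dt^2\tau'\|_{L^\infty}$-component of $\|\dt^2\tau'(t)\|_{X_\epsilon^1}$ (take $\epsilon<\tfrac12$), hence $|\dt^2\tau(s,t)|\le s^{1/2}\|\dt^2\tau'(t)\|_{L^2}\lesssim s^{1/2}$, while $\|\bm{x}'\|_{L^\infty}\lesssim\|\bm{x}\|_{X^4}$ by Lemma \ref{lem:embedding2} and $\|s^{1/2}\bm{x}''\|_{L^2}\le\|\bm{x}\|_{X^3}$. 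Assembling the estimates over $j=2,\ldots,m$ then gives $\opnorm{\bm{x}(t)}_m\le C$.

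I expect the main difficulty to be the critical case $m=4$: one must arrange the induction so that the weaker tension norms supplied by Lemma \ref{lem:EstTau2} (rather than Lemma \ref{lem:EstTau1}) are enough, and must treat the borderline highest-order term $\mathscr{A}_{\dt^2\tau}\bm{x}$ directly, all while checking at every step that the tension bounds invoked at step $j$ depend only on $\bm{x}$-norms already controlled at steps $0,\ldots,j-1$, so that the induction genuinely closes.
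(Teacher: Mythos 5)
Your proposal is correct and takes essentially the same route as the paper: induction on the number of time derivatives, writing $\dt^j\bm{x}$ through $\dt^{j-2}(\mathscr{A}_\tau\bm{x})$ and the Leibniz rule, estimating each term with Lemma \ref{lem:EstAtau}, and supplying the tension bounds from Lemma \ref{lem:EstTau1} (case $m\geq5$) and from Lemma \ref{lem:EstTau2} together with the embeddings $X_\epsilon^3\hookrightarrow X^2$, $X_\epsilon^2\hookrightarrow X^1$, $X_\epsilon^1\hookrightarrow L^2$ (case $m=4$), applied in the same non-circular order. The only cosmetic difference is at $m=4$: you estimate $\mathscr{A}_{\ddot{\tau}}\bm{x}$ by hand, whereas the paper just invokes the $m=0$ case of Lemma \ref{lem:EstAtau} in the form $\|\mathscr{A}_{\ddot{\tau}}\bm{x}\|_{L^2}\lesssim\|\ddot{\tau}'\|_{L^2}\|\bm{x}\|_{X^4}$ (so that term is in fact "covered"), which is equivalent to your computation.
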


\begin{proof}
We will prove $\|\dt^j\bm{x}(t)\|_{X^{m-j}} \leq C$ inductively for $j=2,3,\ldots,m$.

\medskip
\noindent
{\bf The case $m\geq5$.} 
We first consider the case $m\geq5$. 
By Lemma \ref{lem:EstTau1} with $j=0$, we have $\|\tau'(t)\|_{L^\infty \cap X^{m-1}} \lesssim 1$. 
Now, assuming $2\leq j\leq m$ and 
\[
\sum_{l=0}^{j-1} \|\dt^l\bm{x}(t)\|_{X^{m-l}}
 + \sum_{l=0}^{j-2} \|\dt^l\tau'(t)\|_{L^\infty \cap X^{m-(l+1)}} \lesssim 1,
\]
we will evaluate $(\dt^j\bm{x},\dt^{j-1}\tau')$. 
In the case $2\leq j\leq m-1$, we evaluate $\dt^j\bm{x}$ as 
\begin{align*}
\|\dt^j\bm{x}\|_{X^{m-j}}
&= \|\dt^{j-2}(\mathscr{A}_\tau\bm{x}-\bm{g})\|_{X^{m-j}} \\
&\lesssim \sum_{j_1+j_2=j-2}\|((\dt^{j_1}\tau)\dt^{j_2}\bm{x}')'\|_{X^{m-j}}+1.
\end{align*}
Here, we have $j_1\leq m-3$ so that $2\leq m-(j_1+1)$. 
Therefore, by Lemma \ref{lem:EstAtau} 
\begin{align*}
\|((\dt^{j_1}\tau)\dt^{j_2}\bm{x}')'\|_{X^{m-j}}
&\lesssim \|\dt^{j_1}\tau'\|_{X^{m-j \vee 2}} \|\dt^{j_2}\bm{x}\|_{X^{m-j+2}} \\
&\leq \|\dt^{j_1}\tau'\|_{X^{m-(j_1+1)}} \|\dt^{j_2}\bm{x}\|_{X^{m-j_2}}.
\end{align*}
These estimates give $\|\dt^j\bm{x}(t)\|_{X^{m-j}} \lesssim 1$. 
Now, we can apply Lemma \ref{lem:EstTau1} with $j$ replaced by $j-1$ to obtain 
$\|\dt^{j-1}\tau'(t)\|_{L^\infty \cap X^{m-j}} \lesssim 1$.

In the case $j=m$, we evaluate $\dt^m\bm{x}$ as 
\begin{align*}
\|\dt^m\bm{x}\|_{L^2}
&= \|\dt^{m-2}\mathscr{A}_\tau\bm{x}\|_{X^{m-j}} \\
&\lesssim \|((\dt^{m-2}\tau)\bm{x}')'\|_{L^2} + \sum_{j_1+j_2=m-2, j_1\leq m-3}\|((\dt^{j_1}\tau)\dt^{j_2}\bm{x}')'\|_{L^2}.
\end{align*}
Here, by Lemma \ref{lem:EstAtau} we see that 
$\|((\dt^{m-2}\tau)\bm{x}')'\|_{L^2} \lesssim \|\dt^{m-2}\tau'\|_{X^1}\|\bm{x}\|_{X^3} \lesssim 1$ and that 
\begin{align*}
\|((\dt^{j_1}\tau)\dt^{j_2}\bm{x}')'\|_{L^2}
&\lesssim \|\dt^{j_1}\tau'\|_{X^2} \|\dt^{j_2}\bm{x}\|_{X^2} \\
&\lesssim \|\dt^{j_1}\tau'\|_{X^{m-(j_1+1)}} \|\dt^{j_2}\bm{x}\|_{X^{m-j_2}}.
\end{align*}
These estimates give $\|\dt^m\bm{x}(t)\|_{L^2} \lesssim 1$. 
Therefore, by induction we obtain $\opnorm{\bm{x}(t)}_m \lesssim 1$.

\medskip
\noindent
{\bf The case $m=4$.} 
We then consider the case $m=4$. 
By Lemma \ref{lem:EstTau2} with $j=0$, we have $\|\tau'(t)\|_{X_\epsilon^3} \leq C(\epsilon)$ for $\epsilon>0$. 
Since $X_\epsilon^3 \hookrightarrow X^2$ for $0<\epsilon\leq\frac12$, we have also $\|\tau'(t)\|_{X^2}\lesssim1$. 
By Lemma \ref{lem:EstAtau}, 
\begin{align*}
\|\dt^2\bm{x}\|_{X^2}
&\leq \|\mathscr{A}_\tau\bm{x}\|_{X^2} + 1 \\
&\lesssim \|\tau'\|_{X^2} \|\bm{x}\|_{X^4} + 1.
\end{align*}
Therefore, we obtain $\|\dt^2\bm{x}(t)\|_{X^2} \lesssim 1$. 
Then, by Lemma \ref{lem:EstTau2} with $j=1$, we have $\|\dot{\tau}'(t)\|_{X_\epsilon^2} \leq C(\epsilon)$ for $\epsilon>0$. 
Since $X_\epsilon^2 \hookrightarrow X^1$ for $0<\epsilon\leq\frac12$, we have also $\|\dot{\tau}'(t)\|_{X^1}\lesssim1$. 
By Lemma \ref{lem:EstAtau}, 
\begin{align*}
\|\dt^3\bm{x}\|_{X^1}
&\leq \|\mathscr{A}_\tau\dot{\bm{x}}\|_{X^1} + \|\mathscr{A}_{\dot{\tau}}\bm{x}\|_{X^1} \\
&\lesssim \|\tau'\|_{X^2} \|\dot{\bm{x}}\|_{X^3} + \|\dot{\tau}'\|_{X^1} \|\bm{x}\|_{X^4}.
\end{align*}
Therefore, we obtain $\|\dt^3\bm{x}(t)\|_{X^1} \lesssim 1$. 
Then, by Lemma \ref{lem:EstTau2} with $j=2$, we have $\|\ddot{\tau}'(t)\|_{X_\epsilon^1} \leq C(\epsilon)$ for $\epsilon>0$. 
Since $X_\epsilon^1 \hookrightarrow L^2$ for $0<\epsilon<\frac12$, we have also $\|\ddot{\tau}'(t)\|_{L^2}\lesssim1$. 
By Lemma \ref{lem:EstAtau}, 
\begin{align*}
\|\dt^4\bm{x}\|_{L^2}
&\leq \|\mathscr{A}_\tau\ddot{\bm{x}}\|_{L^2} + 2\|\mathscr{A}_{\dot{\tau}}\dot{\bm{x}}\|_{L^2} + \|\mathscr{A}_{\ddot{\tau}}\bm{x}\|_{L^2} \\
&\lesssim \|\tau'\|_{X^2} \|\ddot{\bm{x}}\|_{X^2} + \|\dot{\tau}'\|_{X^1} \|\dot{\bm{x}}\|_{X^3} + \|\ddot{\tau}'\|_{L^2} \|\bm{x}\|_{X^4}.
\end{align*}
Therefore, we obtain $\|\dt^4\bm{x}(t)\|_{L^2} \lesssim 1$. 
Summarizing these estimates, we get $\opnorm{ \bm{x}(t) }_4 \lesssim 1$.
The proof is complete. 
\end{proof}

\section{A priori estimates for solutions}\label{sect:APE}
In this last section, we prove Theorem \ref{th:APE}.

\begin{lemma}\label{lem:APEx}
For any integer $m\geq4$ and any positive constants $M_1$, $M_2$, and $c_0$, there exists a positive constant $C_2=C_2(M_1,M_2,c_0,m)$ 
such that if $(\bm{x},\tau)$ is a regular solution to \eqref{Eq} and \eqref{BC} satisfying the stability condition \eqref{SolClass0_SC} and 
\begin{equation}\label{EstAss}
\begin{cases}
 \opnorm{ \bm{x}(t) }_{m-1} \leq M_1, \\ 
 \|\dt^{m-1}\bm{x}(t)\|_{X^1}^2 + \|\dt^{m-2}\bm{x}(t)\|_{X^2}^2 \leq M_2, 
\end{cases}
\end{equation}
then we have $\opnorm{ \bm{x}(t) }_m \leq C_2$. 
\end{lemma}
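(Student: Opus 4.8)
\textbf{Proof proposal for Lemma \ref{lem:APEx}.}
The plan is to apply the energy estimate of Proposition \ref{prop:EE} to the differentiated system, with $(\bm{y},\nu)$ playing the role of a high-order time derivative of $(\bm{x},\tau)$. Concretely, I would set $\bm{y}=\dt^{m-2}\bm{x}$ and $\nu=\dt^{m-2}\tau$. Differentiating the equation of motion $\ddot{\bm{x}}+\mathscr{A}_\tau\bm{x}=\bm{g}$ in \eqref{Eq} exactly $m-2$ times in $t$, differentiating the constraint $|\bm{x}'|^2=1$ the same number of times, and differentiating the two-point boundary value problem \eqref{BVP} likewise, one sees that $(\bm{y},\nu)$ solves a system of the form \eqref{LEq}--\eqref{LBVP} with forcing terms
\[
\bm{f} = -\sum_{\substack{j_1+j_2=m-2\\ j_1\le m-3}}\binom{m-2}{j_1}\mathscr{A}_{\dt^{j_1}\tau}\dt^{j_2}\bm{x},\qquad
f = -\tfrac12\!\!\sum_{\substack{j_1+j_2=m-2\\ 1\le j_1\le m-3}}\!\!\binom{m-2}{j_1}\dt^{j_1}\bm{x}'\cdot\dt^{j_2}\bm{x}',
\]
and $h$ the commutator term $-[\dt^{m-2},|\bm{x}''|^2]\tau$ plus the lower-order pieces of $\dt^{m-2}|\dot{\bm{x}}'|^2$ beyond $2\dot{\bm{x}}'\cdot\dt^{m-2}\dot{\bm{x}}'$ and the analogue for $(\bm{x}''\cdot\bm{y}'')\tau$. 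The point is that all these forcing terms involve only $\dt^l\bm{x}$ with $l\le m-3$ (hence controlled by $\opnorm{\bm{x}(t)}_{m-1}\le M_1$ via \eqref{EstAss}), together with $\dt^l\tau$ for $l\le m-3$, which are controlled by Lemma \ref{lem:EstTau1} (case $m\ge5$) or Lemma \ref{lem:EstTau2} (case $m=4$) under the hypothesis $\opnorm{\bm{x}(t)}_{m-1}\le M_1$.

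First I would verify that the hypotheses of Proposition \ref{prop:EE} hold: the bound $\opnorm{\bm{x}(t)}_{m-1}\le M_1$ with $m\ge4$ gives $\|\bm{x}'(t)\|_{X^2}+\|\dot{\bm{x}}'(t)\|_{X^1}\le M$ (so \eqref{SolClassEst0} holds with some $M=C(M_1)$, using $X^3\hookrightarrow$ the relevant spaces and the embedding remarks) and also \eqref{SolClassEst2}; combined with Lemmas \ref{lem:EstSol2} and \ref{lem:EstSol4} this gives $C_1^{-1}s\le\tau\le C_1 s$, $|\tau'|\le C_1$, etc. Then Proposition \ref{prop:EE} yields
\[
E(t)=\|\dot{\bm{y}}(t)\|_{X^1}^2+\|\bm{y}(t)\|_{X^2}^2 \le C_1\mathrm{e}^{C_2(\epsilon)t}\Bigl(E(0)+S_1(0)+C_2(\epsilon)\int_0^t S_2(t')\,\mathrm{d}t'\Bigr),
\]
with $E(0)$ bounded by $M_2$ (this is exactly the second line of \eqref{EstAss}, since $\bm{y}=\dt^{m-2}\bm{x}$, $\dot{\bm{y}}=\dt^{m-1}\bm{x}$) and $S_1,S_2$ bounded in terms of $M_1$ after estimating $\bm{f},f,h$. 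The key norm estimates for $\bm{f}$ use Lemma \ref{lem:EstAtau} together with the tension bounds; those for $f$ and $\dot f$ use the calculus inequalities of Lemmas \ref{lem:embedding}, \ref{lem:embedding2}, \ref{lem:CalIneqLp1}; those for $sh$, $s\dot h$, $s^{1/2\pm\epsilon}h$ use the commutator estimate Lemma \ref{lem:commutator} and again the tension estimates. One must check that each term there involves at most $m-3$ time derivatives landing on $\bm{x}$ (so that it is absorbed into $M_1$) — the highest-order term $2\dot{\bm{x}}'\cdot\dt^{m-2}\dot{\bm{x}}'$ is precisely the one retained in $f$ on the left side of \eqref{LEq}, and similarly $-2(\bm{x}''\cdot\dt^{m-2}\bm{x}'')\tau$ is retained on the left of \eqref{LBVP}, so they do not appear in the forcing.

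After bounding $E(t)$ I would recover the full norm $\opnorm{\bm{x}(t)}_m$: the new information beyond $\opnorm{\bm{x}(t)}_{m-1}\le M_1$ is precisely $\|\dt^{m-1}\bm{x}(t)\|_{X^1}$ and $\|\dt^m\bm{x}(t)\|_{L^2}$. The first is $\|\dot{\bm{y}}(t)\|_{X^1}\le E(t)^{1/2}$ and is now controlled; the remaining piece $\|\bm{y}(t)\|_{X^2}=\|\dt^{m-2}\bm{x}(t)\|_{X^2}\le E(t)^{1/2}$ is also controlled, which upgrades the $M_2$-bound. Finally $\|\dt^m\bm{x}(t)\|_{L^2}$ is obtained by reading off $\dt^m\bm{x}=\dt^{m-2}(\mathscr{A}_\tau\bm{x}-\bm{g})$ from the equation and estimating via Lemma \ref{lem:EstAtau}, exactly as in the case $j=m$ of Lemma \ref{lem:EstID}, since all tension derivatives up to order $m-2$ are now available (the top one $\dt^{m-2}\tau'\in X^1$ from Lemma \ref{lem:EstTau1}, resp. $\in L^2$ from Lemma \ref{lem:EstTau2}). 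The main obstacle I anticipate is the bookkeeping of weights in estimating $\bm{f}$, $f$, $\dot f$ and $h$, $\dot h$ — in particular verifying that the critical-regularity case $m=4$ closes, where one only has the weaker tension bounds $\opnorm{\tau'(t)}_{3,*,\epsilon}$ rather than $\opnorm{\tau'(t)}_{3,*}$, so that the $\epsilon$-loss in Proposition \ref{prop:EE} (the $s^{1/2\pm\epsilon}$ weights in $S_2$) must be matched against the $s^\epsilon$-weighted tension estimates of Lemma \ref{lem:EstTau2} without any residual uncontrolled term.
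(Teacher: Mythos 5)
There is a genuine gap, and it concerns the very content of the lemma. You write that ``the new information beyond $\opnorm{\bm{x}(t)}_{m-1}\leq M_1$ is precisely $\|\dt^{m-1}\bm{x}(t)\|_{X^1}$ and $\|\dt^m\bm{x}(t)\|_{L^2}$'', but this is not so: $\opnorm{\bm{x}(t)}_m^2=\sum_{j=0}^m\|\dt^j\bm{x}(t)\|_{X^{m-j}}^2$ raises the \emph{spatial} index of every term by one compared with $\opnorm{\bm{x}(t)}_{m-1}$, so the quantities that actually have to be produced are $\|\bm{x}(t)\|_{X^m},\|\dt\bm{x}(t)\|_{X^{m-1}},\ldots,\|\dt^{m-3}\bm{x}(t)\|_{X^3}$ (plus $\|\dt^m\bm{x}(t)\|_{L^2}$); the terms $\|\dt^{m-2}\bm{x}\|_{X^2}$ and $\|\dt^{m-1}\bm{x}\|_{X^1}$ are already given by the second line of \eqref{EstAss}. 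Your plan — apply Proposition \ref{prop:EE} to $(\bm{y},\nu)=(\dt^{m-2}\bm{x},\dt^{m-2}\tau)$ and bound $E(t)=\|\dt^{m-1}\bm{x}\|_{X^1}^2+\|\dt^{m-2}\bm{x}\|_{X^2}^2$ — re-derives exactly the quantity that is \emph{assumed} bounded by $M_2$; that step is the content of the proof of Theorem \ref{th:APE} (where the energy estimate is indeed applied to this choice of $(\bm{y},\nu)$, with the same $\bm{f},f,h$ you wrote down), not of Lemma \ref{lem:APEx}. Moreover, your closing step for $\|\dt^m\bm{x}\|_{L^2}$ via Lemma \ref{lem:EstAtau} needs, e.g., $\|\dt^{m-3}\bm{x}\|_{X^3}$ and $\|\dt^{m-2}\tau'\|_{X^1}$ (the latter through Lemma \ref{lem:EstTau1} with $j=m-2$, whose hypothesis requires $\sum_{l\leq m-1}\|\dt^l\bm{x}\|_{X^{m-l}}$), i.e., precisely the higher spatial norms your argument never establishes; the same problem occurs already in checking \eqref{SolClassEst0} when $m=4$, where $\|\bm{x}'\|_{X^2}$ is not a consequence of $\opnorm{\bm{x}}_3\leq M_1$ but has to be proved.

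The missing idea is the mechanism for trading time derivatives for space derivatives using the structure of the equation and the nondegeneracy $\tau\simeq s$. The paper integrates the equation of motion in $s$ to obtain $\bm{x}'=\mu^{-1}(\mathscr{M}\ddot{\bm{x}}-\bm{g})$ with $\mu=\tau/s\simeq 1$ and the averaging operator $\mathscr{M}$, which converts two time derivatives into one spatial derivative; combined with the decomposition $\|\dt^{m-j}\bm{x}\|_{X^j}^2=\|\dt^{m-j}\bm{x}\|_{L^2}^2+\|\dt^{m-j}\bm{x}'\|_{X^{j-2}}^2+\|s^{j/2}\ds^j\dt^{m-j}\bm{x}\|_{L^2}^2$ and with the equation itself written as $\tau\ds^j\dt^{m-j}\bm{x}=\ds^{j-2}\dt^{m-j+2}\bm{x}-[\ds^{j-1},\tau]\dt^{m-j}\bm{x}'-\ds^{j-2}[\dt^{m-j},\mathscr{A}_\tau]\bm{x}$ for the weighted top derivative, one runs a downward induction in the number of time derivatives ($m\geq5$), and in the critical case $m=4$ one additionally exploits the constraint $|\bm{x}'|^2=1$ (through $\bm{x}'\cdot\bm{x}''=0$, etc.) to extract pointwise control of $\mu',\mu'',\dot\mu,\dot\mu'$ before recovering $\|\bm{x}\|_{X^4}$ and $\|\dt\bm{x}\|_{X^3}$. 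None of this appears in your proposal, so as it stands the argument does not prove the lemma.
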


The proof of this lemma is divided into two cases: (i) $m\geq5$ and (ii) $m=4$.

\subsection{Proof of Lemma \ref{lem:APEx} in the case $m\geq5$}
In this subsection, we prove Lemma \ref{lem:APEx} in the case $m\geq5$, so that we suppose $m\geq5$. 
For $3\leq j\leq m+1$ we are going to show 
\begin{equation}\label{IndAss}
\sum_{l=1}^{j-1} \|\dt^{m-l}\bm{x}(t)\|_{X^{l}} \lesssim 1
\end{equation}
by induction on $j$. 
Therefore, assuming that \eqref{IndAss} holds for some $3\leq j\leq m$ we evaluate $\|\dt^{m-j}\bm{x}(t)\|_{X^j}$, 
which can be written as 
\begin{align}\label{NormDeco}
\|\dt^{m-j}\bm{x}(t)\|_{X^j}^2
&= \|\dt^{m-j}\bm{x}(t)\|_{L^2}^2 + \|\dt^{m-j}\bm{x}'(t)\|_{X^{j-2}}^2 + \|s^\frac{j}{2}\ds^j\dt^{m-j}\bm{x}(t)\|_{L^2}^2.
\end{align}
The first term in the right-hand side can be easily evaluated.

We proceed to evaluate the second term. 
By the assumptions and Lemmas \ref{lem:EstSolBVP1}, \ref{lem:EstTau1}, and \ref{lem:EstTau2}, we have $\tau(s,t)\simeq s$ and 
$\opnorm{\tau'(t)}_{m-2,*} \lesssim 1$ in the case $m\geq6$ and $\opnorm{\mu(t)}_{3,*,\epsilon} \leq C_0(\epsilon)$ for any $\epsilon>0$ 
in the case $m=5$. 
We introduce a new quantity $\mu(s,t)$ by 
\begin{equation}\label{defMu}
\mu(s,t) = \frac{\tau(s,t)}{s} = \frac1s \int_0^s \tau'(\sigma,t)\mathrm{d}\sigma = \mathscr{M}(\tau'(\cdot,t))(s),
\end{equation}
where $\mathscr{M}$ is the averaging operator defined by \eqref{defM} and we have used the boundary condition $\tau|_{s=0}=0$. 
Then, by Corollary \ref{cor:WEM1} we have also $\mu(s,t)\simeq 1$ and $\opnorm{\mu(t)}_{m-2,*} \lesssim 1$ in the case $m\geq6$ and 
$\opnorm{\mu(t)}_{3,*,\epsilon} \leq C_0(\epsilon)$ for any $\epsilon\in(0,1)$ in the case $m=5$. 
Integrating the hyperbolic equations for $\bm{x}$ in \eqref{Eq} with respect to $s$ over $[0,s]$ and using the boundary condition 
$\tau|_{s=0}=0$, we obtain $\tau\bm{x}'=\int_0^s(\ddot{\bm{x}}-\bm{g})\mathrm{d}\sigma$, so that 
\begin{equation}\label{ExpX}
\bm{x}' = \mu^{-1}(\mathscr{M}\ddot{\bm{x}}-\bm{g}).
\end{equation}
Roughly speaking, this expression makes us to convert estimates for the time derivatives of $\bm{x}$ 
into those for the spatial derivatives of $\bm{x}$ with less weight of $s$. 
Differentiating this with respect to $t$ and using $\mu(s,t)\simeq1$ and $|\dt\mu(s,t)|\lesssim1$, 
we have $|\dt\bm{x}'| \lesssim |\mathscr{M}(\dt^3\bm{x})| + |\mathscr{M}(\dt^2\bm{x})| + 1$. 
Here, we see that $\|\dt^3\bm{x}\|_{L^\infty} \lesssim \|\dt^3\bm{x}\|_{X^2} \lesssim 1$ and 
$\|\dt^2\bm{x}\|_{L^\infty} \lesssim \opnorm{\bm{x}}_4 \lesssim 1$. 
Therefore, by Corollary \ref{cor:WEM1} we get $\|\dt\bm{x}'(t)\|_{L^\infty} \lesssim 1$. 
By Lemma \ref{lem:EstTau2} again, we obtain $\opnorm{\tau'(t)}_{3,*} \lesssim 1$ so that $\opnorm{\mu(t)}_{3,*} \lesssim 1$. 
In other words, we have $\opnorm{\mu(t)}_{m-2,*} \lesssim 1$ in any cases. 
We will use these estimates in the following without any comment.

Now, we go back to evaluate the second term in the right-hand sided of \eqref{NormDeco}. 
In the case $j=3$, by Lemma \ref{lem:Algebra} and Corollary \ref{cor:WEM1} we see that 
\begin{align*}
\|\dt^{m-3}\bm{x}'\|_{X^1}
&\lesssim \|\dt^{m-3}(\mu^{-1})\|_{X^1} \|\mathscr{M}\ddot{\bm{x}}-\bm{g}\|_{X^2} \\
&\quad\;
 + \sum_{j_1+j_2=m-3, j_1\leq m-4} \|\dt^{j_1}(\mu^{-1})\|_{X^2} \|\mathscr{M}(\dt^{j_2+2}\bm{x})\|_{X^1} \\
&\lesssim \opnorm{ \mu^{-1} }_{m-2,*}
 \Biggl( 1+\|\ddot{\bm{x}}\|_{X^2} + \sum_{j_2=1}^{m-3} \|\dt^{j_2+2}\bm{x}\|_{X^1} \Biggr),
\end{align*}
which together with Lemma \ref{lem:EstCompFunc1} yields $\|\dt^{m-3}\bm{x}'(t)\|_{X^1} \lesssim 1$. 
We then consider the case $4\leq j\leq m$. 
By Lemma \ref{lem:Algebra} and Corollary \ref{cor:WEM1} we see that 
\begin{align*}
\|\dt^{m-j}\bm{x}'\|_{X^{j-2}}
&\lesssim \sum_{j_1+j_2=m-j} \|\dt^{j_1}(\mu^{-1})\|_{X^{j-2}} \|\mathscr{M}(\dt^{j_2+2}\bm{x}) - \dt^{j_2}\bm{g}\|_{X^{j-2}} \\
&\lesssim \opnorm{ \mu^{-1} }_{m-2,*}( 1+\|\dt^{m-(j-2)}\bm{x}\|_{X^{j-2}}+\opnorm{\bm{x}}_{m-1} ),
\end{align*}
which together with Lemma \ref{lem:EstCompFunc1} yields $\|\dt^{m-j}\bm{x}'(t)\|_{X^{j-2}} \lesssim 1$.

It remains to evaluate the last term in \eqref{NormDeco}. 
Applying $\ds^{j-2}\dt^{m-j}$ to the hyperbolic equations for $\bm{x}$, we have 
$\ds^{j-2}\dt^{m-j+2}\bm{x} = \tau\ds^j\dt^{m-j}\bm{x} + [\ds^{j-1},\tau]\dt^{m-j}\bm{x}' + \ds^{j-2}[\dt^{m-j},\mathscr{A}_\tau]\bm{x}$, so that 
\begin{align*}
s|\ds^j\dt^{m-j}\bm{x}|
&\lesssim |\ds^{j-2}\dt^{m-j+2}\bm{x}| + |[\ds^{j-1},\tau]\dt^{m-j}\bm{x}'| + |\ds^{j-2}[\dt^{m-j},\mathscr{A}_\tau]\bm{x}|
\end{align*}
and that 
\begin{align*}
\|s^\frac{j}{2}\ds^j\dt^{m-j}\bm{x}\|_{L^2}
&\lesssim \|\dt^{m-(j-2)}\bm{x}\|_{X^{j-2}} + \|s^\frac{j-2}{2}[\ds^{j-1},\tau]\dt^{m-j}\bm{x}'\|_{L^2} + \|[\dt^{m-j},\mathscr{A}_\tau]\bm{x}\|_{X^{j-2}}. 
\end{align*}
By Lemma \ref{lem:commutator}, the second term in the right-hand side is evaluated as 
\begin{align*}
\|s^\frac{j-2}{2}[\ds^{j-1},\tau]\dt^{m-j}\bm{x}'\|_{L^2}
&\lesssim \|\tau'\|_{X^{j-2\vee2}}\|\dt^{m-j}\bm{x}'\|_{X^{j-2}} \\
&\lesssim \opnorm{\tau'}_{m-2,*}\|\dt^{m-j}\bm{x}'\|_{X^{j-2}}.
\end{align*}
To evaluate the third term in the right-hand side, it is sufficient to consider the case $3\leq j\leq m-1$. 
Then, by Lemma \ref{lem:EstAtau}, we see that 
\begin{align*}
\|[\dt^{m-j},\mathscr{A}_\tau]\bm{x}\|_{X^{j-2}}
&\lesssim \|\dt^{m-j}\tau'\|_{X^{j-2}}\|\bm{x}\|_{X^{j\vee4}} + \sum_{j_1+j_2=m-j, j_2\geq1} \|\dt^{j_1}\tau'\|_{X^{j-2\vee2}} \|\dt^{j_2}\bm{x}\|_{X^j} \\
&\lesssim \opnorm{\tau'}_{m-2,*}\opnorm{\bm{x}}_{m-1}
\end{align*}
These estimates yield $\|s^\frac{j}{2}\ds^j\dt^{m-j}\bm{x}\|_{L^2} \lesssim 1$.

Summarizing the above argument, we see that under the induction hypothesis \eqref{IndAss} it holds that $\|\dt^{m-j}\bm{x}\|_{X^j} \lesssim 1$. 
Therefore, we obtain $\opnorm{ \bm{x}(t) }_{m,*} \leq C_2$. 
Finally, we evaluate $\|\dt^m\bm{x}\|_{L^2}$ as follows. 
Thanks to the estimate $\opnorm{ \bm{x} }_{m,*} \lesssim 1$, we can improve the estimate for $\tau'$ as 
$\opnorm{ \tau' }_{m-1,*} \lesssim 1$ by Lemma \ref{lem:EstTau1}. 
Therefore, by Lemma \ref{lem:EstAtau} we see that 
\begin{align*}
\|\dt^m\bm{x}\|_{L^2}
&= \|\dt^{m-2}(\mathscr{A}_\tau\bm{x})\|_{L^2} \\
&\lesssim \|\tau'\|_{X^2} \|\dt^{m-2}\bm{x}\|_{X^2} 
 + \sum_{j_1+j_2=m-2, j_2\leq m-3} \|\dt^{j_1}\tau'\|_{X^1} \|\dt^{j_2}\bm{x}\|_{X^3} \\
&\lesssim \opnorm{ \tau' }_{m-1,*} \opnorm{\bm{x}}_{m,*},
\end{align*}
which gives $\opnorm{ \bm{x} }_m \lesssim 1$. 
The proof of Lemma \ref{lem:APEx} in the case $m\geq5$ is complete. 
\hfill$\Box$

\subsection{Proof of Lemma \ref{lem:APEx} in the case $m=4$}
In this subsection, we prove Lemma \ref{lem:APEx} in the critical case $m=4$, so that we suppose $m=4$. 
The proof consists of several steps. 
Before going into the proof, we note that by Lemma \ref{lem:EstSolBVP1} we have $\tau(s,t)\simeq s$ and $|\tau'(s,t)|\lesssim1$. 
Particularly, the quantity $\mu$ defined by \eqref{defMu} satisfies $\mu(s,t)\simeq1$.

\medskip
\noindent
{\bf Step 1.} 
Estimate for $\|\bm{x}'(t)\|_{X^2}$. 
We rewrite \eqref{ExpX} as 
\begin{equation}\label{ExpX2}
\mu\bm{x}' = \mathscr{M}\ddot{\bm{x}}-\bm{g}.
\end{equation}
Differentiating this with respect to $s$, we have $\mu\bm{x}''+\mu'\bm{x}'=(\mathscr{M}\ddot{\bm{x}})'$. 
Taking an inner product of this equation with $\bm{x}'$ and using the constraint $|\bm{x}'|^2=1$ together with $\bm{x}'\cdot\bm{x}''=0$, 
we obtain $\mu'=\bm{x}'\cdot(\mathscr{M}\ddot{\bm{x}})'$. 
Therefore, by Corollary \ref{cor:WEM1} we see that 
$\|\mu'\|_{L^2} \leq \|(\mathscr{M}\ddot{\bm{x}})'\|_{L^2} \leq \frac23\|\ddot{\bm{x}}'\|_{L^2}\lesssim1$. 
Then, in view of $\bm{x}''=\mu^{-1}( (\mathscr{M}\ddot{\bm{x}})' - \mu'\bm{x}')$ we obtain 
$\|\bm{x}''\|_{L^2} \lesssim \|(\mathscr{M}\ddot{\bm{x}})'\|_{L^2} + \|\mu'\|_{L^2} \lesssim 1$.

Differentiating \eqref{ExpX2} twice with respect to $s$, we have $\mu\bm{x}'''+2\mu'\bm{x}''+\mu''\bm{x}'=(\mathscr{M}\ddot{\bm{x}})''$. 
Taking an inner product of this equation with $\bm{x}'$ and using $\bm{x}'\cdot\bm{x}'''+|\bm{x}''|^2=0$, we obtain 
$\mu''=\bm{x}'\cdot(\mathscr{M}\ddot{\bm{x}})''+\mu|\bm{x}''|^2$. 
Therefore, by Corollary \ref{cor:WEM1} and Lemma \ref{lem:CalIneqLp1} we see that 
\begin{align*}
\|s\mu''\|_{L^2}
&\lesssim \|s(\mathscr{M}\ddot{\bm{x}})''\|_{L^2} + \|s\bm{x}''\|_{L^\infty}\|\bm{x}''\|_{L^2} \\
&\lesssim \|\ddot{\bm{x}}\|_{X^2} + \|\bm{x}\|_{X^3}\|\bm{x}''\|_{L^2},
\end{align*}
which implies $\|s\mu''\|_{L^2} \lesssim 1$. 
Then, in view of $\bm{x}'''=\mu^{-1}( (\mathscr{M}\ddot{\bm{x}})''-2\mu'\bm{x}''-\mu''\bm{x}')$ we obtain 
$\|s\bm{x}'''\|_{L^2} \lesssim \|s(\mathscr{M}\ddot{\bm{x}})''\|_{L^2} + \|\mu'\|_{L^2}\|s\bm{x}''\|_{L^\infty} + \|s\mu''\|_{L^2} \lesssim 1$. 
Therefore, we obtain 
\begin{equation}\label{EstX'}
\|\bm{x}'(t)\|_{X^2} + \|\mu(t)\|_{X^2} \leq C_2. 
\end{equation}
Particularly, we get $\|s^\frac12\bm{x}''(t)\|_{L^\infty}+\|s^\frac12\mu'(t)\|_{L^\infty} \leq C_2$. 

\medskip
\noindent
{\bf Step 2.} 
Estimate for $\|\dt\bm{x}'(t)\|_{X^1}$. 
Differentiating \eqref{ExpX2} with respect to $t$, we have $\mu\dot{\bm{x}}'+\dot{\mu}\bm{x}'=\mathscr{M}(\dt^3\bm{x})$. 
Taking an inner product of this equation with $\bm{x}'$ and using $\bm{x}'\cdot\dot{\bm{x}}'=0$, 
we obtain $\dot{\mu} = \bm{x}'\cdot\mathscr{M}(\dt^3\bm{x})$. 
Therefore, we see that $\|\dot{\mu}\|_{L^2} \leq \|\mathscr{M}(\dt^3\bm{x})\|_{L^2} \leq 2\|\dt^3\bm{x}\|_{L^2} \lesssim 1$. 
Differentiating \eqref{ExpX2} with respect to $t$ and $s$, we have 
$\mu\dot{\bm{x}}''+\mu'\dot{\bm{x}}'+\dot{\mu}\bm{x}''+\dot{\mu}'\bm{x}'=(\mathscr{M}(\dt^3\bm{x}))'$. 
Taking an inner product of this equation with $\bm{x}'$ and using $\bm{x}'\cdot\dot{\bm{x}}''+\bm{x}''\cdot\dot{\bm{x}}'=0$, 
we obtain $\dot{\mu}'=\bm{x}'\cdot(\mathscr{M}(\dt^3\bm{x}))'+\mu\bm{x}''\cdot\dot{\bm{x}}'$. 
Therefore, we see that 
\begin{align*}
\|s^\frac12\dot{\mu}'\|_{L^2}
&\leq \|s^\frac12(\mathscr{M}(\dt^3\bm{x}))'\|_{L^2} + \|\mu\|_{L^\infty} \|s^\frac12\bm{x}''\|_{L^\infty} \|\dot{\bm{x}}'\|_{L^2} \\
&\lesssim \|\dt^3\bm{x}\|_{X^1} + \|\bm{x}'\|_{X^2}\|\dot{\bm{x}}\|_{X^2}, 
\end{align*}
which implies $\|s^\frac12\dot{\mu}'\|_{L^2} \lesssim 1$. 
Then, in view of $\dot{\bm{x}}''=\mu^{-1}( (\mathscr{M}(\dt^3\bm{x}))' - \mu'\dot{\bm{x}}'-\dot{\mu}\bm{x}''-\dot{\mu}'\bm{x}')$ we obtain 
$\|s^\frac12\dot{\bm{x}}''\|_{L^2} \lesssim \|s^\frac12(\mathscr{M}(\dt^3\bm{x}))'\|_{L^2} + \|s^\frac12\mu\|_{L^\infty}\|\bm{x}''\|_{L^2}
 + \|\dot{\mu}\|_{L^2}\|s^\frac12\bm{x}''\|_{L^\infty} + \|s^\frac12\dot{\mu}'\|_{L^2} \lesssim1$. 
Therefore, we obtain 
\begin{equation}\label{EstdtX'}
\|\dt\bm{x}'(t)\|_{X^1} + \|\dt\mu(t)\|_{X^1} \leq C_2. 
\end{equation}
Particularly, we get $\|s^\epsilon\dt\bm{x}'(t)\|_{L^\infty} + \|s^\epsilon\dt\mu(t)\|_{L^\infty} \leq C_2(\epsilon)$ for any $\epsilon>0$.

\medskip
\noindent
{\bf Step 3.} 
Estimate for $\|\bm{x}(t)\|_{X^4}$. 
We first derive estimates for $\tau$. 
We remind that we have already $\tau(s,t)\simeq s$ and $|\tau'(s,t)|\lesssim1$. 
Therefore, the first lines in \eqref{EstTau''} and \eqref{EstTau'''} are still valid, so that we obtain 
\begin{equation}
\|s^\epsilon\tau''(t)\|_{L^\infty} + \|s^{\frac12+\epsilon}\tau'''(t)\|_{L^2} \leq C_2(\epsilon) \quad\mbox{for}\quad \epsilon>0.
\end{equation}
In view of \eqref{NormDeco} and \eqref{EstX'}, it is sufficient to evaluate $\|s^2\bm{x}''''\|_{L^2}$ to obtain an estimate for $\|\bm{x}\|_{X^4}$. 
Differentiating the hyperbolic equations in \eqref{Eq} twice with respect to $s$, we have 
$s|\bm{x}''''| \lesssim |\ddot{\bm{x}}''|+|\bm{x}'''|+|\tau''\bm{x}''|+|\tau'''|$, so that 
\begin{align*}
\|s^2\bm{x}''''\|_{L^2}
&\lesssim \|s\ddot{\bm{x}}''\|_{L^2} + \|s\bm{x}'''\|_{L^2} + \|s^\frac12\tau''\|_{L^\infty}\|\bm{x}''\|_{L^2} + \|s\tau'''\|_{L^2} \\
&\lesssim \|\ddot{\bm{x}}\|_{X^2} + (1+\|s^\frac12\tau''\|_{L^\infty}) \|\bm{x}'\|_{X^2} + \|s\tau'''\|_{L^2}.
\end{align*}
Therefore, we obtain $\|\bm{x}(t)\|_{X^4} \leq C_2$.

\medskip
\noindent
{\bf Step 4.} 
Estimate for $\|\dt\bm{x}(t)\|_{X^3}$. 
We first derive estimates for $\dot{\tau}$. 
The estimates $|\dot{\tau}(s,t)|\lesssim s$ and $|\dot{\tau}'(s,t)|\lesssim1$ in the proof of Lemma \ref{lem:EstTau1} and 
the fist line in \eqref{EstdtTau''} are still valid, so that we obtain 
\begin{equation}
|\dot{\tau}(s,t)| \leq C_2s, \quad \|\dot{\tau}'(t)\|_{L^\infty} \leq C_2, \quad
\|s^\epsilon\dot{\tau}''(t)\|_{L^2} \leq C_2(\epsilon) \quad\mbox{for}\quad \epsilon>0.
\end{equation}
In view of \eqref{NormDeco} and \eqref{EstdtX'}, it is sufficient to evaluate $\|s^\frac32\dot{\bm{x}}'''\|_{L^2}$ 
to obtain an estimate for $\|\dt\bm{x}\|_{X^3}$. 
Differentiating the hyperbolic equations in \eqref{Eq} with respect to $s$ and $t$, we have 
$s|\dot{\bm{x}}'''| \lesssim |\dt^3\bm{x}'| + |\dot{\bm{x}}''|+|\tau''\dot{\bm{x}}'| + s|\bm{x}'''|+|\bm{x}''|+|\dot{\tau}''|$, so that 
\begin{align*}
\|s^\frac32\dot{\bm{x}}'''\|_{L^2}
&\lesssim \|s^\frac12\dt^3\bm{x}'\|_{L^2} + \|s^\frac12\dot{\bm{x}}''\|_{L^2} + \|s^\frac12\tau''\|_{L^\infty}\|\dot{\bm{x}}'\|_{L^2} \\
&\quad\;
 + \|s\bm{x}'''\|_{L^2} + \|\bm{x}''\|_{L^2} + \|s^\frac12\dot{\tau}''\|_{L^2} \\
&\lesssim \|\dt^3\bm{x}\|_{X^1} + (1+\|s^\frac12\tau''\|_{L^\infty})\|\dot{\bm{x}}'\|_{X^1}
 + \|\bm{x}'\|_{X^2} + \|s^\frac12\dot{\tau}''\|_{L^2}.
\end{align*}
Therefore, we obtain $\|\dt\bm{x}(t)\|_{X^3} \leq C_2$.

\medskip
\noindent
{\bf Step 5.} 
Estimate for $\|\dt^4\bm{x}(t)\|_{L^2}$. 
We have shown that $\opnorm{\bm{x}(t)}_{4,*}\lesssim1$. 
Therefore, by Lemma \ref{lem:EstTau2} we have $\opnorm{\tau'(t)}_{3,*,\epsilon} \leq C_2(\epsilon)$ for any $\epsilon>0$. 
Particularly, $\|\dot{\tau}'(t)\|_{X^1}+\|\ddot{\tau}'(t)\|_{L^2} \lesssim 1$. 
Therefore, by Lemma \ref{lem:EstAtau} we see that 
\begin{align*}
\|\dt^4\bm{x}\|_{L^2}
&= \|\dt^2(\mathscr{A}_\tau\bm{x})\|_{L^2} \\
&\leq \|\mathscr{A}_\tau\ddot{\bm{x}}\|_{L^2} + 2\|\mathscr{A}_{\dot{\tau}}\dot{\bm{x}}\|_{L^2} + \|\mathscr{A}_{\ddot{\tau}}\bm{x}\|_{L^2} \\
&\lesssim \|\tau'\|_{L^\infty} \|\ddot{\bm{x}}\|_{X^2} + \|\dot{\tau}'\|_{X^1} \|\dot{\bm{x}}\|_{X^3}
 + \|\ddot{\tau}'\|_{L^2} \|\bm{x}\|_{X^4},
\end{align*}
which implies $\|\dt^4\bm{x}(t)\|_{L^2} \leq C_2$. 
Summarizing the above estimates, we obtain $\opnorm{ \bm{x}(t) }_4 \leq C_2$. 
The proof of Lemma \ref{lem:APEx} in the case $m=4$ is complete. 
\hfill$\Box$

\subsection{Proof of Theorem \ref{th:APE}}
We are ready prove Theorem \ref{th:APE}, which ensures a priori estimates for the solution $(\bm{x},\tau)$ to the initial boundary value problem 
\eqref{Eq}--\eqref{IC}. 
We are going to show that for any regular solution $(\bm{x},\tau)$ to the problem, if the initial data satisfy \eqref{CondID}, 
then the estimates in \eqref{EstAss} hold in fact for $0\leq t\leq T$ by choosing appropriately the positive constants $M_1$, $M_2$, 
and the positive time $T$. 
In the following, we simply denote the constants $C_0=C(M_0,c_0,m)$, $C_1=C(M_1,c_0,m)$, and $C_2=C(M_2,M_1,c_0,m)$. 
These constants may change from line to line.

Suppose that the initial data $(\bm{x}_0^\mathrm{in},\bm{x}_1^\mathrm{in})$ satisfy \eqref{CondID} and that $(\bm{x},\tau)$ is a regular 
solution to the problem \eqref{Eq}--\eqref{IC}. 
By Lemmas \ref{lem:EstID}, \ref{lem:EstTau1}, and \ref{lem:EstTau2}, we have 
\begin{equation}\label{EstID}
\begin{cases}
 \opnorm{ \bm{x}(0) }_m + \opnorm{ \tau'(0) }_{m-2,*} \leq C_0, \\
 C_0^{-1}s \leq \tau(s,0) \leq C_0s, \quad \sum_{j=1}^{m-3}|\dt^j\tau(s,0)| \leq C_0s.
\end{cases}
\end{equation}
Suppose also that the solution $(\bm{x},\tau)$ satisfies \eqref{EstAss} for $0\leq t\leq T$, where the constants $M_0$, $M_1$, and time $T$ 
will be defined later. 
Then, by Lemmas \ref{lem:APEx}, \ref{lem:EstTau1}, and \ref{lem:EstTau2}, we have 
\[
\begin{cases}
 \opnorm{ \bm{x}(t) }_m \leq C_2, \\
 C_1^{-1}s \leq \tau(s,t) \leq C_1s, \quad \sum_{j=1}^{m-3}|\dt^j\tau(s,t)|\leq C_2s, \quad |\dt^{m-2}\tau(s,t)|\leq C_2s^\frac12, \\
 \opnorm{\tau'(t)}_{m-1,*} \leq C_2 \qquad\ \mbox{in the case $m\geq5$}, \\
 \opnorm{\tau'(t)}_{3,*,\epsilon} \leq C_2 \qquad\mbox{in the case $m=4$ with $\epsilon=\frac14$}
\end{cases}
\]
for $0\leq t\leq T$. 
Here, we note that there is no special reason on the choice $\epsilon=\frac14$ 
and that we can choose $\epsilon$ arbitrarily such that $0<\epsilon<\frac12$. 
We put $\bm{y}=\dt^{m-2}\bm{x}$ and $\nu=\dt^{m-2}\tau$. 
Then, we see that $(\bm{y},\nu)$ satisfies the linearized system \eqref{LEq} and \eqref{LBVP} with $(\bm{f},f,h)$ given by 
\begin{align*}
\bm{f} &= \{ \dt^{m-2}(\tau\bm{x}')-(\dt^{m-2}\tau)\bm{x}'-\tau\dt^{m-2}\bm{x}' \}', \\
f &= -\frac12\{ \dt^{m-2}(\bm{x}'\cdot\bm{x}') - 2\bm{x}'\cdot\dt^{m-2}\bm{x}' \}, \\
h &= \{ \dt^{m-2}(\dot{\bm{x}}'\cdot\dot{\bm{x}}') - 2\dot{\bm{x}}'\cdot\dt^{m-2}\dot{\bm{x}}' \} \\
&\quad\;
 - \{ \dt^{m-2}(\tau\bm{x}''\cdot\bm{x}'') - (\dt^{m-2}\tau)\bm{x}''\cdot\bm{x}'' - 2\tau\bm{x}''\cdot\dt^{m-2}\bm{x}'' \}.
\end{align*}
Therefore, by Proposition \ref{prop:EE} we obtain the energy estimate 
\begin{equation}\label{EI1}
E(t) \leq C_1 \mathrm{e}^{C_2 t}\left( E(0) + S_1(0) + C_2\int_0^t S_2(t')\mathrm{d}t' \right), 
\end{equation}
where $E(t) = \|\dot{\bm{y}}(t)\|_{X^1}^2+\|\bm{y}(t)\|_{X^2}^2 = \|\dt^{m-1}\bm{x}(t)\|_{X^1}^2 + \|\dt^{m-2}\bm{x}(t)\|_{X^2}^2$, 
and $S_1(t)$ and $S_2(t)$ are defined by \eqref{DefS12}.

\begin{lemma}\label{lem:EstS12}
It holds that $E(0)+S_1(0)\leq C_0$ and $S_2(t)\leq C_2$. 
\end{lemma}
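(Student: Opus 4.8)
The plan is to treat the two assertions separately, since the first concerns only the time $t=0$, at which the bounds \eqref{EstID} (depending on $C_0$) are available, whereas the second concerns a general $t\in[0,T]$, at which, once \eqref{EstAss} has been assumed and Lemmas~\ref{lem:APEx}, \ref{lem:EstSolBVP1}, \ref{lem:EstTau1} and \ref{lem:EstTau2} have been invoked, we have at hand $\opnorm{\bm{x}(t)}_m\le C_2$, $\tau(s,t)\simeq s$, $|\dt^j\tau(s,t)|\lesssim s$ for $1\le j\le m-3$, $|\dt^{m-2}\tau(s,t)|\lesssim s^{1/2}$, together with $\opnorm{\tau'(t)}_{m-1,*}\le C_2$ when $m\ge5$ and the weaker $\opnorm{\tau'(t)}_{3,*,\epsilon}\le C_2$ when $m=4$, and moreover $\|\dt^{m-1}\bm{x}(t)\|_{X^1}$ and $\|\dt^{m-2}\bm{x}(t)\|_{X^2}$ bounded by $\sqrt{M_2}$ straight from \eqref{EstAss}. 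For $E(0)$ nothing is needed: by the definition of $\opnorm{\,\cdot\,}_m$, both $\|\dt^{m-1}\bm{x}(0)\|_{X^1}^2$ and $\|\dt^{m-2}\bm{x}(0)\|_{X^2}^2$ are $\le\opnorm{\bm{x}(0)}_m^2$, hence $\le C_0$ by \eqref{EstID}.

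The estimates for $\bm{f}$ and $h$ at $t=0$, and for $\dot{\bm{f}}$, $\dot{f}$, $\dot{f}|_{s=1}$, $\dot{h}$ and $h$ at a general $t$, are all obtained the same way. First I would expand every defining expression by the Leibniz rule, so that it becomes a finite sum of products in which one factor is $\dt^{j_0}\tau$ or $\dt^{j_0}\tau'$ and the remaining one or two factors lie among $\dt^{j_1}\bm{x}'$, $\dt^{j_1}\bm{x}''$ and $\dt^{j_1}\dot{\bm{x}}'$, with the orders summing to $m-2$ or $m-1$ but never all attaining the top value; in particular the $\tau$-factor has order $\le m-3$ (hence is $\lesssim s$, by \eqref{EstID} at $t=0$ and by the $\tau$-bounds above at a general $t$), except in the terms of $\dot{\bm{f}}$ and $\dot{h}$ where $\dt^{m-2}\tau$ occurs, which obey $|\dt^{m-2}\tau|\lesssim s^{1/2}$, while the complementary factors carry at most one additional time derivative on $\bm{x}'$ or $\bm{x}''$ and are therefore controlled in $L^\infty$ or $L^q$ by $\opnorm{\bm{x}(t)}_{m-1}$ via Lemmas~\ref{lem:embedding} and \ref{lem:embedding2} (recall $m-1\ge3$), the single genuinely top-order factor $\dt^{m-2}\bm{x}$ being absorbed by $\|\dt^{m-2}\bm{x}\|_{X^2}$. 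With Lemmas~\ref{lem:Algebra} and \ref{lem:CalIneqLp1} one then verifies, term by term, that the powers of $s$ produced by the $\tau$-factors, combined with the explicit weights $s$, $s^{1/2-\epsilon}$, $s^{1/2+\epsilon}$ in $\|sh\|_{L^1}$, $S_1$ and $S_2$, are just enough to close each estimate; the trace $\dot{f}|_{s=1}$ is handled by a localized Sobolev trace estimate near $s=1$, as in the computations in Sections~\ref{sect:equiv} and \ref{sect:EE}.

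In the critical case $m=4$ the top-order factors $\dt^2\bm{x}$ and $\dt^3\bm{x}$ lie only in $X^2$ and $X^1$, and the top-order $\tau$-factors (such as $\dt^2\tau'$) are controlled only in the $\epsilon$-weighted spaces $X^k_\epsilon$, so the scheme above does not directly close. There I would, on the one hand, use \eqref{Eq} in the form $\dt^2\bm{x}=-(\tau\bm{x}')'+\bm{g}$, whence $\dt^2\bm{x}'=-\tau\bm{x}'''-2\tau'\bm{x}''-\tau''\bm{x}'$, together with the constraint $|\bm{x}'|=1$ and $\tau(s,t)\simeq s$, and estimate the resulting terms via the weighted $L^4$ inequalities of Lemma~\ref{lem:CalIneqLp1} with $k=2$; and, on the other hand, absorb the mild $s^{-\epsilon}$ singularities of the top $\tau$-factors against the factors $\dt^j\bm{x}'$, using that the latter lie in $X^1$ and hence, by Lemma~\ref{lem:embedding} and H\"older's inequality, in every weighted space $s^{-\delta}L^2$ with $0\le\delta<1/2$. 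Throughout, the $\epsilon$ of Proposition~\ref{prop:EE} must be kept fixed (and, if necessary, small relative to the weights in $S_2$).

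I expect the only genuine obstacle to be exactly this term-by-term matching of weights — making the $\epsilon$'s consistent in the $m=4$ case, and checking that each top-order factor lands precisely in the space ($X^1$, $X^2$, or $L^2$) controlled by \eqref{EstAss}; the rest is a routine application of the calculus inequalities of Section~\ref{sect:FS} and of the already-established pointwise and Sobolev bounds on $\bm{x}$ and $\tau$.
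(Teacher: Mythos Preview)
Your plan is correct and follows the same route as the paper: expand $\bm{f}$, $f$, $h$ and their time derivatives by Leibniz, bound each $\tau$-factor by the available pointwise or $X^k$/$X^k_\epsilon$ estimates, and place the $\bm{x}$-factors via Lemmas~\ref{lem:embedding}, \ref{lem:CalIneqLp1} and~\ref{lem:EstAtau}. In the critical case $m=4$ the paper does not invoke the equation of motion but handles the delicate term $(\dt^2\tau')\dt\bm{x}'$ by the H\"older splitting $\|\dt^2\tau'\|_{L^3}\|\dt\bm{x}'\|_{L^6}$ with $\|\dt^2\tau'\|_{L^3}\le\|s^{1/4}\dt^2\tau'\|_{L^\infty}\|s^{-1/4}\|_{L^3}$, which is exactly your $s^\epsilon L^\infty\times s^{-\epsilon}L^2$ idea in a different pairing; your detour through the PDE to rewrite $\dt^2\bm{x}'$ is unnecessary but harmless.
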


\begin{proof}
We first evaluate $S_2(t)$. 
In the case $m\geq5$, by Lemma \ref{lem:EstAtau} we see that 
\begin{align*}
\|\dot{\bm{f}}\|_{L^2}
&\lesssim \sum_{j_1+j_2=m-1, j_1,j_2\leq m-2} \|((\dt^{j_1}\tau)\dt^{j_2}\bm{x}')'\|_{L^2} \\
&\lesssim \|\dt\tau'\|_{X^2} \|\dt^{m-2}\bm{x}\|_{X^2}
 + \sum_{j_1\leq m-2, j_2\leq m-3} \|\dt^{j_1}\tau'\|_{X^1} \|\dt^{j_2}\bm{x}\|_{X^3} \\
&\lesssim \opnorm{ \tau' }_{m-1,*} \opnorm{ \bm{x} }_m.
\end{align*}
In the case $m=4$, we have 
$\|\dot{\bm{f}}\|_{L^2} \lesssim \|((\dt^2\tau)\dt\bm{x}')'\|_{L^2} + \|((\dt\tau)\dt^2\bm{x}')'\|_{L^2}$. 
Here, we note that 
$\|\dt^2\tau'\|_{L^3} \leq \|s^\frac14\dt^2\tau'\|_{L^\infty} \|s^{-\frac14}\|_{L^3} \leq C_2$. 
Therefore, the first term can be evaluated as 
\begin{align*}
\|((\dt^2\tau)\dt\bm{x}')'\|_{L^2}
&\leq \|s^{-\frac12}\dt^2\tau\|_{L^\infty} \|s^\frac12\dt\bm{x}''\|_{L^2} + \|\dt^2\tau'\|_{L^3} \|\dt\bm{x}'\|_{L^6} \\
&\lesssim \|\dt^2\tau'\|_{L^3} \|\dt\bm{x}\|_{X^3},
\end{align*}
where we used Lemmas \ref{lem:embedding} and \ref{lem:CalIneqLp1}. 
The second term can be evaluated by Lemma \ref{lem:EstAtau} as 
$\|((\dt\tau)\dt^2\bm{x}')'\|_{L^2} \lesssim \|\dt\tau'\|_{L^\infty}\|\dt^2\bm{x}\|_{X^2}$. 
In any case, we have $\|\dot{\bm{f}}(t)\|_{L^2} \leq C_2$ for $0\leq t\leq T$.

As for $\dot{f}$, by Lemma \ref{lem:embedding} we see that 
\begin{align*}
\|s^\frac14\dot{f}\|_{L^2}
&\lesssim \sum_{j_1+j_2=m-1,j_1\leq j_2\leq m-2} \|s^\frac14\dt^{j_1}\bm{x}'\|_{L^\infty} \|\dt^{j_2}\bm{x}'\|_{L^2} \\
&\lesssim \sum_{j_1\leq m-3, j_2\leq m-2} \|\dt^{j_1}\bm{x}\|_{X^3}\|\dt^{j_2}\bm{x}\|_{X^2} \\
&\lesssim \opnorm{ \bm{x} }_m^2,
\end{align*}
which yields $\|s^\frac14\dot{f}(t)\|_{L^2} \leq C_2$ for $0\leq t\leq T$. 
Similarly, by the standard Sobolev embedding theorem we have 
$|\dot{f}(1,t)| \lesssim \opnorm{ \bm{x}(t) }_m^2 \leq C_2$ for $0\leq t\leq T$.

As for $h$, we see that 
\begin{align*}
\|s\dot{h}\|_{L^1}
&\lesssim \sum_{j_1+j_2=m-1,j_1\leq j_2\leq m-2} \|\dt^{j_1+1}\bm{x}'\|_{L^2} \|s^\frac12\dt^{j_2+1}\bm{x}'\|_{L^2} \\
&\quad\;
 + \sum_{j_0+j_1+j_2=m-1,j_0\leq m-2, j_1\leq j_2\leq m-2}
  \|s^{-\frac12}\dt^{j_0}\tau\|_{L^\infty} \|s^\frac12\dt^{j_1}\bm{x}''\|_{L^2} \|s\dt^{j_2}\bm{x}''\|_{L^2} \\
&\lesssim \sum_{j_1\leq m-3, j_2\leq m-2} \|\dt^{j_1+1}\bm{x}\|_{X^2} \|\dt^{j_2+1}\bm{x}\|_{X^1} \\
&\quad\;
 + \sum_{j_0,j_2\leq m-2, j_1\leq m-3}
  \|\dt^{j_0}\tau'\|_{L^2} \|\dt^{j_1}\bm{x}\|_{X^3} \|\dt^{j_2}\bm{x}\|_{X^2} \\
&\lesssim \opnorm{ \bm{x} }_m^2 + \sum_{j_0\leq m-2} \|\dt^{j_0}\tau'\|_{L^2}\opnorm{ \bm{x} }_m^2
\end{align*}
and that 
\begin{align*}
\|s^\frac12h\|_{L^2}
&\lesssim \sum_{j_1+j_2=m-2,j_1\leq j_2\leq m-3} \|s^\frac12\dt^{j_1+1}\bm{x}'\|_{L^\infty} \|\dt^{j_2+1}\bm{x}'\|_{L^2} \\
&\quad\;
 + \sum_{j_0+j_1+j_2=m-2,j_0\leq m-3, j_1\leq j_2\leq m-3}
  \|s^{-1}\dt^{j_0}\tau\|_{L^\infty} \|s\dt^{j_1}\bm{x}''\|_{L^\infty} \|s^\frac12\dt^{j_2}\bm{x}''\|_{L^2} \\
&\lesssim \sum_{j_1,j_2 \leq m-3} \|\dt^{j_1+1}\bm{x}\|_{X^2} \|\dt^{j_2+1}\bm{x}\|_{X^2} \\
&\quad\;
 + \sum_{j_0,j_1,j_2\leq m-3} \|\dt^{j_0}\tau'\|_{L^\infty} \|\dt^{j_1}\bm{x}\|_{X^3} \|\dt^{j_2}\bm{x}\|_{X^3} \\
&\lesssim \opnorm{ \bm{x} }_m^2 + \sum_{j_0\leq m-3} \|\dt^{j_0}\tau'\|_{L^\infty}\opnorm{ \bm{x} }_m^2,
\end{align*}
where we used Lemma \ref{lem:CalIneqLp1}. 
Therefore, we obtain $\|s\dot{h}(t)\|_{L^1}+\|s^\frac12h(t)\|_{L^2} \leq C_2$ for $0\leq t\leq T$. 
Summarizing the above estimates, we get $S_2(t) \leq C_2$ for $0\leq t\leq T$.

It remains to evaluate $E(0)$ and $S_1(0)$. 
Since we have \eqref{EstID}, by similar evaluations as above, we obtain $E(0)+S_1(0) \leq C_0$. 
The proof is complete. 
\end{proof}

This lemma and \eqref{EI1} implies $E(t) \leq C_1 \mathrm{e}^{C_2 t}( C_0 + C_2t)$. 
On the other hand, it is easy to see that 
$\opnorm{\bm{x}(t)}_{m-1} \leq \opnorm{\bm{x}(0)}_{m-1}+\int_0^t\opnorm{\bm{x}(t')}_m\mathrm{d}t' \leq C_0 + C_2t$ and that 
$\frac{\tau(s,t)}{s} \geq \frac{\tau(s,0)}{s} - \frac{1}{s}\int_0^t|\dt\tau(s,t')|\mathrm{d}t'\geq 2c_0 - C_2t$. 
Summarizing the above estimates, we have shown 
\[
\begin{cases}
 \|\dt^{m-1}\bm{x}(t)\|_{X^1}^2 + \|\dt^{m-2}\bm{x}(t)\|_{X^2}^2 \leq C_1\mathrm{e}^{C_2 t}( C_0 + C_2t), \\
 \opnorm{ \bm{x}(t) }_{m-1} \leq C_0 + C_2t, \\
 \frac{\tau(s,t)}{s} \geq 2c_0-C_2t.
\end{cases}
\]
Now, we define the constants $M_1$ and $M_2$ by $M_1=2C_0$ and $M_2=4C_0C_1$ and then choose the time $T$ so small that 
$C_2T\leq\min\{C_0,c_0,\log2\}$. 
Then, by the standard argument we see that the solution $(\bm{x},\tau)$ satisfies in fact \eqref{EstAss} for $0\leq t\leq T$ 
and the estimates in Theorem \ref{th:APE} follows from Lemmas \ref{lem:APEx}, \ref{lem:EstTau1}, and \ref{lem:EstTau2}. 
The proof of Theorem \ref{th:APE} is complete. 
\hfill$\Box$

\bigskip
\noindent
\textbf{Data availability}
Data sharing not applicable to this article as no datasets were generated or analysed during the current study. 

\bigskip
\noindent
\textbf{\Large Declarations}

\medskip
\noindent
\textbf{Conflict of interest}
On behalf of all authors, the corresponding author states that there is no conflict of interest.


\bigskip
Tatsuo Iguchi \par
{\sc Department of Mathematics} \par
{\sc Faculty of Science and Technology, Keio University} \par
{\sc 3-14-1 Hiyoshi, Kohoku-ku, Yokohama, 223-8522, Japan} \par
E-mail: \texttt{iguchi@math.keio.ac.jp}

\bigskip
Masahiro Takayama \par
{\sc Department of Mathematics} \par
{\sc Faculty of Science and Technology, Keio University} \par
{\sc 3-14-1 Hiyoshi, Kohoku-ku, Yokohama, 223-8522, Japan} \par
E-mail: \texttt{masahiro@math.keio.ac.jp}

\end{document}